\pgfplotsset{compat=newest}
\definecolor{colTspec}{rgb}{0.0, 0.72, 0.92}  %% cyan
\definecolor{colTAspec}{rgb}{0.5, 0.5, 0.5} %% grey
\colorlet{colPlane}{colTspec!20}
\newtcolorbox{bluebox}[1][]%
{left=0mm, right=0mm, bottom=0mm, top=0mm, sharp corners, boxrule=.8pt, before skip=\topsep, after skip=\topsep, colback=cyan!5, colframe=cyan, coltitle=black, fonttitle=\bfseries, title=#1, breakable}
\newcommand{\monikatodo}[2][]{\tikzexternaldisable\todo[color=blue!20, linecolor=blue]{#2}\tikzexternalenable}
\newcommand{\monika}[1]{{\color{blue}\footnotesize #1}}
\newcommand{\R}{\mathbb{R}}
\newcommand{\C}{\mathbb{C}}
\newcommand{\Z}{\mathbb{Z}}
\newcommand{\N}{\mathbb{N}}
\newcommand{\J}{\mathcal{J}}
\newcommand{\M}{\mathcal{M}}
\newcommand{\rd}{\mathrm d}
\newcommand{\id}{I}
\newcommand{\e}{\mathrm{e}} 
\newcommand{\I}{\mathrm{i}} %% imaginary unit
\newcommand{\mD}{\mathcal{D}}
\newcommand{\wgamma}{\widetilde \gamma}
\newcommand{\weta}{\widetilde \eta}
\DeclareMathOperator{\im}{Im}%Parte imaginaria
\DeclareMathOperator{\re}{Re}%Parte real\textbf{}\textbf{}
\DeclareMathOperator{\dist}{dist}
\DeclareMathOperator{\Ran}{Ran}
\DeclareMathOperator{\codim}{codim}
\DeclareMathOperator{\diag}{diag}
\DeclareMathOperator{\rank}{rank}
\newcommand{\define}[1]{\emph{#1}}
\newcommand{\strip}{\mathrm{Str}}
\newcommand{\sector}[2]{S_{#1}(#2)} %% sector
\newcommand{\hyperbola}{\mathrm{Hyp}}
\newcommand{\parabola}{\Pi}
\newcommand{\Hind}{\delta}
\newcommand{\sigmaess}{\sigma_{\mathrm{ess} }}  %% essential spectrum
\theoremstyle{plain}
\newtheorem{theorem}{Theorem}[section]
\newtheorem{theoremCT}{Theorem}
\newtheorem{lemma}[theorem]{Lemma}
\newtheorem*{lemma*}{Lemma}
\newtheorem{proposition}[theorem]{Proposition}
\newtheorem{corollary}[theorem]{Corollary}
\theoremstyle{definition}
\newtheorem{definition}[theorem]{Definition}
\newtheorem{remark}[theorem]{Remark}
\newtheorem*{remark*}{Remark}
\newtheorem{assumption}{Assumption}
\newtheorem{example}[theorem]{Example}
\begin{document}

\title{Spectral inclusions of perturbed normal operators and applications}
\author{%
Javier Moreno%,
\thanks{Departamento de Matem\'aticas, Universidad de los Andes, Bogot\'a, Colombia.\newline jd.morenop@uniandes.edu.co},
Monika Winklmeier%
\thanks{Departamento de Matem\'aticas, Universidad de los Andes, Bogot\'a, Colombia.\newline mwinklme@uniandes.edu.co}
}
\maketitle
% \tableofcontents
%%%%%%%%%%%%%%%%%%%%%%%%%%%%

\begin{abstract}
   \noindent
   We consider a normal operator $T$ on a Hilbert space $H$.
   Under various assumptions on the spectrum of $T$, we give bounds for the spectrum of $T+A$
   where $A$ is $T$-bounded with relative bound less than 1 but we do not assume that $A$ is symmetric or normal.
   If the imaginary part of the spectrum of $T$ is bounded, then the spectrum of $T+A$ is contained in the region between two hyperbolas whose asymptotic slope depends on the $T$-bound of $A$.
   If the spectrum of $T$ is contained in a bisector, then the spectrum of $T+A$ is contained in the area between certain rotated hyperbolas.
   The case of infinite gaps in the spectrum of $T$ is studied.
   Moreover, we prove a stability result for the essential spectrum of $T+A$.
   If $A$ is even $p$-subordinate to $T$, then we obtain stronger results for the localisation of the spectrum of $T+A$.

  \bigskip\noindent
  \textbf{Mathematics Subject Classification (2020).}\\
  Primary 47A55; Secondary 47A10, 47B12, 47B28.
  %% 47A55  	Perturbation theory of linear operators [See also 47H14, 58J37, 70H09, 81Q15]
  %% 47A10  	Spectrum, resolvent
  %% 47B12  	Sectorial operators
  %% 47B28  	Nonselfadjoint operators [See also 47A45, 81Q12]
  \smallskip

  \noindent
  \textbf{Keywords.}
  Normal operator; sectorial operator; perturbation theory; $p$-subordinate perturbation.

\end{abstract}

\section{Introduction} %%%{{{
Perturbation theory of linear operator has a long history.
Vast parts of the literature are dedicated to the perturbation theory of selfadjoint linear operators
where the perturbation is symmetric and small in some sense with respect to the unperturbed operator, see for instance the classic monograph~\cite{Kato95}.
\smallskip

Recently however, there is growing interest in the spectral theory of non-selfadjoint operators.
Such operators may be used to model open physical systems where certain quantities are not conserved.
For instance, in \cite{riviere2019, Krejcirik2015} the Laplace operator on a quantum graph with non-selfadjoint boundary conditions is studied.
Here the unperturbed operator is not selfadjoint and not even similar to a selfadjoint operator.
Perturbations of non-selfadjoint operators have been studied for example in the influential books 
\cite{GohbergKrein69} by Gohberg and Krein, \cite{DunfordSchwartzII, DunfordSchwartzIII} by Dunford and Schwartz
and \cite{Markus1988} by Markus.  
The case of a normal discrete operator with a $p$-subordinate perturbation was studied by
Markus and Matsaev (\cite{MarkusMatsaev1981}, \cite[Theorem 6.12]{Markus1988}).
Wyss \cite{wyss2010} could extend their results by allowing for weaker hypotheses on the asymptotic behaviour of eigenvalues of the normal discrete operator.
Shkalikov also studied $p$-subordinate perturbations of normal operators in \cite{shkalikov2016perturbations} where he considered other kinds of perturbations, for example relatively bounded perturbations with relative bound equal to zero.
\smallskip

Analytic bounds for the spectrum of non-selfadjoint operators are valuable because it is in general difficult to obtain good numerical bounds.
In this paper we will study the spectra of perturbations of normal operators where the perturbation is either relatively bounded or $p$-subordinate.

\begin{definition}
   \label{def:relbounded}
   Let $T$ and $A$ be linear operators on a Banach space. We say that $A$ is \define{$T$-bounded} or \define{relatively bounded with respect to $T$} if $\mD(T)\subseteq \mD(A)$ and if there exist $a,b\geq 0$ such that
   \begin{align}
      \label{eq:definicion2}
      \|Ax\|^2\leq a^2\|x\|^2+b^2\|Tx\|^2,\qquad x\in \mD(T).
   \end{align}
   The infimum $\delta_A$ of all $b\geq 0$ such that \eqref{eq:definicion2} is satisfied for some $a\geq 0$ is called the $T$-\define{bound} of $A$.
\end{definition}

\begin{definition}
   Let $T$ and $A$ be linear operators on a Banach space.
   We say that $A$ is $p$\define{-subordinate} to $T$ for some $p\in [0,1]$ if $\mD(T)\subseteq \mD(A)$ and if there exists $c\geq 0$ such that 
   \begin{align}
      \label{eq:defpsubord}
      \|Ax\|\leq c\|x\|^{1-p}\|Tx\|^{p},
      \qquad x\in \mD(T).
   \end{align}
   The infimum over all constants $c\geq 0$ such that \eqref{eq:defpsubord} holds is called the \define{$p$-subordination bound} of $A$ to $T$.
\end{definition}

\begin{remark}
   \label{rem:psub:relbounded}
   If $A$ is $p$-subordinate to $T$ with $p<1$, then $A$ is $T$-bounded with $T$-bound zero; if in addition $0\in \rho(T)$ and $q>p$, then $A$ is $q$-subordinate to $T$ (see, e.g., \cite[Theorem 12.3]{krasnoselski}).
\end{remark}

Cuenin and Tretter studied in \cite{CueninTretter2016} the case when the unperturbed operator $T$ is selfadjoint and the additive perturbation $A$ is $T$-bounded with relative bound less than 1.
The perturbation is not assumed to be symmetric so that $T+A$ is in general not selfadjoint or even symmetric.
The authors give sufficient conditions for gaps in the spectrum of $T$ to remain open under the perturbation.
They show the following localisation of the spectrum between two hyperbolas.

\begin{theoremCT}[see Theorem 2.1 in \cite{CueninTretter2016}]
   \label{thm:theorem21}
   Let $T$ be a selfadjoint operator on a Hilbert space $H$ and let $A$ be $T$-bounded with $T$-bound $<1$ and let $a,b \geq 0$, $b <1$, as in \eqref{eq:definicion2}.
   Then the following is true:
   \begin{enumerate}[label={\upshape(\roman*)}]

      \item 
      \label{item:theorem21:i}
      The spectrum of $T+A$ lies between two hyperbolas, more precisely,
      \begin{align*}
	 \left\{z\in\C: |\im z|^2> \frac{a^2+b^2|\re z|^2}{1-b^2}\right\} \subseteq \rho(T+A).
      \end{align*}

      \item 
      \label{item:theorem21:ii}
      If $T$ has a spectral gap $(\alpha_{T},\beta_{T})\subset \R$, i.e. $\sigma(T)\cap (\alpha_{T},\beta_{T})=\emptyset$, then
      \begin{align*}
	 \{z\in\C: \alpha_{T+A} <\re z < \beta_{T+A}\} \subseteq \rho(T+A)
      \end{align*}
      where $\alpha_{T+A}:=\alpha_{T}+\sqrt{a^2+b^2\alpha_{T}^2}\,,\ \beta_{T+A}:= \beta_T -\sqrt{a^2+b^2\beta_{T}^2}$.

      % \item
      % If $A$ is symmetric and $\alpha_{T},\beta_{T}\in \sigma(T)$, then
      % \begin{align*}
      %    \sigma(T+A)\cap \overline{B}_{\sqrt{a^2+b^2\alpha_{T}^2}}(\alpha_{T})\neq\emptyset,
      %    \qquad\sigma(T+A)\cap \overline{B}_{\sqrt{a^2+b^2\beta_{T}^2}}(\beta_{T})\neq\emptyset,
      % \end{align*}
      % where $\overline{B}_r(z)$ denotes the closed ball of radius $r> 0$ centred at $z\in \C$.

      \item 
      \label{item:theorem21:iii}
      If $T$ has an essential spectral gap $(\alpha_{T},\beta_{T})\subset \R$, i.e. $\sigmaess(T)\cap (\alpha_{T},\beta_{T})=\emptyset$  with $\alpha_{T},\beta_{T}\in \sigmaess(T)$, and $\alpha_{T+A} \le \beta_{T+A}$ holds, then the set
      $$\sigma(T+A)\cap \{z\in\C: \alpha_{T+A} <\re z < \beta_{T+A}\}$$ 
      consists of at most countably many isolated eigenvalues of finite algebraic multiplicity which may accumulate at most at the points $\alpha_{T+A}$ and $\beta_{T+A}$.
   \end{enumerate}
\end{theoremCT}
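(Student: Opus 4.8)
The plan is to treat \ref{item:theorem21:i} and \ref{item:theorem21:ii} by one Neumann-series argument and then derive \ref{item:theorem21:iii} from them via the analytic Fredholm theorem. Since $A$ is $T$-bounded with $T$-bound $b<1$, the operator $T+A$ on $\mD(T)$ is closed, and for $z\in\rho(T)$ one has
\begin{align*}
   T+A-z=\bigl(\id+A(T-z)^{-1}\bigr)(T-z)\qquad\text{on }\mD(T),
\end{align*}
with $A(T-z)^{-1}$ bounded; so $\|A(T-z)^{-1}\|<1$ already forces $z\in\rho(T+A)$. I would estimate this norm by combining \eqref{eq:definicion2} with the spectral theorem for the selfadjoint operator $T$ (spectral measure $E$): for $x\in H$,
\begin{align*}
   \|A(T-z)^{-1}x\|^2\le a^2\|(T-z)^{-1}x\|^2+b^2\|T(T-z)^{-1}x\|^2=\int_{\sigma(T)}\frac{a^2+b^2\lambda^2}{|\lambda-z|^2}\,\rd\|E_\lambda x\|^2 .
\end{align*}
For \ref{item:theorem21:i} I would bound the two summands separately: using $|\lambda-z|\ge|\im z|$ and the elementary identity $\sup_{\lambda\in\R}\lambda^2|\lambda-z|^{-2}=|z|^2|\im z|^{-2}$, this gives $\|A(T-z)^{-1}\|^2\le(a^2+b^2|z|^2)|\im z|^{-2}$; since $|z|^2=|\re z|^2+|\im z|^2$, this quantity is $<1$ exactly on the set $\{|\im z|^2>(a^2+b^2|\re z|^2)/(1-b^2)\}$, on which $\im z\neq0$ so $z\in\rho(T)$, and hence $z\in\rho(T+A)$.

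For \ref{item:theorem21:ii}, if $\re z\in(\alpha_{T+A},\beta_{T+A})\subseteq(\alpha_T,\beta_T)$ then $z\in\rho(T)$, and since the spectral integral above is supported on $\sigma(T)\subseteq\R\setminus(\alpha_T,\beta_T)$,
\begin{align*}
   \|A(T-z)^{-1}\|^2\le\sup_{\lambda\in\R\setminus(\alpha_T,\beta_T)}\frac{a^2+b^2\lambda^2}{(\lambda-\re z)^2+(\im z)^2} .
\end{align*}
Here I would use that $f(\lambda):=\lambda+\sqrt{a^2+b^2\lambda^2}$ and $g(\lambda):=\lambda-\sqrt{a^2+b^2\lambda^2}$ are strictly increasing (their derivatives exceed $1-b>0$), with $f(\alpha_T)=\alpha_{T+A}$ and $g(\beta_T)=\beta_{T+A}$; so for $\lambda\le\alpha_T$ one gets $f(\lambda)\le\alpha_{T+A}<\re z$, whence $a^2+b^2\lambda^2<(\lambda-\re z)^2$, and symmetrically for $\lambda\ge\beta_T$. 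Since the integrand is $<1$ throughout $\R\setminus(\alpha_T,\beta_T)$ and tends to $b^2<1$ as $|\lambda|\to\infty$, the supremum is strictly $<1$, so $z\in\rho(T+A)$. Running the same computation on the closed strip moreover shows that for $\re z\in\{\alpha_{T+A},\beta_{T+A}\}$ with $\im z\neq0$ the supremum over $\R\setminus(\alpha_T,\beta_T)$ is still strictly $<1$; I would keep this boundary estimate for the last part.

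For \ref{item:theorem21:iii} set $\Omega:=\{z:\alpha_{T+A}<\re z<\beta_{T+A}\}$, which we may assume nonempty. By \ref{item:theorem21:i}, $\Omega$ contains points of $\rho(T+A)$. The key further input is that $\sigmaess(T+A)$ meets the closed strip $\overline\Omega$ only in $\{\alpha_{T+A},\beta_{T+A}\}$ — the essential-spectrum analogue of \ref{item:theorem21:ii} for the essential gap $(\alpha_T,\beta_T)$, to be provided by the essential-spectrum stability result of this paper. Granting this, let $V$ be the connected component of $\C\setminus\sigmaess(T+A)$ containing $\Omega$; then $\overline\Omega\setminus\{\alpha_{T+A},\beta_{T+A}\}\subseteq V$, on $V$ the operator $T+A-z$ is Fredholm of constant index, and that index is $0$ because \ref{item:theorem21:i} supplies resolvent points in $V$. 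The holomorphic Fredholm family $z\mapsto T+A-z$ on the connected open set $V$ is invertible somewhere, so by the analytic Fredholm theorem $(T+A-z)^{-1}$ is meromorphic on $V$; hence $\sigma(T+A)\cap V$ is discrete in $V$, at most countable, and consists of isolated eigenvalues of finite algebraic multiplicity. Restricting to $\Omega$, the accumulation points of $\sigma(T+A)\cap\Omega$ lie in $\partial V\cup\{\infty\}$; \ref{item:theorem21:i} rules out $\infty$ (it places $\{z:\alpha_{T+A}\le\re z\le\beta_{T+A},\ |\im z|>C\}$ in $\rho(T+A)$ for a suitable $C$), and $\partial V\subseteq\sigmaess(T+A)$ meets $\overline\Omega$ only in $\{\alpha_{T+A},\beta_{T+A}\}$, which gives \ref{item:theorem21:iii}.

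The step I expect to be the main obstacle is exactly this essential-spectrum input — that $\sigmaess(T+A)$ touches $\overline\Omega$ only at the two corner points. Establishing it requires a stability theorem for the essential spectrum under $T$-bounded perturbations of relative bound $<1$, together with a preliminary reduction that, in order to run the first display with $\sigmaess(T)$ in place of $\sigma(T)$, removes from $T$ the discrete eigenvalues lying in the gap $(\alpha_T,\beta_T)$ (finitely many of each multiplicity, but possibly infinitely many in total, accumulating at $\alpha_T$ or $\beta_T$) so that the remaining selfadjoint summand genuinely possesses the spectral gap; the strict boundary estimate recorded above then supplies strictness off the real axis.
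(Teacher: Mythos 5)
Parts \ref{item:theorem21:i} and \ref{item:theorem21:ii} are correct, and the method --- the Neumann factorisation $T+A-z=(\id+A(T-z)^{-1})(T-z)$ together with the spectral-theorem bound on $\|A(T-z)^{-1}\|$ --- is exactly what the paper isolates in Proposition~\ref{prop:ourfirsprop3} and recovers in the special case $\gamma_1=\gamma_2=0$ of Proposition~\ref{prop:bddstrip}, Theorem~\ref{thm:bddstripgap} and Remark~\ref{prop:bddstripsymm}. Your separate bounding of the two terms in \eqref{eq:definicion2} and the paper's single estimate \eqref{eq:distanceline} produce the same numerical bound $(a^2+b^2|z|^2)/|\im z|^2$, so nothing is lost, and your monotonicity argument via $f(\lambda)=\lambda+\sqrt{a^2+b^2\lambda^2}$, $g(\lambda)=\lambda-\sqrt{a^2+b^2\lambda^2}$ is a clean way to get \ref{item:theorem21:ii} (the paper effectively uses the same observation in Lemma~\ref{lem:app:lemma0}).

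Part \ref{item:theorem21:iii}, however, is an outline, not a proof, and the gap you flag is genuine. You reduce everything to the claim that $\sigmaess(T+A)$ meets the closed strip $\overline\Omega$ only in $\{\alpha_{T+A},\beta_{T+A}\}$ and then observe, correctly, that this requires (a) a stability theorem for $\sigmaess$ under $T$-bounded perturbations with bound $<1$, and (b) a reduction that strips from $T$ the eigenvalues lying inside the essential gap so that the remaining selfadjoint part has an actual spectral gap to which \ref{item:theorem21:ii} applies. What the proposal does not supply is how to carry out (b) when, as you yourself note, there may be \emph{infinitely many} such eigenvalues accumulating at $\alpha_T$ and $\beta_T$: then the spectral subspace you would like to split off is infinite-dimensional, and the finite-rank/relatively-compact perturbation argument underlying (a) (in the paper, Theorem~\ref{thm:essentialgaps}, which needs $\dim\widetilde H<\infty$) does not apply directly. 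The paper's way around this is the exhaustion of Theorem~\ref{thm:essspecbdd}: for each $n$ one works with the shrunken gap $(\alpha_T+\tfrac1n,\beta_T-\tfrac1n)$, inside which only finitely many eigenvalues of $T$ live, splits those off with Theorem~\ref{thm:essentialgaps}, applies the spectral-gap result to the remainder, and then unions over $n$ to recover the full $\alpha_{T+A}<\re z<\beta_{T+A}$ statement. Without this (or an equivalent device) the essential-spectrum enclosure you ``grant'' is not established, so \ref{item:theorem21:iii} remains unproved as written. The downstream analytic-Fredholm argument itself --- index $0$ from the resolvent points supplied by \ref{item:theorem21:i}, discreteness on the connected component $V$, boundedness of the spectrum inside $\overline\Omega$ again from \ref{item:theorem21:i} --- is sound.
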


Moreover, the following estimate for the norm of the resolvent operator is proved.

\begin{theoremCT}[\cite{CueninTretter2016}, Proposition 2.8]
   \label{p:proposition28}
   Let $T$ be a selfadjoint operator in a Hilbert space $H$, let $A$ be $T$-bounded with $T$-bound $<1$ and let $a,b \geq 0$, $b <1$, as in \eqref{eq:definicion2}. 
   Then the following statements hold: 
   \begin{enumerate}[label={\upshape(\roman*)}]
      \item For $z\in \C$ such that $|\im z|^2>\frac{a^2+b^2|\re z|^2}{1-b^2}$, we have that
      \begin{equation}
	 \label{eq:prop28:i}
	 \|(T+A-z)^{-1}\|\leq \frac{1}{|\im z|-\sqrt{a^2+b^2|z|^2}}.
      \end{equation}

      \item If $(\alpha_T,\beta_T)$ is a spectral gap of $T$ then for $z\in \C$ such that  $\alpha_{T+A}<\re z<\beta_{T+A}$ with $\alpha_{T+A}$ and $\beta_{T+A}$ as in Theorem~\ref{thm:theorem21}, we have that 
      \begin{align}
	 \label{eq:prop28:ii}
	 \|(T+A-z)^{-1}\|
	 \leq 
	 \frac{
	 \left( \sqrt{\min\{\re z-\alpha_T,\beta_T-\re z\}^2+(\im z)^2} \right)^{-1}
	 }{1-\max\left\{b,~\frac{\sqrt{a^2+b^2\alpha_T^2}}{\mu-\alpha_T},~\frac{\sqrt{a^2+b^2\beta_T^2}}{\beta_T-\mu}\right\}}.
      \end{align}
   \end{enumerate}
\end{theoremCT}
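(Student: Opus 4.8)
The plan is to use the factorisation $T+A-z=\bigl(I+A(T-z)^{-1}\bigr)(T-z)$, valid for every $z$ in the resolvent set of $T$. In case~(i) the condition $|\im z|^2>(a^2+b^2|\re z|^2)/(1-b^2)$ forces $\im z\neq 0$, hence $z\notin\sigma(T)\subseteq\R$; in case~(ii) the hypothesis $\alpha_{T+A}<\re z<\beta_{T+A}$ together with $\alpha_T\le\alpha_{T+A}$ and $\beta_{T+A}\le\beta_T$ (immediate from the definitions of $\alpha_{T+A},\beta_{T+A}$) puts $\re z$ inside the spectral gap $(\alpha_T,\beta_T)$, so again $z\in\rho(T)$. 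The key point is then to show $\|A(T-z)^{-1}\|<1$: once this holds, a Neumann series shows that $I+A(T-z)^{-1}$ is boundedly invertible with inverse of norm at most $\bigl(1-\|A(T-z)^{-1}\|\bigr)^{-1}$, so that $(T+A-z)^{-1}=(T-z)^{-1}\bigl(I+A(T-z)^{-1}\bigr)^{-1}$ and
\[
\|(T+A-z)^{-1}\|\le\frac{\|(T-z)^{-1}\|}{1-\|A(T-z)^{-1}\|}.
\]
It then remains only to estimate the two factors on the right, using the spectral theorem for the selfadjoint operator $T$.

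The numerator is immediate: $\|(T-z)^{-1}\|=\dist(z,\sigma(T))^{-1}$, which in case~(i) is at most $|\im z|^{-1}$, and in case~(ii), since $\sigma(T)\subseteq(-\infty,\alpha_T]\cup[\beta_T,\infty)$ and $\alpha_T<\re z<\beta_T$, is at most $\bigl(\min\{\re z-\alpha_T,\beta_T-\re z\}^2+(\im z)^2\bigr)^{-1/2}$, exactly the numerator in~\eqref{eq:prop28:ii}.

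For the denominator I would combine the relative bound~\eqref{eq:definicion2} with the functional calculus: writing $\mu_x$ for the spectral measure of $T$ associated with $x\in H$,
\[
\|A(T-z)^{-1}x\|^2\le a^2\|(T-z)^{-1}x\|^2+b^2\|T(T-z)^{-1}x\|^2
=\int_{\sigma(T)}\frac{a^2+b^2t^2}{|t-z|^2}\,\rd\mu_x(t)
\le\Bigl(\sup_{t\in\sigma(T)}\frac{a^2+b^2t^2}{|t-z|^2}\Bigr)\|x\|^2.
\]
In case~(i) one bounds the supremum over all of $\R$ by $\sup_{t\in\R}\frac{a^2}{(t-\re z)^2+(\im z)^2}+\sup_{t\in\R}\frac{b^2t^2}{(t-\re z)^2+(\im z)^2}=\frac{a^2+b^2|z|^2}{(\im z)^2}$, which is $<1$ precisely on the hyperbolic region in the hypothesis; inserting this and $\|(T-z)^{-1}\|\le|\im z|^{-1}$ into the resolvent inequality gives~\eqref{eq:prop28:i}. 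In case~(ii) one first discards the imaginary part, $|t-z|^2\ge(t-\re z)^2$, and then analyses $h(t)=\frac{a^2+b^2t^2}{(t-\re z)^2}$ separately on $(-\infty,\alpha_T]$ and on $[\beta_T,\infty)$: a direct computation yields $h'(t)=\frac{2(b^2(\re z)t+a^2)}{(\re z-t)^3}$, and since $h(t)\to b^2$ as $t\to\pm\infty$ and $h$ has no interior maximum exceeding its endpoint value, one obtains $\sup_{t\le\alpha_T}h=\max\{b^2,\frac{a^2+b^2\alpha_T^2}{(\re z-\alpha_T)^2}\}$ and $\sup_{t\ge\beta_T}h=\max\{b^2,\frac{a^2+b^2\beta_T^2}{(\beta_T-\re z)^2}\}$, hence
\[
\|A(T-z)^{-1}\|\le\max\Bigl\{b,\ \frac{\sqrt{a^2+b^2\alpha_T^2}}{\re z-\alpha_T},\ \frac{\sqrt{a^2+b^2\beta_T^2}}{\beta_T-\re z}\Bigr\}.
\]
Unwinding the definitions of $\alpha_{T+A}$ and $\beta_{T+A}$, each of the three entries is $<1$ exactly under $b<1$ and $\alpha_{T+A}<\re z<\beta_{T+A}$, so $\|A(T-z)^{-1}\|<1$ there; feeding this together with the numerator estimate into the resolvent inequality yields~\eqref{eq:prop28:ii} (reading $\mu=\re z$).

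The one step that requires genuine care is the sign analysis of $h'$ in case~(ii): its unique zero lies at $t=-a^2/(b^2\re z)$, whose position relative to the two half-lines depends on the sign of $\re z$, so one has to treat $\re z>0$, $\re z=0$ and $\re z<0$ separately to confirm that on each half-line the supremum of $h$ is attained at the finite endpoint ($\alpha_T$, resp.\ $\beta_T$) or only in the limit at infinity, never at an interior critical point. Everything else — the Neumann series, the distance-to-spectrum computations, and the crude suprema in case~(i) — is routine.
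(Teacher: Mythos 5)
Your argument is correct and is essentially the same as the one the paper's framework gives: Proposition~\ref{prop:ourfirsprop3} supplies the factorisation of $T+A-z$ and the bound $\|A(T-z)^{-1}\|\le\sup_{t\in\sigma(T)}\sqrt{a^2+b^2t^2}/|t-z|$, and Lemma~\ref{lem:app:lemma0} evaluates that supremum over $\R\setminus(\alpha_T,\beta_T)$, so that specialising Theorem~\ref{thm:bddstripgap} and Proposition~\ref{prop:imupperbdd} to $\gamma_1=\gamma_2=0$ recovers exactly these estimates (as the paper itself points out in Remark~\ref{prop:bddstripsymm}). The only inessential difference is that you locate the supremum of $h(t)=(a^2+b^2t^2)/(t-\mu)^2$ by a derivative-sign analysis with a case split on the sign of $\re z$, whereas Lemma~\ref{lem:app:lemma0} sidesteps differentiation via the rewrite $h(t)=b^2+\bigl(a^2+b^2[t^2-(t-\mu)^2]\bigr)/(t-\mu)^2$, so that a single sign check decides whether the endpoint value or the asymptotic value $b^2$ dominates.
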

The case of infinitely many gaps in the spectrum is also considered.
\smallskip

In the present work however we assume that the unperturbed operator is only normal instead of selfadjoint which makes the situation more complicated and appropriate conditions on its spectrum have to be identified that allow us to establish perturbational results about its spectrum.
In Proposition~\ref{prop:imupperbdd} we assume that $T$ is normal and that the imaginary part of the spectrum of $T$ is bounded.
Then the spectrum of $T+A$ is contained in the region between two hyperbolas which are shifted away from the real axis.
If in addition we assume that the resolvent set of $T$ contains a strip parallel to the imaginary axis, we can provide a condition under which the resolvent set of $T+A$ still contains a (possibly smaller) strip and we give estimates for $\|(T+A-z)^{-1}\|$ for $z$ in that strip, see Theorem~\ref{thm:bddstripgap}.
If the bound on the imaginary part of the spectrum of $T$ is 0, we recover the spectral inclusion from Theorem~\ref{thm:theorem21} and the estimate for the resolvent operator, see \eqref{eq:resestimupperbdd}.
If the real part of the spectrum of $T$ is lower bounded, then we show in Corollary~\ref{cor:ourconjecture1} that the real part of the spectrum of $T+A$ remains lower bounded, which is a generalisation of Corollary~2.4 in \cite{CueninTretter2016}.

We can even allow for normal operators with unbounded imaginary part of the spectrum.
We show in Theorem~\ref{thm:bisecgap} that if the spectrum of the normal operator $T$ is contained in two sectors symmetric around the real axis, then the spectrum of $T+A$ is contained in two possibly bigger sectors and some neighbourhood of $0$ if the perturbation $A$ is small enough.
More precisely, the exterior of certain shifted and rotated hyperbolas is contained in the resolvent set of $T+A$.
If we set the opening angles of the sectors equal to $0$, we again recover the results from Theorem~\ref{thm:theorem21}.
We can also show that a spectral free strip

\begin{equation}
   \label{eq:spectralgapgeneral}
   \{\lambda\in\C : \alpha\le \re(\lambda) \le \beta\} \subseteq \rho(T)
\end{equation}
of the unperturbed operator leads to a possibly smaller spectral free strip of $T+A$ if the perturbation is small enough.
That the strips under consideration are parallel to the real or the imaginary axis is a choice of convenience.
Clearly, if $T$ is normal, then so is $e^{\I\phi}T$ for any $\phi \in\R$ and we could as well consider other infinite strips which are not necessarily parallel to the axes.
\medskip

The paper is organised as follows.
In Section~\ref{sec:perturbation} we use a Neumann series argument and the spectral theorem to establish a basic criterion for $z$ to belong to $\rho(T+A)$ and we give estimates for $\|(T+A-z)^{-1}\|$ for such $z$.
An important ingredient here is that 
$\|(T-z)^{-1}\| = \dist(z,\sigma(T))^{-1}$ for all $z\in\rho(T)$ since $T$ is a normal operator.
Moreover we prove a perturbation theorem for the essential spectrum of normal operators.
In Section~\ref{sec:boundedimaginary} we consider normal operators whose imaginary part is bounded. 
Under a relatively bounded perturbation, the imaginary part of the spectrum may become unbounded but it is contained in the region between two shifted hyperbolas.
Theorem~\ref{thm:bddstripgap} shows that a spectral free strip of $T$ may give  rise to spectral free strip of $T+A$ if the perturbation $A$ is small enough.
In Section~\ref{sec:sectorial} we investigate normal (bi)sectorial-like operators. 
Our main theorem, Theorem~\ref{thm:bisecgap}, shows that the spectrum of $T+A$ away from 0 is contained in two sectors if the relative bound of $A$ is small enough. 
The amount by which the opening angle of the new sector increases depends on the $T$-bound of $A$.
Moreover, we give sufficient conditions for a spectral free strip around the imaginary axis to remain open.

The special case when the spectrum of $T$ is contained in only one sector has implications for semigroup generator properties of $T_A$ which will be dealt with in more detail in a forthcoming paper.
In Section~\ref{sec:misc} we apply our method to several other situations. 
Most notably we study the case when infinitely many strips parallel to the imaginary axis belong to the resolvent set of $T$, see Theorems~\ref{thm:infgapsbdd} and \ref{thm:infgaps}.
In Section~\ref{sec:psub} we consider $p$-subordinate perturbations.
In this case, we can prove spectral enclosures even if the spectrum of $T$ is bounded only by parabolas as opposed to linear functions in the sectorial case.
Section~\ref{sec:applications} contains applications to a non-selfadjoint quantum graph and to a first order system of differential equations.
Several estimates that are used throughout this work are colleted in Appendix~\ref{app:estimates}.

%%%}}}

\bigskip
\noindent
{\bf Acknowledgement.}
The authors are grateful to Lyonell Boulton for fruitful discussions.
\bigskip

\section{Perturbation of normal operators} %%%{{{
\label{sec:perturbation}
In this section we will establish our notation and provide some basic facts on normal operators.
The main result in this section is Proposition~\ref{prop:ourfirsprop3} on which most of our results are based.
Recall that a closed densely defined operator $T$ on a Hilbert space $H$ is called \define{normal} if $TT^* = T^*T$.
Note that for a normal operator $\mD(T)= \mD(T^*)$ and $\|Tx\| = \|T^*x\|$ for all $x\in\mD(T)$.
As for selfadjoint operators, there is a spectral theorem for normal operators:
There exists a unique projection valued measure $E$ supported on the spectrum $\sigma(T)$ of $T$ such that
\begin{equation*}
   T = \int_{\sigma(T)} \lambda\, \rd E(\lambda),
\end{equation*}
see for instance, \cite{Schmuedgen2012} or \cite{vanneerven}.
An important consequence which we will use frequently is that
\begin{equation*}
   \| (T-\lambda)^{-1} \| = \frac{1}{\dist(\lambda, \sigma(T))} .
\end{equation*}

In this work, the operators $T$ and $A$ and the constants $a,b, c$ will always satisfy one of the following assumptions.

\begin{assumption}
   \label{ass:relbounded}
   $T$ is a closed normal operator on a Hilbert space $H$ and $A$ is $T$-bounded with $T$-bound $\delta_A <1$ and the constants $a\ge 0$ and $0\le b \leq \delta_A$ satisfy \eqref{eq:definicion2}.
\end{assumption}

\begin{assumption}
   \label{ass:psubord}
   $T$ is a closed normal operator on a Hilbert space $H$ and $A$ is $p$-subordinate to $T$ for some $p\in [0,1]$ and the constant $c>0$ satisfies \eqref{eq:defpsubord}.
\end{assumption}

\noindent
Note that under either of the assumptions the perturbed operator $T+A$ is closed and that Assumption~\ref{ass:psubord} is stronger than Assumption~\ref{ass:relbounded} by Remark~\ref{rem:psub:relbounded}.
\smallskip

Recall that the resolvent set $\rho(T)$ of a closed linear operator consists of all $z\in\C$ such that $T-z$ is bijective.
In this case, the resolvent operator $(T-z)^{-1}$ is bounded. 
Much of perturbation theory is based on the well-known fact that $\id + S$ is bijective and $\|(I+S)^{-1}\|\le \frac{1}{1-\|S\|}$ if $S$ is a bounded linear operator with $\|S\|< 1$.
This allows us to prove the next Proposition which is fundamental for most of our results.

\begin{proposition}
   \label{prop:ourfirsprop3}
   Let $T$ and $A$ be as in Assumption~\ref{ass:relbounded}.
   Then for every $z \in \rho(T)$ the following holds.
   \begin{enumerate}[label={\upshape (\roman*)}]
      \item \label{item:ourfirstprop3:i}
      If $\displaystyle \|A(T-z)^{-1}\| < 1$, then $T+A-z$ is invertible and
      \begin{equation*}
	 \| (T+A-z)^{-1} \| \le  \frac{ \| (T-z)^{-1} \| }{ 1 - \| A(T-z)^{-1} \|} .
      \end{equation*}

      \item \label{item:ourfirstprop3:ii}
      $\displaystyle \|A(T-z)^{-1}\|\leq \sup_{t\in\sigma(T)} \frac{\sqrt{a^2+b^2|t|^2}}{|t-z|}.$

      \item \label{item:ourfirstprop3:iii}
      If $\displaystyle \sup_{t\in\sigma(T)} \frac{\sqrt{a^2+b^2|t|^2}}{|t-z|} <1$, 
      then $T+A-z$ is invertible and
      \begin{equation*}
	 \| (T+A-z)^{-1} \| \le  \frac{ \| (T-z)^{-1} \| }{ 1 - \| A(T-z)^{-1} \|} 
	 \le \frac{1}{\dist(z, \sigma(T))}  \frac{1}{ 1 - \sup_{t\in\sigma(T)} \frac{\sqrt{a^2+b^2|t|^2}}{|t-z|}} .
      \end{equation*}
   \end{enumerate}
\end{proposition}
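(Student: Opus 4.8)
The plan is to factor $T+A-z = \bigl(\id + A(T-z)^{-1}\bigr)(T-z)$ and reduce everything to a Neumann series estimate for the bounded operator $I+A(T-z)^{-1}$, with part~\ref{item:ourfirstprop3:ii} providing the quantitative control of $\norm{A(T-z)^{-1}}$ through the spectral theorem.

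For \ref{item:ourfirstprop3:i} I would first check that $A(T-z)^{-1}$ is everywhere defined and bounded: since $z\in\rho(T)$, the resolvent $(T-z)^{-1}$ maps $H$ into $\mD(T)\subseteq\mD(A)$, and applying \eqref{eq:definicion2} to $y=(T-z)^{-1}x$ together with the boundedness of $T(T-z)^{-1}=\id+z(T-z)^{-1}$ shows $\norm{A(T-z)^{-1}x}$ is controlled by $\norm{x}$. Consequently the factorization above is a valid identity of operators with common domain $\mD(T)=\mD(T+A)$. If $\norm{A(T-z)^{-1}}<1$, then $\id+A(T-z)^{-1}$ is boundedly invertible on $H$ with $\norm{\bigl(\id+A(T-z)^{-1}\bigr)^{-1}}\le\bigl(1-\norm{A(T-z)^{-1}}\bigr)^{-1}$, so $T+A-z$ is a bijection $\mD(T)\to H$ whose inverse is $(T-z)^{-1}\bigl(\id+A(T-z)^{-1}\bigr)^{-1}$, and the claimed bound follows by submultiplicativity of the operator norm.

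For \ref{item:ourfirstprop3:ii} I would fix $x\in H$, set $y:=(T-z)^{-1}x\in\mD(T)$, and insert this into \eqref{eq:definicion2}. Writing $y=\int_{\sigma(T)}(\lambda-z)^{-1}\,\rd E(\lambda)x$ and using the functional calculus gives
\[
  \norm{y}^2=\int_{\sigma(T)}\frac{\rd\norm{E(\lambda)x}^2}{|\lambda-z|^2},
  \qquad
  \norm{Ty}^2=\int_{\sigma(T)}\frac{|\lambda|^2}{|\lambda-z|^2}\,\rd\norm{E(\lambda)x}^2,
\]
hence $a^2\norm{y}^2+b^2\norm{Ty}^2=\int_{\sigma(T)}\frac{a^2+b^2|\lambda|^2}{|\lambda-z|^2}\,\rd\norm{E(\lambda)x}^2\le\bigl(\sup_{t\in\sigma(T)}\tfrac{a^2+b^2|t|^2}{|t-z|^2}\bigr)\norm{x}^2$, using $\int_{\sigma(T)}\rd\norm{E(\lambda)x}^2=\norm{x}^2$. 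Taking the supremum over $\norm{x}\le1$ and a square root yields the inequality in \ref{item:ourfirstprop3:ii}.

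For \ref{item:ourfirstprop3:iii} I would simply combine the two preceding parts: the hypothesis and \ref{item:ourfirstprop3:ii} force $\norm{A(T-z)^{-1}}<1$, so \ref{item:ourfirstprop3:i} applies and gives the first inequality; the second follows from $\norm{(T-z)^{-1}}=\dist(z,\sigma(T))^{-1}$ (valid because $T$ is normal) together with the monotonicity estimate $1-\norm{A(T-z)^{-1}}\ge 1-\sup_{t\in\sigma(T)}\tfrac{\sqrt{a^2+b^2|t|^2}}{|t-z|}>0$. The only genuinely delicate point in the whole argument is the bookkeeping of domains — establishing that $A(T-z)^{-1}$ is bounded and that the factorization holds on exactly $\mD(T)$; once that is in place, the rest is a routine Neumann series bound and a direct application of the spectral theorem, so I expect no real obstacle beyond this.
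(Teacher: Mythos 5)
Your proof is correct and takes essentially the same route as the paper: factor $T+A-z=(\id+A(T-z)^{-1})(T-z)$ with a Neumann series for (i), the spectral theorem for (ii), and combination plus $\|(T-z)^{-1}\|=\dist(z,\sigma(T))^{-1}$ for (iii). The only variation is in (ii), where you write out the spectral integrals for $\|y\|^2$ and $\|Ty\|^2$ directly instead of introducing the auxiliary operator $\sqrt{a^2+b^2|T|^2}$ as the paper does; this has the minor advantage of avoiding the paper's detour through the dense subspace $\mD(|T|^2)$ and the subsequent density extension, but the underlying estimate is identical.
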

\begin{proof}

   \ref{item:ourfirstprop3:i} is well known and follows from the Neumann series for the resolvent:
   If $z\in\rho(T)$, we can write $T+A-z = (\id + A (T-z)^{-1})(T-z)$.
   Therefore, if $\| A (T-z)^{-1}\| < 1$, then $T+A-z$ is invertible with inverse
   \begin{equation*}
      (T+A-z)^{-1} = (T-z)^{-1} \sum_{n=0}^\infty (-A (T-z)^{-1})^n
   \end{equation*}
   and 
   \begin{equation*}
      \|T+A-z)^{-1}\| \le \| (T-z)^{-1}\| \sum_{n=0}^\infty \|A (T-z)^{-1}\|^n
      =  \frac{\| (T-z)^{-1}\|}{ 1-\|A (T-z)^{-1}\|}.
   \end{equation*}
   \smallskip

   \ref{item:ourfirstprop3:ii}
   Since $T$ is normal, we have $\||T|x\| = \|Tx\|$ for all $x\in \mD(T) = \mD(|T|)$.
   Using that $A$ is $T$-bounded, we obtain from \eqref{eq:definicion2} for all $x\in \mD(|T|^2)$ and $z\in\rho(T)$ that 
   \begin{align*}
      \|Ax\|^2&\leq a^2\|x\|^2 +b^2\||T|x\|^2
      =a^2\langle x,\, x\rangle +b^2\langle |T|x,\, |T|x\rangle
      \\
      &= \langle (a^2+b^2|T|^2)x,\, x\rangle
      \\
      &=\|\sqrt{a^2+b^2|T|^2}\, x\|^2.
   \end{align*}
   Therefore, for all $x\in\mD(T)$ with $\|x\|=1$,
   \begin{align}
      \label{eq:ATresolventestimate}
      \|A(T-z)^{-1}x\|
      \leq \|\sqrt{a^2+b^2|T|^2}(T-z)^{-1} x\|
      \le 
      \sup_{t\in\sigma(T)} \frac{\sqrt{a^2+b^2|t|^2}}{|t-z|}
   \end{align}
   where the last inequality follows from the spectral theorem for normal operators.
   Since $A(T-z)^{-1}$ is bounded on the dense subset $\mD(T)$, it extends uniquely to a bounded operator on $H$ 
   and \eqref{eq:ATresolventestimate} holds for all $x\in H$.
   \smallskip

   \ref{item:ourfirstprop3:iii}
   follows from 
   \ref{item:ourfirstprop3:i} and \ref{item:ourfirstprop3:ii}.
\end{proof}
The proposition shows that in order to establish $z\in\rho(T+A)$ for a given $z\in\rho(T)$, it is crucial to find good estimates for 
$\dist(z, \sigma(T))$ and for the supremum of the function
\begin{equation}
   \label{eq:Hdef}
   H_z(t) := \frac{a^2 + b^2|t|^2 }{ |t-z|^2 } 
\end{equation}
where the supremum is taken over $t\in\sigma(T)$.
We show in Lemma~\ref{lem:nolocalextrema} that for fixed $z$, the function $H_z$ has no local maximum in open subsets of $\C\setminus\{z\}$.
Therefore it suffices to take the supremum over the boundary of $\sigma(T)$ or any set containing $\sigma(T)$ which may include the point at infinity.
In Appendix~\ref{app:estimates} we give bounds for $\sup_{t\in U} H_z(t)$ for various sets $U$ which then lead to the characterisation of regions contained in the resolvent set of $T+A$.
\bigskip

We finish this section with the following theorem about the essential spectrum\footnote{%
It should be noted that there are many different definitions in the literature for the essential spectrum of a non-selfadjoint operator; see for instance \cite{EdmundsEvans}. The definition we use corresponds to $\sigma_{e1}$ in \cite{EdmundsEvans}.
}
which for a closed operator $S$ is defined in \cite[Sec. XVII.2]{GohGolMarKaa90} as
\begin{align*}
   \sigmaess(S) := \{\lambda\in\C : S-\lambda\ \text{ is not a Fredholm operator} \}.
\end{align*}
If $U$ is an open connected set in $\C\setminus\sigmaess(S)$ which contains at least one point from $\rho(S)$, then
$\sigma(S)\cap U$ consists of at most countably many eigenvalues of $S$ of finite type which cannot accumulate in $U$, see \cite[Chapter~XVII, Theorem~2.1]{GohGolMarKaa90}.

\begin{theorem}
   \label{thm:essentialgaps}
   Let $T$ and $A$ and the constants $a,b$ be as in Assumption~\ref{ass:relbounded}.
   Assume that there exists a subspace $\widetilde H\subseteq \mD(T)$ such that 
   $\dim \widetilde H < \infty$, 
   $\mD(T) = \widetilde H \oplus \mD_2$ where $\mD_2 = \mD(T) \cap \widetilde H^\perp$
   and $\widetilde H$ and $\mD_2$ are $T$-invariant so that we can write $T = T_1\oplus T_2$ where $T_1$ is defined on $\widetilde H$ and $T_2$ on $\mD_2 \subseteq \widetilde H^\perp$.
   Let $P_1$ and $P_2$ be the orthogonal projections onto $\widetilde H$ and $\widetilde H^\perp$ respectively.
   Then $A_{22} := P_2 A|_{\mD(T)\cap\widetilde H^\perp}$, viewed as an operator on $\widetilde H^\perp$, is $T_2$-bounded with the same constants $a,b$ and
   \begin{equation}
      \label{eq:sigmaess}
      \sigmaess(T+A) = \sigmaess(T_2+A_{22}).
   \end{equation}
   In any open connected subset of $\C\setminus\sigmaess(T+A)$ which contains at least one point from the resolvent set of $T+A$, there are at most countably many eigenvalues of $T+A$ and they may accumulate only at its boundary.
\end{theorem}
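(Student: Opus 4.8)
The plan is to reduce the essential spectrum of $T+A$ to that of a block-diagonal decomposition by showing the off-diagonal and finite-rank pieces of the perturbation do not affect Fredholmness. Write $A = \sum_{i,j} P_i A P_j$ (restricted appropriately to $\mD(T)$), so that $A_{12} = P_1 A|_{\mD_2}$ and $A_{21} = P_2 A|_{\widetilde H}$ and $A_{11}$ all have range or domain inside the finite-dimensional space $\widetilde H$ and are therefore finite-rank, hence $T$-compact (they are bounded on $\mD(T)$ because $A$ is $T$-bounded and $\widetilde H$ is finite-dimensional, and any bounded finite-rank operator composed with $(T-z)^{-1}$ for some fixed $z\in\rho(T)$ is compact). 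First I would record that for a closed operator $S$ and a relatively compact perturbation $K$ (i.e. $K(S-z)^{-1}$ compact for one, equivalently all, $z\in\rho(S)$), $\sigmaess(S+K)=\sigmaess(S)$; this is the standard Fredholm-stability statement, available from \cite{GohGolMarKaa90} or \cite{EdmundsEvans}. Applying this with $S = T + A_{22} \oplus 0$ — more precisely with the operator $T_1 \oplus (T_2+A_{22})$ on $\widetilde H\oplus\widetilde H^\perp$ — and $K = A_{11}\oplus 0 + (\text{off-diagonal part})$ gives
\begin{equation*}
   \sigmaess(T+A) = \sigmaess\bigl(T_1 \oplus (T_2+A_{22})\bigr) = \sigmaess(T_1) \cup \sigmaess(T_2+A_{22}).
\end{equation*}
Since $T_1$ acts on the finite-dimensional space $\widetilde H$, $T_1-\lambda$ is automatically Fredholm for every $\lambda$, so $\sigmaess(T_1)=\emptyset$ and \eqref{eq:sigmaess} follows.

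Second, I would verify the auxiliary claim that $A_{22}=P_2A|_{\mD(T)\cap\widetilde H^\perp}$ is $T_2$-bounded with the same constants $a,b$. For $x\in\mD_2 = \mD(T)\cap\widetilde H^\perp$ we have $T_2 x = Tx$ and, since $P_2$ is an orthogonal projection, $\|A_{22}x\| = \|P_2 Ax\| \le \|Ax\|$, so \eqref{eq:definicion2} for $A$ applied to $x$ gives $\|A_{22}x\|^2 \le \|Ax\|^2 \le a^2\|x\|^2 + b^2\|Tx\|^2 = a^2\|x\|^2 + b^2\|T_2x\|^2$. In particular $\delta_{A_{22}}\le\delta_A<1$, so $T_2$ and $A_{22}$ again satisfy Assumption~\ref{ass:relbounded} on $\widetilde H^\perp$ and $T_2+A_{22}$ is closed; this is what makes the right-hand side of \eqref{eq:sigmaess} well-defined and makes the block decomposition $T+A = (T_1+A_{11}) \oplus (T_2+A_{22}) + (\text{finite rank})$ legitimate as a perturbation of a closed operator.

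Third, for the final sentence of the statement I would simply invoke \cite[Chapter~XVII, Theorem~2.1]{GohGolMarKaa90}, which is already quoted in the text immediately before the theorem: any open connected component $U$ of $\C\setminus\sigmaess(T+A)$ meeting $\rho(T+A)$ contains at most countably many eigenvalues of $T+A$, all of finite type, with no accumulation point inside $U$; hence accumulation can occur only on the boundary $\partial U \subseteq \sigmaess(T+A) \cup (\text{boundary of the component})$.

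The main obstacle is the care needed to make the finite-rank/relatively-compact argument rigorous on the level of unbounded operators: one must check that the off-diagonal blocks $A_{12}, A_{21}$ and the corner block $A_{11}$ are genuinely $T$-compact, not merely bounded, and that adding them back to the closed operator $T_1\oplus(T_2+A_{22})$ does not disturb closedness or the domain. This follows because each of these blocks factors through the finite-dimensional space $\widetilde H$ and $A$ is $T$-bounded, so each block is a bounded finite-rank operator from $(\mD(T),\|\cdot\|_T)$ to $H$; composing with $(T-z)^{-1}$ (for a fixed $z\in\rho(T)$, which exists since $T$ is normal) then yields a finite-rank, hence compact, operator on $H$. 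Everything else is bookkeeping with orthogonal projections and the already-cited Fredholm theory.
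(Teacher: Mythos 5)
Your proposal is correct and follows essentially the same route as the paper: decompose $T+A$ as the block-diagonal operator $\mathcal{T}=T_1\oplus(T_2+A_{22})$ plus a perturbation $\mathcal{A}$ whose blocks all factor through the finite-dimensional space $\widetilde H$, invoke the Fredholm stability theorem for relatively compact perturbations, and observe that $\sigmaess$ of a direct sum decomposes and that the finite-dimensional summand contributes nothing. Two small imprecisions, both of a bookkeeping nature: (1) your side remark that $\rho(T)\neq\emptyset$ \emph{because} $T$ is normal is not correct in general (multiplication by $z$ on $L^2(\C)$ is normal with $\sigma(T)=\C$); the paper sidesteps this by working with the graph-norm formulation of $\mathcal T$-compactness rather than via a resolvent factor, and the theorem is anyway vacuous when there are no Fredholm points. (2) You establish that the off-diagonal and corner blocks are $T$-compact, but the Fredholm stability theorem is applied with unperturbed operator $\mathcal T$, so what is needed is $\mathcal T$-compactness; these coincide because $A_{22}$ has $T_2$-bound $<1$, whence the graph norms of $T$ and $\mathcal T$ on $\mD(T)$ are equivalent, but you should say so. Neither affects the substance of the argument.
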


\begin{proof}
   Since $\mD(T)\subseteq \mD(A)$, we can write 
   $\mD(T+A)=\mD(T_1)\oplus \mD(T_2) = \widetilde H \oplus \mD_2$ and, if we define 
   \begin{equation*}
      A_{ij}:=P_i A|_{\mD(T)\cap H_j},
      \qquad i,j=1,2,
   \end{equation*}
   we can represent both $T$ and $T+A$ as block operator matrices
   \begin{align*}
      T = 
      \begin{pmatrix} T_1 & 0 \\ 0 & T_2
      \end{pmatrix}
      : \mD(T_1)\oplus \mD(T_2) \subseteq H \longrightarrow \widetilde H \oplus \widetilde H^\perp,
   \end{align*}
   and 
   \begin{align*}
      T + A = 
      \begin{pmatrix} T_1  +  A_{11} & A_{12} \\ A_{21} & T_2 + A_{22}
      \end{pmatrix}
      : \mD(T_1)\oplus \mD(T_2) \subseteq H \longrightarrow \widetilde H \oplus \widetilde H^\perp.
   \end{align*}
   We decompose the sum as
   \begin{equation*}
      T+A=\mathcal{T}+\mathcal{A}
      \qquad\text{with }\quad
      \mathcal{T}:=
      \begin{pmatrix} T_1 & 0 \\ 0 & T_2+A_{22}
      \end{pmatrix},
      \quad 
      \mathcal{A}:= 
      \begin{pmatrix} A_{11} & A_{12} \\ A_{21} & 0
      \end{pmatrix}.
   \end{equation*}
   Note that for $j=1, 2$ and $x\in\mD(T_j)$ we have that
   \begin{equation*}
      \|A_{j1}x\|^2 + \|A_{j2}x\|^2
      = \|Ax\|^2 
      \le a^2 \|x\|^2 + b^2 \|Tx\|^2
      = a^2 \|x\|^2 + b^2 \|T_jx\|^2,
   \end{equation*}
   hence $A_{j\ell}$ is $T_j$-bounded with relative bound less or equal to the $T$-bound of $A$.
   Since $\widetilde H$ is finite dimensional, an easy calculation shows that $\mathcal{A}$ is $\mathcal{T}$-compact.
   Thus, by \cite[Chapter~XVII, Theorem~4.3]{GohGolMarKaa90},
   \begin{align}
      \label{eq:espectroess}
      \sigmaess(T+A)=\sigmaess(\mathcal{T}+\mathcal{A})=\sigmaess(\mathcal{T}).
   \end{align}
   Observe that for $\lambda \in \C$
   \begin{align*}
      \ker(\mathcal{T}-\lambda)
      & =\ker(T_1-\lambda)\oplus \ker(T_2+A_{22} - \lambda),
      \\
      \Ran(\mathcal{T}-\lambda)
      & =\Ran(T_1-\lambda)\oplus \Ran(T_2+A_{22} - \lambda).
   \end{align*}
   Since $\ker(T_1-\lambda)$ and $\Ran(T_1-\lambda)$ are finite dimensional, we have that 
   \begin{align*}
      \dim(\ker(\mathcal{T}-\lambda))< \infty
      & \quad \Longleftrightarrow \quad 
      \dim(\ker(T_2+A_{22}-\lambda))< \infty ,
      \\
      \codim(\Ran(\mathcal{T}-\lambda))< \infty 
      & \quad\Longleftrightarrow\quad 
      \codim(\Ran(T_2+A_{22}-\lambda))<\infty,
   \end{align*}
   and therefore
   \begin{align}
      \label{eq:espectroess2}
      \sigmaess(\mathcal{T})=\sigmaess(T_2+A_{22})
   \end{align}
   which together with \eqref{eq:espectroess} shows \eqref{eq:sigmaess}.
   The claim about nonaccumulation of the spectrum follows from \cite[Chapter~XVII, Theorem~2.1]{GohGolMarKaa90}.
\end{proof}

Let us remark that we need to assume conditions on $\sigma(T)$ in order to be able to control $\sigma(T+A)$.
For instance, the condition \eqref{eq:spectralgapgeneral} in general will not be enough to guarantee that a spectral gap remains open under a perturbation even in the case of an arbitrarily large gap and a $p$-subordinate perturbation.

\begin{example}
   \label{ex:gapcloses}
   Let $H=L^2(\R;\C)\oplus L^2(\R;\C)$.
   For $k>0$ we define the operator $T$ by 
   \begin{equation*}
      \mD(T):= \left\{
      \begin{pmatrix}
	 f \\ g
      \end{pmatrix}
      \in L^2(\R;\C)\oplus L^2(\R;\C): xf(x),~xg(x)\in L^2(\R;\C) \right\}
   \end{equation*}
   and 
   \begin{equation*}
      T \begin{pmatrix} f \\ g
      \end{pmatrix}
      := 
      \begin{pmatrix}
	 \mathcal{M}_{k+\I x} & 0 \\
	 0 & \mathcal{M}_{-k+\I x}
      \end{pmatrix}
      \begin{pmatrix} f \\ g
      \end{pmatrix}
      = 
      \begin{pmatrix} (k+\I x)f\\ (-k+\I x)g
      \end{pmatrix}
      \qquad
      \begin{pmatrix} f \\ g
      \end{pmatrix} \in \mD(T).
   \end{equation*}
   It is clear that $T$ is a normal operator with spectrum
   \begin{equation*}
      \sigma(T)=\left\{k+\I x:x\in \R\right\}\cup\left\{-k+\I x:x\in \R\right\}.
   \end{equation*}
   In particular, $T$ has a spectral free strip 
   \begin{equation*}
      \left\{z\in \C: -k<\re z<k\right\}\subseteq \rho(T).
   \end{equation*}
   Consider the operator $A$ defined by 
   % $$\mD(A):=\mD(T) $$
   % and 
   \begin{equation*}
      A
      \begin{pmatrix} f \\ g
      \end{pmatrix}
      := 
      \begin{pmatrix}
	 \mathcal{M}_{-\sqrt{|x|}} & 0 \\
	 0 & \mathcal{M}_{\sqrt{|x|}}
      \end{pmatrix}
      \begin{pmatrix} f \\ g
      \end{pmatrix}
      = 
      \begin{pmatrix}
	 -\sqrt{|x|}f \\ \sqrt{|x|}g
      \end{pmatrix},
      \qquad
      \begin{pmatrix} f \\ g
      \end{pmatrix}
      \in \mathcal \mD(A) := \mD(T).
   \end{equation*}
   It is easy to see that $A$ is $p$-subordinate to $T$ for $p=\frac{1}{2}$, in particular $A$ is also $T$-bounded with relative bound $0$.
   However, the spectrum of $T+A$ is
   \begin{equation*}
      \sigma(T+A)
      =\left\{ k-\sqrt{|x|} + \I x : x\in \R\right\}\cup\left\{ -k+\sqrt{|x|} + \I x : x\in \R\right\},
   \end{equation*}
   hence $T+A$ does not have any spectral free strip.  
   \begin{comment}
   \begin{figure}[H] %%% {{{ Figure for closing of spectral gap
      \centering

      \begin{tikzpicture}
	 \begin{axis}[axis lines=middle,% 
 	    ticks=none,
	    xlabel={$\R$}, 
	    ylabel={$\I\R$}, 
	    ]
	    \addplot[blue, thick, domain=-200:200] (.5,x);
	    \addplot[blue,  thick, domain=-200:200] (-.5,x);
	    \addplot[red, thick, domain=0:.2]   ({.5-sqrt(abs(x))},{100*x)});
	    \addplot[red, thick, domain=.2:2]   ({.5-sqrt(abs(x))},{100*x)});
	    \addplot[red, thick, domain=-2:-.2] ({.5-sqrt(abs(x))},{100*x)});
	    \addplot[red, thick, domain=-.2:0]  ({.5-sqrt(abs(x))},{100*x)});
	    \addplot[red, thick, domain=0:.2]  ({-.5+sqrt(abs(x))},{100*x)});
	    \addplot[red, thick, domain=.2:2]  ({-.5+sqrt(abs(x))},{100*x)});
	    \addplot[red, thick, domain=-2:-.2]({-.5+sqrt(abs(x))},{100*x)});
	    \addplot[red, thick, domain=-.2:0] ({-.5+sqrt(abs(x))},{100*x)});

	    %% k
	    \node[below right] at (axis cs:.5,0) {$k$};
	    \node[below left] at (axis cs:-.5,0) {$-k$};

	 \end{axis};
	 
	 %% spectrum
	 \node[red] at (7.5,5.0) {$\sigma(T+A)$};
	 \node[blue] at (5,6) {$\sigma(T)$};
      \end{tikzpicture}
      % \includegraphics[width=0.5\textwidth]{Espectro-de-T-y-de-T+A-Ejemplo-1.png}
      \caption{Spectrum of $T$ and $T+A$ $(k\neq 0)$ from Example~\ref{ex:gapcloses}.}
      \label{fig:figura2}
   \end{figure}
   %%%}}}
   \end{comment}
\end{example}

%%%}}}

\section{Normal operators with bounded imaginary part} %%%{{{
\label{sec:boundedimaginary}

Except for Proposition~\ref{prop:imupperbdd} we will always assume in this section that the imaginary part of the spectrum of the normal operator $T$ is bounded.

For $\gamma\in\R$ we define the sets
\begin{align}
   \label{eq:defHyper}
   \hyperbola_{\gamma} &= 
   \left\{ z\in\C :
   \frac{ a^2 + b^2\gamma^2 }{1-b^2}
   + 
   \frac{ b^2 (\re z )^2 }{1-b^2}
   < ( \im z )^2,\
   \im z > 0
   \right\}.
\end{align}
$\hyperbola_{\gamma}$ is the set of all points above the hyperbola in the complex which passes through $\I \sqrt{\frac{ a^2 + b^2\gamma^2 }{1-b^2} }$ and has the asymptotes $\im z = \pm \frac{b}{\sqrt{1-b^2}} \re z$ for $\re z \to\pm \infty$.

\begin{proposition}
   \label{prop:imupperbdd}
   Let $T$ and $A$ be as in Assumption~\ref{ass:relbounded}.
   Moreover, we assume that $\im\sigma(T)\le \gamma$ for some $\gamma\in\R$.
   Then
   \begin{equation}
      \label{eq:imupperbdd}
      \I\gamma + \hyperbola_\gamma
      \subseteq \rho(T+A).
   \end{equation}
   For $z\in \I \gamma + \hyperbola_\gamma$ we have the resolvent estimate
   \begin{equation}
      \label{eq:resestimupperbdd}
      \|(T+A-z)^{-1}\|
      \leq \frac{1}{\im z -\gamma -\sqrt{a^2 + 2b^2\gamma^2 - 2b^2 \gamma \im z +b^2|z|^2}}.
   \end{equation}
\end{proposition}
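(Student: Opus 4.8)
The whole statement will be extracted from Proposition~\ref{prop:ourfirsprop3}\ref{item:ourfirstprop3:iii}: for a fixed $z$ it suffices to check that $z\in\rho(T)$ and that $\sup_{t\in\sigma(T)}H_z(t)<1$ for the function $H_z$ of \eqref{eq:Hdef}, and then to rewrite the resulting resolvent bound in the form \eqref{eq:resestimupperbdd}. Throughout I would set $w:=z-\I\gamma$, so $\re w=\re z$ and $\im w=\im z-\gamma$. Unravelling \eqref{eq:defHyper}, the hypothesis $z\in\I\gamma+\hyperbola_\gamma$ is equivalent to $\im w>0$ together with $(1-b^2)(\im w)^2>a^2+b^2\gamma^2+b^2(\re w)^2$, i.e.\ with
\begin{equation*}
   (\im w)^2>a^2+b^2\gamma^2+b^2|w|^2=:L,
\end{equation*}
and a direct expansion gives $L=a^2+2b^2\gamma^2-2b^2\gamma\im z+b^2|z|^2$, so this says exactly that the denominator in \eqref{eq:resestimupperbdd} is positive. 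Next, since every $t\in\sigma(T)$ satisfies $\im t\le\gamma<\im z$, we get $|z-t|\ge\im z-\im t\ge\im z-\gamma=\im w>0$, hence $z\in\rho(T)$ and $\dist(z,\sigma(T))\ge\im w$.

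The core step is to estimate $S:=\sup_{t\in\sigma(T)}H_z(t)$. Because $\sigma(T)$ lies in the closed half-plane $U:=\{t\in\C:\im t\le\gamma\}$ and $z\notin U$, the function $H_z$ is continuous on $U$, tends to $b^2$ as $|t|\to\infty$, and by Lemma~\ref{lem:nolocalextrema} has no local maximum in the open half-plane $\{\im t<\gamma\}$; a compactness argument on $U\cup\{\infty\}$ then shows $\sup_U H_z=M$, where $M:=\sup_{\im t=\gamma}H_z(t)\ (\ge b^2$, since $H_z\to b^2$ along that line). Parametrising the boundary line by $t=s+\I\gamma$ reduces the computation of $M$ to maximising the ratio of quadratics $s\mapsto\dfrac{a^2+b^2\gamma^2+b^2s^2}{(s-\re w)^2+(\im w)^2}$ over $s\in\R$; this is one of the elementary estimates collected in Appendix~\ref{app:estimates}, and its closed form yields the two facts I need, namely $M\ge b^2$ (so indeed $\sup_U H_z=M$) and, crucially,
\begin{equation*}
   (\im w)^2M=\tfrac12\Big(L+\sqrt{L^2-4(\im w)^2b^2(a^2+b^2\gamma^2)}\Big)\le L.
\end{equation*}
In particular $S\le M\le L/(\im w)^2<1$ by the reformulated hypothesis.

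To conclude, Proposition~\ref{prop:ourfirsprop3}\ref{item:ourfirstprop3:iii} gives $z\in\rho(T+A)$, which is \eqref{eq:imupperbdd}, together with
\begin{equation*}
   \|(T+A-z)^{-1}\|\le\frac{1}{\dist(z,\sigma(T))}\,\frac{1}{1-\sqrt S}\le\frac{1}{\im w\,(1-\sqrt M)}\le\frac{1}{\im w-\sqrt L},
\end{equation*}
where the middle inequality uses $\dist(z,\sigma(T))\ge\im w$ and $\sqrt M<1$, and the last one uses $\im w\sqrt M\le\sqrt L$ (which is $(\im w)^2M\le L$) together with $\im w-\sqrt L>0$. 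Substituting $\im w=\im z-\gamma$ and $L=a^2+2b^2\gamma^2-2b^2\gamma\im z+b^2|z|^2$ turns this into \eqref{eq:resestimupperbdd}.

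\textbf{Main obstacle.} The only genuine work is the core step: justifying that the supremum over $\sigma(T)$ may be replaced by the supremum over the line $\{\im t=\gamma\}$ (the no-local-maximum lemma plus the behaviour of $H_z$ at infinity), and then carrying out the one-variable optimisation carefully enough to extract both the strict bound $\sqrt M<1$ and the sharp quantitative inequality $(\im w)^2M\le L$ — it is this last inequality that makes the resolvent estimate collapse to the single-fraction form \eqref{eq:resestimupperbdd}. Everything else is bookkeeping around Proposition~\ref{prop:ourfirsprop3}.
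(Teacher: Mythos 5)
Your proposal is correct and follows essentially the same route as the paper: reduce $\sup_{\sigma(T)}H_z$ to the boundary line $\im t=\gamma$ via Lemma~\ref{lem:nolocalextrema}, bound that supremum by $L/(\im w)^2$, and feed the result into Proposition~\ref{prop:ourfirsprop3}\ref{item:ourfirstprop3:iii} together with $\dist(z,\sigma(T))\ge\im w$. The only (harmless) detour is that you compute the exact one-variable maximum $M$ and then discard the square-root term to get $(\im w)^2M\le L$; the paper skips this by invoking the ready-made inequality \eqref{eq:distanceline} of Lemma~\ref{lem:app:Hest}\ref{item:lem:app:distanceline}, which yields $\sup_{\im t=\gamma}H_z\le L/(\im w)^2$ directly, so the explicit formula for $M$ is not needed.
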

\begin{proof}
   Let $z=\mu + \I\nu\in\C$ with $\nu > \gamma$.
   Then $z\in\rho(T)$ and it belongs to $\rho(T+A)$ if 
   $\sup_{\im t \le \gamma} H_z(t) < 1$.
   By Lemma~\ref{lem:nolocalextrema}, $H_z$ has no local extrema, so we obtain
   \begin{equation}
      \label{eq:imbddabove}
      \sup_{\im t \le \gamma} H_z(t)
      = \sup_{\im t = \gamma} H_z(t)
      = \sup_{\im t = \gamma} \frac{a^2 + b^2 |t|^2}{|t-z|^2} 
      \le b^2 + \frac{b^2 \mu^2}{(\nu - \gamma)^2} +  \frac{a^2 + b^2 \gamma^2}{(\nu - \gamma)^2}
   \end{equation}
   where the last inequality follows from \eqref{eq:distanceline} in Lemma~\ref{lem:app:Hest}.
   Clearly, the right hand side is smaller than 1 if and only if 
   \begin{equation*}
      \frac{b^2\mu^2}{1-b^2} + \frac{a^2 + b^2\gamma^2}{1-b^2} 
      < (\im z -\gamma)^2,
   \end{equation*}
   which is true if and only if $z\in\I\gamma + \hyperbola_\gamma$.
   The resolvent estimate \eqref{eq:resestimupperbdd}
   follows from Proposition~\ref{prop:ourfirsprop3}\ref{item:ourfirstprop3:iii}
   together with \eqref{eq:imbddabove} and 
   $\|(T-z)^{-1}\|^2 = \frac{1}{\dist(z,\sigma(T))^2}\leq \frac{1}{(\im z -\gamma)^2}$.
\end{proof}

For the rest of this section we assume that the imaginary part of the spectrum of $T$ is bounded.
Our next proposition shows that in this case the spectrum of the perturbed operator $T+A$ is confined between two shifted hyperbolas whose asymptotic slopes $\pm \frac{b}{\sqrt{1-b^2}}$ depend only on the $T$-bound of $A$, 
while the amount of the shift depends also on the bound for the imaginary part of $\sigma(T)$.
Note that if $\gamma_1 = \gamma_2 =0$ in the Proposition below, then our spectral inclusion coincides with 
the one for selfadjoint operators proved in \cite[Theorem 2.1]{CueninTretter2016}.

\begin{proposition}  %%%{{{ bounded imaginary part
   \label{prop:bddstrip}
   Let $T$ and $A$ be as in Assumption~\ref{ass:relbounded}.
   If
   \begin{equation}
      \label{eq:assumptionboundedim}
      \sigma(T)\subseteq \{ z\in\C : \gamma_1 \le \im z \le \gamma_2\}.
   \end{equation}
   for some $\gamma_1 < \gamma_2 \in \R$.
   Set $\wgamma = \max\{ |\gamma_1|,\, |\gamma_2|\}$.
   Then
   \begin{align}
      \label{eq:hiperbolas1}
      (\I\gamma_1 - \hyperbola_{\wgamma})
      \cup
      (\I\gamma_2 + \hyperbola_{\wgamma})
      \subseteq \rho(T+A)
   \end{align}
   and for all $z$ in the set on the left hand side of \eqref{eq:hiperbolas1} we have the resolvent estimate
   \begin{align}
      \label{eq:resestimateboundedim}
      \|(T+A-z)^{-1}\|
      \le
      \frac{1}{ |\im z-\gamma| - \sqrt{ 
      b^2( |\im z-\gamma|^2 + (\re z)^2 ) + a^2 + b^2 \wgamma^2} }
   \end{align}
   with
   $\gamma = \gamma_1$ if $\im z < \gamma_1$ and $\gamma = \gamma_2$ if $\im z > \gamma_2$,
   see Figure~\ref{fig:figura3}.
\end{proposition}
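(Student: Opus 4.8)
The plan is to reduce everything to Proposition~\ref{prop:ourfirsprop3}\ref{item:ourfirstprop3:iii}, exactly as in the proof of Proposition~\ref{prop:imupperbdd}; the only new feature is that the horizontal strip in \eqref{eq:assumptionboundedim} has \emph{two} boundary lines $\im t=\gamma_1$ and $\im t=\gamma_2$ rather than one, so the task is to bound $\sup_{t\in\sigma(T)}H_z(t)$ over the correct line. I would treat the two connected components of the left-hand side of \eqref{eq:hiperbolas1} separately.

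First I fix $z=\mu+\I\nu$ with $z-\I\gamma_2\in\hyperbola_{\wgamma}$; by \eqref{eq:defHyper} this forces $\nu>\gamma_2$, hence $z\in\rho(T)$ since $\im\sigma(T)\le\gamma_2<\nu$. By Lemma~\ref{lem:nolocalextrema} the function $H_z$ has no local maximum on $\C\setminus\{z\}$, so its supremum over the closed strip $\{\gamma_1\le\im t\le\gamma_2\}$ is attained on the two boundary lines or in the limit $|t|\to\infty$ (where $H_z\to b^2$). On the line $\im t=\gamma_j$, estimate \eqref{eq:distanceline} of Lemma~\ref{lem:app:Hest} gives
\[
   \sup_{\im t=\gamma_j}H_z(t)\le b^2+\frac{b^2\mu^2+a^2+b^2\gamma_j^2}{(\nu-\gamma_j)^2}\le b^2+\frac{b^2\mu^2+a^2+b^2\wgamma^2}{(\nu-\gamma_j)^2},
\]
where I used $\gamma_j^2\le\wgamma^2$ to symmetrise the numerator in $\gamma_1,\gamma_2$. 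Since $0<\nu-\gamma_2<\nu-\gamma_1$, the $j=2$ term dominates, so $\sup_{t\in\sigma(T)}H_z(t)\le b^2+\frac{b^2\mu^2+a^2+b^2\wgamma^2}{(\nu-\gamma_2)^2}$, and this is $<1$ precisely when $\frac{b^2\mu^2+a^2+b^2\wgamma^2}{1-b^2}<(\nu-\gamma_2)^2$, i.e.\ when $z\in\I\gamma_2+\hyperbola_{\wgamma}$, which holds. Proposition~\ref{prop:ourfirsprop3}\ref{item:ourfirstprop3:iii} then yields $z\in\rho(T+A)$. For $z\in\I\gamma_1-\hyperbola_{\wgamma}$ the same computation applies with $\nu<\gamma_1<\gamma_2$, so now $0<\gamma_1-\nu<\gamma_2-\nu$ and the line $\im t=\gamma_1$ dominates; this establishes \eqref{eq:hiperbolas1}.

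For the resolvent bound, write $\gamma=\gamma_2$ in the first case and $\gamma=\gamma_1$ in the second, so that in both cases the above yields $\sup_{t\in\sigma(T)}\frac{\sqrt{a^2+b^2|t|^2}}{|t-z|}\le\frac{1}{|\im z-\gamma|}\sqrt{b^2(\im z-\gamma)^2+b^2(\re z)^2+a^2+b^2\wgamma^2}$. Moreover $\sigma(T)$ lies in the strip and $\im z\notin[\gamma_1,\gamma_2]$, so $\dist(z,\sigma(T))\ge|\im z-\gamma|$ and hence $\|(T-z)^{-1}\|\le|\im z-\gamma|^{-1}$. Substituting both into Proposition~\ref{prop:ourfirsprop3}\ref{item:ourfirstprop3:iii} and simplifying the resulting product gives \eqref{eq:resestimateboundedim}; the denominator there is positive because $z\in\I\gamma\pm\hyperbola_{\wgamma}$ is equivalent to $b^2(|\im z-\gamma|^2+(\re z)^2)+a^2+b^2\wgamma^2<|\im z-\gamma|^2$.

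I do not expect a genuine obstacle: the argument is a bookkeeping variant of the proof of Proposition~\ref{prop:imupperbdd}. The one point that requires care is to identify, for a given $z$, the boundary line of the strip that is \emph{closer} to $z$ — that line produces the larger value of $\sup H_z$ — and to absorb the asymmetry between $\gamma_1$ and $\gamma_2$ via the uniform bound $\gamma_j^2\le\wgamma^2$, so that both connected components of the resolvent region are governed by the same symmetric hyperbola $\hyperbola_{\wgamma}$.
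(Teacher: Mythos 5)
Your proposal is correct and follows essentially the same path as the paper's proof: bound $\sup_{t\in\sigma(T)}H_z(t)$ by a supremum over the strip, reduce to the two boundary lines, apply \eqref{eq:distanceline}, symmetrise with $\wgamma$, determine which line dominates from the sign of $\im z-\gamma_j$, and close with Proposition~\ref{prop:ourfirsprop3}\ref{item:ourfirstprop3:iii} together with $\dist(z,\sigma(T))\ge|\im z-\gamma|$. The only cosmetic difference is that you invoke Lemma~\ref{lem:nolocalextrema} together with \eqref{eq:distanceline} line by line, whereas the paper uses the packaged strip estimate \eqref{eq:distancestrip}; these are the same argument.
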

%%%}}}
\begin{proof} %%%{{{
   Let $z = \mu + \I\nu\in \C$ with $\nu =\im z \in \R\setminus [\gamma_1, \gamma_2]$ so that  $z\in \rho(T)$.
   Note that by \eqref{eq:distancestrip} in Lemma~\ref{lem:app:Hest} we have
   \begin{align}
      \label{eq:bddstrip}
      \nonumber
      \sup_{t\in\sigma(T)} \frac{ a^2 + b^2 |t|^2}{|t-z|^2} 
      & \le \sup_{t\in \R + \I[\gamma_1, \gamma_2]} \frac{ a^2 + b^2 |t|^2}{|t-z|^2} 
      \le b^2 + \max_{\gamma\in\{\gamma_1, \gamma_2\}} 
      \frac{ b^2 \mu^2}{(\nu - \gamma)^2} + \frac{ a^2 + b^2 \gamma^2}{(\nu - \gamma)^2}
      \\
      & \le b^2 + \max_{\gamma\in\{\gamma_1, \gamma_2\}} 
      \frac{ b^2 \mu^2}{(\nu - \gamma)^2} + \frac{ a^2 + b^2 \wgamma^2}{(\nu - \gamma)^2}.
   \end{align}
   If $\nu < \gamma_1$ then the maximum is attained for $\gamma=\gamma_1$ 
   and if $\nu > \gamma_2$, then it is attained for $\gamma=\gamma_2$.
   Therefore, the right hand side of \eqref{eq:bddstrip} is less than 1 if and only if 
   \begin{equation*}
      (\nu - \gamma)^2 > 
      \frac{ b^2}{1-b^2} \mu^2  + \frac{ a^2 + b^2 \wgamma^2}{1-b^2}
      \qquad\text{with}\quad
      \begin{cases}
	 \gamma = \gamma_1\ &\text{if } \nu < \gamma_1,\\
	 \gamma = \gamma_2\ &\text{if } \nu > \gamma_2,
      \end{cases}
   \end{equation*}
   hence \eqref{eq:hiperbolas1} is proved.
   Finally the resolvent estimate \eqref{eq:resestimateboundedim}
   follows from Proposition~\ref{prop:ourfirsprop3}\ref{item:ourfirstprop3:iii}
   together with \eqref{eq:bddstrip} and 
   \begin{align*}
      \|(T-z)^{-1}\| &= \frac{1}{\dist(z,\sigma(T))}\leq \frac{1}{ |\nu-\gamma| }
      \qquad\text{with}\quad
      \begin{cases}
	 \gamma = \gamma_1\ &\text{if } \nu < \gamma_1,\\
	 \gamma = \gamma_2\ &\text{if } \nu > \gamma_2.
      \end{cases}
   \end{align*}
\end{proof}
%%%}}}

Next we show that if the imaginary part of the spectrum of $T$ is bounded, then a gap in the spectrum of $T$ remains open if the perturbation is small enough.

\begin{theorem} %%%{{{ bounded imaginary part with gap
   \label{thm:bddstripgap}
   Let $T$ and $A$ be as in Assumption~\ref{ass:relbounded}.
   As in Proposition~\ref{prop:bddstrip} we assume that there exist $\gamma_1 < \gamma_2\in\R$ such that 
   $\sigma(T)\subseteq \R + \I[\gamma_1, \gamma_2]$.
   In addition, we assume that $T$ has a spectral free strip 
   \begin{equation*}
      \{z\in\C: \alpha_{T} <\re z < \beta_{T}\} \subseteq \rho(T).
   \end{equation*}
   Set $\wgamma := \max\{|\gamma_1|, |\gamma_2|\}$
   and 
   \begin{align}
      \label{eq:extremosdelatira2}
      \alpha'_{T+A}:=\alpha_{T}+\sqrt{a^2+b^2\wgamma^2+b^2\alpha_{T}^2}\, ,\qquad
      \beta'_{T+A}:= \beta_T -\sqrt{a^2+b^2\wgamma^2+b^2\beta_{T}^2}\, .
   \end{align}
   If $\alpha_{T+A}' < \beta_{T+A}'$, then
   $T+A$ has a spectral free strip
   \begin{align}
      \label{eq:tira2}
      \{z\in\C: \alpha'_{T+A} <\re z < \beta'_{T+A}\} \subseteq \rho(T+A).
   \end{align}
   See Figure~\ref{fig:figura3}.
   Moreover, if $\alpha_{T+A}' \le \re z \le \beta_{T+A}'$, then
   \begin{equation}
      \label{eq:resestbddstrip}
      \|(T+A-z)^{-1}\|
      \leq \frac{\left(\sqrt{(\min\{\re z-\alpha_T,\beta_T-\re z\})^2+
      (\min\{0, \gamma_2-\im z, \im z - \gamma_1 \})^2}\right)^{-1}}%
      {1-\max\left\{
      b,\ \frac{\sqrt{a^2+b^2\wgamma^2+b^2\alpha_T^2}}{\re z-\alpha_T},\ \frac{\sqrt{a^2+b^2\wgamma^2+b^2\beta_T^2}}{\beta_T-\re z}\right\}}.
   \end{equation}
\end{theorem}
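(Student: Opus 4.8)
The plan is to apply Proposition~\ref{prop:ourfirsprop3}\ref{item:ourfirstprop3:iii}, so that the whole task reduces to bounding $\dist(z,\sigma(T))$ from below and $\sup_{t\in\sigma(T)}H_z(t)$ from above, for $z$ in the candidate strip. Write $\alpha:=\alpha_T$, $\beta:=\beta_T$. The hypotheses say that $\sigma(T)$ is contained in the union of the two closed ``half-strips''
\[
   L := \{t\in\C:\re t\le\alpha,\ \gamma_1\le\im t\le\gamma_2\},
   \qquad
   R := \{t\in\C:\re t\ge\beta,\ \gamma_1\le\im t\le\gamma_2\}.
\]
Since $\sqrt{a^2+b^2\wgamma^2+b^2\alpha^2}\ge0$ and $\sqrt{a^2+b^2\wgamma^2+b^2\beta^2}\ge0$, we have $\alpha\le\alpha'_{T+A}$ and $\beta'_{T+A}\le\beta$, so $\{\alpha'_{T+A}<\re z<\beta'_{T+A}\}$ lies inside the spectral free strip of $T$; hence every such $z$ belongs to $\rho(T)$ and, more precisely, $\re z-\alpha>\sqrt{a^2+b^2\wgamma^2+b^2\alpha^2}\ge0$ and $\beta-\re z>\sqrt{a^2+b^2\wgamma^2+b^2\beta^2}\ge0$.

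For the supremum of $H_z$ I would use the crude pointwise bound that holds for $t\in L$,
\[
   H_z(t)=\frac{a^2+b^2|t|^2}{|t-z|^2}
   \le\frac{a^2+b^2\wgamma^2+b^2(\re t)^2}{(\re z-\re t)^2},
\]
obtained from $|t-z|\ge\re z-\re t$ and $|t|^2=(\re t)^2+(\im t)^2\le(\re t)^2+\wgamma^2$, and symmetrically for $t\in R$. It then remains to maximise $s\mapsto\frac{a^2+b^2\wgamma^2+b^2s^2}{(\re z-s)^2}$ over $s\le\alpha$ (resp.\ $s\ge\beta$): this rational function tends to $b^2$ as $s\to\mp\infty$ and its only critical point on the half-line is a minimum, so it is bounded by $\max\bigl\{b^2,\ \tfrac{a^2+b^2\wgamma^2+b^2\alpha^2}{(\re z-\alpha)^2}\bigr\}$ (resp.\ $\max\bigl\{b^2,\ \tfrac{a^2+b^2\wgamma^2+b^2\beta^2}{(\beta-\re z)^2}\bigr\}$); this is the half-strip analogue of \eqref{eq:distanceline}--\eqref{eq:distancestrip} in Lemma~\ref{lem:app:Hest}, and combined with Lemma~\ref{lem:nolocalextrema} could also be quoted from the Appendix. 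Altogether
\[
   \sup_{t\in\sigma(T)}H_z(t)\le M_z:=
   \max\Bigl\{b^2,\ \frac{a^2+b^2\wgamma^2+b^2\alpha^2}{(\re z-\alpha)^2},\ \frac{a^2+b^2\wgamma^2+b^2\beta^2}{(\beta-\re z)^2}\Bigr\}.
\]

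Since $b\le\delta_A<1$ and, because $\re z-\alpha>0$ and $\beta-\re z>0$, the second term of $M_z$ is $<1$ exactly when $\re z>\alpha'_{T+A}$ and the third is $<1$ exactly when $\re z<\beta'_{T+A}$, the assumption $\alpha'_{T+A}<\re z<\beta'_{T+A}$ forces $M_z<1$. Proposition~\ref{prop:ourfirsprop3}\ref{item:ourfirstprop3:iii} then gives $z\in\rho(T+A)$, which is \eqref{eq:tira2}, together with
\[
   \|(T+A-z)^{-1}\|\le\frac{1}{\dist(z,\sigma(T))}\cdot\frac{1}{1-\sqrt{M_z}},
\]
and $\sqrt{M_z}=\max\bigl\{b,\ \tfrac{\sqrt{a^2+b^2\wgamma^2+b^2\alpha^2}}{\re z-\alpha},\ \tfrac{\sqrt{a^2+b^2\wgamma^2+b^2\beta^2}}{\beta-\re z}\bigr\}$ is precisely the maximum in the denominator of \eqref{eq:resestbddstrip}. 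Finally, since $\sigma(T)\subseteq L\cup R$, a direct computation of the distance from $z$ to a vertical half-strip gives $\dist(z,L)=\sqrt{(\re z-\alpha)^2+(\min\{0,\gamma_2-\im z,\im z-\gamma_1\})^2}$ and the same for $R$ with $\beta-\re z$; taking the minimum and using that $x\mapsto\sqrt{x^2+c^2}$ is increasing for $x\ge0$ yields $\dist(z,\sigma(T))\ge\sqrt{(\min\{\re z-\alpha,\beta-\re z\})^2+(\min\{0,\gamma_2-\im z,\im z-\gamma_1\})^2}$, the reciprocal of the numerator in \eqref{eq:resestbddstrip}. (At the boundary values $\re z\in\{\alpha'_{T+A},\beta'_{T+A}\}$ the denominator vanishes and the estimate is to be read as vacuous, matching the convention in Theorem~\ref{p:proposition28}.)

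I expect the only genuine work to be the one-variable maximisation in the second paragraph: checking that replacing $|t-z|$ by $|\re z-\re t|$ and $|\im t|$ by $\wgamma$ loses nothing that matters, and that the maximum is attained either at the vertical edge $\re t=\alpha$ (resp.\ $\beta$) or ``at infinity'', where the value drops to $b^2$. This is a routine calculation of exactly the type gathered in Appendix~\ref{app:estimates}, so in the final write-up I would simply invoke the relevant lemma there; everything else is bookkeeping together with one application of Proposition~\ref{prop:ourfirsprop3}.
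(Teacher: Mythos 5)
Your proposal is correct and takes essentially the same route as the paper's own proof: both bound $|t-z|\ge|\re t-\re z|$ and $|\im t|\le\wgamma$ to reduce $\sup_{t\in\sigma(T)}H_z(t)$ to a one-variable supremum over $\tau\in\R\setminus(\alpha_T,\beta_T)$ (the paper invokes Lemma~\ref{lem:app:lemma0} with $a^2+b^2\wgamma^2$ in place of $a^2$ for exactly the maximisation you carry out by hand), then apply Proposition~\ref{prop:ourfirsprop3}\ref{item:ourfirstprop3:iii} together with the same explicit lower bound for $\dist(z,\sigma(T))$. The only cosmetic difference is that you split into the two half-strips $L$ and $R$ rather than treating $(\R\setminus(\alpha_T,\beta_T))+\I[\gamma_1,\gamma_2]$ in one go.
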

%%%}}}

\begin{proof} %%%{{{
   Let $z=\mu + \I\nu$ with $\mu \in (\alpha_{T},\beta_{T})$ and $\nu \in\R\setminus [\gamma_1, \gamma_2]$. 
   Then
   \begin{align}
      \nonumber
      \sup_{t\in\sigma(T)}
      \frac{a^2+b^2|t|^2}{|t-z|^2}
      &\le
      \sup_{\sigma\in [\gamma_1, \gamma_2] }
      \sup_{t\in(\R\setminus(\alpha_T, \beta_T)) + \I\sigma}
      \frac{a^2+b^2|t|^2}{|t-z|^2}
      \\
      \nonumber
      &\le 
      \sup_{\sigma\in [\gamma_1, \gamma_2] }
      \sup_{\tau\in \R\setminus(\alpha_T, \beta_T)}
      \frac{a^2+b^2(\tau^2+\sigma^2)}{(\tau-\mu)^2}
      \\
      \nonumber
      &= \sup_{\tau\in \R\setminus(\alpha_T, \beta_T)}
      \frac{a^2 + b^2\wgamma^2 + b^2 \tau^2}{(\tau-\mu)^2}
      \\
      \label{eq:eq3delcorolario2:5}
      & \leq \max\left\{b^2,~\frac{{a^2+b^2\wgamma^2+b^2\alpha_T^2}}{(\mu-\alpha_T)^2},~\frac{{a^2+b^2\wgamma^2+b^2\beta_T^2}}{(\beta_T-\mu)^2} \right\}
   \end{align}
   where the last inequality follows from Lemma~\ref{lem:app:lemma0} with $a^2+b^2\wgamma^2$ instead of $a^2$.
   Note that the expression in \eqref{eq:eq3delcorolario2:5} is less than 1 if
   $\alpha_{T+A}' < \re z  = \mu < \beta_{T+A}'$.
   Therefore \eqref{eq:tira2} follows from Proposition~\ref{prop:ourfirsprop3}\ref{item:ourfirstprop3:iii}.

   The resolvent estimate \eqref{eq:resestbddstrip} follows again from 
   Proposition~\ref{prop:ourfirsprop3}\ref{item:ourfirstprop3:iii}
   together with \eqref{eq:eq3delcorolario2:5} and 
   $\dist(z, \sigma(T)) \ge \sqrt{(\min\{\re z-\alpha_T,\beta_T-\re z\})^2+(\min\{0, \gamma_2-\im z, \im z - \gamma_1 \})^2}$.
\end{proof}
%%%}}}
%
In Theorem~\ref{thm:EVmultiplicitybdd} below we extend Theorem~\ref{thm:bddstripgap} and allow for finitely many eigenvalues outside the strip.
We will also prove a stability result for the total algebraic multiplicity of these eigenvalues.

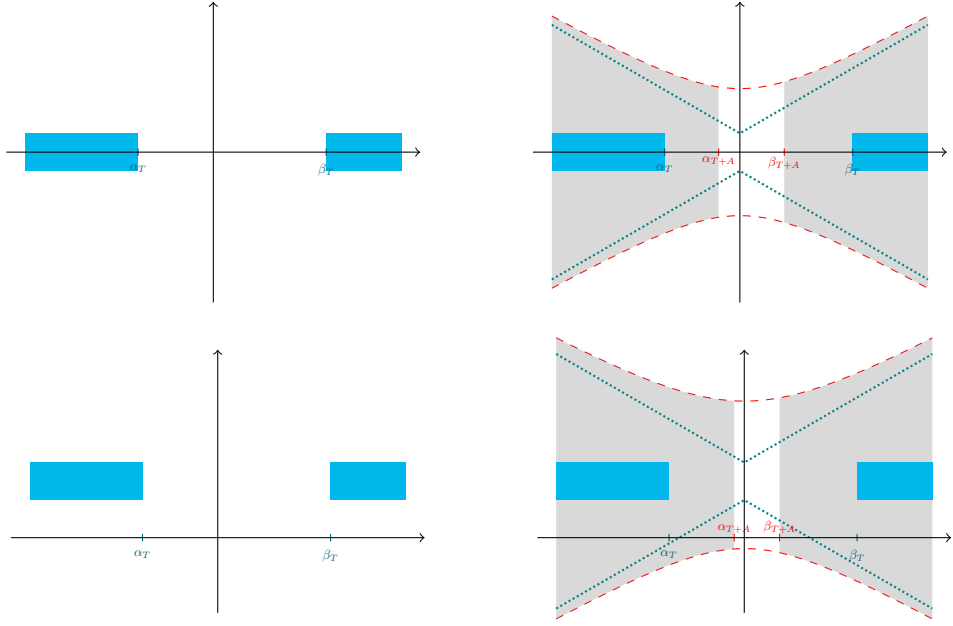
\begin{figure}%[H] %%%{{{ bounded imaginary part with gap
   \centering
   \begin{tikzpicture}[ 
      declare function={ hyperbola(\x) = \gT + sqrt( (\abound^2 + \bbound^2*\gT^2 + \bbound^2*\x*\x)/(1-\bbound^2) ); 
      hyperbolaright(\x) = \betaT - sqrt( (\abound^2 + \bbound^2*(\betaT)^2 + \bbound^2*\x*\x)/(1-\bbound^2) );  
      hyperbolaleft(\x) = \alphaT + sqrt( (\abound^2 + \bbound^2*(\alphaT)^2 + \bbound^2*\x*\x)/(1-\bbound^2) ); 
      } ]

      \tikzmath{
      \gT = .5;  %% \gamma_T
      \abound = 1;   %% a
      \bbound = .5;  %% b
      \alphaT = -2;      %% \alpha_T
      \betaT = 3;      %% \beta_T
      \alphaTA = \alphaT + sqrt( \abound^2 + \bbound^2*\gT^2 + \bbound^2*(\alphaT)^2); %% \alpha_{T+A}
      \betaTA = \betaT - sqrt( \abound^2 + \bbound^2*\gT^2 + \bbound^2*\betaT^2);      %% \beta_{T+A}
      }

      \begin{scope}[scale=0.5, transform shape, xshift=-7cm] %%%{{{ 

      %% left part of spectrum of T
      \path [fill=colTspec, opacity=1] (-5, -\gT) rectangle (\alphaT, \gT);
      \path [fill=colTspec, opacity=1] (5, -\gT) rectangle (\betaT, \gT);

      %% labels \alpha_T, etc. 
      \draw[colTspec!50!black] (\alphaT,-0.1) --  node[below=.2] {$\alpha_{T}$} (\alphaT,0.1);
      \draw[colTspec!50!black] (\betaT,-0.1) -- node[below=.2] {$\beta_{T}$} (\betaT,0.1);

      %% Axis
      \draw[->] (-5.5,0)--(5.5,0);
      \draw[->] (0,-4)--(0,4);
      \end{scope}
      %%%}}}

      \begin{scope}[scale=0.5, transform shape, xshift=7cm] %%%{{{
      %% upper and lower dashed lines of the hyperbolas
      \draw [red, dashed] plot[domain=-5.0:5, smooth] ({\x},{hyperbola(\x)});
      \draw [red, dashed] plot[domain=-5.0:5, smooth] ({\x},{-hyperbola(\x)});
      %% \draw [colTspec, dashed] plot[domain=-5.0:5, smooth] ({hyperbolaleft(\x)},{\x});
      %% \draw [colTspec, dashed] plot[domain=-5.0:5, smooth] ({hyperbolaright(\x)},{\x});

      %% fill left part
      \fill [colTAspec, opacity=.3]
      plot[domain=-5.0:\alphaTA, smooth] ({\x},{hyperbola(\x)}) -- plot[domain=\alphaTA:-5.0, smooth] ({\x},{-hyperbola(\x)}) -- cycle;
      
      %% fill right part
      \fill [colTAspec, opacity=.3]
      plot[domain=5.0:\betaTA, smooth] ({\x},{hyperbola(\x)}) -- plot[domain=\betaTA:5.0, smooth] ({\x},{-hyperbola(\x)}) -- cycle;

      %% asymptotes
      \draw [densely dotted, thick,teal] (-5, {\gT + 5*\bbound/sqrt(1-\bbound^2)}) -- (0,\gT) -- (5, {\gT + 5*\bbound/sqrt(1-\bbound^2)});
      \draw [densely dotted, thick,teal] (-5, {-\gT - 5*\bbound/sqrt(1-\bbound^2)}) -- (0,-\gT) -- (5, {-\gT-5*\bbound/sqrt(1-\bbound^2)});

      %% spectrum of T
      \path [fill=colTspec, opacity=1] (-5, -\gT) rectangle (\alphaT, \gT);
      \path [fill=colTspec, opacity=1] (5, -\gT) rectangle (\betaT, \gT);

      %% labels \alpha_T, etc. 
      \draw[red] (\alphaTA,-0.1) --  node[below] {$\alpha_{T+A}$} (\alphaTA,0.1);
      \draw[red] (\betaTA,-0.1) -- node[below] {$\beta_{T+A}$} (\betaTA,0.1);
      \draw[colTspec!50!black] (\alphaT,-0.1) --  node[below=.2] {$\alpha_{T}$} (\alphaT,0.1);
      \draw[colTspec!50!black] (\betaT,-0.1) -- node[below=.2] {$\beta_{T}$} (\betaT,0.1);

      %% Axis
      \draw[->] (-5.5,0)--(5.5,0);
      \draw[->] (0,-4)--(0,4);
      \end{scope}
      %%%}}}

   \end{tikzpicture}
   \bigskip

   \begin{tikzpicture}[ 
      declare function={ 
      hyperbolaupper(\x) = \gupper + sqrt( (\abound^2 + \bbound^2*(\gupper)^2 + \bbound^2*\x*\x)/(1-\bbound^2) ); 
      hyperbolalower(\x) = \glower - sqrt( (\abound^2 + \bbound^2*(\glower)^2 + \bbound^2*\x*\x)/(1-\bbound^2) );  
      hyperbolaright(\x) = \betaT - sqrt( (\abound^2 + \bbound^2*(\betaT)^2 + \bbound^2*\x*\x)/(1-\bbound^2) );  
      hyperbolaleft(\x) = \alphaT + sqrt( (\abound^2 + \bbound^2*(\alphaT)^2 + \bbound^2*\x*\x)/(1-\bbound^2) ); 
      } ]

      \tikzmath{
      \gupper = 2.0;  %% upper bound for spectrum of T
      \glower = 1.0;  %% lower bound for spectrum of T
      \gT = .5;  %% lower bound for spectrum of T
      \wg = max(abs(\gupper), abs(\glower));  %% max of |\glower| and |\gupper|
      \abound = 1;   %% a
      \bbound = .5;  %% b
      \alphaT = -2;      %% \alpha_T
      \betaT = 3;      %% \beta_T
      \alphaTA = \alphaT + sqrt( \abound^2 + \bbound^2*\wg^2 + \bbound^2*(\alphaT)^2); %% \alpha_{T+A}
      \betaTA = \betaT - sqrt( \abound^2 + \bbound^2*\wg^2 + \bbound^2*\betaT^2);      %% \beta_{T+A}
      }

      \begin{scope}[scale=0.5, transform shape, xshift=-7cm] %%%{{{ 

      %% left part of spectrum of T
      \path [fill=colTspec, opacity=1] (-5,\glower) rectangle (\alphaT, \gupper);
      \path [fill=colTspec, opacity=1] (5, \glower) rectangle (\betaT, \gupper);

      %% labels \alpha_T, etc. 
      \draw[colTspec!50!black] (\alphaT,-0.1) --  node[below=.2] {$\alpha_{T}$} (\alphaT,0.1);
      \draw[colTspec!50!black] (\betaT,-0.1) -- node[below=.2] {$\beta_{T}$} (\betaT,0.1);

      %% Axis
      \draw[->] (-5.5,0)--(5.5,0);
      \draw[->] (0,-2)--(0,5);
      \end{scope}
      %%%}}}

      \begin{scope}[scale=0.5, transform shape, xshift=7cm] %%%{{{
      %% upper and lower dashed lines of the hyperbolas
      \draw [red, dashed] plot[domain=-5.0:5, smooth] ({\x},{hyperbolaupper(\x)});
      \draw [red, dashed] plot[domain=-5.0:5, smooth] ({\x},{hyperbolalower(\x)});
      %% \draw [colTspec, dashed] plot[domain=-5.0:5, smooth] ({hyperbolaleft(\x)},{\x});
      %% \draw [colTspec, dashed] plot[domain=-5.0:5, smooth] ({hyperbolaright(\x)},{\x});
      
      %% fill left part
      \fill [colTAspec, opacity=.3]
      plot[domain=-5.0:\alphaTA, smooth] ({\x},{hyperbolaupper(\x)}) -- plot[domain=\alphaTA:-5.0, smooth] ({\x},{hyperbolalower(\x)}) -- cycle;
      
      %% fill right part
      \fill [colTAspec, opacity=.3]
      plot[domain=5.0:\betaTA, smooth] ({\x},{hyperbolaupper(\x)}) -- plot[domain=\betaTA:5.0, smooth] ({\x},{hyperbolalower(\x)}) -- cycle;

      %% asymptotes
      \draw [densely dotted, thick,teal] (-5, {\gupper + 5*\bbound/sqrt(1-\bbound^2)}) -- (0,\gupper) -- (5, {\gupper + 5*\bbound/sqrt(1-\bbound^2)});
      \draw [densely dotted, thick,teal] (-5, {\glower - 5*\bbound/sqrt(1-\bbound^2)}) -- (0,\glower) -- (5, {\glower-5*\bbound/sqrt(1-\bbound^2)});

      %% left part of spectrum of T
      \path [fill=colTspec, opacity=1] (-5,\glower) rectangle (\alphaT, \gupper);
      \path [fill=colTspec, opacity=1] (5, \glower) rectangle (\betaT,  \gupper);

      %% labels \alpha_T, etc. 
      \draw[red] (\alphaTA,-0.1) --  node[above] {$\alpha_{T+A}$} (\alphaTA,0.1);
      \draw[red] (\betaTA,-0.1) -- node[above] {$\beta_{T+A}$} (\betaTA,0.1);

      \draw[colTspec!50!black] (\alphaT,-0.1) --  node[below=.2] {$\alpha_{T}$} (\alphaT,0.1);
      \draw[colTspec!50!black] (\betaT,-0.1) -- node[below=.2] {$\beta_{T}$} (\betaT,0.1);

      %% Axis
      \draw[->] (-5.5,0)--(5.5,0);
      \draw[->] (0,-2)--(0,5);
      \end{scope}
      %%%}}}

   \end{tikzpicture}

   \caption{Spectral inclusion from Proposition~\ref{prop:bddstrip} and Theorem~\ref{thm:bddstripgap}.
   The left graphic shows the location of the spectrum of $T$, the gray area in the graphics on the right shows the spectral enclosure of $T+A$ proved in Proposition~\ref{prop:bddstrip} and Theorem~\ref{thm:bddstripgap}.
   In the upper row we have $\gamma_1 = -\gamma_2$, 
   in the lower row $0 < \gamma_1 < \gamma_2$.
   The upper red dashed line is the boundary of $\I\gamma_2 + \hyperbola_{\gamma_2}$,
   the lower red dashed line is the boundary of $\I\gamma_1 - \hyperbola_{\gamma_1}$.
   The green dotted lines are their asymptotes.
   }
   \label{fig:figura3}
\end{figure}
%%%}}}

\begin{remark} 
   \label{prop:bddstripsymm}
   In the special case when $\gamma_2 = -\gamma_1 =: \gamma$, our formulas simplify slightly. 
   Proposition~\ref{prop:bddstrip} yields that
   \begin{multline}
      \label{eq:hiperbolas1sym}
      ( -\I\gamma-\hyperbola_{\gamma} )
      \cup ( \I\gamma+\hyperbola_{\gamma} )
      \\[1ex]
      = \left\{z\in\C:  
      \frac{a^2+b^2\left((\re z)^2+\gamma^2\right)}{1-b^2} < \left(|\im z|-\gamma\right)^2,\
      |\im z| > \gamma
      \right\}
      \subseteq \rho(T+A)
   \end{multline}
   and for all $z$ in the set on the left hand side of \eqref{eq:hiperbolas1sym} we have the resolvent estimate
   \begin{align}
      \label{eq:resestimateboundedimsym}
      \|(T+A-z)^{-1}\|
      \leq \frac{1}{|\im z|-\gamma -\sqrt{ a^2+2b^2\gamma^2-2b^2\gamma |\im z| +b^2|z|^2}}.
   \end{align}
   If in addition $(\alpha_{T},\beta_{T})+\I \R \subseteq\rho(T)$ and $\alpha'_{T+A} < \beta'_{T+A}$ (with $\gamma$ instead of $\wgamma$), then
   $T+A$ has a spectral free strip
   \begin{align}
      \label{eq:tira2sym}
      \{z\in\C: \alpha'_{T+A} <\re z < \beta'_{T+A}\} \subseteq \rho(T+A)
   \end{align}
   and for $\alpha_{T+A}' < \re z < \beta_{T+A}'$ we have the resolvent estimate
   \begin{equation}
      \label{eq:resestbddstripsym}
      \|(T+A-z)^{-1}\|
      \leq \frac{\left(\sqrt{(\min\{\re z-\alpha_T,\beta_T-\re z\})^2+
      (\min\{0, \gamma-|\im z|\})^2}\right)^{-1}}%
      {1-\max\left\{
      b,\ \frac{\sqrt{a^2+b^2\gamma^2+b^2\alpha_T^2}}{\re z-\alpha_T},\ \frac{\sqrt{a^2+b^2\gamma^2+b^2\beta_T^2}}{\beta_T-\re z}\right\}}.
   \end{equation}
   Note that for $\gamma=0$, the estimates \eqref{eq:resestimateboundedimsym} and \eqref{eq:resestbddstripsym}
   coincide with the estimates \eqref{eq:prop28:i} and \eqref{eq:prop28:ii}.
\end{remark}

If the spectrum of $T$ is bounded from below, then we can choose $\alpha_T$ in Theorem~\ref{thm:bddstripgap} arbitrarily small and the spectrum of $T+A$ is also bounded from below as the following corollary shows.
\begin{corollary}
   \label{cor:ourconjecture1}
   Let $T$ and $A$ be as in Assumption~\ref{ass:relbounded}.
   As in Proposition~\ref{prop:bddstrip} we assume that $\sigma(T)\subseteq \R + \I [\gamma_1,\gamma_2]$ for $\gamma_1 < \gamma_2 \in\R$ and we set $\wgamma := \max\{|\gamma_1|, |\gamma_2|\}$.
   \begin{enumerate}[label={\upshape(\roman*)}]

      \item 
      \label{item:semib}
      If $\re\sigma(T) \ge \beta_{T}$ for some $\beta_T\in\R$, then
      \begin{align*}
	 \re \sigma(T+A)\ge
	 \beta'_{T+A}:= \beta_T -\sqrt{a^2+b^2\wgamma^2+b^2\beta_{T}^2}\, .
      \end{align*}

      \item \label{item:semia}
      If $\re\sigma(T) \le \alpha_{T}$ for some $\alpha_T\in\R$, then
      \begin{align*}
	 \re \sigma(T+A)\le
	 \alpha'_{T+A}:= \alpha_T + \sqrt{a^2+b^2\wgamma^2+b^2\alpha_{T}^2}\, .
      \end{align*}

   \end{enumerate}
\end{corollary}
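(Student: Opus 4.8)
The plan is to obtain both statements from Theorem~\ref{thm:bddstripgap} by letting one endpoint of the spectral free strip of $T$ tend to infinity. Consider part~\ref{item:semib}. Since $\re\sigma(T)\ge\beta_T$, for \emph{every} $\alpha<\beta_T$ the vertical strip $\{z\in\C:\alpha<\re z<\beta_T\}$ is contained in $\rho(T)$, so the hypotheses of Theorem~\ref{thm:bddstripgap} are satisfied with its left endpoint $\alpha_T$ replaced by any such $\alpha$; note that the hypothesis $\sigma(T)\subseteq\R+\I[\gamma_1,\gamma_2]$ is exactly the standing assumption of the corollary. The key observation is that the right endpoint $\beta'_{T+A}=\beta_T-\sqrt{a^2+b^2\wgamma^2+b^2\beta_T^2}$ of the perturbed strip produced by Theorem~\ref{thm:bddstripgap} does not depend on the choice of $\alpha$, whereas the perturbed left endpoint $g(\alpha):=\alpha+\sqrt{a^2+b^2\wgamma^2+b^2\alpha^2}$ does.

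Next I would check that $g(\alpha)\to-\infty$ as $\alpha\to-\infty$: for $\alpha<0$ one has $g(\alpha)=\alpha\bigl(1-\sqrt{b^2+(a^2+b^2\wgamma^2)/\alpha^2}\bigr)$, and since $b\le\delta_A<1$ the factor in parentheses tends to $1-b>0$, so $g(\alpha)\to-\infty$. Consequently, given any $z\in\C$ with $\re z<\beta'_{T+A}$, I can pick $\alpha<\beta_T$ so negative that $g(\alpha)<\re z$; then automatically $g(\alpha)<\beta'_{T+A}$ as well, so Theorem~\ref{thm:bddstripgap} applied with this $\alpha$ gives $\{w:g(\alpha)<\re w<\beta'_{T+A}\}\subseteq\rho(T+A)$, and in particular $z\in\rho(T+A)$. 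Hence $\{z\in\C:\re z<\beta'_{T+A}\}\subseteq\rho(T+A)$, which is precisely the claim $\re\sigma(T+A)\ge\beta'_{T+A}$.

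Part~\ref{item:semia} is entirely symmetric: now $\{z:\alpha_T<\re z<\beta\}\subseteq\rho(T)$ for every $\beta>\alpha_T$, the perturbed left endpoint $\alpha'_{T+A}=\alpha_T+\sqrt{a^2+b^2\wgamma^2+b^2\alpha_T^2}$ is fixed, and the perturbed right endpoint $\beta-\sqrt{a^2+b^2\wgamma^2+b^2\beta^2}=\beta\bigl(1-\sqrt{b^2+(a^2+b^2\wgamma^2)/\beta^2}\bigr)\to+\infty$ as $\beta\to+\infty$, again because $b<1$. Applying Theorem~\ref{thm:bddstripgap} with $\beta$ chosen large enough for any given $z$ with $\re z>\alpha'_{T+A}$ yields $z\in\rho(T+A)$, hence $\{z:\re z>\alpha'_{T+A}\}\subseteq\rho(T+A)$, i.e.\ $\re\sigma(T+A)\le\alpha'_{T+A}$.

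I do not expect a genuine obstacle here. The only points requiring care are to take the limit at the level of the resolvent set — fix $z$ first and then choose the moving endpoint depending on $z$, rather than attempting to pass to a limit inside the statement of Theorem~\ref{thm:bddstripgap} — and to invoke $b\le\delta_A<1$ to guarantee that the moving endpoint actually escapes to infinity with the right sign.
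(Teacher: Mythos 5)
Your proof is correct and takes essentially the same approach as the paper: apply Theorem~\ref{thm:bddstripgap} with a freely chosen left (resp.\ right) endpoint, observe that since $0\le b<1$ the perturbed endpoint $\alpha+\sqrt{a^2+b^2\wgamma^2+b^2\alpha^2}\to-\infty$ as $\alpha\to-\infty$ (resp.\ $\to+\infty$), and conclude that the half-plane $\{\re z<\beta'_{T+A}\}$ (resp.\ $\{\re z>\alpha'_{T+A}\}$) lies in $\rho(T+A)$. Your "fix $z$ first, then choose the moving endpoint" phrasing makes explicit a quantifier the paper leaves implicit, but the argument is the same.
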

\begin{proof}
   We proof only \ref{item:semib}.
   Let $\beta_{T+A}'$ be as in \eqref{eq:extremosdelatira2}.
   Since the spectrum of $T$ is bounded from below, we can choose $\alpha_T$, and hence also $\alpha_{T+A}'$ from \eqref{eq:extremosdelatira2} arbitrarily small because $0\le b< 1$ and therefore
   \begin{equation*}
      \alpha_{T+A}'
      = \alpha_T+\sqrt{a^2+b^2\gamma_T^2+b^2\alpha_{T}^2}
      \ \longrightarrow -\infty
      \quad\text{for }\
      \alpha_T\to -\infty.
   \end{equation*}
   Hence \eqref{eq:tira2} shows that
   \begin{equation*}
      (-\infty,~\beta'_{T+A})+\I \R
      \subseteq \rho(T+A).
      \qedhere
   \end{equation*}
\end{proof}

Under the conditions from Corollary~\ref{cor:ourconjecture1}\ref{item:semia}, the spectrum of $T+A$ is contained in a sector.
In fact, if $\phi_b = \arctan( \frac{b}{\sqrt{1-b^2}})$, we can find an $\omega\in\R$ such that $\sigma(T+A)\subseteq -\sector{\phi_b}{\omega}$.
It is even true that $T+A-\omega$ is sectorial and therefore generates an analytic semigroup.
Semigroup generator properties of $T+A$ will be discussed in a forthcoming work.
\bigskip

The last two theorems in this section give a perturbation result for the essential spectrum and a stability result for the total algebraic multiplicity of eigenvalues in a gap of the essential spectrum.

\begin{theorem}
   \label{thm:essspecbdd}
   Let $T$ and $A$ be as in Assumption~\ref{ass:relbounded}.
   For
   $\alpha < \beta$  and 
   $\gamma_1 \le \gamma_2$ we define the gapped strip
   $\strip = \{ z\in\C : \re z \in \R\setminus (\alpha, \beta),\ \gamma_1 \le \im z \le \gamma_2 \}$.
   Assume that 
   $\sigmaess(T)\subseteq \strip$
   and 
   $\sigma(T) = \sigmaess(T) \cup \Sigma$ 
   where $\Sigma \subseteq\C\setminus\strip$ is a bounded countable set which may accumulate at most at the boundary of $\sigmaess(T)$
   Let $\alpha'_{T+A}$ and $\beta'_{T+A}$ as in \eqref{eq:extremosdelatira2}
   and set $\wgamma = \max\{ |\gamma_1|,\, |\gamma_2|\}$.
   If $\alpha_{T+A}' < \beta_{T+A}'$
   then 
   \begin{align}
      \label{eq:bddstripessspec}
      \begin{aligned}
	 \{z\in \C: \alpha'_{T+A} < \re z < \beta'_{T+A}\}
	 \cup
	 (\I\gamma_1 - \hyperbola_{\wgamma}) \cup (\I\gamma_2 + \hyperbola_{\wgamma})
	 \subseteq \C\setminus\sigmaess(T+A)
      \end{aligned}
   \end{align}
   and the set on the left hand side consists of at most 
   countably many isolated eigenvalues of finite algebraic multiplicity which may accumulate only at its boundary.
\end{theorem}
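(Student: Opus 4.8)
The plan is to establish the inclusion $R\subseteq\C\setminus\sigmaess(T+A)$, where $R$ denotes the set on the left-hand side of~\eqref{eq:bddstripessspec}, by peeling off finitely many eigenvalues as in Theorem~\ref{thm:essentialgaps} and then applying Proposition~\ref{prop:bddstrip} and Theorem~\ref{thm:bddstripgap}; since $\Sigma$ may be infinite this must be done along an exhaustion. The statement about eigenvalues then follows from \cite[Chapter~XVII, Theorem~2.1]{GohGolMarKaa90} once $R$ is shown to be open, connected and to meet $\rho(T+A)$.

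\emph{First part.} Because $\strip$ is closed, $\Sigma\subseteq\C\setminus\strip$ is bounded, and $\Sigma$ can accumulate only at $\partial\sigmaess(T)\subseteq\strip$, each set $\Sigma_n:=\{w\in\Sigma:\dist(w,\strip)\ge 1/n\}$ is finite, and each of its points is an isolated point of $\sigma(T)$ which, not lying in $\sigmaess(T)$, is an eigenvalue of the normal operator $T$ of finite multiplicity. Applying Theorem~\ref{thm:essentialgaps} with $\widetilde H:=E(\Sigma_n)H$ (a finite-dimensional spectral subspace) I get a normal operator $T_2^{(n)}$ with $\sigma(T_2^{(n)})=\sigma(T)\setminus\Sigma_n\subseteq\{z:\dist(z,\strip)\le 1/n\}$ and a $T_2^{(n)}$-bounded operator $A_{22}^{(n)}$ with the same constants $a,b$ such that $\sigmaess(T+A)=\sigmaess(T_2^{(n)}+A_{22}^{(n)})$. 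For $n>2/(\beta-\alpha)$ the $1/n$-neighbourhood of $\strip$ is contained in the gapped strip $\strip_n$ with gap $(\alpha+\frac1n,\beta-\frac1n)$ and edges $\gamma_1-\frac1n\le\im z\le\gamma_2+\frac1n$, so Proposition~\ref{prop:bddstrip} and Theorem~\ref{thm:bddstripgap}, applied to $T_2^{(n)}$ and $A_{22}^{(n)}$ with these parameters, show that $R_n\subseteq\rho(T_2^{(n)}+A_{22}^{(n)})$, where $R_n$ is defined exactly as $R$ but with $\alpha,\beta,\gamma_1,\gamma_2,\wgamma$ replaced by $\alpha+\frac1n,\beta-\frac1n,\gamma_1-\frac1n,\gamma_2+\frac1n,\wgamma_n:=\max\{|\gamma_1-\frac1n|,|\gamma_2+\frac1n|\}$. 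The hypothesis $\alpha'_{T+A}<\beta'_{T+A}$ together with the continuity of the expressions in~\eqref{eq:extremosdelatira2} in these parameters ensures that the analogous strict inequality holds for the perturbed parameters once $n$ is large, so Theorem~\ref{thm:bddstripgap} is indeed applicable. Since $x\mapsto x+\sqrt{a^2+b^2\gamma^2+b^2x^2}$ is nondecreasing in $x$ and in $\gamma$ and $\wgamma_n\ge\wgamma$, one checks that $R_n\subseteq R$ for each of the three pieces, and conversely every point of $R$ satisfies one of the three defining strict inequalities, which persists under the small perturbation of the parameters, so $\bigcup_n R_n=R$. Therefore $\sigmaess(T+A)\subseteq\C\setminus R_n$ for all large $n$, whence $\sigmaess(T+A)\subseteq\bigcap_n(\C\setminus R_n)=\C\setminus R$.

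\emph{Second part.} Openness of $R$ is clear, and $R$ is connected because the nonempty vertical strip $\{\alpha'_{T+A}<\re z<\beta'_{T+A}\}$ meets both $\I\gamma_2+\hyperbola_{\wgamma}$ and $\I\gamma_1-\hyperbola_{\wgamma}$ (take points with $|\im z|$ large). For a point of $R\cap\rho(T+A)$ I fix $\mu_0\in(\alpha'_{T+A},\beta'_{T+A})$ and consider $z=\mu_0+\I\nu$: the estimate behind~\eqref{eq:eq3delcorolario2:5} gives $\sup_{t\in\sigmaess(T)}H_z(t)\le\sup_{t\in\strip}H_z(t)<1$ for every $\nu\in\R$, while $\sup_{t\in\Sigma}H_z(t)\le(a^2+b^2\Lambda^2)/(|\nu|-\Lambda)^2\to0$ as $|\nu|\to\infty$, with $\Lambda:=\sup_{w\in\Sigma}|w|$; since $z\in\rho(T)$ once $|\nu|$ is large, Proposition~\ref{prop:ourfirsprop3}\ref{item:ourfirstprop3:iii} yields $z\in\rho(T+A)$, and such $z$ lie in $R$. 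Now \cite[Chapter~XVII, Theorem~2.1]{GohGolMarKaa90} applied to $T+A$ on $R$ gives that $\sigma(T+A)\cap R$ consists of at most countably many isolated eigenvalues of finite algebraic multiplicity, accumulating only at $\partial R$.

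The only genuine obstacle is that $\Sigma$ need not be finite, so one cannot remove all of it with a single finite-dimensional reduction; the exhaustion circumvents this because for each $n$ only finitely many eigenvalues lie at distance $\ge 1/n$ from $\strip$, the enlarged gapped strips $\strip_n$ contract to $\strip$, and the enclosure regions $R_n$ consequently fill out $R$. Everything else is bookkeeping of constants and invoking results already proved.
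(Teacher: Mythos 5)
Your proof is correct and takes essentially the same route as the paper: exhaust $\Sigma$ by finite subsets at distance at least $1/n$ from $\strip$, apply Theorem~\ref{thm:essentialgaps} to each finite-dimensional reduction $T_2^{(n)}+A_{22}^{(n)}$, invoke Proposition~\ref{prop:bddstrip} and Theorem~\ref{thm:bddstripgap} for the reduced operators, and observe that the resulting resolvent regions exhaust the claimed set. Your treatment of the nonaccumulation claim is in fact more careful than the paper's: you verify explicitly that the set is open, connected, and meets $\rho(T+A)$ (by sending $|\im z|\to\infty$ along a line $\re z=\mu_0$ in the gap, which handles the bounded set $\Sigma$ directly), whereas the paper's appeal to Proposition~\ref{prop:bddstrip} is not literally applicable here since $\sigma(T)$ need not lie in a horizontal strip.
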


\begin{proof}
   For $n\in\N$ define the sets
   \begin{equation*}
      S_n := \left\{ z\in\C : \alpha+\frac{1}{n} \le \re z \le \beta-\frac{1}{n},\ \gamma_1 - \frac{1}{n} \le \im z \le \gamma_2 + \frac{1}{n} \right\}
   \end{equation*}
   and $U_n := \C\setminus S_n$.
   Then $\sigmaess(T)\subseteq S_n$ for every $n\in\N$ and $U_n\cap \sigma(T)$ is finite and contains only finite eigenvalues of finite multiplicity.
   Let $P_1$ be the spectral projection of $T$ for the set $U_n$ and set 
   $\widetilde H_n := \Ran(P_1) = \bigoplus_{\lambda\in U_n\cap \sigma(T)} \ker(T-\lambda)$.
   Set $\mD_2 = \mD(T)\cap \widetilde H_n^\perp$.
   Then $\dim \widetilde H_n < \infty$, $\mD(T) = \widetilde H_n \oplus \mD_2$
   and both $\widetilde H_n$ and $\widetilde H_n^\perp$ are $T$-invariant, see \cite[pag.~83]{Schmuedgen2012}).
   Let us define $T_j^{(n)}$ and $A_{ij}^{(n)}$ as in Theorem~\ref{thm:essentialgaps}.
   Then $A_{22}^{(n)}$ is $T_2^{(n)}$-bounded with the same constants $a,b$ that work for $T$ and $A$ and, by the spectral theorem, $\sigma(T_2^{(n)})\subseteq S_n$.
   If we apply Proposition~\ref{prop:bddstrip} to $T_2^{(n)}$ and $A_{22}^{(n)}$, we obtain that 
   \begin{align*}
      F_n & :=
      \textstyle
      (\I(\gamma_1 - \frac{1}{n} ) - \hyperbola_{\wgamma + \frac{1}{n} }) \cup 
      (\I(\gamma_2 + \frac{1}{n} ) + \hyperbola_{\wgamma + \frac{1}{n} })
      \subseteq \rho(T_2^{(n)} + A_{22}^{(n)}).
   \end{align*}
   Moreover, Theorem~\ref{thm:bddstripgap} applied to $T_2^{(n)}$ and $A_{22}^{(n)}$ shows that 
   \begin{align*}
      G_n := \left\{ z\in\C : \alpha_{n}' < \re z < \beta_{n}' \right\}
      \subseteq \rho(T_2^{(n)} + A_{22}^{(n)})
   \end{align*}
   where 
   \begin{align*}
      \alpha_{n}' & := \alpha + \frac{1}{n} + \sqrt{ \textstyle a^2 + b^2(\wgamma+\frac{1}{n})^2 + b^2 (\alpha + \frac{1}{n})^2},
      \\
      \beta_{n}'  & := \beta - \frac{1}{n} + \sqrt{ \textstyle a^2 + b^2(\wgamma+\frac{1}{n})^2 + b^2 (\beta - \frac{1}{n})^2}.
   \end{align*}
   In summary,
   $\sigmaess(T_2^{(n)}+A_{22}^{(n)}) \subseteq \sigma(T_2^{(n)}+A_{22}^{(n)})\subseteq \C\setminus (F_n\cup G_n)$.
   Since by Theorem~\ref{thm:essentialgaps} the essential spectra of $T+A$ and $T_2^{(n)} + A_{22}^{(n)}$ coincide for every $n\in\N$, we obtain that
   \begin{equation*}
      \sigmaess(T+A) \subseteq  \bigcap_{n\in\N} \C\setminus (F_n\cup G_n).
   \end{equation*}
   Since $\bigcup_{n\in\N} (F_n\cup G_n)$ is equal to the right hand side of \eqref{eq:bddstripessspec}, that inclusion is proved.
   The claim about nonaccumulation of the spectrum follows from \cite[Chapter~XVII, Theorem~2.1]{GohGolMarKaa90} because by Proposition~\ref{prop:bddstrip} the exterior of certain hyperbolas belongs to the resolvent set of $T+A$, hence the complement of the left hand side of \eqref{eq:bddstripessspec} contains points in the resolvent set of $T+A$.
\end{proof}

The situation in our next theorem is similar to that in Theorem \ref{thm:bddstripgap} but here we allow for additional finitely many eigenvalues.
\begin{theorem}
   \label{thm:EVmultiplicitybdd}
   Let $T$ and $A$ be as in Assumption~\ref{ass:relbounded}.
   For
   $\alpha < \beta$  and 
   $\gamma_1 \le \gamma_2$ we define the gapped strip
   $\strip = \{ z\in\C : \re z \in \R\setminus (\alpha, \beta),\ \gamma_1 \le \im z \le \gamma_2 \}$.
   Assume that 
   $\sigma(T) = \Sigma_0 \cup \Sigma_1$
   where
   $\Sigma_0 \subseteq \strip$
   and $\Sigma_1 = \{\lambda_1,\,\dots,\, \lambda_N\}$ is a finite set of eigenvalues of $T$, each of finite multiplicity, which satisfies
   $\Sigma_0 \cap \strip = \emptyset$.
   Set $\wgamma = \max\{ |\gamma_1|,\, |\gamma_2|\}$ and 
   let $\alpha'_{T+A}$ and $\beta'_{T+A}$ as in \eqref{eq:extremosdelatira2}.
   Moreover, for each $j=1,\dots,n$ we define $r_j = \sqrt{ a^2 + b^2 |\lambda_j|^2}$.

   \begin{enumerate}[label={\upshape(\roman*)}]

      \item\label{item:EVmultiplidicadbdd:i}
      For the resolvent set of $T+A$ we have that 
      \begin{align}
	 \label{eq:bddstripEV}
	 \begin{aligned}
	    \Big( 
	    \{z\in \C: \alpha'_{T+A} \le \re z \le \beta'_{T+A}\}
	    \cup
	    (\I\gamma_1 - \hyperbola_{\wgamma}) 
	    \cup (\I\gamma_2 + \hyperbola_{\wgamma})
	    \Big)
	    \setminus \bigcup_{j=1}^N K_{r_j}(\lambda_j)
	    \subseteq \rho(T+A)
	 \end{aligned}
      \end{align}
      where $K_{r_j}(\lambda_j)$ is the closed disk of radius $r_j$ centred in $\lambda_j$.

      \item\label{item:EVmultiplidicadbdd:ii}
      If there exists $k\in\{1,\,\dots,\, N\}$ and $\epsilon > 0$ such that 
      $K_{r_k+2\epsilon}(\lambda_k)$ is contained in the set
      \begin{multline}
	 \label{eq:bddstripEVnotk}
	 \Big( 
	 \{z\in \C: \alpha'_{T+A} \le \re z \le \beta'_{T+A}\}
	 \cup
	 (\I\gamma_1 - \hyperbola_{\wgamma}) \cup (\I\gamma_2 + \hyperbola_{\wgamma})
	 \Big)
	 \setminus \bigcup_{\substack{j=1\\ j\neq k}}^N K_{r_j}(\lambda_j), 
      \end{multline}
      then $\sigma(T+A)\cap K_{r_k}(\lambda_k)$ consists of finitely many eigenvalues and their total algebraic multiplicity is equal to the multiplicity of $\lambda_k$ as eigenvalue of $T$.

      \item\label{item:EVmultiplidicadbdd:iii}
      If there exists $\epsilon > 0$ such that for all $k\in\{1,\,\dots,\, N\}$ the closed disk $K_{r_k+2\epsilon}(\lambda_k)$ is contained in the set in \eqref{eq:bddstripEVnotk},
      then $\sigma(T+A)\cap \left( \bigcup_{j=1}^N K_{r_j}(\lambda_j) \right)$ consists of finitely many eigenvalues and their 
      total algebraic multiplicity is equal to the total multiplicity of $\lambda_1,\, \dots,\, \lambda_N$ as eigenvalues of $T$.

   \end{enumerate}
\end{theorem}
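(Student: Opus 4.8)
The plan is to reduce everything to Theorem~\ref{thm:essspecbdd} combined with a Rouché-type argument for the Riesz projections, exactly in the spirit of the Cuenin--Tretter result in \cite{CueninTretter2016}. For part~\ref{item:EVmultiplidicadbdd:i}, I would first observe that the finite set $\Sigma_1$ consists of isolated eigenvalues of finite algebraic multiplicity, hence $\sigmaess(T) = \overline{\Sigma_0} \subseteq \strip$, and that $T$ falls under the hypotheses of Theorem~\ref{thm:essspecbdd} with $\Sigma = \Sigma_1$. That theorem already tells us that the region
\begin{equation*}
   R := \{z\in \C: \alpha'_{T+A} < \re z < \beta'_{T+A}\} \cup (\I\gamma_1 - \hyperbola_{\wgamma}) \cup (\I\gamma_2 + \hyperbola_{\wgamma})
\end{equation*}
lies in $\C\setminus\sigmaess(T+A)$ and contains at most countably many isolated eigenvalues of $T+A$ of finite algebraic multiplicity, accumulating only at $\partial R$. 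To pin down \emph{where} those eigenvalues sit, I would go back to Proposition~\ref{prop:ourfirsprop3}\ref{item:ourfirstprop3:iii}: for $z\in R$ one has $z\in\rho(T+A)$ provided $\sup_{t\in\sigma(T)} H_z(t) < 1$. For $t\in\Sigma_0$ the estimates in the proof of Theorem~\ref{thm:essspecbdd} (i.e.\ Lemma~\ref{lem:app:Hest} and Lemma~\ref{lem:app:lemma0}) already give $H_z(t)<1$ when $z\in R$; for $t = \lambda_j\in\Sigma_1$ a single term must be checked, namely $H_z(\lambda_j) = \frac{a^2+b^2|\lambda_j|^2}{|\lambda_j - z|^2} = \frac{r_j^2}{|\lambda_j-z|^2}$, which is $<1$ precisely when $|z-\lambda_j| > r_j$, i.e.\ when $z\notin K_{r_j}(\lambda_j)$. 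Since $H_z$ has no local maxima off $\{z\}$ (Lemma~\ref{lem:nolocalextrema}), the supremum over $\sigma(T)$ is the maximum of these contributions, so $z\in R\setminus\bigcup_j K_{r_j}(\lambda_j)$ forces $\sup_t H_z(t)<1$ and hence $z\in\rho(T+A)$. This proves \eqref{eq:bddstripEV}.

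For part~\ref{item:EVmultiplidicadbdd:ii} I would use the classical fact that the total algebraic multiplicity of the spectrum of a closed operator inside a contour is the rank of the associated Riesz projection, and that this rank is stable under norm-continuous deformation of the resolvent along the contour. Concretely, consider the homotopy $T_s := T + sA$ for $s\in[0,1]$, all satisfying Assumption~\ref{ass:relbounded} with the \emph{same} constants $a,b$ (replacing $a$ by $sa$, $b$ by $sb$ only improves things, so the constants $a,b$ work uniformly). By part~\ref{item:EVmultiplidicadbdd:i} applied to each $T_s$ — or rather by the cruder observation that $\sigma(T_s)\cap K_{r_k+2\epsilon}(\lambda_k) \subseteq K_{r_k}(\lambda_k)$ and the boundary circle $\partial K_{r_k+\epsilon}(\lambda_k)$ lies in $\rho(T_s)$ for every $s\in[0,1]$, which follows from the same $H_z$-estimate since the circle of radius $r_k+\epsilon$ avoids $K_{r_j}(\lambda_j)$ for all $j$ by the hypothesis that $K_{r_k+2\epsilon}(\lambda_k)$ meets none of the other disks and lies in the good region — the Riesz projection
\begin{equation*}
   P_k(s) := -\frac{1}{2\pi\I}\oint_{\partial K_{r_k+\epsilon}(\lambda_k)} (T_s - z)^{-1}\,\rd z
\end{equation*}
is well-defined, depends continuously on $s$ in operator norm (because $s\mapsto (T_s-z)^{-1}$ is norm-continuous, uniformly for $z$ on the compact contour, by the Neumann series), and therefore has constant rank. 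At $s=0$ this rank is the algebraic multiplicity of $\lambda_k$ as an eigenvalue of $T$ (the only point of $\sigma(T)$ inside the contour, by the separation hypothesis); at $s=1$ it is $\sum_{\mu\in\sigma(T+A)\cap K_{r_k}(\lambda_k)} (\text{alg.\ mult.\ of }\mu)$. Finiteness of this set follows from the nonaccumulation statement in part~\ref{item:EVmultiplidicadbdd:i} together with compactness of $K_{r_k}(\lambda_k)$.

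Part~\ref{item:EVmultiplidicadbdd:iii} is then immediate: under its hypothesis, part~\ref{item:EVmultiplidicadbdd:ii} applies to every $k$, and additivity of rank over disjoint contours (the disks $K_{r_k}(\lambda_k)$ can be taken pairwise disjoint, or at least enclosed by disjoint contours, exactly because each $K_{r_k+2\epsilon}(\lambda_k)$ avoids the other $K_{r_j}(\lambda_j)$ — one should remark that if some disks overlap one simply groups them and runs the same homotopy argument with one contour around each cluster) gives that the total algebraic multiplicity of $\sigma(T+A)\cap\bigcup_j K_{r_j}(\lambda_j)$ equals $\sum_j m(\lambda_j)$. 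The main obstacle I anticipate is bookkeeping rather than conceptual: one must be careful that the contours $\partial K_{r_k+\epsilon}(\lambda_k)$ stay inside $\rho(T_s)$ for \emph{all} $s\in[0,1]$ simultaneously and for all $k$, which is where the factor $2\epsilon$ and the precise phrasing "$K_{r_k+2\epsilon}(\lambda_k)$ contained in \eqref{eq:bddstripEVnotk}" are needed — it guarantees a definite annular margin around each $\lambda_k$ in which the $H_z<1$ criterion holds uniformly in $s$. A secondary subtlety is the case of overlapping disks $K_{r_j}(\lambda_j)$, handled by passing to connected components of the union and taking one contour per component; the multiplicity count is unaffected since it is additive over the (then disjoint) contours.
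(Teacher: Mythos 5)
Your proposal is correct and follows essentially the same route as the paper: part~\ref{item:EVmultiplidicadbdd:i} by the direct estimate $\sup_{t\in\sigma(T)}H_z(t)=\max\{\sup_{\Sigma_0}H_z,\sup_{\Sigma_1}H_z\}$, with $\sup_{\Sigma_1}H_z=\max_j r_j^2/|\lambda_j-z|^2$ handled by the disks, and parts~\ref{item:EVmultiplidicadbdd:ii},~\ref{item:EVmultiplidicadbdd:iii} by the homotopy $T+sA$, norm-continuity of the resolvent on the contour via the Neumann series, and constancy of the rank of the Riesz projection (Kato). Two small remarks on steps that are harmless but not needed: the detour through Theorem~\ref{thm:essspecbdd} to obtain nonaccumulation is superfluous (and its hypothesis $\sigma(T)=\sigmaess(T)\cup\Sigma$ with $\Sigma\subseteq\C\setminus\strip$ need not hold literally if $\Sigma_0$ contains isolated finite-multiplicity eigenvalues) — finiteness of $\sigma(T+A)\cap K_{r_k}(\lambda_k)$ already follows from the finite rank of the Riesz projection, which equals $\dim\Ran P_0<\infty$; and your discussion of clustering overlapping disks in part~\ref{item:EVmultiplidicadbdd:iii} is moot, since the hypothesis $K_{r_k+2\epsilon}(\lambda_k)\cap K_{r_j}(\lambda_j)=\emptyset$ for all $j\neq k$ forces $|\lambda_k-\lambda_j|>(r_k+\epsilon)+(r_j+\epsilon)$, so the contours $\partial K_{r_j+\epsilon}(\lambda_j)$ are already pairwise disjoint.
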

\begin{proof}
   In order to prove \ref{item:EVmultiplidicadbdd:i}, we start with $z\in\rho(T)$.
   As in the proofs of Proposition~\ref{prop:bddstrip} and Theorem~\ref{thm:bddstripgap}, we have to show that 
   $\sup_{t\in\sigma(T)} H_z(t) < 1$ for $z$ in the set on the left side of \eqref{eq:bddstripEV}.
   Note that 
   \begin{equation*}
      \sup_{t\in\sigma(T)} H_z(t)
      = \max\left\{ \sup_{t\in\Sigma_0} H_z(t),\  \sup_{t\in\Sigma_1} H_z(t) \right\}
      \le \max\left\{ \sup_{t\in\strip} H_z(t),\  \sup_{t\in\Sigma_1} H_z(t) \right\}.
   \end{equation*}
   By Proposition~\ref{prop:bddstrip} and Theorem~\ref{thm:bddstripgap} we know that $\sup_{t\in\strip} H_z(t) < 1$ if $z$ belongs to the union of the first three sets on the left hand side in \eqref{eq:bddstripEV}.
   Moreover,
   \begin{equation*}
      \sup_{t\in\Sigma_1} H_z(t) 
      = \max_{j=1}^N H_z(\lambda_j) 
      = \max_{j=1}^N \frac{a^2 + b^2 |\lambda_j|^2}{ |\lambda_j - z |^2 } 
   \end{equation*}
   which is smaller than $1$ if $z$ is outside $\bigcup_{j=1}^N K_{r_j}(\lambda_j)$.
   \medskip

   We proceed to prove \ref{item:EVmultiplidicadbdd:ii}.
   For $s\in [0,1]$ define $A_s := sA$.
   Then all $A_s$ are $T$-bounded and if $a,b$ are the constants from \eqref{eq:definicion2}, then we have the corresponding inequality for $A_s$ if we use $a_s:=\sqrt{s}a$ and $b_s:=\sqrt{s}b$ instead of $a$ and $b$.
   Note that the numbers $\alpha'_{T+A_s}, \beta'_{T+A_s}$ and $r_{j,s} := \sqrt{a_s^2 + b_s^2|\lambda_j|^2}$
   are non-decreasing in $s$.
   In particular we have \eqref{eq:bddstripEV} for all $A_s$ with the corresponding 
   $\alpha'_{T+A_s}, \beta'_{T+A_s}$.
   Therefore, if for the moment we denote the set \eqref{eq:bddstripEVnotk} by $U(A)$, we have that $U(sA)\supset U(s'A)$ for all $0\le s \le s' \le 1$
   and for any $s,s'\in [0,1]$ we have that $K_{r_{k,s}+2\epsilon} \subseteq U(s'A)$.
   Therefore we can define the Riesz projections
   \begin{equation}
      \label{eq:Rieszprojection}
      P_s := \frac{1}{2\pi \I} \int_{\partial K_{r_k+\epsilon}(\lambda_k)} (T+A_s-\lambda)^{-1}\,\rd\lambda,
      \qquad s\in[0,1].
   \end{equation}
   The dimension of the range of $P_s$ is equal to the total algebraic multiplicity of the eigenvalues of $A_s$ contained in $B_{r_k+\epsilon}(\lambda_k)$, hence in $K_{r_k}(\lambda_k)$.
   So we have to show that $\dim\Ran P_0 = \dim\Ran P_1$.
   To this end, observe that for $\lambda\in\partial K_{r_k+\epsilon}(\lambda_k)$
   \begin{equation*}
      (T+A_s-\lambda)^{-1}
      = (T-\lambda)^{-1} \sum_{n=0}^\infty s^n(-A(T-\lambda)^{-1})^n
   \end{equation*}
   which is continuous in $s$, uniformly with respect to $\lambda$.
   Therefore $P_s$ is continuous in $s$, hence $\dim\Ran P_s$ is constant in $s$
   by \cite[Ch.I, Lemma 4.10]{Kato95}.
   Therefore, applying the above equality $m$ times, we obtain that $\dim\Ran P_0 = \dim\Ran P_1$.
   \smallskip

   The claim in \ref{item:EVmultiplidicadbdd:iii} follows as in \ref{item:EVmultiplidicadbdd:ii} if we replace the contour of integration in \eqref{eq:Rieszprojection} by $\Gamma = \bigcup_{j=1}^N \partial K_{r_j+\epsilon}(\lambda_j)$.

\end{proof}

As a corollary, we obtain a generalization of \cite[Theorem 2.12]{CueninTretter2016}.

\begin{corollary}
   \label{cor:EVmultiplicity}
   Let $T$ and $A$ be as in Assumption~\ref{ass:relbounded}.
   For
   $\alpha < \beta$  and 
   $\gamma_1 \le \gamma_2$ we define the ``gapped strip''
   $\strip = \{ z\in\C : \re z \in \R\setminus (\alpha, \beta),\ \gamma_1 \le \im z \le \gamma_2 \}$.
   Assume that 
   $\sigma(T) = \Sigma_0 \cup \Sigma_1$
   where
   $\Sigma_0 \subseteq \strip$
   and $\Sigma_1 = \{\lambda_1,\,\dots,\, \lambda_N\}$ is a finite set of eigenvalues counted with multiplicity that satisfies
   $\Sigma_0 \cap \strip = \emptyset$ and $\gamma_1< \im\lambda_j < \gamma_2$ for $j=1,\dots, N$.
   Without restriction we may assume that $\re \lambda_1 \le \re\lambda_2 \le \dots \le \re \lambda_N$.
   Set $\wgamma = \max\{ |\gamma_1|,\, |\gamma_2|\}$ and, as in \eqref{eq:extremosdelatira2}, let
   \begin{equation*}
      \alpha'_{T+A}:=\alpha_{T}+\sqrt{a^2+b^2\wgamma^2+b^2\alpha_{T}^2},\qquad
      \beta'_{T+A}:= \beta_T -\sqrt{a^2+b^2\wgamma^2+b^2\beta_{T}^2}.
   \end{equation*}
   Moreover, we set 
   \begin{equation*}
      R_1 := \re \lambda_1 - \sqrt{a^2+b^2\wgamma^2+b^2(\re \lambda_1)^2},\qquad
      R_N :=  \re\lambda_N + \sqrt{a^2+b^2\wgamma^2+b^2(\re\lambda_N)^2}.
   \end{equation*}
   If $\alpha'_{T+A} < R_1$ and $R_N < \beta'_{T+A}$, then for any 
   $\widetilde R_1\in (\alpha'_{T+A},\, R_1)$,
   $\widetilde R_2\in (R_N, \, \beta'_{T+A})$
   and large enough $\eta$ the rectangular curve with corners $\widetilde R_1\pm\I\eta$ and $\widetilde R_2\pm\I\eta$ belongs to $\rho (T+sA)$ for any $s\in [0,1]$, its interior contains only eigenvalues of $T+A$ and their total algebraic multiplicity is equal to $N$.
\end{corollary}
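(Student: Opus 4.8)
The plan is to reduce the statement to the Riesz‑projection homotopy in the coupling parameter that already appears in the proof of Theorem~\ref{thm:EVmultiplicitybdd}, now carried out along the prescribed rectangular contour. I would write $Q_\eta$ for the open rectangle with vertices $\widetilde R_1\pm\I\eta$ and $\widetilde R_2\pm\I\eta$ and $\Gamma_\eta=\partial Q_\eta$ for its positively oriented boundary, and aim to establish two geometric facts: \textbf{(a)} there is $\eta_0>0$, depending only on $T$ and $A$, such that $\Gamma_\eta\subseteq\rho(T+sA)$ for every $\eta\ge\eta_0$ and every $s\in[0,1]$; and \textbf{(b)} for $\eta\ge\eta_0$ one has $Q_\eta\cap\sigma(T)=\{\lambda_1,\dots,\lambda_N\}$ while $\overline{Q_\eta}$ is a compact subset of the open strip $\{z\in\C:\alpha'_{T+A}<\re z<\beta'_{T+A}\}$. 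Granting (a) and (b), the Riesz projections $P_s:=\frac{1}{2\pi\I}\int_{\Gamma_\eta}(T+sA-\lambda)^{-1}\,\rd\lambda$ are well defined, and since $\|A(T-\lambda)^{-1}\|<1$ for each $\lambda\in\Gamma_\eta$ (this inequality is exactly what the proof of (a) produces, via Proposition~\ref{prop:ourfirsprop3}) and $\lambda\mapsto\|A(T-\lambda)^{-1}\|$ is continuous on $\rho(T)\supseteq\Gamma_\eta$, its supremum $q$ over the compact set $\Gamma_\eta$ is $<1$; hence the Neumann series $(T+sA-\lambda)^{-1}=(T-\lambda)^{-1}\sum_{n\ge0}s^n(-A(T-\lambda)^{-1})^n$ converges uniformly for $(\lambda,s)\in\Gamma_\eta\times[0,1]$, $s\mapsto P_s$ is norm‑continuous, and $\dim\Ran P_s$ is constant in $s$ by \cite[Ch.~I, Lemma~4.10]{Kato95}. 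By (b), $P_0$ is the spectral projection of $T$ onto the span of the eigenvectors for $\lambda_1,\dots,\lambda_N$, of dimension $N$ (the $\lambda_j$ being counted with multiplicity); by (b) and Theorem~\ref{thm:essspecbdd} — applicable because $\sigmaess(T)\subseteq\Sigma_0\subseteq\strip$ while $\Sigma_1$ is a finite set of finite‑multiplicity eigenvalues of $T$ lying off $\strip$ — the compact set $\overline{Q_\eta}$ meets no essential spectrum of $T+A$, and together with $\Gamma_\eta\subseteq\rho(T+A)$ this forces $\sigma(T+A)\cap Q_\eta$ to consist of finitely many eigenvalues of finite algebraic multiplicity, whose total equals $\dim\Ran P_1$. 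Equating $\dim\Ran P_0$ and $\dim\Ran P_1$ then gives the assertion.

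For (a) I would first observe that $A_s=sA$ satisfies \eqref{eq:definicion2} with $a_s=\sqrt s\,a$ and $b_s=\sqrt s\,b$, so $\alpha'_{T+sA}\le\alpha'_{T+A}$, $\beta'_{T+sA}\ge\beta'_{T+A}$, the discs shrink ($r_{j,s}=\sqrt s\,r_j$), and the hyperbola regions built from $a_s,b_s$ grow as $s$ decreases; consequently the sets occurring in Theorem~\ref{thm:EVmultiplicitybdd}\ref{item:EVmultiplidicadbdd:i} for $(T,A_s)$ contain those for $(T,A)$ for every $s\in[0,1]$, and it suffices to keep $\Gamma_\eta$ inside the $s=1$ sets. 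For the vertical sides $\{z:\re z=\widetilde R_1\}$ and $\{z:\re z=\widetilde R_2\}$ the crucial point is that each disc $K_{r_j}(\lambda_j)$ projects onto the real axis into $[R_1,R_N]$: since $|\im\lambda_j|<\wgamma$ one has $r_j\le\sqrt{a^2+b^2\wgamma^2+b^2(\re\lambda_j)^2}$, and because the maps $x\mapsto x\pm\sqrt{a^2+b^2\wgamma^2+b^2x^2}$ are increasing on $\R$ (their derivatives are bounded below by $1-b>0$), $\re\lambda_j-r_j\ge R_1$ and $\re\lambda_j+r_j\le R_N$ for all $j$. Combined with $\alpha'_{T+A}<\widetilde R_1<R_1\le R_N<\widetilde R_2<\beta'_{T+A}$, this places the vertical sides in the set of Theorem~\ref{thm:EVmultiplicitybdd}\ref{item:EVmultiplidicadbdd:i}, hence in $\rho(T+sA)$. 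For the horizontal sides $\{x\pm\I\eta:\widetilde R_1\le x\le\widetilde R_2\}$ I would choose $\eta_0$ so large that the region $\I\gamma_2+\hyperbola_{\wgamma}$ contains every $x+\I y$ with $x\in[\widetilde R_1,\widetilde R_2]$ and $y\ge\eta_0$, that $\I\gamma_1-\hyperbola_{\wgamma}$ contains every such point with $y\le-\eta_0$, and that both regions avoid the finitely many bounded discs $K_{r_j}(\lambda_j)$; then Theorem~\ref{thm:EVmultiplicitybdd}\ref{item:EVmultiplidicadbdd:i} puts the horizontal sides in $\rho(T+sA)$ for all $s\in[0,1]$ and all $\eta\ge\eta_0$.

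For (b): since $\alpha'_{T+A}\ge\alpha$ and $\beta'_{T+A}\le\beta$ (the square roots in their definitions being non‑negative) and every point of $\Sigma_0$ has real part outside $(\alpha,\beta)$, the closed vertical strip $\{z:\widetilde R_1\le\re z\le\widetilde R_2\}$, which is contained in $\{z:\alpha<\re z<\beta\}$ because $\widetilde R_1>\alpha'_{T+A}\ge\alpha$ and $\widetilde R_2<\beta'_{T+A}\le\beta$, is disjoint from $\Sigma_0$; hence $\overline{Q_\eta}\cap\Sigma_0=\emptyset$ for every $\eta$. On the other hand each $\lambda_j$ has $\re\lambda_j\in(R_1,R_N)\subset(\widetilde R_1,\widetilde R_2)$ and $|\im\lambda_j|<\wgamma$, so $\lambda_j\in Q_\eta$ as soon as $\eta>\wgamma$. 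Thus $Q_\eta\cap\sigma(T)=\{\lambda_1,\dots,\lambda_N\}$ for $\eta$ large, $\overline{Q_\eta}$ lies in $\{z:\alpha'_{T+A}<\re z<\beta'_{T+A}\}$, and $Q_\eta\setminus\bigcup_jK_{r_j}(\lambda_j)\subseteq\rho(T+A)$ by (a) with $s=1$ is nonempty, so $Q_\eta$ meets $\rho(T+A)$.

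I expect the only real difficulty to be the geometric book‑keeping in step (a): verifying that all discs $K_{r_j}(\lambda_j)$ project into $[R_1,R_N]$ and that the entire rectangular contour stays in $\rho(T+sA)$ \emph{simultaneously} for all $s\in[0,1]$. This rests on the elementary monotonicity of $x\mapsto x\pm\sqrt{a^2+b^2\wgamma^2+b^2x^2}$, on the bound $|\im\lambda_j|<\wgamma$, and on the monotone dependence of $a_s,b_s$ on $s$. The analytic part — norm‑continuity of the Riesz projections along the homotopy $s\mapsto sA$ and the constancy of their rank — is standard and already appears in the proof of Theorem~\ref{thm:EVmultiplicitybdd}, while Theorem~\ref{thm:essspecbdd} supplies the ``finitely many eigenvalues of finite algebraic multiplicity'' conclusion essentially for free.
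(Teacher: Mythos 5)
Your proof is correct, and its core — the Riesz‑projection homotopy in the coupling parameter $s$ along the rectangular contour $\Gamma_\eta$, together with the observation that $a_s=\sqrt s\,a$, $b_s=\sqrt s\,b$ make all the enclosing sets monotone in $s$ — is the same as the paper's. The genuine divergence is in how the two vertical sides of $\Gamma_\eta$ are placed in $\rho(T+sA)$. You route through Theorem~\ref{thm:EVmultiplicitybdd}\ref{item:EVmultiplidicadbdd:i} and then verify, using $|\im\lambda_j|<\wgamma$ and the monotonicity of $x\mapsto x\pm\sqrt{a^2+b^2\wgamma^2+b^2x^2}$, that every disc $K_{r_j}(\lambda_j)$ projects on $\R$ into $[R_1,R_N]$, so the lines $\re z=\widetilde R_1$ and $\re z=\widetilde R_2$ miss them. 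The paper's proof is shorter: it notes that the hypotheses on $\sigma(T)$ already give $T$ the two spectral free strips $\{\alpha<\re z<\re\lambda_1\}+\I\R$ and $\{\re\lambda_N<\re z<\beta\}+\I\R$, applies Theorem~\ref{thm:bddstripgap} to each, and thereby places the entire strips $\{\alpha'_{T+A}<\re z<R_1\}$ and $\{R_N<\re z<\beta'_{T+A}\}$ inside $\rho(T+sA)$ with no disc bookkeeping at all. Both routes are valid; yours buys a concrete picture of where the discs sit relative to the contour at the cost of an extra monotonicity computation, while the paper's is a one‑line reduction to an earlier theorem. Two minor remarks. First, the appeal to Theorem~\ref{thm:essspecbdd} at the end is superfluous: once the Riesz projection $P_1$ is known to have finite rank $N$, standard Riesz–Fredholm theory already gives that $\sigma(T+A)\cap Q_\eta$ consists of finitely many eigenvalues of finite algebraic multiplicity summing to $N$. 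Second, Theorem~\ref{thm:essspecbdd} as stated would not literally apply here anyway, since it requires $\sigma(T)\setminus\sigmaess(T)$ to lie outside $\strip$, while in the present setting $\Sigma_0$ may well contain isolated eigenvalues inside $\strip$; dropping the appeal removes the issue.
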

\begin{proof}
   As in the proof of \ref{item:EVmultiplidicadbdd:iii} in Theorem~\ref{thm:EVmultiplicitybdd} it can be shown that
   the vertical parts of $\Gamma$ belong to $\rho(T+sA)$ for all $s\in [0,1]$ by Theorem~\ref{thm:bddstripgap}, applied to the spectral gaps $\{\alpha < \re z < \re\lambda_1\}$ and $\{\re\lambda_N < \re z < \beta\}$.
   The same is true for the horizontal lines from $\widetilde R_1\pm\I\eta$
   if we choose $\eta$ large enough such that they lie in 
   $\I\gamma_2 + \hyperbola_{\wgamma}$ and $\I\gamma_1 - \hyperbola_{\wgamma}$ respectively.
   Now the same continuity argument shows that the rank of the Riesz projections in \eqref{eq:Rieszprojection} with contour $\Gamma$ is constant in $s$.
\end{proof}

%%%}}}

\section{Bisectorial and sectorial operators} %%%{{{
\label{sec:sectorial}

We will denote the closed sector of angle $\theta$ and centred at $\lambda \in \R$ by
\begin{equation*}
   \sector{\theta}{\lambda}:=\{z\in \C: |\arg(z-\lambda)|\leq \theta\} \cup \{\lambda\}.
\end{equation*}
For $\alpha, \beta$, with $\alpha\leq \beta$ we define
\begin{equation}
   \label{eq:defOmega}
   \Omega (\alpha, \beta, \theta):= \sector{\theta}{\beta}\cup-\sector{\theta}{-\alpha}.
\end{equation}

\begin{figure}[H] %%%{{{ Bisector
  \centering
  \begin{tikzpicture}[scale=1]

      \tikzmath{
      \alphaT = -.5;      %% \alpha_T
      \betaT = 1.5;      %% \beta_T
      \thetaT = 20;      %% \theta
      }

      %% left part of spectrum of T
      \path [fill=colTspec, opacity=1, shift={(\alphaT,0)}] (\thetaT:-3) -- (0,0) --  (-\thetaT:-3) -- cycle;
      \path [fill=colTspec, opacity=1, shift={(\betaT,0)}] (\thetaT:3) -- (0,0) --  (-\thetaT:3) -- cycle;
      %% labels \alpha_T, etc. 
      \node [below, colTspec!50!black] at (\alphaT, -.1) {$\alpha$};
      \node [below, colTspec!50!black] at (\betaT, -.1) {$\beta$};
      \node [right, colTspec!50!black] at (4.2, .8) {$\sector{\theta}{\beta}$};
      \node [left, colTspec!50!black] at (-3.2, .8) {$\sector{\theta}{\alpha}$};

      %% Axis
      \draw[->] (-3.5,0)--(4.5,0);
      \draw[->] (0,-2)--(0,2);

   \end{tikzpicture}
   \caption{The set $\Omega (\alpha, \beta, \theta) = \sector{\theta}{\beta} \cup -\sector{\theta}{\alpha}$.}
  \label{fig:figura4}
\end{figure}
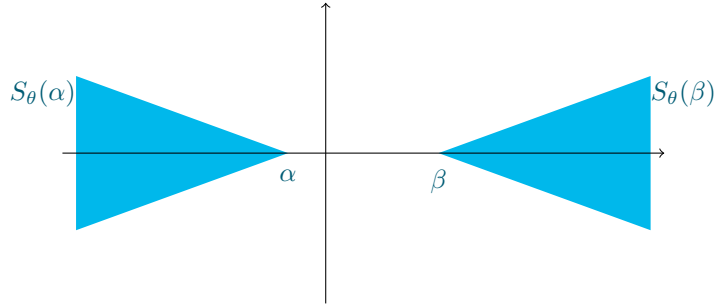
%%%}}}

\begin{definition}
   \label{def:sectorial}
   An operator $T$ on a Hilbert space $H$ is called \define{sectorial} if $\sigma(T)\subseteq -\sector{\theta}{0}$ for some $\theta\in [0,\pi)$ and if for every $\theta' \in (\theta, \pi)$ there exists a constant $M\ge 0$ such that 
   $\|(T-z)^{-1}\| \le \frac{M}{|z|}$ for all $z\in\C\setminus (-\sector{\theta'}{0})$.
   An operator $T$ is called \define{quasi-sectorial} if there exists some $c \in\R$ such that $T+c I$ is sectorial.

   We call the operator $T$ \define{bisectorial} if 
   $\sigma(T)\subseteq \Omega(0,0,\theta)$ for some $\theta \in [0,\pi)$ and if for every $\theta' \in (\theta, \pi)$ there exists a constant $M\ge 0$ such that 
   $\|(T-z)^{-1}\| \le \frac{M}{|z|}$ for all $z\in\C\setminus \Omega(0, 0, \theta')$.
\end{definition}

Since $\|(T-z)^{-1}\| = \dist(z,\sigma(T))$ for all $z\in\rho(T)$ if $T$ is a normal operator, we obtain the following lemma.
\begin{lemma}
   \label{lem:normalsectorial}
   Let $T$ be a closed normal  operator on a Hilbert space $H$.
   \begin{enumerate}[label={\upshape(\roman*)}]

      \item 
      If $\sigma(T)\subseteq \Omega(\alpha_T, \beta_T, \theta)$, then, for any $c\in(\alpha_T,\beta_T)$, the operator $T -cI$ is bisectorial with a possibly larger angle $\theta'>\theta$.

      \item 
      If there exists $\mu, M \in\R$ such that $(\mu+\I\R)\setminus\{0\} \in \rho(T)$ and $\| (T-\mu-\I\nu)^{-1}\| \le \frac{M}{|\nu|}$ for all $\nu\in\R\setminus\{0\}$, then $T$ is bisectorial.
   \end{enumerate}
\end{lemma}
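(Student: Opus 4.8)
The plan is to treat the two parts separately, reducing everything to two facts about the normal operator $T$: that $\|(T-z)^{-1}\|=\dist(z,\sigma(T))^{-1}$, and that the distance to a cone is positively homogeneous of degree one. For part (i), I would set $S:=T-cI$, so that $\sigma(S)=\sigma(T)-c\subseteq\Omega(\alpha_T-c,\beta_T-c,\theta)$, and since $c\in(\alpha_T,\beta_T)$ the two vertices $\beta_T-c$ and $-(\alpha_T-c)$ are both strictly positive. The key elementary fact is that $\sector{\theta}{v}\subseteq\sector{\theta}{0}$ for every real $v\ge0$ and every $\theta\in[0,\pi)$: writing a point of $\sector{\theta}{v}$ as $v+r\e^{\I\phi}$ with $r\ge0$, $|\phi|\le\theta$, the map $r\mapsto\arg(v+r\e^{\I\phi})$ has derivative $\frac{v\sin\phi}{|v+r\e^{\I\phi}|^{2}}$, hence is monotone and stays between $\arg v=0$ and $\phi$, so its modulus never exceeds $\theta$. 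Applying this to the two half-sectors comprising $\Omega(\alpha_T-c,\beta_T-c,\theta)$ gives $\sigma(S)\subseteq\sector{\theta}{0}\cup-\sector{\theta}{0}=\Omega(0,0,\theta)$, i.e.\ the spectral-inclusion requirement of bisectoriality (with $\theta$, hence a fortiori with any larger $\theta'$).

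For the resolvent estimate in part (i), fix $\theta'\in(\theta,\pi)$. By normality and the inclusion just obtained, $\|(S-z)^{-1}\|=\dist(z,\sigma(S))^{-1}\le\dist(z,\Omega(0,0,\theta))^{-1}$. Since $\Omega(0,0,\theta)$ is a cone, $\dist(z,\Omega(0,0,\theta))=|z|\,\dist(z/|z|,\Omega(0,0,\theta))$, so it suffices to bound $\dist(\cdot,\Omega(0,0,\theta))$ from below on the set $K:=\{u\in\C:|u|=1,\ u\notin\Omega(0,0,\theta')\}$. The closure of $K$ is a compact union of circular arcs which, because $\theta<\theta'$, is disjoint from the closed set $\Omega(0,0,\theta)$; hence the continuous function $\dist(\cdot,\Omega(0,0,\theta))$ attains a strictly positive minimum $c_0$ there, and $M:=c_0^{-1}$ works for all $z\notin\Omega(0,0,\theta')$. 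This shows $T-cI$ is bisectorial.

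For part (ii) I would first note we may assume $M\ge1$. By normality the hypothesis says $\dist(\mu+\I\nu,\sigma(T))\ge|\nu|/M$ for all $\nu\ne0$, hence, both sides being continuous in $\nu$, for all $\nu\in\R$. Fixing $\lambda=x+\I y\in\sigma(T)$ and expanding $|\lambda-\mu-\I\nu|^2\ge\nu^2/M^2$ gives $(1-M^{-2})\nu^2-2y\nu+(x-\mu)^2+y^2\ge0$ for every $\nu$; since the leading coefficient is nonnegative, the discriminant must be nonpositive, which simplifies to $y^2\le(M^2-1)(x-\mu)^2$. Therefore $\sigma(T)\subseteq\Omega(\mu,\mu,\theta_0)$ with $\theta_0:=\arctan\sqrt{M^2-1}\in[0,\tfrac{\pi}{2})$, and one then concludes bisectoriality exactly as in part (i): the spectral inclusion $\sigma(T)\subseteq\Omega(0,0,\theta_0)$ (when $\mu=0$; after the harmless translation placing the vertex at the origin in general) is immediate, and the resolvent estimate off any larger bisector again comes from $\|(T-z)^{-1}\|=\dist(z,\sigma(T))^{-1}$ together with the homogeneity/compactness argument.

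The routine parts really are routine; the step needing care is the geometric one — in particular obtaining the uniform bound $\dist(z,\Omega(0,0,\theta))\ge c_0|z|$ for $z$ off the slightly larger bisector $\Omega(0,0,\theta')$. This is precisely where the gap between $\theta$ and $\theta'$ is used, and it explains why one may be forced to pass to a strictly larger opening angle. The other delicate point is the quadratic/discriminant computation in part (ii), notably the degenerate case $M=1$, where $\sqrt{M^2-1}=0$ and the bisector collapses to the real axis.
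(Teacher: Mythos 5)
The paper states this lemma without proof, so there is no reference argument to compare against; assessed on its own terms, your proof of part (i) is complete and correct. The geometric inclusion $\sector{\theta}{v}\subseteq\sector{\theta}{0}$ for $v\ge0$ is established cleanly via the sign of $\partial_r\arg(v+r\e^{\I\phi})$, giving $\sigma(T-cI)\subseteq\Omega(0,0,\theta)$, and the resolvent estimate off any larger bisector then follows from the normality identity $\|(T-c-z)^{-1}\|=\dist(z,\sigma(T-c))^{-1}$ together with positive homogeneity of the distance to a cone and compactness of the relevant circular arcs.

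In part (ii), however, the phrase ``after the harmless translation'' conceals a genuine gap when $\mu\ne0$. The discriminant argument correctly gives $\sigma(T)\subseteq\Omega(\mu,\mu,\theta_0)$ with $\theta_0=\arctan\sqrt{M^2-1}$, but bisectoriality as defined in the paper is anchored at the origin (spectrum in $\Omega(0,0,\theta)$ and resolvent bound $M/|z|$ off $\Omega(0,0,\theta')$), and $\Omega(\mu,\mu,\theta_0)\not\subseteq\Omega(0,0,\theta')$ for any $\theta'<\pi/2$ when $\mu\ne0$: for $\mu>0$, the left sector $-\sector{\theta_0}{-\mu}$ contains the interval $\{\I r:0<r\le\mu\tan\theta_0\}$ of the imaginary axis. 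Hence what your argument actually proves is that $T-\mu I$ is bisectorial, not $T$, and the two are not equivalent. For instance, multiplication by the independent variable on $L^2$ of $\Omega(\mu,\mu,\theta_0)\cap\{|z-\mu|\ge\epsilon\}$, with $0<\epsilon<\mu$, is normal, satisfies the hypothesis of (ii) with $M=\sec\theta_0$, yet has spectrum meeting $\I\R\setminus\{0\}$ and so is not bisectorial with any $\theta<\pi/2$. The ``$\setminus\{0\}$'' in the hypothesis is meaningful only when $\mu=0$, which is presumably the intended case; you should either restrict to $\mu=0$ or state the conclusion as ``$T-\mu I$ is bisectorial'', and in either event the translation step needs to be examined explicitly rather than waved through.
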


In our first theorem we will show that the spectrum of $T+A$ is contained in a set of the form \eqref{eq:defOmega} if $T$ is a normal bisectorial operator and $A$ is a sufficiently small perturbation.
We saw in Section~\ref{sec:boundedimaginary} that the guaranteed bounds for the spectrum of $T+A$ bend away from the boundary of $\sigma(T)$ by the angle $\arctan \frac{b}{\sqrt{1-b^2}}$.
Therefore, if $\sigma(T)$ is contained in some $\Omega(\alpha,\beta,\theta)$ and if we want a half-plane or at least a strip to be contained in $\rho(T+A)$, then 
it is reasonable to assume that the constant $b$ should satisfy $b<\cos\theta$ in order to guarantee that $\theta + \arctan \frac{b}{\sqrt{1-b^2}} < \pi/2$.
\medskip

In the following we will need sets of the form
\begin{equation*}
   \e^{\I\phi}( \Hind + \hyperbola_\gamma) 
   = 
   \left\{ z\in\C :
   \begin{aligned}
      & \frac{ a^2 + b^2\gamma^2 + b^2 (\re (\e^{ -\I\phi}(z-\Hind)) )^2 ] }{1-b^2}
      < (\im (\e^{ -\I\phi} (z - \Hind)) )^2,
      \\
      & \im (\e^{ -\I\phi} (z - \Hind)) > 0
   \end{aligned}
   \right\}
\end{equation*}
which is the region above a hyperbola shifted away from the real axis by $\Hind$ and then rotated by $\phi$,
see Figure~\ref{fig:rotatedhyperbolas}.

Let $\phi_b=\arctan \frac{b}{\sqrt{1-b^2}}$.
Then the asymptotes of the hyperbola 
$\e^{\I\phi}( \Hind + \hyperbola_\gamma)$ are
\begin{align*}
   \im z &= (\re z - \Hind)\tan(\phi + \phi_b) \text{ for }\re z \to\infty,\\
   \im z &= (\re z - \Hind)\tan(\phi - \phi_b) \text{ for }\re z \to -\infty.
\end{align*}
\smallskip

If $b < \cos\theta$, we define
\begin{align}
   \label{eq:betaTAsect}
   \alpha_{T+A}:=\alpha_{T}+\frac{\sqrt{a^2+b^2\alpha_{T}^2}}{\sqrt{1-b^2\tan^2\theta}},
   \qquad
   \beta_{T+A}:= \beta_T-\frac{\sqrt{a^2+b^2\beta_{T}^2}}{\sqrt{1-b^2\tan^2\theta}}.
\end{align}

Let us first generalise Corollary~\ref{cor:ourconjecture1} to the case when $-T$ is quasi-sectorial.

\begin{theorem}
   \label{thm:sectorial} %%%{{{
   Let $T$ and $A$ and the constants $a,b$ be as in Assumption~\ref{ass:relbounded}
   and assume that 
   \begin{equation*}
      \sigma(T)\subseteq \sector{\theta}{\beta_T}
   \end{equation*}
   for some $\theta\in[0,\pi/2)$.
   Then
   \begin{align}
      \label{eq:resolvent:secthyperbola}
      \e^{\I\theta}(\beta_T + \hyperbola_{\beta_T}) \cup \e^{-\I\theta}(\beta_T - \hyperbola_{\beta_T})
      \subseteq \rho(T+A).
   \end{align}
   If in addition $b < \cos\theta$ holds, then the real part of $\sigma(T+A)$ is lower bounded by $\beta_{T+A}$ and we even have
   \begin{align}
      \label{eq:resolvent:sectstrip}
      \{ z\in\C : \re z < \beta_{T+A}\}
      \cup 
      \e^{\I\theta}(\beta_T + \hyperbola_{\beta_T}) \cup \e^{-\I\theta}(\beta_T - \hyperbola_{\beta_T})
      \subseteq \rho(T+A)
   \end{align}
   where $\beta_{T+A}$ as defined in \eqref{eq:betaTAsect}
   See Figure~\ref{fig:rotatedhyperbolas}.
%%%}}}
\end{theorem}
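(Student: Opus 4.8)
The plan is to derive everything from Proposition~\ref{prop:ourfirsprop3}\ref{item:ourfirstprop3:iii}: for a fixed $z$ it suffices to bound $\sup_{t\in\sigma(T)}H_z(t)$, where $H_z(t)=\frac{a^2+b^2|t|^2}{|t-z|^2}$ is the function from \eqref{eq:Hdef}. If this supremum is $<1$ it is in particular finite, so $z\notin\sigma(T)$ (as $H_z$ blows up at $z$), hence $z\in\rho(T)$, and then Proposition~\ref{prop:ourfirsprop3} gives $z\in\rho(T+A)$. Since $\sigma(T)\subseteq\sector{\theta}{\beta_T}$, it is enough to estimate $\sup_{t\in\sector{\theta}{\beta_T}}H_z(t)$. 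By Lemma~\ref{lem:nolocalextrema} the function $H_z$ has no local maximum in the interior of the sector, and $H_z(t)\to b^2<1$ as $|t|\to\infty$, so the supremum over the closed sector equals $\max\{b^2,\ \sup_{R_+}H_z,\ \sup_{R_-}H_z\}$, where $R_\pm:=\beta_T+\e^{\pm\I\theta}\R_{\ge 0}$ are the two boundary rays. Thus it remains to estimate $\sup H_z$ over each of these rays.

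For $R_+$ I would substitute $t=\beta_T+s\e^{\I\theta}$ with $s\ge 0$, so that $|t-z|^2=|s-\e^{-\I\theta}(z-\beta_T)|^2$ and $a^2+b^2|t|^2=a^2+b^2|\beta_T+s\e^{\I\theta}|^2$, and then apply the one-variable estimate for a half-line at angle $\theta$ collected in Appendix~\ref{app:estimates} (of the type of \eqref{eq:distanceline} in Lemma~\ref{lem:app:Hest} or of Lemma~\ref{lem:app:lemma0}). A direct computation should show that the resulting bound is $<1$ exactly when $z\in\e^{\I\theta}(\beta_T+\hyperbola_{\beta_T})$. The ray $R_-$ is obtained from $R_+$ by replacing $\theta$ with $-\theta$, equivalently by complex conjugating the whole picture, and produces the piece $\e^{-\I\theta}(\beta_T-\hyperbola_{\beta_T})$. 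Combining the two estimates proves \eqref{eq:resolvent:secthyperbola}.

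For the strip inclusion \eqref{eq:resolvent:sectstrip} under the additional hypothesis $b<\cos\theta$, I would instead bound $\sup_{t\in\sector{\theta}{\beta_T}}H_z(t)$ over the full sector for $z$ with $\re z<\beta_{T+A}$. Writing $t=\beta_T+r\e^{\I\psi}$ with $r=|t-\beta_T|\ge 0$ and $|\psi|\le\theta$, one uses $|t-z|\ge\re(t-z)=(\beta_T-\re z)+r\cos\psi\ge(\beta_T-\re z)+r\cos\theta$ together with a bound on $|t|$ in terms of $r$ to reduce $H_z(t)$ to a ratio in the single variable $r\ge 0$; the hypothesis $b<\cos\theta$ is precisely what makes $1-b^2\tan^2\theta>0$, so that $\beta_{T+A}$ in \eqref{eq:betaTAsect} is well defined and finite. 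Maximising over $r$ (again an estimate of the kind in Appendix~\ref{app:estimates}) should give $\sup_{t\in\sector{\theta}{\beta_T}}H_z(t)<1$ whenever $\re z<\beta_{T+A}$, hence $\{z\in\C:\re z<\beta_{T+A}\}\subseteq\rho(T+A)$ and in particular $\re\sigma(T+A)\ge\beta_{T+A}$; taking the union with \eqref{eq:resolvent:secthyperbola} yields \eqref{eq:resolvent:sectstrip}.

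The main obstacle is the one-variable and geometric bookkeeping that turns the crude suprema $\sup_{R_\pm}H_z$ and $\sup_{\sector{\theta}{\beta_T}}H_z$ into the sharp descriptions by the rotated hyperbolas $\e^{\pm\I\theta}(\beta_T\pm\hyperbola_{\beta_T})$ and the cut-off $\beta_{T+A}$; in particular one must deal with the cross term arising when $|t|^2$ is expanded along a ray that is not parallel to an axis, and one must check that the hyperbola pieces and the half-plane $\{\re z<\beta_{T+A}\}$ fit together consistently---their boundaries share the asymptotic slopes $\tan(\theta\pm\phi_b)$ with $\phi_b=\arctan\frac{b}{\sqrt{1-b^2}}$---so that the set in \eqref{eq:resolvent:sectstrip} is exactly where the criterion $\sup_{t\in\sigma(T)}H_z(t)<1$ is available.
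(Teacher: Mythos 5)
The overall strategy (invoke Proposition~\ref{prop:ourfirsprop3}\ref{item:ourfirstprop3:iii} and bound $\sup_{t\in\sigma(T)}H_z(t)$, using Lemma~\ref{lem:nolocalextrema} to reduce to the boundary) matches the paper. But the way you carry out the first half has a genuine gap, and the way you carry out the second half is a different route from the paper's which you have not actually closed.

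\textbf{First half (the two rotated hyperbolas).} You reduce $\sup_{\sector{\theta}{\beta_T}}H_z$ to $\max\{b^2,\,\sup_{R_+}H_z,\,\sup_{R_-}H_z\}$ and then assert that the condition $\sup_{R_+}H_z<1$ is equivalent to $z\in\e^{\I\theta}(\beta_T+\hyperbola_{\beta_T})$, that the corresponding condition for $R_-$ ``produces'' $\e^{-\I\theta}(\beta_T-\hyperbola_{\beta_T})$, and that ``combining the two estimates'' proves \eqref{eq:resolvent:secthyperbola}. This does not add up: for a fixed $z$ you need \emph{both} $\sup_{R_+}H_z<1$ and $\sup_{R_-}H_z<1$ simultaneously, so what you would actually establish is an inclusion of the \emph{intersection} of two sets, not the \emph{union}. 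Moreover the claimed ``exactly'' is false: the set $\e^{\I\theta}(\beta_T+\hyperbola_{\beta_T})$ is characterized (in the paper's Lemma~\ref{lem:app:Hest}\ref{item:lem:app:Hsectorb}) by the supremum of $H_z$ over the \emph{full line} $\beta_T+\e^{\I\theta}\R$, not just the ray $r\ge 0$, being $<1$; the ray supremum is smaller, and in addition may be attained at the endpoint $r=0$. The paper sidesteps the whole issue by enclosing the sector in a half-plane $P$ with $\partial P=\beta_T+\e^{\I\theta}\R$ (when $\arg(z-\beta_T)\in(\theta,\theta+\pi)$), so that $\sup_{\sector{\theta}{\beta_T}}H_z\le\sup_P H_z=\sup_{\partial P}H_z$; the ray $R_-$ is inside $P$, so its contribution is automatically controlled by the same line estimate, and one obtains a \emph{union} of hyperbolic regions by doing this on each side of the sector. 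If you want to stick to the two rays $R_\pm$, you must add the observation that for $z\in\e^{\I\theta}(\beta_T+\hyperbola_{\beta_T})$ the bound on the line through $R_+$ already dominates both $\sup_{R_+}H_z$ and $\sup_{R_-}H_z$ (precisely because $R_-\subseteq P$), which is the half-plane argument in disguise.

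\textbf{Second half (the vertical cutoff $\beta_{T+A}$).} Here you propose to lower-bound $|t-z|\ge\re(t-z)\ge(\beta_T-\re z)+r\cos\theta$ and then maximise a one-variable ratio in $r$. This is a genuinely different route from the paper, which instead writes $H_z(t)=H_{\re z}(t-\I\,\im z)+b^2\frac{(\im t)^2-(\im t-\im z)^2}{|t-z|^2}$, estimates the first term by the vertical-line bound \eqref{eq:supline} and the second by $b^2\tan^2\theta$ (this is Lemma \eqref{eq:Hsector2b}), yielding exactly $\max\{b^2,\frac{a^2+b^2\beta_T^2}{(\beta_T-\re z)^2}\}+b^2\tan^2\theta<1$ and hence the explicit $\beta_{T+A}$. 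Your plan can plausibly be made to work, but you have not verified that the one-variable maximisation actually reproduces $\beta_{T+A}$ as defined in \eqref{eq:betaTAsect} (or at least a value $\ge\beta_{T+A}$); also the auxiliary bound on $|t|^2=|\beta_T+r\e^{\I\psi}|^2$ in terms of $r$ depends on the sign of $\beta_T$ and needs a small case distinction you do not mention. Until that calculation is carried out, the inclusion \eqref{eq:resolvent:sectstrip} is not established.
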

\begin{remark}
   The analogous inclusions for the case when $\sigma(T)\subseteq -\sector{\theta}{-\alpha_T}$ are:
   \begin{align}
      \label{eq:resolvent:secthyperbolaA}
      \e^{-\I\theta}(\alpha_T + \hyperbola_{\alpha_T}) \cup \e^{\I\theta}(\alpha_T - \hyperbola_{\alpha_T})
      \subseteq \rho(T+A).
   \end{align}
   Moreover, if $b<\cos\theta$, then the real part of $\sigma(T+A)$ is upper bounded by $\alpha_{T+A}$ and we obtain the stronger result
   \begin{align}
      \label{eq:resolvent:sectstripA}
      \{ z\in\C : \re z > \alpha_{T+A}\}
      \cup 
      \e^{-\I\theta}(\alpha_T + \hyperbola_{\alpha_T}) \cup \e^{\I\theta}(\alpha_T - \hyperbola_{\alpha_T})
      \subseteq \rho(T+A).
   \end{align}
\end{remark}

\begin{remark}
   If \eqref{eq:resolvent:sectstrip} holds, then clearly there exists $\omega\in\R$ such that $\sigma(T+A) \subseteq \sector{\theta+\phi_b}{\omega}$.
\end{remark}

\begin{proof}[Proof of Theorem~\ref{thm:sectorial}] %%%{{{
   Let $z = \mu + \I\nu \in\C\setminus\sector{\theta}{\beta_T}$ and define the function $H_z$ as in \eqref{eq:Hdef}.
   Then, by Proposition~\ref{prop:ourfirsprop3}\ref{item:ourfirstprop3:iii},
   $z\in\rho(T+A)$ if $\sup_{\sigma(T)} H_z(t) < 1$.
   Inequality \eqref{eq:Hsectorb} shows that for any $z\in\C\setminus \sector{\theta}{\beta_T}$ with $\arg(z-\beta_T)\in (\theta,\theta+\pi)$
   \begin{align*}
      \sup_{t\in \sigma(T)} H_z(t)
      \le
      \sup_{t\in \sector{\theta}{\beta_T}} 
      H_z(t)
      = \sup_{t\in \sector{\theta}{\beta_T}} 
      \frac{a^2+b^2|t|^2}{|t-z|^2}
      \le
      b^2 + 
      \frac{ a^2  + b^2\beta_T^2 + b^2 (\re( \e^{ -\I\theta}(z-\beta_T)) )^2 }{ (\im (\e^{ -\I\theta} (z - \beta_T)) )^2 } .
   \end{align*}
   Clearly, the right hand side is less than 1 if and only if
   \begin{align}
      \label{eq:ourfirsttheo2:b2}
      \frac{ a^2 +  b^2 \beta_T^2 + b^2 (\re (\e^{ -\I\theta}(z-\beta_T)) )^2 }{1-b^2}
      < 
      (\im (\e^{ -\I\theta} (z - \beta_T)) )^2.
   \end{align}
   Since $\arg(z-\beta_T)\in (\theta,\theta+\pi)$, this is the case if and only if
   $z\in \e^{\I\theta}(\beta_T + \hyperbola_\beta)$.
   Analogously, if $\arg(z-\beta_T)\in (-\theta-\pi, -\theta)$, then 
   $\sup_{t\in \sector{\theta}{\beta_T}} \frac{a^2+b^2|t|^2}{|t-z|^2} < 1$ if 
   $z\in \e^{-\I\theta}(\beta_T - \hyperbola_\beta)$.
   Therefore \eqref{eq:resolvent:secthyperbola} is proved.

   Now let us assume that $\mu = \re z < \beta_T$.
   Then \eqref{eq:Hsector2b} shows that
   \begin{align}
      \label{eq:thm:bisecgap:1}
      \sup_{t\in \partial\sector{\theta}{\beta_T}} 
      H_z(t)
      \le
      \max\left\{b^2,\ \frac{{a^2+b^2\beta_T^2}}{(\beta_T-\mu)^2} \right\}
      + b^2\tan^2\theta.
   \end{align}
   Under the additional assumption $\beta < \cos\theta$ we have that $b^2(1+\tan^2\theta)<1$.
   So the right hand side in \eqref{eq:thm:bisecgap:1} is smaller than 1 if
   $\frac{{a^2+b^2\beta_T^2}}{(\beta_T-\mu)^2}+b^2\tan^2\theta < 1$.
   This is the case
   if and only if 
   $\mu < \beta_T - \frac{\sqrt{a^2+b^2\beta_T^2}}{1-b^2\tan^2\theta} = \beta_{T+A}$.
   It follows that $\{z\in\C : \re z < \beta_{T+A} \}\subseteq\rho(T+A)$ and 
   \eqref{eq:ourfirsttheo2:b2} is proved.
\end{proof}
%%%}}}

\begin{theorem}
   \label{thm:bisecgap} %%% {{{ Bisectorial with gap 
   Let $T$ and $A$ and the constants $a,b$ be as in Assumption~\ref{ass:relbounded}
   and assume that there are $\alpha_T < \beta_T$ and $\theta\in [0, \pi/2)$ such that 
   \begin{equation*}
      \sigma(T)\subseteq \sector{\theta}{\beta_T}\cup-\sector{\theta}{-\alpha_T}.
   \end{equation*}
   Then
   \begin{multline}
      \label{eq:bisec:hyperbola}
      \Big(
      \e^{\I\theta}(\beta_T + \hyperbola_{\beta_T}) \cup \e^{-\I\theta}(\beta_T - \hyperbola_{\beta_T})
      \Big)
      \cap
      \Big(
      \e^{-\I\theta}(\alpha_T + \hyperbola_{\alpha_T}) \cup \e^{\I\theta}(\alpha_T - \hyperbola_{\alpha_T})
      \Big)
      \\
      \subseteq \rho(T+A).
   \end{multline}
   If $b < \cos\theta$, then we have in addition
   \begin{align}
      \label{eq:bisec:strip}
      \{z\in\C : \alpha_{T+A} < \re z < \beta_{T+A} \}
      \subseteq \rho(T+A)
   \end{align}
   where $\alpha_{T+A}$ and $\beta_{T+A}$ are defined in \eqref{eq:betaTAsect}.
   See Figure~\ref{fig:bisectorgap}.

%%%}}}
\end{theorem}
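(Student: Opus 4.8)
The plan is to re-use the argument of Theorem~\ref{thm:sectorial} essentially verbatim, once for the right sector $\sector{\theta}{\beta_T}$ and once for the left sector $-\sector{\theta}{-\alpha_T}$, and to glue the two conclusions together. Since $\sigma(T)\subseteq\sector{\theta}{\beta_T}\cup-\sector{\theta}{-\alpha_T}$, for any $z$ we have
\[
   \sup_{t\in\sigma(T)}H_z(t)\le\max\Big\{\sup_{t\in\sector{\theta}{\beta_T}}H_z(t),\ \sup_{t\in-\sector{\theta}{-\alpha_T}}H_z(t)\Big\},
\]
so by Proposition~\ref{prop:ourfirsprop3}\ref{item:ourfirstprop3:iii} it suffices, for each candidate $z$, to check that $z\in\rho(T)$ and that both suprema on the right are $<1$; the estimates controlling each individual supremum are precisely those already used in the proof of Theorem~\ref{thm:sectorial}.

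For the inclusion \eqref{eq:bisec:hyperbola} I would take $z$ in the stated intersection. Then $z$ lies in one of the two hyperbola regions attached to $\beta_T$ and in one of the two attached to $\alpha_T$. Each rotated--shifted hyperbola region is disjoint from the sector out of which it is built --- this is implicit in the proof of Theorem~\ref{thm:sectorial}, where $\e^{\I\theta}(\beta_T+\hyperbola_{\beta_T})$ is identified with the set of $z$ having $\arg(z-\beta_T)\in(\theta,\theta+\pi)$, and similarly for the other three --- so $z$ avoids both sectors and hence $z\in\rho(T)$. The computation from the proof of Theorem~\ref{thm:sectorial} using \eqref{eq:Hsectorb} then gives $\sup_{t\in\sector{\theta}{\beta_T}}H_z(t)<1$, and its left-sector analogue (recorded in \eqref{eq:resolvent:secthyperbolaA}) gives $\sup_{t\in-\sector{\theta}{-\alpha_T}}H_z(t)<1$. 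The displayed maximum is thus $<1$ and Proposition~\ref{prop:ourfirsprop3}\ref{item:ourfirstprop3:iii} yields $z\in\rho(T+A)$.

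For the strip inclusion \eqref{eq:bisec:strip} under the extra hypothesis $b<\cos\theta$, I would take $z=\mu+\I\nu$ with $\alpha_{T+A}<\mu<\beta_{T+A}$. Since $b<\cos\theta$ forces $b^2\tan^2\theta<1$, the quantities in \eqref{eq:betaTAsect} are well defined and satisfy $\alpha_T\le\alpha_{T+A}$ and $\beta_{T+A}\le\beta_T$; hence $\alpha_T<\mu<\beta_T$. Since $\sector{\theta}{\beta_T}\subseteq\{\re z\ge\beta_T\}$ and $-\sector{\theta}{-\alpha_T}\subseteq\{\re z\le\alpha_T\}$, the open strip $\{\alpha_T<\re z<\beta_T\}$ lies in $\rho(T)$, so $z\in\rho(T)$. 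From $\mu<\beta_T$, the boundary estimate \eqref{eq:Hsector2b} used in the proof of Theorem~\ref{thm:sectorial} gives $\sup_{t\in\sector{\theta}{\beta_T}}H_z(t)\le\max\{b^2,\ \frac{a^2+b^2\beta_T^2}{(\beta_T-\mu)^2}\}+b^2\tan^2\theta$, which is $<1$ precisely because $b^2(1+\tan^2\theta)<1$ and $\mu<\beta_{T+A}$; symmetrically, from $\mu>\alpha_T$ and $\mu>\alpha_{T+A}$ one obtains $\sup_{t\in-\sector{\theta}{-\alpha_T}}H_z(t)<1$. Taking the maximum and invoking Proposition~\ref{prop:ourfirsprop3}\ref{item:ourfirstprop3:iii} finishes the proof.

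I do not expect a genuine obstacle: all the analytic work was already carried out in Theorem~\ref{thm:sectorial}, and the present result is essentially its bisectorial repackaging. The only points requiring a little care are (a) confirming that the intersection in \eqref{eq:bisec:hyperbola} actually lies in $\rho(T)$, which reduces to the geometric fact that each rotated--shifted hyperbola region sits strictly outside the sector from which it is built, and (b) checking, in the strip case, that the two sector suprema are simultaneously $<1$, which is immediate once one notes $\alpha_T\le\alpha_{T+A}<\beta_{T+A}\le\beta_T$ (the strict middle inequality being needed only for the strip to be nonempty). As in Theorem~\ref{thm:sectorial}, one may, if desired, first invoke Lemma~\ref{lem:nolocalextrema} to reduce each supremum over a sector to a supremum over its boundary together with the point at infinity, where $H_z\to b^2<1$.
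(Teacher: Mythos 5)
Your proof is correct and takes essentially the same route as the paper: splitting $\sup_{t\in\sigma(T)}H_z(t)$ into the maximum over the two sectors and invoking the estimates already established in the proof of Theorem~\ref{thm:sectorial} for each of them, then concluding via Proposition~\ref{prop:ourfirsprop3}. The additional care you take in confirming $z\in\rho(T)$ and verifying $\alpha_T\le\alpha_{T+A}<\beta_{T+A}\le\beta_T$ is sound and merely spells out what the paper leaves implicit.
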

\begin{proof} %%%{{{
   Let $z = \mu + \I\nu \in\C\setminus\Omega(\alpha_T, \beta_T, \theta)$ and define the function $H_z$ as in \eqref{eq:Hdef}.
   Proposition~\ref{prop:ourfirsprop3}\ref{item:ourfirstprop3:iii} shows that $z\in\rho(T+A)$ if $\sup_{\sigma(T)} H_z(t) < 1$.
   Observe that 
   \begin{align}
      \label{eq:bisecgap}
      %% \sup_{t\in \sigma(T)} 
      %% \frac{a^2+b^2|t|^2}{|t-z|^2} 
      %% = 
      \sup_{t\in \sigma(T)} H_z(t)
      \le \max\left\{ \sup_{t\in -\sector{\theta}{-\alpha_T}} H_z(t),\
      \sup_{t\in \sector{\theta}{\beta_T}} H_z(t) \right\}.
   \end{align}
   From the proof of Theorem~\ref{thm:sectorial} we know that 
   $\sup_{t\in \sector{\theta}{\alpha_T}} H_z(t) < 1$
   if $z$ belongs to the left hand side of \eqref{eq:resolvent:secthyperbola}
   and that 
   $\sup_{t\in -\sector{\theta}{-\alpha_T}} H_z(t) < 1$
   if $z$ belongs to the left hand side of \eqref{eq:resolvent:secthyperbolaA}.
   Therefore the right hand side of \eqref{eq:bisecgap} is less than 1 if $z$ belongs to both sets which proves \eqref{eq:bisec:hyperbola}.

   If $b<\cos\theta$, then the proofs of \eqref{eq:resolvent:sectstrip} and \eqref{eq:resolvent:sectstripA} show that the right hand side of \eqref{eq:bisecgap} is less than 1 if 
   $\alpha_{T+A} < \re z < \beta_{T+A}$.
   Hence \eqref{eq:bisec:strip} holds.
\end{proof}
%%%}}}

\begin{figure}[H] %%%{{{ Hyperbolas for the proof for a sectorial operator
   \begin{tikzpicture}[ 
      declare function={ 
      hyperbolarotb(\x) = sqrt( (\abound^2 + \bbound^2*(\betaT)^2  + \bbound^2*(\x)^2)/(1-\bbound^2) ); 
      hyperbolarota(\x) = sqrt( (\abound^2 + \bbound^2*(\alphaT)^2 + \bbound^2*(\x)^2)/(1-\bbound^2) ); 
      } ]

      \tikzmath{
      \gT = 0;  %% \gamma_T
      \abound = .5;   %% a
      \bbound = .3;  %% b
      \alphaT = -2;   %% \alpha_T
      \betaT = 1;      %% \beta_T
      \thetaT = 15;      %% \beta_T
      \alphaTA = \alphaT + sqrt( \abound^2 + \bbound^2*\gT^2 + \bbound^2*(\alphaT)^2);   %% \alpha_{T+A}
      \betaTA = \betaT - sqrt( \abound^2 + \bbound^2*\gT^2 + \bbound^2*(\betaT)^2);      %% \beta_{T+A}
      \thetaB = atan(\bbound/sqrt(1-\bbound^2));
      }

      \begin{scope}[scale=.6, xshift=-13cm] %%%{{{ 

	 \begin{scope}
	    \clip (-5,-3) rectangle (5, 3);

	    %% plan P containing the right sector
	    \path [fill=colPlane, shift={(\betaT,0)}, rotate=\thetaT] (-8,0) -- (8,0) --  (8, -8) -- (-8,-8) -- cycle;

	    %% spectrum of T
	    %% \path [fill=colTspec, shift={(\alphaT,0)}] (\thetaT:-6) -- (0,0) --  (-\thetaT:-6) -- cycle;
	    \path [fill=colTspec, shift={(\betaT,0)}] (\thetaT:6) -- (0,0) --  (-\thetaT:6) -- cycle;

	 \end{scope}

	 %% Axis
	 \draw[->] (-5.5,0)--(5.5,0);
	 \draw[->] (0,-3)--(0,3);

	 \node at (-5,3) {(a)};
      \end{scope}

      %%%}}}

      \begin{scope}[scale=.6, xshift=0cm] %%%{{{ 

	 \begin{scope}
	    \clip (-5,-3) rectangle (5, 3);

	    %% plan P containing the right sector
	    %% \path [fill=gray, opacity=.3, shift={(\betaT,0)}, rotate=\thetaT] (-8,0) -- (8,0) --  (8, -8) -- (-8,-8) -- cycle;

	    %% area below the rotated hyperbolas
	    \fill [colTAspec, opacity=.2, shift={(\betaT,0)} ]
	    plot[domain=-7:7, smooth,  rotate=\thetaT] ({\x},{hyperbolarotb(\x)}) -- (7,-7) -- (-7,-7)-- cycle;

	    %% spectrum of T
	    %% \path [fill=colTspec, shift={(\alphaT,0)}] (\thetaT:-6) -- (0,0) --  (-\thetaT:-6) -- cycle;
	    \path [fill=colTspec, shift={(\betaT,0)}] (\thetaT:6) -- (0,0) --  (-\thetaT:6) -- cycle;

	    %% tangents
	    \draw [red, densely dashed] plot [domain=\betaT:7.0, smooth] ({\x},{(\x-\betaT) * tan(\thetaT+\thetaB)});
	    \draw [red, densely dashed] plot [domain=-7.0:\betaT, smooth] ({\x},{(\x-\betaT) * tan(\thetaT-\thetaB)});
	    %% tangents (works too)
	    %% \draw [green, shift={(\alphaT,0)}] plot [domain=-7.0:7.0, smooth, rotate=\thetaT] ({\x},{(\x) * tan(\thetaB)});
	    %% \draw [green, shift={(\alphaT,0)}] plot [domain=-7.0:7.0, smooth, rotate=\thetaT] ({\x},{(\x) * tan(-\thetaB)});
	 \end{scope}

	 %% Axis
	 \draw[->] (-5.5,0)--(5.5,0);
	 \draw[->] (0,-3)--(0,3);

	 \node at (-5,3) {(b)};
      \end{scope}

      %%%}}}

      \begin{scope}[scale=.6, xshift=-13cm, yshift=-8cm] %%%{{{ 

	 \begin{scope}
	    \clip (-5,-3) rectangle (5, 3);

	    %% plan Q containing the right sector}
	    \path [fill=colPlane, shift={(\betaT,0)}, rotate={180-\thetaT}] (-8,0) -- (8,0) --  (8, -8) -- (-8,-8) -- cycle;

	    %% spectrum of T
	    %% \path [fill=colTspec, shift={(\alphaT,0)}] (\thetaT:-6) -- (0,0) --  (-\thetaT:-6) -- cycle;
	    \path [fill=colTspec, shift={(\betaT,0)}] (\thetaT:6) -- (0,0) --  (-\thetaT:6) -- cycle;
	 \end{scope}

	 %% Axis
	 \draw[->] (-5.5,0)--(5.5,0);
	 \draw[->] (0,-3)--(0,3);

	 \node at (-5,3.5) {(c)};
      \end{scope}

      %%%}}}

      \begin{scope}[scale=.6, xshift=0cm, yshift=-8cm] %%%{{{ 

	 \begin{scope}
	    \clip (-5,-3) rectangle (5, 3);

	    %% plan Q containing the right sector
	    %% \path [fill=gray, opacity=.3, shift={(\betaT,0)}, rotate=180-\thetaT] (-8,0) -- (8,0) --  (8, -8) -- (-8,-8) -- cycle;

	    %% area above the rotated hyperbolas
	    \fill [colTAspec, opacity=.2, shift={(\betaT,0)} ]
	    plot[domain=-7:7, smooth,  rotate={180-\thetaT}] ({\x},{hyperbolarotb(\x)}) -- (-7,7) -- (7,7) -- cycle;

	    %% spectrum of T
	    %% \path [fill=colTspec, shift={(\alphaT,0)}] (\thetaT:-6) -- (0,0) --  (-\thetaT:-6) -- cycle;
	    \path [fill=colTspec, shift={(\betaT,0)}] (\thetaT:6) -- (0,0) --  (-\thetaT:6) -- cycle;

	    %% tangents
	    \draw [red, densely dashed] plot [domain=\betaT:-7.0, smooth] ({\x},{(\x-\betaT) * tan(-\thetaT+\thetaB)});
	    \draw [red, densely dashed] plot [domain=7.0:\betaT, smooth] ({\x},{(\x-\betaT) * tan(-\thetaT-\thetaB)});
	    %% tangents (works too)
	    %% \draw [green, shift={(\alphaT,0)}] plot [domain=-7.0:7.0, smooth, rotate=\thetaT] ({\x},{(\x) * tan(\thetaB)});
	    %% \draw [green, shift={(\alphaT,0)}] plot [domain=-7.0:7.0, smooth, rotate=\thetaT] ({\x},{(\x) * tan(-\thetaB)});
	 \end{scope}

	 %% Axis
	 \draw[->] (-5.5,0)--(5.5,0);
	 \draw[->] (0,-3)--(0,3);

	 \node at (-5,3.5) {(d)};
      \end{scope}

      %%%}}}

      \begin{scope}[scale=0.6, xshift=-13cm, yshift=-15cm] %%%{{{ 

	 \begin{scope}
	    \clip (-5,-3) rectangle (5, 3);

	    %% area above the rotated hyperbolas
	    \begin{scope}
	       \clip plot[domain=-7:7, smooth, shift={(\betaT,0)},  rotate=\thetaT] ({\x},{hyperbolarotb(\x)}) -- (7,-7) -- (-7,-7)-- cycle;
	       \fill [colTAspec, opacity=.2 ]
	       plot[domain=-7:7, smooth, shift={(\betaT,0)},  rotate={180-\thetaT}] ({\x},{hyperbolarotb(\x)}) -- (-7,7) -- (7,7) -- cycle;
	    \end{scope}

	    %% spectrum of T
	    %% \path [fill=colTspec, shift={(\alphaT,0)}] (\thetaT:-6) -- (0,0) --  (-\thetaT:-6) -- cycle;
	    \path [fill=colTspec, shift={(\betaT,0)}] (\thetaT:6) -- (0,0) --  (-\thetaT:6) -- cycle;

	    %% line at \betaTA, lower bound for the spectrum
	    \draw [blue] (\betaTA, -5) -- (\betaTA, 5);

	 \end{scope}

	 %% Axis
	 \draw[->] (-5.5,0)--(5.5,0);
	 \draw[->] (0,-3)--(0,3);

	 \node at (-5,3.5) {(e)};

      \end{scope}

      %%%}}}

      \begin{scope}[scale=0.6, xshift=0cm, yshift=-15cm] %%%{{{ 

	 \tikzmath{
	 \thetaT = 40;      %% \beta_T
	 \betaTA = \betaT - sqrt( \abound^2 + \bbound^2*\gT^2 + \bbound^2*(\betaT)^2);      %% \beta_{T+A}
	 }

	 \begin{scope}
	    \clip (-5,-3) rectangle (5, 3);

	    %% area above the rotated hyperbolas
	    \begin{scope}
	       \clip plot[domain=-7:7, smooth, shift={(\betaT,0)},  rotate=\thetaT] ({\x},{hyperbolarotb(\x)}) -- (7,-7) -- (-7,-7)-- cycle;
	       \fill [colTAspec, opacity=.2 ]
	       plot[domain=-7:7, smooth, shift={(\betaT,0)},  rotate={180-\thetaT}] ({\x},{hyperbolarotb(\x)}) -- (-7,7) -- (7,7) -- cycle;
	    \end{scope}

	    %% spectrum of T
	    %% \path [fill=colTspec, shift={(\alphaT,0)}] (\thetaT:-6) -- (0,0) --  (-\thetaT:-6) -- cycle;
	    \path [fill=colTspec, shift={(\betaT,0)}] (\thetaT:6) -- (0,0) --  (-\thetaT:6) -- cycle;

	    %% line at \betaTA, lower bound for the spectrum
	    \draw [blue] (\betaTA, -5) -- (\betaTA, 5);

	 \end{scope}

	 %% Axis
	 \draw[->] (-5.5,0)--(5.5,0);
	 \draw[->] (0,-3)--(0,3);

	 \node at (-5,3.5) {(f)};

      \end{scope}

      %%%}}}

   \end{tikzpicture}
   
   \caption[Illustration for the proof of Theorem~\ref{thm:sectorial}.]
   {%
   Illustration for the proof of Theorem~\ref{thm:bisecgap} and Lemma~\ref{lem:app:Hest}~\ref{item:lem:app:Hsectorb}.
   In (a), the lightblue half-plane is the plane $P$ above which the estimate \eqref{eq:halfplaneestimate} holds.
   The white regions in (b) and (d) correspond to the sets $\e^{\I\theta}(\beta_T + \hyperbola_{\beta_T})$ and $\e^{-\I\theta}(\beta_T - \hyperbola_{\beta_T})$, respectively.
   There, $\sup_{t\in\partial(\sector{\theta}{\beta_T})} H_z(t) < 1$ holds.
   The red dashed lines indicate the asymptotes of the hyperbola.
   If we combine the guaranteed regions for the resolvent set from (b) and (d), we find that the spectrum of $T+A$ must be contained in the grey area in (e).
   The vertical blue line in (e) and (f) is the lower bound $\beta_{T+A}$ for the real part of the spectrum of $T+A$ in the case $b<\cos\theta$, so $\sigma(T+A)$ is contained in the grey area to its right.
   The graphic in (f) shows the spectral inclusion for a larger angle $\theta$.
   }
  \label{fig:rotatedhyperbolas}
\end{figure}
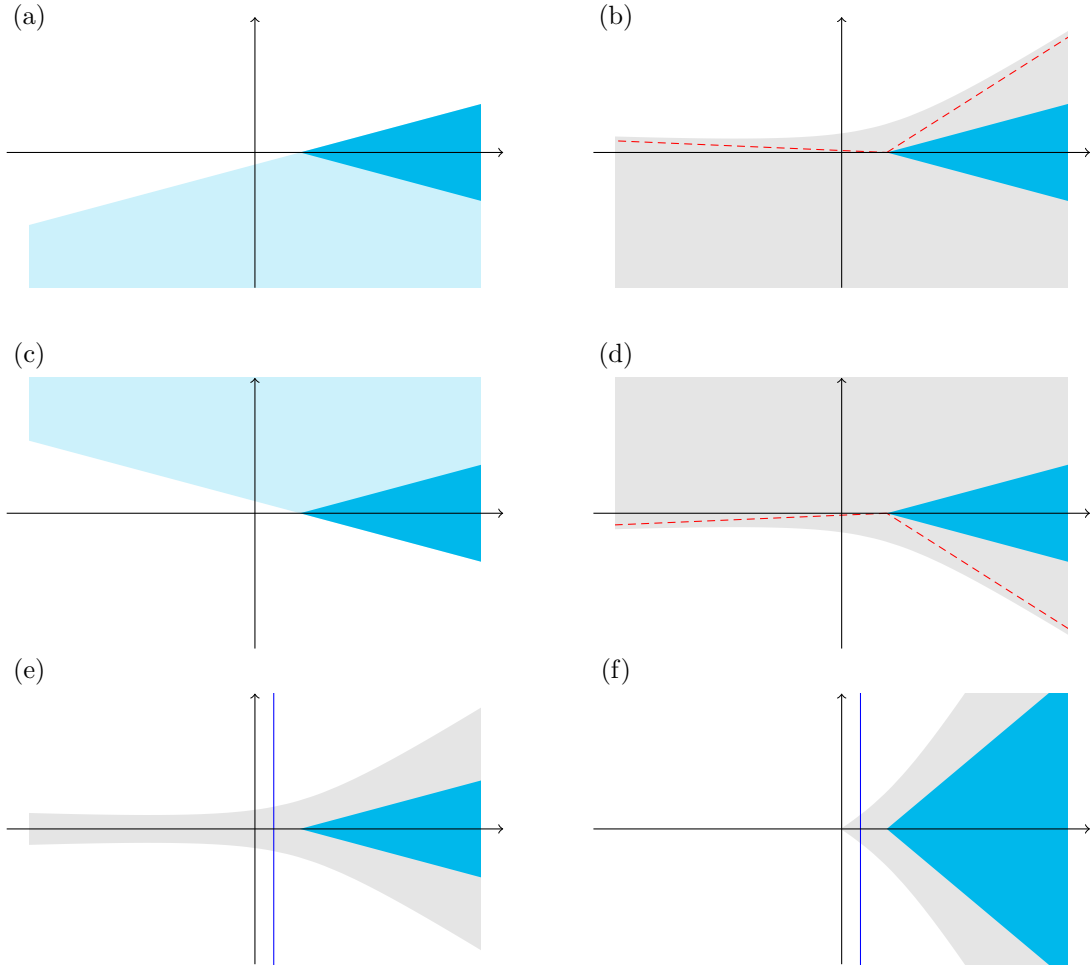
%%%}}}

\begin{figure}[H] %%%{{{ Spectral inclusion: bisector with gap
   \centering
   \begin{tikzpicture}[ 
      declare function={ 
      hyperbolarotb(\x) = sqrt( (\abound^2 + \bbound^2*(\betaT)^2  + \bbound^2*(\x)^2)/(1-\bbound^2) ); 
      hyperbolarota(\x) = sqrt( (\abound^2 + \bbound^2*(\alphaT)^2 + \bbound^2*(\x)^2)/(1-\bbound^2) ); 
      } ]

      \tikzmath{
      \gT = 0;  %% \gamma_T
      \abound = .5;   %% a
      \bbound = .3;  %% b
      \alphaT = -1;   %% \alpha_T
      \betaT = 2;      %% \beta_T
      \thetaT = 15;      %% \beta_T
      \alphaTA = \alphaT + sqrt( \abound^2 + \bbound^2*\gT^2 + \bbound^2*(\alphaT)^2);   %% \alpha_{T+A}
      \betaTA = \betaT - sqrt( \abound^2 + \bbound^2*\gT^2 + \bbound^2*(\betaT)^2);      %% \beta_{T+A}
      }

      \begin{scope}[scale=0.6, transform shape] %%%{{{

	 \begin{scope}
	    \clip (-5, -3) rectangle (\alphaTA, 3);

	    %% area between the rotated hyperbolas left

	    \begin{scope}
	       \clip plot[domain=-7.0:7.0, smooth, shift={(\alphaT,0)}, rotate=-\thetaT] ({\x},{hyperbolarota(\x)}) -- (7,-7) -- (-7,-7) -- cycle;
	       \fill [colTAspec, opacity=.2]
	       plot[domain=7.0:-7.0, smooth, shift={(\alphaT,0)}, rotate=\thetaT] ({\x},{-hyperbolarota(\x)}) -- (-7,7) -- (7,7) -- cycle;
	    \end{scope}

	 \end{scope}

	 \begin{scope}
	    \clip (\betaTA,-3) rectangle (5, 3);

	    %% area between the rotated hyperbolas right
	    \begin{scope}
	       \clip plot[domain=7:-7, smooth, shift={(\betaT,0)}, rotate=-\thetaT] ({\x},{-hyperbolarotb(\x)}) -- (-7,7) -- (7,7) -- cycle;
	       \fill [colTAspec, opacity=.2]
	       plot[domain=-7:7, smooth, shift={(\betaT,0)} , rotate=\thetaT] ({\x},{hyperbolarotb(\x)}) -- (7,-7) -- (-7,-7) -- cycle;
	    \end{scope}

	 \end{scope}

	 \begin{scope}
	    \clip (-5,-5) rectangle (5, 5);

	    %% spectrum of T
	    \path [fill=colTspec, opacity=1, shift={(\alphaT,0)}] (\thetaT:-6) -- (0,0) --  (-\thetaT:-6) -- cycle;
	    \path [fill=colTspec, opacity=1, shift={(\betaT,0)}] (\thetaT:6) -- (0,0) --  (-\thetaT:6) -- cycle;
	 \end{scope}

	 \draw (\alphaTA,-.1) -- node [below] {$\alpha_{T+A}$} (\alphaTA,.1);
	 \draw (\betaTA,-.1) -- node [below] {$\beta_{T+A}$} (\betaTA,.1);

	 %% Axis
	 \draw[->] (-5.5,0)--(5.5,0);
	 \draw[->] (0,-3)--(0,3);
      \end{scope}
      %%%}}}

      \begin{scope}[scale=0.6, transform shape, xshift=13cm] %%%{{{
      \tikzmath{
      \thetaT = 35;      %% \beta_T
      \alphaTA = \alphaT + sqrt( \abound^2 + \bbound^2*\gT^2 + \bbound^2*(\alphaT)^2);   %% \alpha_{T+A}
      \betaTA = \betaT - sqrt( \abound^2 + \bbound^2*\gT^2 + \bbound^2*(\betaT)^2);      %% \beta_{T+A}
      }

	 \begin{scope}
	    \clip (-5, -3) rectangle (\alphaTA, 3);

	    %% area between the rotated hyperbolas left

	    \begin{scope}
	       \clip plot[domain=-7.0:7.0, smooth, shift={(\alphaT,0)}, rotate=-\thetaT] ({\x},{hyperbolarota(\x)}) -- (7,-7) -- (-7,-7) -- cycle;
	       \fill [colTAspec, opacity=.2]
	       plot[domain=7.0:-7.0, smooth, shift={(\alphaT,0)}, rotate=\thetaT] ({\x},{-hyperbolarota(\x)}) -- (-7,7) -- (7,7) -- cycle;
	    \end{scope}

	 \end{scope}

	 \begin{scope}
	    \clip (\betaTA,-3) rectangle (5, 3);

	    %% area between the rotated hyperbolas right
	    \begin{scope}
	       \clip plot[domain=7:-7, smooth, shift={(\betaT,0)}, rotate=-\thetaT] ({\x},{-hyperbolarotb(\x)}) -- (-7,7) -- (7,7) -- cycle;
	       \fill [colTAspec, opacity=.2]
	       plot[domain=-7:7, smooth, shift={(\betaT,0)} , rotate=\thetaT] ({\x},{hyperbolarotb(\x)}) -- (7,-7) -- (-7,-7) -- cycle;
	    \end{scope}

	 \end{scope}

	 \begin{scope}
	    \clip (-5,-5) rectangle (5, 5);

	    %% spectrum of T
	    \path [fill=colTspec, opacity=1, shift={(\alphaT,0)}] (\thetaT:-6) -- (0,0) --  (-\thetaT:-6) -- cycle;
	    \path [fill=colTspec, opacity=1, shift={(\betaT,0)}] (\thetaT:6) -- (0,0) --  (-\thetaT:6) -- cycle;
	 \end{scope}

	 \draw (\alphaTA,-.1) -- node [below] {$\alpha_{T+A}$} (\alphaTA,.1);
	 \draw (\betaTA,-.1) -- node [below] {$\beta_{T+A}$} (\betaTA,.1);

	 %% Axis
	 \draw[->] (-5.5,0)--(5.5,0);
	 \draw[->] (0,-3)--(0,3);
      \end{scope}
      %%%}}}

   \end{tikzpicture}
   
   \caption{Illustration of the spectral inclusion from Theorem~\ref{thm:bisecgap} for different angles $\theta$.
   The blue sectors $\Omega(\alpha_T, \beta_T, \theta)$ contain the spectrum of $T$ and the spectrum of $T+A$ is contained in the grey area.
   }
  \label{fig:bisectorgap}

\end{figure}
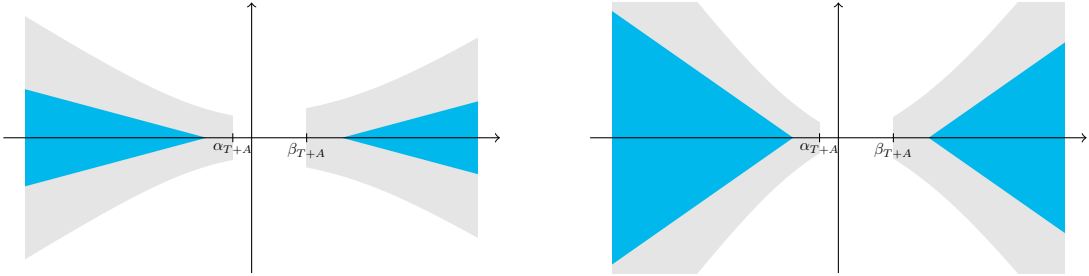
%%%}}}

The proof of Theorem~\ref{thm:bisecgap} leads to estimates for the norm of the resolvent of $T+A$.

\begin{corollary}
   \label{cor:resolventestimate2}
   Let $T$ and $A$ be as in Proposition~\ref{thm:bisecgap}
   with $\alpha_{T+A}$ and $\beta_{T+A}$ as in \eqref{eq:betaTAsect}.
   Then, for $z\in \C$ with $\alpha_{T+A}<\re z<\beta_{T+A}$, we have that 
   \begin{align*}
      &\|(T+A-z)^{-1}\|\\
      &\leq \frac{1}{\dist(z,\Omega(\alpha_T, \beta_T, \theta))}
      \frac{1}{1- \sqrt{ b^2\tan^2\theta + \max\left\{
      b^2,\ \frac{{a^2+b^2\beta_T^2}}{(\beta_T-\re z)^2},\ \frac{{a^2+b^2\alpha_T^2}}{(\re z-\alpha_T)^2}
      \right\}} }.
   \end{align*}
\end{corollary}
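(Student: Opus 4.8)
The plan is to read the estimate straight off the machinery already in place: Proposition~\ref{prop:ourfirsprop3}\ref{item:ourfirstprop3:iii} together with the pointwise bounds on $H_z$ that were produced inside the proof of Theorem~\ref{thm:bisecgap}. There is essentially no new computation.

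First I would fix $z=\mu+\I\nu$ with $\alpha_{T+A}<\mu<\beta_{T+A}$ (this forces $b<\cos\theta$, which is what makes the quantities in \eqref{eq:betaTAsect} well defined). Since $\sigma(T)\subseteq\Omega(\alpha_T,\beta_T,\theta)$ and such a $z$ lies outside that set, $z\in\rho(T)$, and by Theorem~\ref{thm:bisecgap} also $z\in\rho(T+A)$, so the resolvent is a bounded operator. Next I would recall the estimate already obtained in the proof of Theorem~\ref{thm:bisecgap}: combining \eqref{eq:bisecgap} with \eqref{eq:thm:bisecgap:1} and its analogue for the sector $-\sector{\theta}{-\alpha_T}$ (and using, as there, that the supremum of $H_z$ over each closed sector is attained on its boundary or at infinity by Lemma~\ref{lem:nolocalextrema}), one gets
\begin{equation*}
   \sup_{t\in\sigma(T)} H_z(t)
   \le b^2\tan^2\theta + \max\left\{b^2,\ \frac{a^2+b^2\beta_T^2}{(\beta_T-\mu)^2},\ \frac{a^2+b^2\alpha_T^2}{(\mu-\alpha_T)^2}\right\} =: M_z ,
\end{equation*}
and $M_z<1$ precisely because $\mu\in(\alpha_{T+A},\beta_{T+A})$ and $b<\cos\theta$.

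Then, since $\sup_{t\in\sigma(T)}\frac{\sqrt{a^2+b^2|t|^2}}{|t-z|}=\big(\sup_{t\in\sigma(T)}H_z(t)\big)^{1/2}\le M_z^{1/2}<1$, Proposition~\ref{prop:ourfirsprop3}\ref{item:ourfirstprop3:iii} applies and yields
\begin{equation*}
   \|(T+A-z)^{-1}\|
   \le \frac{1}{\dist(z,\sigma(T))}\cdot\frac{1}{1-\big(\sup_{t\in\sigma(T)}H_z(t)\big)^{1/2}}
   \le \frac{1}{\dist(z,\sigma(T))}\cdot\frac{1}{1-\sqrt{M_z}} ,
\end{equation*}
where the second inequality uses that $t\mapsto\sqrt t$ is increasing so $1-\sqrt{M_z}\le 1-\big(\sup_{t\in\sigma(T)}H_z(t)\big)^{1/2}$, both sides being positive. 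Finally I would replace $\dist(z,\sigma(T))$ by the smaller, explicitly computable quantity $\dist(z,\Omega(\alpha_T,\beta_T,\theta))$, which is legitimate because $\sigma(T)\subseteq\Omega(\alpha_T,\beta_T,\theta)$, and then substitute the definition of $M_z$; this is exactly the asserted inequality.

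I do not expect any genuine obstacle here: the corollary is a bookkeeping consequence of already-proved statements. The only points that deserve a careful word are that the monotonicity of the square root keeps all inequalities pointing in the right direction, and that $M_z<1$ guarantees the denominator is strictly positive — both of which are immediate from the hypotheses $\alpha_{T+A}<\re z<\beta_{T+A}$ and $b<\cos\theta$ used in Theorem~\ref{thm:bisecgap}.
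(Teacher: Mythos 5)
Your proof is correct and follows essentially the same route as the paper's: you invoke the bound \eqref{eq:thm:bisecgap:1} (and its analogue for $-\sector{\theta}{-\alpha_T}$) to obtain the same estimate on $\sup_{t\in\sigma(T)}H_z(t)$ that the paper records as \eqref{eq:thm:bisecgap:cor}, you then feed it into Proposition~\ref{prop:ourfirsprop3}\ref{item:ourfirstprop3:iii}, and you replace $\dist(z,\sigma(T))$ by $\dist(z,\Omega(\alpha_T,\beta_T,\theta))$ exactly as in \eqref{eq:cor:resolventestimate2:T}. The small extra remarks you add (monotonicity of the square root, $M_z<1$, well-definedness of \eqref{eq:betaTAsect}) are all correct and merely make explicit what the paper leaves implicit.
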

\begin{proof} 
   Let $z= \mu + \I\nu$ with $\alpha_{T+A} < \mu < \beta_{T+A}$.
   Then, by \eqref{eq:thm:bisecgap:1} and the analogous estimate for $-\sector{\theta}{-\alpha}$, we obtain
   \begin{equation}
      \label{eq:thm:bisecgap:cor}
      \sup_{t\in\sigma(T)} H_z(t)
      \le
      \max\left\{b^2,\ \frac{{a^2+b^2\beta_T^2}}{(\beta_T-\re z)^2},\ \frac{{a^2+b^2\alpha_T^2}}{(\re z-\alpha_T)^2} \right\}
      + b^2\tan^2\theta.
   \end{equation}
   Note that 
   \begin{align}
      \label{eq:cor:resolventestimate2:T}
      \|(T-z)^{-1}\|
      &\leq \frac{1}{\dist(z,\sigma(T))}
      \leq \frac{1}{\dist(z,\Omega(\alpha_T,\beta_T,\theta))}.
   \end{align}
   Now the resolvent estimate follows from 
   Proposition~\ref{prop:ourfirsprop3}\ref{item:ourfirstprop3:iii}, 
   \eqref{eq:thm:bisecgap:cor}
   and \eqref{eq:cor:resolventestimate2:T}.
\end{proof}
There are various ways to estimate \eqref{eq:cor:resolventestimate2:T}.
The most straight forward is 
\begin{align*}
   \frac{1}{\dist(z,\Omega(\alpha_T,\beta_T,\theta))}
   \le
   \frac{1}{{\min\{\re z-\alpha_T,\beta_T-\re z\}} }.
\end{align*}
We can also use \eqref{eq:Hsectorb} and \eqref{eq:Hsectora} with $b=0$ and $a=1$.
Then we obtain for $\im z > 0$ that 
\begin{align*}
   \frac{1}{\dist(z,\Omega(\alpha_T,\beta_T,\theta))}
   \le
   \frac{1}{{\min\{ |\im(\e^{-\I\theta}(z-\beta_T) )|,\  |\im(\e^{\I\theta}(z-\alpha_T) )|  \}} }
\end{align*}
which gives a decay of order $(\im z)^{-1}$ for $\im(z)\to\pm\infty$.
\smallskip

The next proposition combines the results from Theorem~\ref{thm:bisecgap} and Proposition~\ref{prop:bddstrip}.
Of course, an analogous result can be formulated for a strip that is not symmetric to the real axis.

\begin{proposition}  %%% Bisector and strip around real axis
   \label{prop:bisectorstrip} %%%{{{
   Let $T$ and $A$ and the constants $a,b$ be as in Assumption~\ref{ass:relbounded}.
   Assume that there are $\alpha_T < \beta_T$ and $\theta\in [0, \pi/2)$ such that 
   $b < \cos\theta$ and that 
   \begin{equation*}
      \sigma(T)\subseteq \sector{\theta}{\beta_{T}}\cup-\sector{\theta}{-\alpha_{T}} \cup ( \R + \I[-\gamma_T,\gamma_T]).
   \end{equation*}
   Let $\alpha_{T+A}$ and $\beta_{T+A}$ as in \eqref{eq:betaTAsect} and $\hyperbola_{\Hind}$ as in \eqref{eq:defHyper}.
   Then
   \begin{multline*}
      \Big(
      \e^{\I\theta}(\beta_T + \hyperbola_{\beta_T}) \cup \e^{-\I\theta}(\beta_T - \hyperbola_{\beta_T})
      \Big)
      \cap
      \Big(
      \e^{-\I\theta}(\alpha_T + \hyperbola_{\alpha_T}) \cup \e^{\I\theta}(\alpha_T - \hyperbola_{\alpha_T})
      \Big)
      \\
      \cap
      \Big(
      (\I\gamma + \hyperbola_{\gamma_T})
      \cup
      (-\I\gamma - \hyperbola_{\gamma_T})
      \Big)
      \subseteq \rho(T+A),
   \end{multline*}
   see Figure~\ref{fig:bisectorstrip}.

%%%}}}
\end{proposition}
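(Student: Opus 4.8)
The plan is to run the argument from the proof of Theorem~\ref{thm:bisecgap}, carrying one additional piece of $\sigma(T)$ through it, and to invoke Proposition~\ref{prop:bddstrip} for that extra piece. Fix a point $z$ in the intersection of the three sets on the left hand side. Membership of $z$ in $\e^{\I\theta}(\beta_T + \hyperbola_{\beta_T}) \cup \e^{-\I\theta}(\beta_T - \hyperbola_{\beta_T})$ forces $z\notin\sector{\theta}{\beta_T}$ (each rotated hyperbola region lies in $\{\,|\arg(z-\beta_T)|>\theta\,\}$), membership in $\e^{-\I\theta}(\alpha_T + \hyperbola_{\alpha_T}) \cup \e^{\I\theta}(\alpha_T - \hyperbola_{\alpha_T})$ forces $z\notin -\sector{\theta}{-\alpha_T}$, and membership in $(\I\gamma_T + \hyperbola_{\gamma_T}) \cup (-\I\gamma_T - \hyperbola_{\gamma_T})$ forces $|\im z| > \gamma_T$, hence $z\notin \R + \I[-\gamma_T,\gamma_T]$. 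Since $\sigma(T)$ is contained in the union of these three closed sets, $z\in\rho(T)$, and by Proposition~\ref{prop:ourfirsprop3}\ref{item:ourfirstprop3:iii} it suffices to show $\sup_{t\in\sigma(T)} H_z(t) < 1$ with $H_z$ as in~\eqref{eq:Hdef}.

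The second step is to split the supremum according to the three pieces: since $\sigma(T)\subseteq \sector{\theta}{\beta_T}\cup -\sector{\theta}{-\alpha_T}\cup(\R+\I[-\gamma_T,\gamma_T])$,
\[
   \sup_{t\in\sigma(T)} H_z(t) \le \max\Big\{ \sup_{t\in\sector{\theta}{\beta_T}} H_z(t),\ \sup_{t\in -\sector{\theta}{-\alpha_T}} H_z(t),\ \sup_{t\in \R + \I[-\gamma_T,\gamma_T]} H_z(t) \Big\}.
\]
Each term on the right is $<1$ for the chosen $z$: the proof of Theorem~\ref{thm:sectorial} (via~\eqref{eq:Hsectorb}) gives $\sup_{t\in\sector{\theta}{\beta_T}} H_z(t) < 1$ when $z\in\e^{\I\theta}(\beta_T + \hyperbola_{\beta_T}) \cup \e^{-\I\theta}(\beta_T - \hyperbola_{\beta_T})$; the analogous statement recorded after that theorem, namely~\eqref{eq:resolvent:secthyperbolaA}, gives $\sup_{t\in -\sector{\theta}{-\alpha_T}} H_z(t) < 1$ when $z\in\e^{-\I\theta}(\alpha_T + \hyperbola_{\alpha_T}) \cup \e^{\I\theta}(\alpha_T - \hyperbola_{\alpha_T})$; and the proof of Proposition~\ref{prop:bddstrip} (specialised to $\gamma_1=-\gamma_2=-\gamma_T$, so $\wgamma=\gamma_T$, cf.\ Remark~\ref{prop:bddstripsymm}), through estimate~\eqref{eq:bddstrip}, gives $\sup_{t\in \R + \I[-\gamma_T,\gamma_T]} H_z(t) < 1$ when $z\in(\I\gamma_T + \hyperbola_{\gamma_T}) \cup (-\I\gamma_T - \hyperbola_{\gamma_T})$. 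Taking the maximum yields $\sup_{t\in\sigma(T)} H_z(t)<1$, which is the claim.

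So the argument is purely a bookkeeping combination of three results already available, and the only place one can slip is the following: each auxiliary estimate is proved by bounding the supremum of $H_z$ over an \emph{ambient} set (a full sector at $\beta_T$, a full sector at $\alpha_T$, a full horizontal strip) that merely \emph{contains} the corresponding portion of $\sigma(T)$, so the bounds transfer verbatim to the present, possibly smaller, spectrum; I would state this explicitly to justify using Theorem~\ref{thm:sectorial} and Proposition~\ref{prop:bddstrip} even though $\sigma(T)$ is here only a subset of the union of sectors and a strip. The standing hypothesis $b<\cos\theta$ enters only implicitly, through the cited results and to guarantee that the rotated hyperbolas have asymptotes of slope $\tan(\theta\pm\phi_b)$ with $\theta+\phi_b<\pi/2$, so the intersection region is non-trivial; it plays no further role. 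If wanted, one can add a resolvent estimate on this region exactly as in Corollary~\ref{cor:resolventestimate2}, by combining the three suprema bounds above with $\|(T-z)^{-1}\|=\dist(z,\sigma(T))^{-1}$.
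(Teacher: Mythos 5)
Your proof is correct and follows essentially the same route as the paper: the paper also decomposes $\sigma(T)$ into the bisector $\Omega(\alpha_T,\beta_T,\theta)$ plus the strip, bounds $\sup H_z$ on each piece by citing the proofs of Theorem~\ref{thm:bisecgap} and Proposition~\ref{prop:bddstrip}, and concludes via Proposition~\ref{prop:ourfirsprop3}. Your slightly finer three-way split (two single sectors plus a strip, citing Theorem~\ref{thm:sectorial} and equations~\eqref{eq:Hsectorb}, \eqref{eq:resolvent:secthyperbolaA} directly) is the same argument one layer down, since that is exactly how Theorem~\ref{thm:bisecgap}'s own proof handles $\Omega(\alpha_T,\beta_T,\theta)$; your extra remarks about $z\in\rho(T)$, the ambient-set containment, and the mostly decorative role of $b<\cos\theta$ are accurate additions rather than deviations.
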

\begin{proof} %%%{{{
   The proof is analogous to that of Theorem~\ref{thm:bisecgap}.
   We define the function $H_z$ as in \eqref{eq:Hdef}.
   In order to localise $\rho(T+A)$ we need to find those $z\in\C$ for which
   $\sup_{t\in\sigma(T)} H_z(t) < 1$.
   As before, we use the estimate
   \begin{align*}
      \sup_{t\in\sigma(T)} H_z(t)
      & \le \sup \left\{ H_z(t) :
      t\in \Omega(\alpha_T,\beta_T, \theta) \cup (\R + \I [-\gamma_T,\gamma_T]) \right\}
      \\
      & \le \max\left\{ 
      \sup_{t\in \Omega(\alpha_T,\beta_T, \theta)} H_z(t),\
      \sup_{t\in \R + \I [-\gamma_T,\gamma_T]} H_z(t)
      \right\}.
   \end{align*}
   We know from the proof of Theorem~\ref{thm:bisecgap}
   that
   $\sup\limits_{t\in \Omega(\alpha_T,\beta_T, \theta)} H_z(t) < 1$ if 
   $z\in 
   \Big(
   \e^{\I\theta}(\beta_T + \hyperbola_{\beta_T}) \cup \e^{-\I\theta}(\beta_T - \hyperbola_{\beta_T}) 
   \Big)
   \cap
   \Big(
   \e^{-\I\theta}(\alpha_T + \hyperbola_{\alpha_T}) \cup \e^{\I\theta}(\alpha_T - \hyperbola_{\alpha_T}) 
   \Big)$
   while
   $\sup\limits_{t\in \R + \I [-\gamma_T,\gamma_T]} H_z(t) < 1$ if 
   $z\in (\I\gamma + \hyperbola_{\gamma_T}) \cup (-\I\gamma - \hyperbola_{\gamma_T})$
   by the proof of Proposition~\ref{prop:bddstrip}.
\end{proof}
%%%}}}

\begin{figure}[H] %%%{{{ Spectral inclusion: bisector and strip around real axis
   \begin{tikzpicture}[ 
      declare function={ 
      hyperbolarotb(\x) = sqrt( (\abound^2 + \bbound^2*\betaT^2 + \bbound^2*(\x)^2)/(1-\bbound^2) ); 
      hyperbolarota(\x) = sqrt( (\abound^2 + \bbound^2*(\alphaT)^2 + \bbound^2*(\x)^2)/(1-\bbound^2) ); 
      hyperbola(\x) = \gT + sqrt( (\abound^2 + \bbound^2*\gT^2 + \bbound^2*\x*\x)/(1-\bbound^2) );
      } ]

      \tikzmath{
      \gT = 1.5;  %% \gamma_T
      \abound = .5;   %% a
      \bbound = .3;  %% b
      \alphaT = -1;   %% \alpha_T
      \betaT = 2;      %% \beta_T
      \thetaT = 25;      %% \beta_T
      \alphaTA = \alphaT + sqrt( \abound^2 + \bbound^2*\gT^2 + \bbound^2*(\alphaT)^2);   %% \alpha_{T+A}
      \betaTA = \betaT - sqrt( \abound^2 + \bbound^2*\gT^2 + \bbound^2*(\betaT)^2);      %% \beta_{T+A}
      }

      \begin{scope}[xshift=-8cm, scale=0.3, transform shape] %%%{{{ 

	 \begin{scope}
	    \clip (-10,-5) rectangle (10, 5);

	    %% spectrum of T
	    \path [fill=colTspec, shift={(\alphaT,0)}] (\thetaT:-10) -- (0,0) --  (-\thetaT:-10) -- cycle;
	    \path [fill=colTspec, shift={(\betaT,0)}] (\thetaT:9) -- (0,0) --  (-\thetaT:9) -- cycle;
	    \path [fill=colTspec] (-10,-\gT) rectangle (10,\gT);

	    \draw [colTspec!30!black, dashed, shift={(\alphaT,0)}] (\thetaT:-10) -- (0,0) --  (-\thetaT:-10);
	    \draw [colTspec!30!black, dashed, shift={(\betaT,0)}] (\thetaT:9) -- (0,0) --  (-\thetaT:9);

	 \end{scope}

	 %% Axis
	 \draw[->] (-10.5,0)--(10.5,0);
	 \draw[->] (0,-5.5)--(0,5.5);
      \end{scope}

      %%%}}}

      \begin{scope}[scale=0.3, transform shape] %%%{{{ 

	 \begin{scope}
	    \clip (-10,-5) rectangle (10, 5);

	    %% spectrum of T
	    \path [fill=colTspec, shift={(\alphaT,0)}] (\thetaT:-10) -- (0,0) --  (-\thetaT:-10) -- cycle;
	    \path [fill=colTspec, shift={(\betaT,0)}] (\thetaT:9) -- (0,0) --  (-\thetaT:9) -- cycle;
	    \path [fill=colTspec] (-10,-\gT) rectangle (10,\gT);

	 \end{scope}

	 \begin{scope}
	    \clip (-10,-7) rectangle (10, 7);

	    %% area between the rotated hyperbolas for alpha
  	    \fill [colTAspec, opacity=.2, shift={(\alphaT,0)} ]
  	    plot[domain=-15.0:15.0, smooth, rotate=-\thetaT] ({\x},{hyperbolarota(\x)}) 
  	    -- plot[domain=15.0:-15.0, smooth, rotate=\thetaT] ({\x},{-hyperbolarota(\x)}) -- cycle;

	    %% area between the rotated hyperbolas for beta
	    \fill [colTAspec, opacity=.2, shift={(\betaT,0)} ]
	    plot[domain=-15.0:10.0, smooth, rotate=\thetaT] ({\x},{hyperbolarotb(\x)}) -- plot[domain=10.0:-15.0, smooth, rotate=-\thetaT] ({\x},{-hyperbolarotb(\x)}) -- cycle;

	    %% area between the hyperbolas for strip with gamma
	    \fill [green, opacity=.4 ]
	    plot[domain=-10.0:10.0, smooth] ({\x},{hyperbola(\x)}) -- plot[domain=10.0:-10.0, smooth] ({\x},{-hyperbola(\x)}) -- cycle;

	    %% spectrum of T
	    \path [fill=colTspec, shift={(\alphaT,0)}] (\thetaT:-10) -- (0,0) --  (-\thetaT:-10) -- cycle;
	    \path [fill=colTspec, shift={(\betaT,0)}] (\thetaT:9) -- (0,0) --  (-\thetaT:9) -- cycle;
	    \path [fill=colTspec] (-10,-\gT) rectangle (10,\gT);

	 \end{scope}

	 %% Axis
	 \draw[->] (-10.5,0)--(10.5,0);
	 \draw[->] (0,-5.5)--(0,5.5);
      \end{scope}

      %%%}}}

   \end{tikzpicture}
   
   \caption{Localisation of the spectrum of $T+A$ in the case of Proposition~\ref{prop:bisectorstrip} when $A$ is $T$-bounded and the spectrum of $T$ is contained in $\Omega(\alpha_T,\beta_T,\theta)\cup ( \R + \I [-\gamma_T,\gamma_T]$.
   The dashed lines in the left picture indicate the boundaries of $\Omega(\alpha_T,\beta_T,\theta)$.
   In the picture on the right, the grey area indicate the area that we get from Theorem~\ref{thm:bisecgap}.
   The green area shows the contribution from the strip $\R+\I[-\gamma_T,\gamma_t]$.
   Therefore the spectrum of $T+A$ is contained in the union of the coloured areas.
   }
  \label{fig:bisectorstrip}

\end{figure}
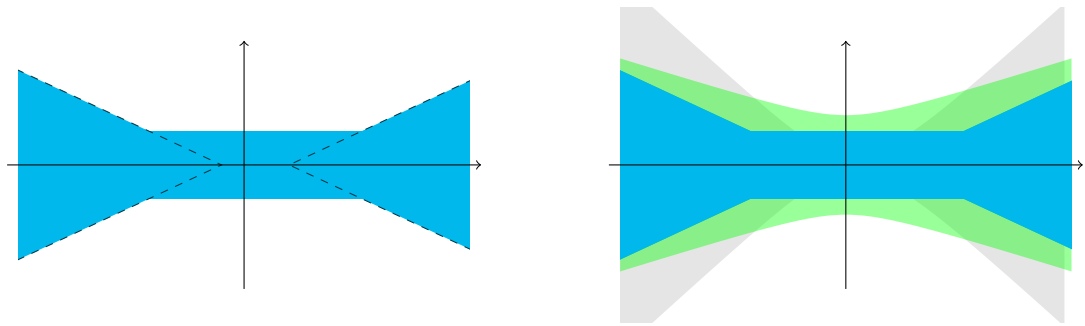
%%%}}}

We finish this section with the following theorem for essential spectral gaps.

\begin{theorem}
   \label{thm:essspecsec}
   Let $T$ and $A$ be as in Assumption~\ref{ass:relbounded}.
   Assume that there exist 
   $\alpha < \beta$ and $\theta\in [0,\pi/2)$ such that 
   $\sigmaess(T)\subseteq \Omega(\alpha,\beta,\theta)$
   and 
   $\sigma(T) = \sigmaess(T) \cup \Sigma$ 
   where $\Sigma\subseteq \C\setminus\Omega(\alpha, \beta, \theta)$ is a bounded countable set which may accumulate at most at the boundary of $\sigmaess(T)$.
   Let $\alpha_{T+A}$ and $\beta_{T+A}$ as in \eqref{eq:betaTAsect}.
   If $\alpha_{T+A} < \beta_{T+A}$
   then 
   \begin{multline*}
      \Big(
      \e^{\I\theta}(\beta_T + \hyperbola_{\beta_T}) \cup \e^{-\I\theta}(\beta_T - \hyperbola_{\beta_T})
      \cup \{\re z < \beta_{T+A} \}
      \Big)
      \\
      \cap
      \Big(
      \e^{-\I\theta}(\alpha_T + \hyperbola_{\alpha_T}) \cup \e^{\I\theta}(\alpha_T - \hyperbola_{\alpha_T})
      \cup \{ \re z > \alpha_{T+A} \}
      \Big)
      \subseteq \C\setminus\sigmaess(T+A).
   \end{multline*}
   and the set on the left hand side consists of at most 
   countably many isolated eigenvalues of finite algebraic multiplicity which may accumulate only at its boundary.
\end{theorem}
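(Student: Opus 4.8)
The plan is to run the proof of Theorem~\ref{thm:essspecbdd} essentially line for line, with Theorem~\ref{thm:bisecgap} and the sectorial estimates behind Theorem~\ref{thm:sectorial} playing the role that Proposition~\ref{prop:bddstrip} and Theorem~\ref{thm:bddstripgap} played there: peel off the finitely many points of $\Sigma$ lying outside a slightly enlarged pair of sectors by passing to a finite-codimensional $T$-invariant subspace, apply the (bi)sectorial enclosure to the truncated operator, and let the enlargement shrink back. Throughout I write $\alpha_T=\alpha$, $\beta_T=\beta$, denote by $W$ the set on the left-hand side of the claimed inclusion, and note that the presence of $\alpha_{T+A},\beta_{T+A}$ (as in \eqref{eq:betaTAsect}) and of the half-planes $\{\re z<\beta_{T+A}\}$, $\{\re z>\alpha_{T+A}\}$ in the statement presupposes $b<\cos\theta$, which I therefore assume. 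For $n$ so large that $\alpha+\frac1n<\beta-\frac1n$ set $\alpha_n:=\alpha+\frac1n$, $\beta_n:=\beta-\frac1n$, $S_n:=\Omega(\alpha_n,\beta_n,\theta)=\sector{\theta}{\beta_n}\cup-\sector{\theta}{-\alpha_n}$ and $U_n:=\C\setminus S_n$. An elementary computation with the two bounding rays of each sector shows that $\Omega(\alpha,\beta,\theta)$ lies in the interior of $S_n$, that the $S_n$ decrease to $\Omega(\alpha,\beta,\theta)$, and hence that the $U_n$ increase to $\C\setminus\Omega(\alpha,\beta,\theta)$. Since $\sigmaess(T)\subseteq\Omega(\alpha,\beta,\theta)\subseteq S_n^{\circ}$ we have $\sigmaess(T)\cap U_n=\emptyset$, and since $\Sigma$ is bounded with all of its accumulation points in $\partial\sigmaess(T)\subseteq\sigmaess(T)\subseteq S_n^{\circ}$, the set $\Sigma\cap U_n$ is bounded and free of accumulation points, hence finite. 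Therefore $\sigma(T)\cap U_n$ is a finite clopen subset of $\sigma(T)$ consisting of isolated eigenvalues of $T$ of finite multiplicity.

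For each such $n$ I would let $P_1^{(n)}$ be the spectral projection of the normal operator $T$ for $\sigma(T)\cap U_n$, put $\widetilde H_n:=\Ran P_1^{(n)}=\bigoplus_{\lambda\in\sigma(T)\cap U_n}\ker(T-\lambda)$ (finite dimensional) and $\mD_2^{(n)}:=\mD(T)\cap\widetilde H_n^{\perp}$. Both subspaces are $T$-invariant, so $T=T_1^{(n)}\oplus T_2^{(n)}$; defining $A_{ij}^{(n)}$ as in Theorem~\ref{thm:essentialgaps}, the operator $A_{22}^{(n)}$ is $T_2^{(n)}$-bounded with the same constants $a,b$, while by the spectral theorem $\sigma(T_2^{(n)})=\sigma(T)\cap S_n\subseteq S_n$. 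By Theorem~\ref{thm:essentialgaps} this gives $\sigmaess(T+A)=\sigmaess(T_2^{(n)}+A_{22}^{(n)})$ for every such $n$.

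Now fix $z\in W$. The proofs of \eqref{eq:resolvent:sectstrip} and \eqref{eq:resolvent:sectstripA} (which combine the sectorial estimates of Lemma~\ref{lem:app:Hest} with Lemma~\ref{lem:nolocalextrema} to pass from the boundary of a sector to the whole sector) show that membership of $z$ in the first factor of $W$ forces $\sup_{t\in\sector{\theta}{\beta_T}}H_z(t)<1$ and membership in the second factor forces $\sup_{t\in-\sector{\theta}{-\alpha_T}}H_z(t)<1$; hence $\sup_{t\in\Omega(\alpha_T,\beta_T,\theta)}H_z(t)<1$. Since the $\Omega(\alpha_n,\beta_n,\theta)$ form a decreasing sequence of closed sets with intersection $\Omega(\alpha_T,\beta_T,\theta)$, and since $H_z$ is continuous on $\C\setminus\{z\}$ with $H_z(t)\to b^2<1$ as $t\to\infty$, the near-maximisers of $H_z$ over these sets remain within a bounded region independent of $n$, so $\sup_{t\in\Omega(\alpha_n,\beta_n,\theta)}H_z(t)\to\sup_{t\in\Omega(\alpha_T,\beta_T,\theta)}H_z(t)<1$. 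Consequently there is an $n$ with $\sup_{t\in\sigma(T_2^{(n)})}H_z(t)\le\sup_{t\in S_n}H_z(t)<1$; in particular $z\notin\sigma(T_2^{(n)})$, so $z\in\rho(T_2^{(n)})$ and Proposition~\ref{prop:ourfirsprop3}\ref{item:ourfirstprop3:iii} yields $z\in\rho(T_2^{(n)}+A_{22}^{(n)})\subseteq\C\setminus\sigmaess(T_2^{(n)}+A_{22}^{(n)})=\C\setminus\sigmaess(T+A)$. This proves $W\subseteq\C\setminus\sigmaess(T+A)$.

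For the statement about eigenvalues, note that $W$ is open and, because $b<\cos\theta$, is the complement of two disjoint closed sets (the curved-sector regions around the positive and the negative real axis obtained by truncating the hyperbolic regions $\e^{\I\theta}(\beta_T+\hyperbola_{\beta_T})\cup\e^{-\I\theta}(\beta_T-\hyperbola_{\beta_T})$ resp.\ $\e^{-\I\theta}(\alpha_T+\hyperbola_{\alpha_T})\cup\e^{\I\theta}(\alpha_T-\hyperbola_{\alpha_T})$ at $\re z=\beta_{T+A}$ resp.\ $\re z=\alpha_{T+A}$), so $W$ is connected (see Figure~\ref{fig:bisectorgap}). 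By Proposition~\ref{prop:ourfirsprop3} every $z$ with $\sup_{t\in\sigma(T)}H_z(t)<1$ belongs to $\rho(T+A)$; since $\Sigma$ is bounded, $\sup_{t\in\Sigma}H_z(t)\to b^2<1$ as $|z|\to\infty$, and combined with $\sup_{t\in\Omega(\alpha_T,\beta_T,\theta)}H_z(t)<1$ on $W$ this shows that $W$ contains points of $\rho(T+A)$ (e.g.\ far out along the strip $\{\alpha_{T+A}<\re z<\beta_{T+A}\}$). Applying \cite[Chapter~XVII, Theorem~2.1]{GohGolMarKaa90} to the open connected set $W$ then gives that $\sigma(T+A)\cap W$ consists of at most countably many eigenvalues of finite algebraic multiplicity which cannot accumulate in $W$, i.e.\ may accumulate only at $\partial W$. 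The one genuinely delicate point is the limiting step in the third paragraph: because the sectors $\Omega(\alpha_n,\beta_n,\theta)$ are unbounded, the convergence of the suprema must be justified carefully, using that $H_z\to b^2<1$ at infinity to confine the maximisation to a bounded region independent of $n$; the remaining ingredients — the elementary geometry of the nested sectors, the connectedness of $W$, and the inclusion of $W$ in $\{\,\sup_{\Omega(\alpha_T,\beta_T,\theta)}H_z<1\,\}$ — are routine given what was already established for Theorems~\ref{thm:sectorial} and~\ref{thm:bisecgap}.
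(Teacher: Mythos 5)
Your proposal is correct and takes essentially the same approach as the paper, whose own proof consists of the one-line indication that the argument is analogous to Theorem~\ref{thm:essspecbdd} with $S_n=\Omega(\alpha+\tfrac1n,\beta-\tfrac1n,\theta)$; your write-up is a careful expansion of that indication, and you correctly surface the implicit $b<\cos\theta$ hypothesis and the need for a compactness/continuity argument (exploiting $H_z\to b^2<1$ at infinity) to control $\sup_{S_n}H_z$ in the step that, in the proof of Theorem~\ref{thm:essspecbdd}, is handled instead by directly identifying $\bigcup_n(F_n\cup G_n)$ with the target set. One harmless slip: since $\Sigma$ is bounded, $\sup_{t\in\Sigma}H_z(t)\to 0$ (not $b^2$) as $|z|\to\infty$; the conclusion that this supremum is eventually $<1$ is unchanged.
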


\begin{proof}
   The proof is analogous to that of Theorem~\ref{thm:essspecbdd} if we use for instance the sets
   $S_n = \Omega(\alpha+\frac{1}{n}, \beta-\frac{1}{n}, \theta)$.
\end{proof}

Clearly, a theorem analogous to Theorem~\ref{thm:EVmultiplicitybdd} about the multiplicity of isolated eigenvalues can be proved.
We omit details.
\bigskip

In the proofs of Theorem~\ref{thm:sectorial}, Theorem~\ref{thm:bisecgap}, Corollary~\ref{cor:resolventestimate2}, Proposition~\ref{prop:bisectorstrip} we used the estimates 
\eqref{eq:Hsectorb} and \eqref{eq:Hsectora}
from Lemma~\ref{lem:app:Hest}.
If we use the alternative estimates 
\eqref{eq:Hsectorb:alt} and \eqref{eq:Hsectora:alt}
from Lemma~\ref{lem:app:Hestalt} then 
we obtain slightly different spectral inclusions for $T+A$. 
For details see Appendix~\ref{app:alternativebounds}.

%%%}}}

% \tikzsetnextfilename{XXXXX}

\section{Other spectral inclusions} %%%{{{
\label{sec:misc}

Using the techniques from Section~\ref{sec:boundedimaginary} and  Section~\ref{sec:sectorial} we can prove a variety of results for the location of the spectra of perturbed normal operators. 
We only sketch some of their proofs.

\begin{proposition}
   \label{prop:interiorstrip}
   Let $T$ and $A$ be as in Assumption~\ref{ass:relbounded} and assume that there are constants $\gamma_1 < \gamma_2$ such that
   \begin{equation*}
      \{z\in\C : \gamma_1 < \re z < \gamma_2 \} \subseteq \rho(T)
   \end{equation*}
   and set $\wgamma = \max\{ |\gamma_1|,\, |\gamma_2| \}$.
   Then 
   \begin{multline*}
      (\gamma_1 + \I\hyperbola_{\wgamma})
      \cap
      (\gamma_2 - \I\hyperbola_{\wgamma})
      \\
      \begin{aligned}
	 &= \left\{
	 \gamma_1 + \sqrt{ \frac{a^2 + b^2\wgamma^2}{1-b^2} + \frac{b^2}{1-b^2} (\im z)^2}
	 < \re z < 
	 \gamma_2 - \sqrt{ \frac{a^2 + b^2\wgamma^2}{1-b^2} + \frac{b^2}{1-b^2} (\im z)^2}
	 \right\}
	 \\
	 &\subseteq \rho(T+A).
      \end{aligned}
   \end{multline*}
\end{proposition}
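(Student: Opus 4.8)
\emph{Proof plan.} The plan is to follow the proof of Theorem~\ref{thm:bddstripgap} with the roles of the real and imaginary parts interchanged, since here the spectral free strip of $T$ is parallel to the imaginary axis. Fix $z = \mu + \I\nu$ with $\gamma_1 < \mu < \gamma_2$; by hypothesis every such $z$ lies in $\rho(T)$, so by Proposition~\ref{prop:ourfirsprop3}\ref{item:ourfirstprop3:iii} it suffices to show that $\sup_{t\in\sigma(T)} H_z(t) < 1$, with $H_z$ the function from \eqref{eq:Hdef}. Since $\sigma(T)$ is disjoint from the open strip $\{w\in\C : \gamma_1 < \re w < \gamma_2\}$, we have $\sigma(T)\subseteq\{\re w\le\gamma_1\}\cup\{\re w\ge\gamma_2\}$, and because $H_z$ has no local maximum in $\C\setminus\{z\}$ by Lemma~\ref{lem:nolocalextrema} while $H_z(w)\to b^2 < 1$ as $|w|\to\infty$, the supremum over $\sigma(T)$ is bounded by the supremum over the two vertical lines $\re w = \gamma_1$ and $\re w = \gamma_2$ (exactly as in the proof of Proposition~\ref{prop:imupperbdd}).

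Next I would invoke the line estimate \eqref{eq:distanceline} of Lemma~\ref{lem:app:Hest}. It is stated for horizontal lines, but the isometry $w\mapsto -\I w$ of $\C$ maps the vertical line $\re w = \gamma_j$ to the horizontal line $\im w = -\gamma_j$, sends $z$ to $\nu - \I\mu$, and satisfies $H_z(w) = H_{-\I z}(-\I w)$ with the same constants $a,b$; hence \eqref{eq:distanceline} yields
\[
   \sup_{\re w = \gamma_j} H_z(w) \;\le\; b^2 + \frac{a^2 + b^2\gamma_j^2 + b^2\nu^2}{(\mu - \gamma_j)^2},
   \qquad j = 1,2.
\]
Bounding $\gamma_j^2 \le \wgamma^2$ and using $\mu - \gamma_1 > 0 > \mu - \gamma_2$, we conclude that $\sup_{t\in\sigma(T)} H_z(t) < 1$ as soon as $\min\{(\mu-\gamma_1)^2,\,(\gamma_2-\mu)^2\}$ exceeds $\frac{a^2+b^2\wgamma^2+b^2\nu^2}{1-b^2}$, that is, as soon as
\[
   \gamma_1 + \sqrt{\frac{a^2+b^2\wgamma^2}{1-b^2} + \frac{b^2}{1-b^2}\nu^2}
   \;<\; \mu \;<\;
   \gamma_2 - \sqrt{\frac{a^2+b^2\wgamma^2}{1-b^2} + \frac{b^2}{1-b^2}\nu^2}.
\]
With $\mu = \re z$, $\nu = \im z$ this is precisely the region displayed in the statement, and rewriting that region in terms of the sets $\hyperbola_{\wgamma}$ is immediate from the definition \eqref{eq:defHyper} (translating and rotating the branches of $\hyperbola_{\wgamma}$ by a quarter turn).

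I do not expect any real obstacle. The only two steps deserving a word of care are the reduction of the supremum to the two boundary lines — which relies on Lemma~\ref{lem:nolocalextrema} together with $\lim_{|w|\to\infty} H_z(w) = b^2$ — and the transfer of the horizontal-line estimate \eqref{eq:distanceline} to vertical lines via the rotation $w\mapsto -\I w$. I would also remark in passing that keeping $\gamma_1^2$ and $\gamma_2^2$ separate in the two square roots, rather than bounding both by $\wgamma^2$, gives a slightly sharper, non-symmetric enclosure at no extra cost.
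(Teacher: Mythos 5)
Your proof is correct and takes essentially the same route as the paper's (very terse) proof: reduce the supremum over $\sigma(T)$ to the two boundary lines $\re t=\gamma_1$ and $\re t=\gamma_2$ via Lemma~\ref{lem:nolocalextrema}, then apply the line estimate \eqref{eq:distanceline} ``as in Proposition~\ref{prop:imupperbdd}'', which is exactly the transfer to vertical lines by the rotation $w\mapsto -\I w$ that you make explicit. Your closing observation that keeping $\gamma_1^2$ and $\gamma_2^2$ separate rather than symmetrising by $\wgamma^2$ yields a sharper non-symmetric enclosure is also correct.
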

\begin{proof}
   We write
   $\sigma(T) \subseteq \{\re z \le \gamma_1 \} \cup \{\re z \ge \gamma_2 \}$.
   We know that a sufficient condition for $z\in \rho(T+A)$ is that
   $\sup_{t\in\partial M} H_z(t) < 1$ for 
   $M = \{\re z \le \gamma_2 \}$ and $M = \{\re z \ge \gamma_1 \}$.
   The claim then follows as in Proposition~\ref{prop:imupperbdd}.
   See Figure~\ref{fig:intstrip}.
\end{proof}

\begin{figure}[H] %%%{{{ interior strip
   \centering
   \begin{tikzpicture}[ 
      scale=0.5, transform shape, 
      declare function={ 
      %% hyperbola around Y-axis
      hyperbolax(\x)     = \gx      - sqrt( (\abound^2 + \bbound^2*(\gx)^2      + \bbound^2*\x*\x)/(1-\bbound^2) ); 
      %% hyperbola left and right around X-axis
      hyperbolaleft(\x)  = \gxupper - sqrt( (\abound^2 + \bbound^2*(\gxupper)^2 + \bbound^2*\x*\x)/(1-\bbound^2) ); 
      hyperbolaright(\x) = \gxlower + sqrt( (\abound^2 + \bbound^2*(\gxlower)^2 + \bbound^2*\x*\x)/(1-\bbound^2) ); 
      }
      ]

      \tikzmath{
      \gx = 2;  %% width in x-direction
      \gxlower = -2.0;  %% right boundary of the spectrum or T
      \gxupper = 2.0;  %% left boundary of the spectrum or T
      \abound = .5;   %% a
      \bbound = .2;  %% b
      }

      \begin{scope} %%%{{{
	 \clip (-5,-4) rectangle (5,4);

	 %% spectrum of T
	 \path [fill=colTspec, opacity=1] (-5, -5) rectangle (-\gx, 5);
	 \path [fill=colTspec, opacity=1] (\gx, -5) rectangle (5, 5);

	 %% labels \alpha_T, etc. 
	 \draw[blue] (\gx,-0.1) --  node[below left] {$\gamma_{2}$} (\gx,0.1);
	 \draw[blue] (-\gx,-0.1) --  node[below right] {$\gamma_{1}$} (-\gx,0.1);

	 %% Axis
	 \draw[->] (-5.5,0)--(5.5,0);
	 \draw[->] (0,-4)--(0,4);
      \end{scope}
      %%%}}}

      \begin{scope}[xshift=15cm] %%%{{{
	 \clip (-5,-4) rectangle (5,4);

	 %% upper and lower dashed lines of the hyperbolas
	 \draw [name path=leftcurve,red,dashed, thick] plot[domain=-5.0:5, smooth] ({hyperbolax(\x)} ,{\x});
	 \draw [name path=rightccurve,red,dashed, thick] plot[domain=-5.0:5, smooth] ({-hyperbolax(\x)},{\x});

	 %% Shaded gray area for spectral enclosure
	 %% right part of enclosure
	 \fill [colTAspec, opacity=.3] plot[domain=-5.0:5, smooth] ({hyperbolax(\x)} ,{\x}) -- (7,7) -- (7, -7) -- cycle; 
	 %% left part of enclosure
	 \fill [colTAspec, opacity=.3] plot[domain=-5.0:5, smooth] ({-hyperbolax(\x)} ,{\x}) -- (-7,7) -- (-7, -7) -- cycle;

	 %% spectrum of T
	 \path [fill=colTspec, opacity=1] (-5, -5) rectangle (-\gx, 5);
	 \path [fill=colTspec, opacity=1] (\gx, -5) rectangle (5, 5);
	 
	 %% Axis
	 \draw[->] (-5.5,0)--(5.5,0);
	 \draw[->] (0,-4)--(0,4);

      \end{scope}
      %%%}}}

      \tikzmath{
      \gxlower = 1.0;  %% right boundary of the spectrum or T
      \gxupper = 5.0;  %% left boundary of the spectrum or T
      \abound = .5;   %% a
      \bbound = .2;  %% b
      }

      \begin{scope}[yshift=-10cm, xshift=-3cm] %%%{{{
	 \clip (-2,-4) rectangle (8,4);

	 %% spectrum of T
	 \path [fill=colTspec, opacity=1] (-5, -5) rectangle (\gxlower, 8);
	 \path [fill=colTspec, opacity=1] (\gxupper, -5) rectangle (8, 8);

	 %% labels \alpha_T, etc. 
	 \draw[blue] (\gxupper,-0.1) --  node[below=.2] {$\gamma_{2}$} (\gxupper,0.1);
	 \draw[blue] (\gxlower,-0.1) --  node[below=.2] {$\gamma_{1}$} (\gxlower,0.1);

	 %% Axis
	 \draw[->] (-5.5,0)--(8,0);
	 \draw[->] (0,-4)--(0,4);
      \end{scope}
      %%%}}}

      \begin{scope}[yshift=-10cm, xshift=12cm] %%%{{{
	 \clip (-2,-4) rectangle (8,4);

	 %% upper and lower dashed lines of the hyperbolas
	 \draw [name path=rightcurve,red,dashed, thick] plot[domain=-5.0:8, smooth] ({hyperbolaleft(\x)},{\x});
	 \draw [name path=leftcurve,red,dashed, thick] plot[domain=-5.0:8, smooth] ({hyperbolaright(\x)},{\x});

	 %% Shaded gray area for spectral enclosure
	 %% right part of enclosure
	 \fill [gray, opacity=.3] plot[domain=-5.0:8, smooth] ({hyperbolaleft(\x)} ,{\x}) -- (10,8) -- (10, -7) -- cycle; 
	 %% left part of enclosure
	 \fill [gray, opacity=.3] plot[domain=-5.0:8, smooth] ({hyperbolaright(\x)} ,{\x}) -- (-3,7) -- (-3, -7) -- cycle; 

	 %% spectrum of T
	 \path [fill=colTspec, opacity=1] (-5, -5) rectangle (\gxlower, 8);
	 \path [fill=colTspec, opacity=1] (\gxupper, -5) rectangle (8, 8);

	 %% Axis
	 \draw[->] (-5.5,0)--(8,0);
	 \draw[->] (0,-4)--(0,4);
      \end{scope}
      %%%}}}

   \end{tikzpicture}

   \caption{
   Illustration of Proposition~\ref{prop:interiorstrip} when $\sigma(T)\subseteq \C\setminus
   \{ \gamma_{1}\le \re(z) \le \gamma_{2} \}$.
   In the upper row we have the special case $\gamma_1 = -\gamma_2$.
   The dashed red lines show the boundaries of the corresponding hyperbolas.
   The spectrum of $T+A$ is contained in coloured regions.
   }
   \label{fig:intstrip}
\end{figure}
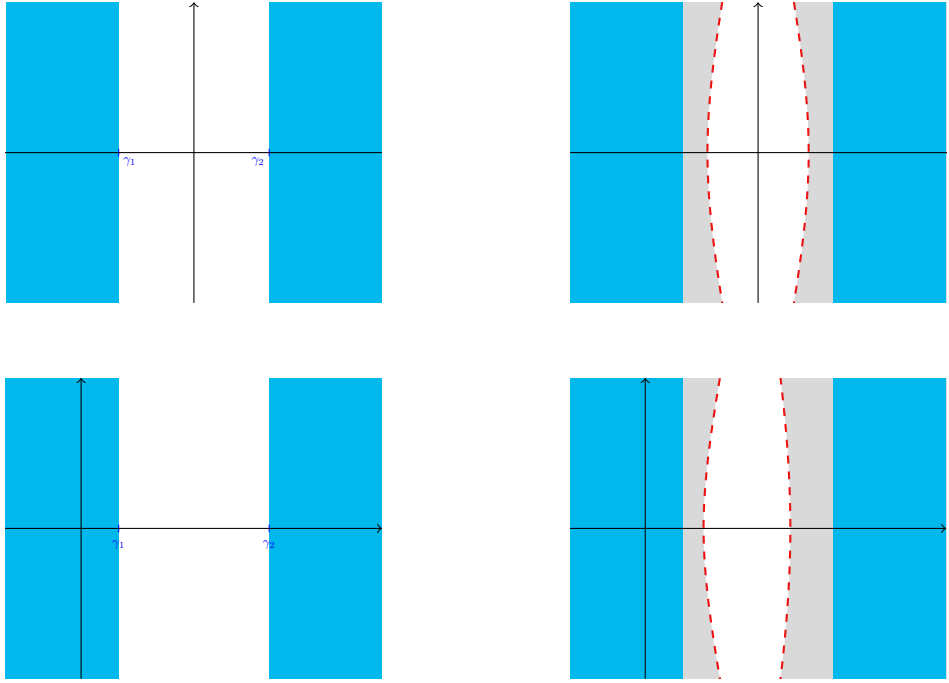
%%%}}}

\begin{proposition}
   \label{prop:square}
   Let $T$ and $A$ be as in Assumption~\ref{ass:relbounded} and assume that there are constants $\gamma_j$ and $\eta_j$ $(j=1, 2)$ such that
   \begin{equation*}
      \sigma(T)\subseteq \C\setminus \{z\in\C : \gamma_1 < \re z < \gamma_2,\  \eta_1 < \im z < \eta_2 \}
   \end{equation*}
   and set $\wgamma = \max\{ |\gamma_1|,\, |\gamma_2| \}$, $\weta = \max\{ |\eta_1|,\, |\eta_2| \}$.
   Then 
   \begin{equation*}
      (\gamma_1 + \I\hyperbola_{\wgamma})
      \cap
      (\gamma_2 - \I\hyperbola_{\wgamma})
      \cap
      (\I\eta_1 - \hyperbola_{\weta})
      \cap
      (\I\eta_2 + \hyperbola_{\weta})
      \subseteq \rho(T+A).
   \end{equation*}
\end{proposition}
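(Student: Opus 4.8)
The plan is to run the argument behind Proposition~\ref{prop:interiorstrip} for two strips at once — one bounded by the vertical lines $\re t=\gamma_1,\gamma_2$, the other by the horizontal lines $\im t=\eta_1,\eta_2$ — and then intersect the resulting regions, in the same spirit in which Proposition~\ref{prop:bisectorstrip} combined Theorem~\ref{thm:bisecgap} with Proposition~\ref{prop:bddstrip}.

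First I would observe that the hypothesis says exactly that $\sigma(T)$ is contained in the union of the four closed half-planes $\{\re t\le\gamma_1\}$, $\{\re t\ge\gamma_2\}$, $\{\im t\le\eta_1\}$ and $\{\im t\ge\eta_2\}$. The region on the left-hand side of the claimed inclusion is contained in the open rectangle that is removed from $\sigma(T)$, hence it lies in $\rho(T)$, and so by Proposition~\ref{prop:ourfirsprop3}\ref{item:ourfirstprop3:iii} it suffices to show $\sup_{t\in\sigma(T)}H_z(t)<1$ there. For that I would use
\[
\sup_{t\in\sigma(T)}H_z(t)\ \le\ \max\left\{\sup_{\re t\le\gamma_1}H_z(t),\ \sup_{\re t\ge\gamma_2}H_z(t),\ \sup_{\im t\le\eta_1}H_z(t),\ \sup_{\im t\ge\eta_2}H_z(t)\right\}
\]
and make each of the four suprema strictly less than $1$.

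Next, for each half-plane I would repeat the computation from the proof of Proposition~\ref{prop:imupperbdd}: because $H_z$ has no local extrema (Lemma~\ref{lem:nolocalextrema}) and tends to $b^2<1$ at infinity, the supremum over a half-plane not containing $z$ is attained on its bounding line; then the line estimate \eqref{eq:distanceline} of Lemma~\ref{lem:app:Hest} — applied directly on the horizontal lines $\im t=\eta_j$, and with the roles of real and imaginary parts interchanged (equivalently, after a rotation by $\pi/2$, which preserves Assumption~\ref{ass:relbounded} and carries a rectangle to a rectangle) on the vertical lines $\re t=\gamma_j$ — bounds each supremum, after estimating $\gamma_j^2\le\wgamma^2$ and $\eta_j^2\le\weta^2$, by an expression that is $<1$ precisely when $z$ lies in the corresponding shifted hyperbola region. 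The two conditions coming from $\gamma_1,\gamma_2$ are exactly those of Proposition~\ref{prop:interiorstrip} (with $\wgamma$), the two coming from $\eta_1,\eta_2$ are the $\pi/2$-rotation of that proposition (with $\weta$), and intersecting the four regions gives the assertion.

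I do not foresee a genuine difficulty here: every estimate needed has already been established, and the proof is a verbatim combination of Proposition~\ref{prop:interiorstrip} with its rotated counterpart. The only step requiring care is the bookkeeping — matching each hyperbola region with the statement ``$z$ lies far to one definite side of one definite bounding line'' — together with the observation that replacing $\gamma_j,\eta_j$ by $\wgamma,\weta$ is a harmless symmetrisation that makes the two ends of each strip give the same bound.
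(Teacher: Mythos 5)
Your proof is correct and takes essentially the same route as the paper: decompose $\sigma(T)$ into the four closed half-planes cut off by $\re t=\gamma_j$ and $\im t=\eta_j$, apply Proposition~\ref{prop:ourfirsprop3}\ref{item:ourfirstprop3:iii}, reduce each supremum to the bounding line via Lemma~\ref{lem:nolocalextrema}, and estimate each line with \eqref{eq:distanceline} exactly as in Proposition~\ref{prop:imupperbdd} (and its $\pi/2$-rotation for the vertical lines). The paper's proof is terse but identical in substance; your extra sentence verifying that the target region lies in $\rho(T)$ is a harmless and correct elaboration of what the paper leaves implicit.
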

\begin{proof}
   We write 
   $\sigma(T)$ as union of the half-planes
   $\{\im z \ge \eta_2 \},\, \{\im z \le \eta_2 \},\, \{\re z \le \gamma_1 \}$ and $\{\re z \ge \gamma_2 \}$.
   A sufficient condition for $z\in \rho(T+A)$ is that
   $\sup_{t\in\partial M} H_z(t) < 1$ for all of the four half-planes $M$.
   The claim then follows as in Proposition~\ref{prop:imupperbdd}.
   See Figure~\ref{fig:square}.
\end{proof}

% \tikzexternaldisable
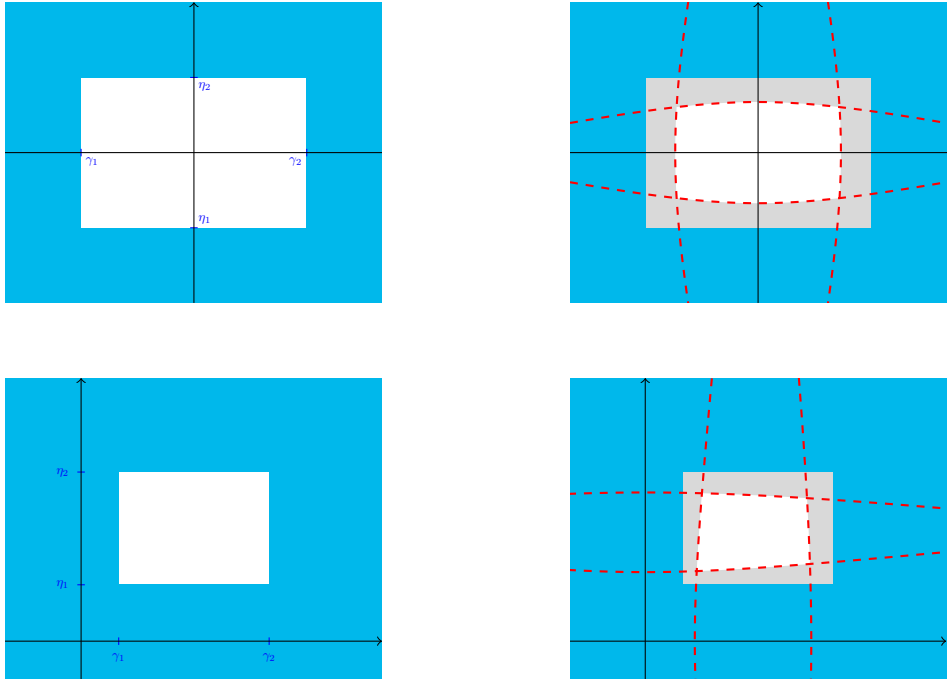
\begin{figure}[H] %%%{{{ square around 0
   \centering
   \begin{tikzpicture}[ 
      scale=0.5, transform shape, 
      declare function={ 
      %% hyperbola around Y-axis
      hyperbolax(\x) = \gx - sqrt( (\abound^2 + \bbound^2*\gx^2 + \bbound^2*\x*\x)/(1-\bbound^2) ); 
      %% hyperbola around X-axis
      hyperbolay(\x) = \gy - sqrt( (\abound^2 + \bbound^2*\gy^2 + \bbound^2*\x*\x)/(1-\bbound^2) ); 
      %% hyperbola up and down around Y-axis
      hyperboladown(\x) = \gyupper - sqrt( (\abound^2 + \bbound^2*(\gyupper)^2 + \bbound^2*\x*\x)/(1-\bbound^2) ); 
      hyperbolaup(\x) = \gylower + sqrt( (\abound^2 + \bbound^2*(\gylower)^2 + \bbound^2*\x*\x)/(1-\bbound^2) ); 
      %% hyperbola left and right around X-axis
      hyperbolaleft(\x) = \gxupper - sqrt( (\abound^2 + \bbound^2*(\gxupper)^2 + \bbound^2*\x*\x)/(1-\bbound^2) ); 
      hyperbolaright(\x) = \gxlower + sqrt( (\abound^2 + \bbound^2*(\gxlower)^2 + \bbound^2*\x*\x)/(1-\bbound^2) ); 
      },
      ]

      \tikzmath{
      \gx = 3;  %% width in x-direction
      \gy = 2;  %% width in y-direction
      \abound = .5;   %% a
      \bbound = .2;  %% b
      }

      \begin{scope} %%%{{{
	 \clip (-5,-4) rectangle (5,4);

	 %% spectrum of T
	 \path [fill=colTspec, opacity=1] (\gx, -5) rectangle (5, 5);
	 \path [fill=colTspec, opacity=1] (-\gx, -5) rectangle (-5, 5);
	 \path [fill=colTspec, opacity=1] (-5, \gy) rectangle (5, 5);
	 \path [fill=colTspec, opacity=1] (-5, -\gy) rectangle (5, -5);

	 %% labels \alpha_T, etc. 
	 \draw[blue] (\gx,-0.1) --  node[below left] {$\gamma_{2}$} (\gx,0.1);
	 \draw[blue] (-\gx,-0.1) --  node[below right] {$\gamma_{1}$} (-\gx,0.1);
	 \draw[blue] (-0.1,\gy) --  node[below right] {$\eta_{2}$} (0.1,\gy);
	 \draw[blue] (-0.1,-\gy) --  node[above right] {$\eta_{1}$} (0.1,-\gy);

	 %% Axis
	 \draw[->] (-5.5,0)--(5.5,0);
	 \draw[->] (0,-4)--(0,4);
      \end{scope}
      %%%}}}

      \begin{scope}[xshift=15cm] %%%{{{
	 \clip (-5,-4) rectangle (5,4);

	 %% define upper and lower dashed lines of the hyperbolas
	 \path [save path=\myuppercurve, name path=uppercurve] plot[domain=-5.0:5, smooth] ({\x},{hyperbolay(\x)});
	 \path [save path=\mylowercurve, name path=lowercurve] plot[domain=-5.0:5, smooth] ({\x},{-hyperbolay(\x)});
	 \path [save path=\myleftcurve, name path=leftcurve] [rotate=90] plot[domain=-5.0:5, smooth] ({\x},{hyperbolax(\x)});
	 \path [save path=\myrightcurve, name path=rightcurve] [rotate=90] plot[domain=-5.0:5, smooth] ({\x},{-hyperbolax(\x)});

	 %% GIVE NAME TO INTERSECTION!!!
	 \path [name intersections={of=uppercurve and leftcurve, by=UL}] ;
	 \path [name intersections={of=uppercurve and rightcurve, by=UR}];
	 \path [name intersections={of=lowercurve and rightcurve, by=LR}];
	 \path [name intersections={of=lowercurve and leftcurve, by=LL}] ;

% 	 \draw[thick] (UL) -- (UR) -- (LR) -- (LL);

	 \tikzset{
	 declare function={pointstocm(\t) = \t*0.03514598;}, % 1pt = 0.03514598 cm
	 }

	 % Coordinates of the corners of the spectral inclusions
	 \tikzmath{
	 coordinate \UL; coordinate \UR; coordinate \LL; coordinate \LR;
	 \UL = (UL); \UR = (UR); \LL = (LL); \LR = (LR); 
	 \ULconvx = pointstocm(\ULx);
	 \ULconvy = pointstocm(\ULy);
	 \URconvx = pointstocm(\URx);
	 \URconvy = pointstocm(\URy);
	 \LLconvx = pointstocm(\LLx);
	 \LLconvy = pointstocm(\LLy);
	 \LRconvx = pointstocm(\LRx);
	 \LRconvy = pointstocm(\LRy);
	 }

	 %% Shaded gray area for spectral enclosure
	 \begin{scope} [even odd rule]
	    \clip
	    (-8,-8) rectangle (8,8)
	    plot[domain=\ULconvx:\URconvx, smooth] ({\x},{hyperbolay(\x)})
	    -- plot[domain=\URconvy:\LRconvy, smooth] ({hyperbolax(\x)},{\x})
	    -- plot[domain=\LRconvx:\LLconvx, smooth] ({\x},{-hyperbolay(\x)})
	    -- plot[domain=\LLconvy:\ULconvy, smooth] ({-hyperbolax(\x)}, {\x})
	    -- cycle;
	    
	    \fill[gray, opacity=.3](-8,-8) rectangle (8,8);
	 \end{scope}

	 %% spectrum of T
	 \path [fill=colTspec, opacity=1] (\gx, -5) rectangle (5, 5);
	 \path [fill=colTspec, opacity=1] (-\gx, -5) rectangle (-5, 5);
	 \path [fill=colTspec, opacity=1] (-5, \gy) rectangle (5, 5);
	 \path [fill=colTspec, opacity=1] (-5, -\gy) rectangle (5, -5);

	 %% draw upper and lower dashed lines of the hyperbolas
	 \draw [use path=\myuppercurve,red,dashed, thick];
	 \draw [use path=\mylowercurve,red,dashed, thick];
	 \draw [use path=\myleftcurve, red,dashed, thick];
	 \draw [use path=\myrightcurve,red,dashed, thick];

	 %% Axis
	 \draw[->] (-5.5,0)--(5.5,0);
	 \draw[->] (0,-4)--(0,4);
      \end{scope}
      %%%}}}

      \tikzmath{
      \gxlower = 1.0;  %% right boundary of the spectrum or T
      \gxupper = 5.0;  %% left boundary of the spectrum or T
      \gyupper = 4.5;  %% lower boundary of the spectrum or T
      \gylower = 1.5;  %% upper boundary of the spectrum or T
      \abound = 1;   %% a
      \bbound = .5;  %% b
      }

      \begin{scope}[yshift=-13cm, xshift=-3cm] %%%{{{
	 \clip (-2,-1) rectangle (8,7);

	 %% spectrum of T
	 \path [fill=colTspec, opacity=1] (\gxupper, -5) rectangle (10, 10);
	 \path [fill=colTspec, opacity=1] (\gxlower, -5) rectangle (-5, 10);
	 \path [fill=colTspec, opacity=1] (-5, \gyupper) rectangle (10, 10);
	 \path [fill=colTspec, opacity=1] (-5, \gylower) rectangle (10, -5);

	 %% labels \alpha_T, etc. 
	 \draw[blue] (\gxupper,-0.1) --  node[below=.2] {$\gamma_{2}$} (\gxupper,0.1);
	 \draw[blue] (\gxlower,-0.1) --  node[below=.2] {$\gamma_{1}$} (\gxlower,0.1);
	 \draw[blue] (-0.1,\gyupper) --  node[left=.2] {$\eta_{2}$} (0.1,\gyupper);
	 \draw[blue] (-0.1,\gylower) --  node[left=.2] {$\eta_{1}$} (0.1,\gylower);

	 %% Axis
	 \draw[->] (-5.5,0)--(8,0);
	 \draw[->] (0,-4)--(0,7);
      \end{scope}
      %%%}}}

      \begin{scope}[yshift=-13cm, xshift=12cm] %%%{{{
	 \clip (-2,-1) rectangle (8,7);

	 %% labels \alpha_T, etc. 
	 %% \draw[blue] (\alphaT,-0.1) --  node[below=.2] {$\alpha_{T}$} (\alphaT,0.1);
	 %% \draw[blue] (\betaT,-0.1) -- node[below=.2] {$\beta_{T}$} (\betaT,0.1);

	 %% left and right regions
	 %% \fill [green, opacity=.3, rotate=90] plot[domain=-5.0:5, smooth] ({\x},{hyperbolax(\x)}) -- (5,6) -- (-5,6) -- cycle;
	 %% \fill [green, opacity=.3, rotate=90] plot[domain=-5.0:5, smooth] ({\x},{-hyperbolax(\x)}) -- (5,-6) -- (-5,-6) -- cycle;

	 %% upper and lower regions
	 %% \fill [green, opacity=.3] plot[domain=-5.0:5, smooth] ({\x},{hyperbolay(\x)}) -- (5,6) -- (-5,6) -- cycle;
	 %% \fill [green, opacity=.3] plot[domain=-5.0:5, smooth] ({\x},{-hyperbolay(\x)}) -- (5,-6) -- (-5,-6) -- cycle;

	 %% define upper and lower dashed lines of the hyperbolas
	 \tikzmath{\abound=0.3; \bbound=0.1;}
	 \path [save path=\myuppercurve, name path=uppercurve] plot[domain=-5.0:8, smooth] ({\x},{hyperboladown(\x)});
	 \path [save path=\mylowercurve, name path=lowercurve] plot[domain=-5.0:8, smooth] ({\x},{hyperbolaup(\x)});
	 \path [save path=\myrightcurve, name path=rightcurve] plot[domain=-5.0:8, smooth] ({hyperbolaleft(\x)},{\x});
	 \path [save path=\myleftcurve , name path=leftcurve ] plot[domain=-5.0:8, smooth] ({hyperbolaright(\x)},{\x});

	 %% GIVE NAME TO INTERSECTION!!!
	 \path [name intersections={of=uppercurve and leftcurve, by=UL}] ;
	 \path [name intersections={of=uppercurve and rightcurve, by=UR}];
	 \path [name intersections={of=lowercurve and rightcurve, by=LR}];
	 \path [name intersections={of=lowercurve and leftcurve, by=LL}] ;

	 %% \draw[thick] (UL) -- (UR) -- (LR) -- (LL);

	 \tikzset{
	 declare function={pointstocm(\t) = \t*0.03514598;}, % 1pt = 0.03514598 cm
	 }

	 % Coordinates of the corners of the spectral inclusions
	 \tikzmath{
	 coordinate \UL; coordinate \UR; coordinate \LL; coordinate \LR;
	 \UL = (UL); \UR = (UR); \LL = (LL); \LR = (LR); 
	 \ULconvx = pointstocm(\ULx);
	 \ULconvy = pointstocm(\ULy);
	 \URconvx = pointstocm(\URx);
	 \URconvy = pointstocm(\URy);
	 \LLconvx = pointstocm(\LLx);
	 \LLconvy = pointstocm(\LLy);
	 \LRconvx = pointstocm(\LRx);
	 \LRconvy = pointstocm(\LRy);
	 }

	 %% Shaded gray area for spectral enclosure
	 \begin{scope} [even odd rule]
	    \clip
	    (-8,-8) rectangle (8,8)
	    plot[domain=\ULconvx:\URconvx, smooth] ({\x},{hyperboladown(\x)})
	    -- plot[domain=\URconvy:\LRconvy, smooth] ({hyperbolaleft(\x)},{\x})
	    -- plot[domain=\LRconvx:\LLconvx, smooth] ({\x},{hyperbolaup(\x)})
	    -- plot[domain=\LLconvy:\ULconvy, smooth] ({hyperbolaright(\x)}, {\x})
	    -- cycle;
	    
	    \fill[gray, opacity=.3](-8,-8) rectangle (8,8);

	 \end{scope}

	 %% spectrum of T
	 \path [fill=colTspec, opacity=1] (\gxupper, -5) rectangle (10, 10);
	 \path [fill=colTspec, opacity=1] (\gxlower, -5) rectangle (-5, 10);
	 \path [fill=colTspec, opacity=1] (-5, \gyupper) rectangle (10, 10);
	 \path [fill=colTspec, opacity=1] (-5, \gylower) rectangle (10, -5);

	 %% draw upper and lower dashed lines of the hyperbolas
	 \tikzmath{\abound=0.3; \bbound=0.1;}
	 \draw [use path=\myuppercurve,red,dashed, thick];
	 \draw [use path=\mylowercurve,red,dashed, thick];
	 \draw [use path=\myrightcurve,red,dashed, thick];
	 \draw [use path=\myleftcurve,red,dashed, thick];

	 %% Axis
	 \draw[->] (-5.5,0)--(8,0);
	 \draw[->] (0,-4)--(0,7);
      \end{scope}
      %%%}}}

   \end{tikzpicture}

   \caption{
   Illustration of Proposition~\ref{prop:square} when $\sigma(T)\subseteq \C\setminus
   \{ \gamma_{1}\le \re(z) \le \gamma_{2},\ \eta_{1} \le \im(z) \le \eta_{2} \}$.
   In the upper row we have the special case $\gamma_1 = -\gamma_2, \eta_1 = -\eta_2$.
   % We can apply Proposition~\ref{prop:imupperbdd} to the sets 
   % $\{ \re(z) \ge \gamma \}$,
   % $\{ \re(z) \le -\gamma \}$,
   % $\{ \im(z) \ge \eta \}$,
   % $\{ \im(z) \le -\eta \}$.
   The dashed red lines show the boundaries of the corresponding hyperbolas.
   The white area is contained in $\rho(T+A)$.
   }
   \label{fig:square}
\end{figure}
%%%}}}

\begin{proposition}
   \label{prop:disk}
   Let $T$ and $A$ be as in Assumption~\ref{ass:relbounded} and assume that there is a constant $R>0$ such that 
   \begin{equation*}
      \sigma(T)\subseteq \C\setminus \{z\in\C : |z|\le R \}.
   \end{equation*}
   If $r := R - \sqrt{a^2 + b^2 R^2} > 0$, then $B_r(0)\subseteq\rho(T+A)$.
\end{proposition}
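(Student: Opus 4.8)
The plan is to reduce everything to Proposition~\ref{prop:ourfirsprop3}\ref{item:ourfirstprop3:iii}. Fix $z$ with $|z| < r$. Since $\sigma(T)\subseteq\{t\in\C:|t|\ge R\}$ and $0 < r \le R$, we have $\dist(z,\sigma(T)) \ge R-|z| > R-r = \sqrt{a^2+b^2R^2} > 0$, so $z\in\rho(T)$. By Proposition~\ref{prop:ourfirsprop3}\ref{item:ourfirstprop3:iii} it then suffices to show that $\sup_{t\in\sigma(T)} H_z(t) < 1$, where $H_z(t)=\frac{a^2+b^2|t|^2}{|t-z|^2}$ as in \eqref{eq:Hdef}.

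The key step is to reduce this supremum to the circle $\{|t|=R\}$. Since $\sigma(T)$ is contained in the closed exterior $U:=\{t\in\C:|t|\ge R\}$, whose boundary in the one-point compactification is $\{|t|=R\}\cup\{\infty\}$, and since $H_z$ has no local maximum in the interior of $U$ by Lemma~\ref{lem:nolocalextrema}, while $\lim_{|t|\to\infty}H_z(t)=b^2$, we obtain
\begin{equation*}
   \sup_{t\in\sigma(T)} H_z(t) \le \sup_{t\in U} H_z(t) \le \max\Big\{ b^2,\ \sup_{|t|=R} H_z(t) \Big\}.
\end{equation*}
(Alternatively one can invoke the corresponding bound for the exterior of a disk from Appendix~\ref{app:estimates} directly.) On the circle, $|t-z|\ge R-|z|$ with equality attained, so $\sup_{|t|=R}H_z(t) = \frac{a^2+b^2R^2}{(R-|z|)^2}$. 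Because $|z|<r$ gives $R-|z|>\sqrt{a^2+b^2R^2}$, this supremum is $<1$; and $b^2\le\delta_A^2<1$. Hence $\sup_{t\in\sigma(T)}H_z(t)<1$, so $z\in\rho(T+A)$, and since $z\in B_r(0)$ was arbitrary, $B_r(0)\subseteq\rho(T+A)$.

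I do not expect a genuine obstacle: the only delicate point is the passage from the supremum over $\sigma(T)$ to the one over the bounding circle, which is precisely the role of Lemma~\ref{lem:nolocalextrema}. If desired, the same computation also yields the resolvent estimate $\|(T+A-z)^{-1}\|\le \big((R-|z|)-\sqrt{a^2+b^2R^2}\big)^{-1}$ for $|z|<r$, via Proposition~\ref{prop:ourfirsprop3}\ref{item:ourfirstprop3:iii} together with $\dist(z,\sigma(T))\ge R-|z|$ and the bound on $\sup_{t\in\sigma(T)}H_z(t)$.
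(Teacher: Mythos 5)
Your proof is correct and follows essentially the same route as the paper: verify $z\in\rho(T)$, invoke Proposition~\ref{prop:ourfirsprop3}\ref{item:ourfirstprop3:iii} together with Lemma~\ref{lem:nolocalextrema} to reduce the supremum of $H_z$ to the circle $|t|=R$, compute $\sup_{|t|=R}H_z(t)=\frac{a^2+b^2R^2}{(R-|z|)^2}$ by minimising $|t-z|$ there, and conclude. The added resolvent estimate at the end is a natural bonus not stated in the paper's version of this proposition, but it follows immediately from the same ingredients.
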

\begin{proof}
   Again we will use that 
   $\sup_{t\in\sigma(T)} H_z(t) < 1$ is
   a sufficient condition for $z\in \rho(T+A)$.
   We estimate
   \begin{equation*}
   \sup_{t\in\sigma(T)} H_z(t)
   \le \sup_{|t|\ge R} H_z(t)
   = \max\left\{ b^2, \sup_{|t|= R} H_z(t) \right\}.
   \end{equation*}
   Note that
   $\sup_{|t|= R} H_z(t)
   = \sup_{|t|= R} \frac{a^2 + b^2|t|^2}{|z-t|^2}
   = \sup_{|t|= R} \frac{a^2 + b^2R^2}{|z-t|^2}$.
   Clearly, the supremum is attained when $\arg t = \arg z$, so
   $\sup_{|t|= R} H_z(t)
   =  \frac{a^2 + b^2R^2}{(R - |z|)^2}$.
   This is smaller than 1 if and only if $|z| < R - \sqrt{a^2 + b^2 R^2} =: r$.
   Therefore, such $z$ belong to $\rho(T+A)$ by Proposition~\ref{prop:ourfirsprop3}.
   See Figure~\ref{fig:disk}.
\end{proof}

\begin{figure}[H] %%%{{{ disk around 0
   \centering
   \begin{tikzpicture}[ scale=0.5, transform shape ]

      \tikzmath{
      \abound = .5;   %% a
      \bbound = .2;  %% b
      \RR = 3;  %% radius of resolvent set of T
      \rr = \RR - sqrt( 1/(\abound^2 + \bbound^2*\RR^2) );  %% radius of resolvent set of T+A
      }

      \begin{scope} %%%{{{
	 \clip (-5,-4) rectangle (5,4);

	 %% spectrum of T
	 \path [fill=colTspec, opacity=1] (-5, -5) rectangle (5, 5);
	 \path [fill=white, opacity=1] circle[radius=\RR];

	 %% Axis
	 \draw[->] (-5.5,0)--(5.5,0);
	 \draw[->] (0,-4)--(0,4);

	 \draw[-] (\RR,.2)--(\RR ,-.2) node[below]{$R$};
      \end{scope}
      %%%}}}

      \begin{scope}[xshift=15cm] %%%{{{
	 \clip (-5,-4) rectangle (5,4);

	 %% spectrum of T
	 \path [fill=colTspec, opacity=1] (-5, -5) rectangle (5, 5);
	 \path [fill=white, opacity=1] circle[radius=\RR];

	 %% spectrum of T+A
	 \begin{scope} 
	    \clip circle[radius=\RR];
	    \path [fill=gray, opacity=.3] (-5, -5) rectangle (5, 5);
	    \path [fill=white, opacity=1] circle[radius=\rr];
	 \end{scope}

	 %% Axis
	 \draw[->] (-5.5,0)--(5.5,0);
	 \draw[->] (0,-4)--(0,4);

	 \draw[-] (\RR,.2)--(\RR ,-.2) node[below]{$R$};
	 \draw[-] (\rr,.2)--(\rr ,-.2) node[below]{$r$};

      \end{scope}
      %%%}}}

   \end{tikzpicture}

   \caption[$\sigma(T)\subseteq \C\setminus \{ |z| < R\}$.]{
   Illustration of Proposition~\ref{prop:disk}.
   If $\sigma(T)\subseteq \C\setminus \{ |z| < R\}$, then $\sigma(T+A)\subseteq \C\setminus \{ |z| < r\}$ 
   with $r = R - \sqrt{a^2 + b^2 R^2}$.
   }
   \label{fig:disk}
\end{figure}
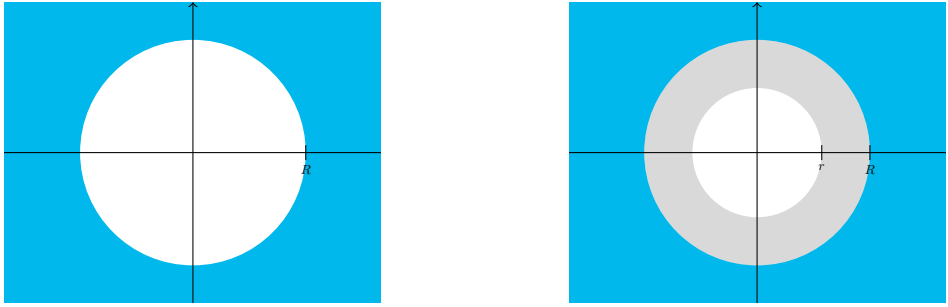
%%%}}}

In Figure~\ref{fig:differentangles} we show the spectral enclosures that can be obtained as in Theorem~\ref{thm:bisecgap} when the sectors have different angles.
Figure~\ref{fig:manysectors} shows the spectral enclosures if we have more than two sectors.
In Figure~\ref{fig:manysectors}, all sectors have the same angle and the same distance from the origin, however it is not difficult to adapt the method of proof of Theorem~\ref{thm:bisecgap} to situations with different angles or different distances from the origin.

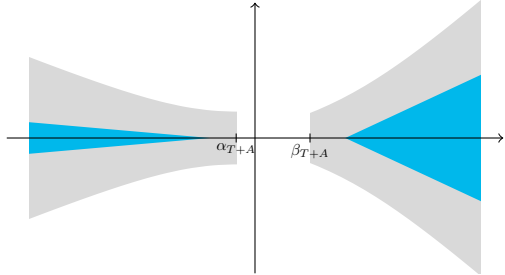
\begin{figure}%
   [H] %%%{{{ Spectral inclusion: bisector with gap and different angles

   \begin{center}
      \begin{tikzpicture}[  %%%{{{
	 declare function={ 
	 hyperbolarotb(\x) = sqrt( (\abound^2 + \bbound^2*(\betaT)^2  + \bbound^2*(\x)^2)/(1-\bbound^2) ); 
	 hyperbolarota(\x) = sqrt( (\abound^2 + \bbound^2*(\alphaT)^2 + \bbound^2*(\x)^2)/(1-\bbound^2) ); 
	 },
	 scale=0.6, transform shape
	 ]

	 \tikzmath{
	 \gT = 0;  %% \gamma_T
	 \abound = .5;   %% a
	 \bbound = .3;  %% b
	 \alphaT = -1;   %% \alpha_T
	 \betaT = 2;      %% \beta_T
	 \thetaA = 5;      %% \beta_T
	 \thetaB = 25;      %% \beta_T
	 \alphaTA = \alphaT + sqrt( \abound^2 + \bbound^2*\gT^2 + \bbound^2*(\alphaT)^2);   %% \alpha_{T+A}
	 \betaTA = \betaT - sqrt( \abound^2 + \bbound^2*\gT^2 + \bbound^2*(\betaT)^2);      %% \beta_{T+A}
	 }

	 \begin{scope}
	    \clip (-5, -3) rectangle (\alphaTA, 3);

	    %% area between the rotated hyperbolas left
	    %% \draw [thick,red, shift={(\alphaT,0)}, rotate=-\thetaA ] plot[domain=-7.0:7.0, smooth] ({\x},{hyperbolarota(\x)});
	    %% \draw [thick,blue, shift={(\alphaT,0)}, rotate=\thetaA ] plot[domain=-7.0:7.0, smooth] ({\x},{-hyperbolarota(\x)});
	    \fill [colTAspec, opacity=.3, shift={(\alphaT,0)} ]
	    plot[domain=-7.0:7.0, smooth, rotate=-\thetaA] ({\x},{hyperbolarota(\x)}) 
	    -- plot[domain=7.0:-7.0, smooth, rotate=\thetaA] ({\x},{-hyperbolarota(\x)}) -- cycle;
	 \end{scope}

	 \begin{scope}
	    \clip (\betaTA,-3) rectangle (5, 5);

	    %% area between the rotated hyperbolas right
	    %% \draw [thick, red, shift={(\betaT,0)} ] plot[domain=-7.0:7.0, smooth, rotate=\thetaB] ({\x},{hyperbolarotb(\x)}) ;
	    %% \draw [thick, blue, shift={(\betaT,0)} ] plot[domain=-7.0:7.0, smooth, rotate=-\thetaB] ({\x},{-hyperbolarotb(\x)}) ;
	    \fill [colTAspec, opacity=.3, shift={(\betaT,0)} ]
	    plot[domain=-7.0:7.0, smooth, rotate=\thetaB] ({\x},{hyperbolarotb(\x)}) 
	    --  plot[domain=7.0:-7.0, smooth, rotate=-\thetaB] ({\x},{-hyperbolarotb(\x)}) -- cycle;

	 \end{scope}

	 \draw (\alphaTA,-.1) -- node [below] {$\alpha_{T+A}$} (\alphaTA,.1);
	 \draw (\betaTA,-.1) -- node [below] {$\beta_{T+A}$} (\betaTA,.1);

	 \begin{scope}
	    \clip (-5,-3) rectangle (5, 5);
	    %% spectrum of T
	    \path [fill=colTspec, opacity=1, shift={(\alphaT,0)}] (\thetaA:-6) -- (0,0) --  (-\thetaA:-6) -- cycle;
	    \path [fill=colTspec, opacity=1, shift={(\betaT,0)}] (\thetaB:6) -- (0,0) --  (-\thetaB:6) -- cycle;
	 \end{scope}

	 %% Axis
	 \draw[->] (-5.5,0)--(5.5,0);
	 \draw[->] (0,-3)--(0,3);

      %%%}}}

      \end{tikzpicture}
   \end{center}

   \caption{Spectral inclusion for $T+A$ if $\sigma(T)\subseteq -\sector{\theta_\alpha}{-\alpha} \cup -\sector{\theta_\beta}{\beta}$ with $\theta_\alpha\neq \theta_\beta$.}
   \label{fig:differentangles}

\end{figure}
%%%}}}

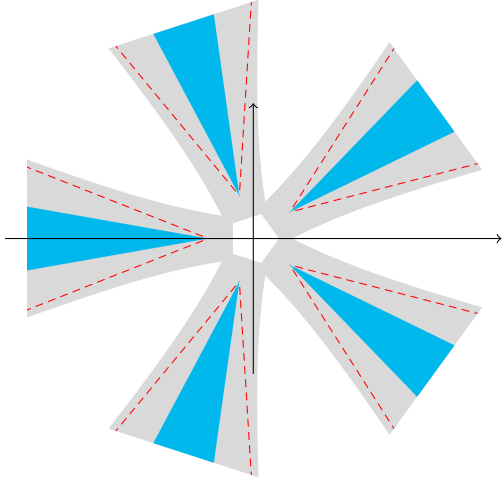
\begin{figure}%[H] %%%{{{ Spectral inclusion: several sectors with gap
   \begin{center}

   \begin{tikzpicture}[  %%%{{{
      declare function={ 
      hyperbolarotb(\x) = sqrt( (\abound^2 + \bbound^2*(\betaT)^2  + \bbound^2*(\x)^2)/(1-\bbound^2) ); 
      hyperbolarota(\x) = sqrt( (\abound^2 + \bbound^2*(\alphaT)^2 + \bbound^2*(\x)^2)/(1-\bbound^2) ); 
      },
      scale=0.6, transform shape]

      \tikzmath{
      \gT = 0;  %% \gamma_T
      \abound = .5;   %% a
      \bbound = .2;  %% b
      \alphaT = -1;   %% \alpha_T
      \betaT = 2;      %% \beta_T
      \thetaT = 10;      %% \beta_T
      \alphaTA = \alphaT + sqrt( \abound^2 + \bbound^2*\gT^2 + \bbound^2*(\alphaT)^2);   %% \alpha_{T+A}
      \betaTA = \betaT - sqrt( \abound^2 + \bbound^2*\gT^2 + \bbound^2*(\betaT)^2);      %% \beta_{T+A}
      \thetaB = atan(\bbound/sqrt(1-\bbound^2));
      }

      \tikzset{mywedge/.pic={
      %%%{{{ Definition of ons sector and its spectral bounds
      \clip (-5, -3) rectangle (\alphaTA, 3);

      %% area between the rotated hyperbolas

      \fill [colTAspec!30, opacity=1, shift={(\alphaT,0)} ]
      plot[domain=-7.0:7.0, smooth, rotate=-\thetaT] ({\x},{hyperbolarota(\x)}) 
      -- plot[domain=7.0:-7.0, smooth, rotate=\thetaT] ({\x},{-hyperbolarota(\x)}) -- cycle;

      %% tangents
      \draw [red, densely dashed] plot [domain=-7.0:\alphaT, smooth] ({\x},{(\x-\alphaT) * tan(\thetaT+\thetaB)});
      \draw [red, densely dashed] plot [domain=-7.0:\alphaT, smooth] ({\x},{-(\x-\alphaT) * tan(\thetaT+\thetaB)});

      %% spectrum of $T$
      \path [fill=colTspec, opacity=1, shift={(\alphaT,0)}] (\thetaT:-6) -- (0,0) --  (-\thetaT:-6) -- cycle;
      }}
      %%%}}}

      \draw [rotate=0] (0,0) pic {mywedge};
      \draw [rotate=72] (0,0) pic {mywedge};
      \draw [rotate={2*72}] (0,0) pic {mywedge};
      \draw [rotate={3*72}] (0,0) pic {mywedge};
      \draw [rotate={4*72}] (0,0) pic {mywedge};

      % \draw (\alphaTA,-.1) -- node [below] {$\alpha_{T+A}$} (\alphaTA,.1);

      %% Axis
      \draw[->] (-5.5,0)--(5.5,0);
      \draw[->] (0,-3)--(0,3);

   \end{tikzpicture}
   %%%}}}

   \end{center}
   
   \caption{Spectral inclusion for $T+A$ if the $\sigma(T)$ is contained in several sectors symmetrically distributed around $0$}
  \label{fig:manysectors}

\end{figure}
%%%}}}

\subsection*{Infinitely many gaps} %%%{{{

In this subsection we apply the results from Section~\ref{sec:boundedimaginary} to normal operators with infinitely many spectral gaps along the real axis.
First we consider the case when the imaginary part of the spectrum of $T$ is bounded.
The next theorem can be considered a generalisation of Theorem~\ref{thm:bddstripgap} to the case of infinitely many spectral free strips.

\begin{theorem} %%%{{{
   \label{thm:infgapsbdd}
   Let $T$ and $A$ be as in Assumption~\ref{ass:relbounded}. 
   As in Proposition~\ref{prop:bddstrip} we assume that $\sigma(T)\subseteq \{z\in\C: \gamma_1\le \im z \le \gamma_2\}$ for some $\gamma_1< \gamma_2$.
   Let us set $\widetilde \gamma = \max\{|\gamma_1|,\, |\gamma_2|\}$.
   Assume in addition that there are unbounded sequences $(\alpha_n)_n$ and  $(\beta_n)_n$ such that $\alpha_n < \beta_n \le \alpha_{n+1}$ and $\{z\in\C : \alpha_n < \re z < \beta_n\}\subseteq\rho(T)$ for every $n$.

   \begin{enumerate}[label={\upshape(\roman*)}]

      \item 
      \label{item:infgapsbdd:limsup}
      The resolvent set of the operator $T+A$ contains infinitely many strips parallel to the imaginary axis if
      \begin{equation}
	 \label{eq:infgapsbdd:i}
	 \liminf_{n\to\infty}
	 \frac{ \sqrt{ a^2 + b^2 \widetilde \gamma^2 + b^2\alpha_n^2} + \sqrt{ a^2 + b^2 \widetilde \gamma^2 + b^2\beta_n^2} }{ \beta_n-\alpha_n } 
	 < 1.
      \end{equation}
      If $b=0$, then inequality \eqref{eq:infgapsbdd:i} holds if and only if
      $\limsup \beta_n - \alpha_n > 2a$;
      if $0<b<1$, then it holds if and only if 
      \begin{equation}
	 \label{eq:infgapsbdd:ii}
	 \limsup_{n\to\infty} \frac{\beta_n}{\alpha_n} > \frac{1+b}{1-b}.
      \end{equation}

      \item 
      \label{item:infgapsbdd:liminf}
      At most finitely many spectral gaps of $T$ close if
      \begin{equation}
	 \label{eq:infgapsbdd:iii}
	 \limsup_{n\to\infty}
	 \frac{ \sqrt{ a^2 + b^2 \widetilde \gamma^2 + b^2\alpha_n^2} + \sqrt{ a^2 + b^2 \widetilde \gamma^2 + b^2\beta_n^2} }{ \beta_n-\alpha_n } 
	 < 1.
      \end{equation}
      If $b=0$, then inequality \eqref{eq:infgapsbdd:iii} holds if and only if
      $\liminf \beta_n - \alpha_n > 2a$;
      if $0<b<1$, then it holds if and only if 
      \begin{equation}
	 \label{eq:infgapsbdd:vi}
	 \liminf_{n\to\infty} \frac{\beta_n}{\alpha_n} > \frac{1+b}{1-b}.
      \end{equation}

   \end{enumerate}
\end{theorem}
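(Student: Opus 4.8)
The plan is to apply Theorem~\ref{thm:bddstripgap} to each of the spectral free strips $\{z\in\C:\alpha_n<\re z<\beta_n\}$ of $T$ individually, and then to translate the two hypotheses \eqref{eq:infgapsbdd:i} and \eqref{eq:infgapsbdd:iii} into asymptotic conditions on the quotients $\beta_n/\alpha_n$ (respectively on the widths $\beta_n-\alpha_n$ when $b=0$).

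First I would note that, since $\alpha_n<\beta_n\le\alpha_{n+1}$, both sequences are increasing, and being unbounded they tend to $+\infty$; in particular $\alpha_n>0$ for all sufficiently large $n$. For each such $n$ set, as in \eqref{eq:extremosdelatira2},
\[
   \alpha_n':=\alpha_n+\sqrt{a^2+b^2\wgamma^2+b^2\alpha_n^2},
   \qquad
   \beta_n':=\beta_n-\sqrt{a^2+b^2\wgamma^2+b^2\beta_n^2},
\]
and write $q_n$ for the quotient appearing in \eqref{eq:infgapsbdd:i}. Since $\beta_n-\alpha_n>0$, we have $\alpha_n'<\beta_n'$ if and only if $q_n<1$, and in that case Theorem~\ref{thm:bddstripgap} applied to the gap $(\alpha_n,\beta_n)$ yields $\{z\in\C:\alpha_n'<\re z<\beta_n'\}\subseteq\rho(T+A)$. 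These strips are pairwise disjoint because $\beta_n'<\beta_n\le\alpha_{n+1}<\alpha_{n+1}'$. Hence: if $\liminf_n q_n<1$ then $q_n<1$ for infinitely many $n$, so $\rho(T+A)$ contains infinitely many disjoint strips parallel to the imaginary axis, which proves~\ref{item:infgapsbdd:limsup}; and if $\limsup_n q_n<1$ then $q_n<1$ for all but finitely many $n$, so at most finitely many of the gaps of $T$ can close, which proves~\ref{item:infgapsbdd:liminf}.

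It remains to characterise when $\liminf_n q_n<1$ and when $\limsup_n q_n<1$. If $b=0$ then $q_n=\frac{2a}{\beta_n-\alpha_n}$, and since $x\mapsto 2a/x$ is decreasing on $(0,\infty)$ one gets $\liminf_n q_n=2a/\limsup_n(\beta_n-\alpha_n)$ and $\limsup_n q_n=2a/\liminf_n(\beta_n-\alpha_n)$, from which the equivalences with $\limsup(\beta_n-\alpha_n)>2a$ and $\liminf(\beta_n-\alpha_n)>2a$ are immediate. If $0<b<1$, put $c^2:=a^2+b^2\wgamma^2$, $r_n:=\beta_n/\alpha_n>1$ and $\psi(r):=b\,\frac{r+1}{r-1}$. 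From $0\le\sqrt{c^2+b^2x^2}-bx=\frac{c^2}{\sqrt{c^2+b^2x^2}+bx}\le\frac{c^2}{2bx}$ for $x>0$ one obtains, for large $n$,
\[
   0\le q_n-\psi(r_n)\le\frac{c^2\,(1/\alpha_n+1/\beta_n)}{2b\,(\beta_n-\alpha_n)} .
\]
On any subsequence along which $q_n\le M$, the inequality $\frac{b\alpha_n}{\beta_n-\alpha_n}\le q_n\le M$ forces $\beta_n-\alpha_n\ge\frac{b}{M}\alpha_n\to\infty$, so the error $q_n-\psi(r_n)$ tends to $0$ there; combining this with $q_n\ge\psi(r_n)$ one deduces $\liminf_n q_n=\psi(\limsup_n r_n)$ and $\limsup_n q_n=\psi(\liminf_n r_n)$, with the continuous extension $\psi(1):=+\infty$, $\psi(+\infty):=b$. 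Since $\psi$ is a strictly decreasing bijection of $(1,\infty)$ onto $(b,\infty)$ with $\psi(r)<1\iff r>\frac{1+b}{1-b}$, this gives $\liminf_n q_n<1\iff\limsup_n r_n>\frac{1+b}{1-b}$ and $\limsup_n q_n<1\iff\liminf_n r_n>\frac{1+b}{1-b}$, that is, \eqref{eq:infgapsbdd:ii} and \eqref{eq:infgapsbdd:vi}.

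The only genuinely delicate point is the identity $\liminf_n q_n=\psi(\limsup_n r_n)$ together with its companion: it rests on the observation that whenever $q_n$ fails to blow up, the gap width $\beta_n-\alpha_n$ must grow at least linearly in $\alpha_n$, which makes the constants $a$ and $\wgamma$ asymptotically negligible and reduces $q_n$ to $\psi(\beta_n/\alpha_n)$ up to an error tending to $0$; the $\liminf$/$\limsup$ bookkeeping then has to be carried out through subsequences, using the monotonicity and continuity of $\psi$. Everything else is routine: the spectral content — that each surviving gap of $T$ produces a strip in $\rho(T+A)$ — is immediate from Theorem~\ref{thm:bddstripgap}.
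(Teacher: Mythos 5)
Your argument is correct and follows the same strategy as the paper's own proof: apply Theorem~\ref{thm:bddstripgap} to each gap $(\alpha_n,\beta_n)$, observe that the resulting strip $\{\alpha'_n<\re z<\beta'_n\}$ is non-empty exactly when the quotient $q_n<1$, and then translate the $\liminf$/$\limsup$ conditions on $q_n$ into conditions on $\beta_n/\alpha_n$ (respectively on $\beta_n-\alpha_n$ when $b=0$). Your asymptotic treatment via $\psi$ and the elementary bound $0\le\sqrt{c^2+b^2x^2}-bx\le c^2/(2bx)$ is in fact slightly more careful than the paper's, which passes from the original $\liminf$ to $b(L+1)/(L-1)$ by dropping the $(a^2+b^2\wgamma^2)/\alpha_n^2$ terms inside the square roots without explicitly dealing with the possibility that $\beta_n/\alpha_n-1$ tends to zero along a subsequence; your observation that boundedness of $q_n$ forces $\beta_n-\alpha_n\ge (b/M)\,\alpha_n$ is precisely what justifies that step.
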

%%%}}}

The theorem can easily be modified to the situation when $\alpha_n < \beta_n \le \alpha_{n+1}$ for $n\in\Z$ or for $n\in -\N$.

\begin{proof} %%%{{{
   Let $\alpha_n' = \alpha_n + \sqrt{ a^2 + b^2 \widetilde \gamma^2 + b^2\alpha_n^2}$ and 
   $\beta_n' = \beta_n - \sqrt{ a^2 + b^2 \widetilde \gamma^2 + b^2\beta_n^2}$.
   Proposition~\ref{thm:bddstripgap} guarantees that $\{ \alpha_n' < \re z < \beta_n'\}\subseteq \rho(T+A)$.
   This strip is not empty if and only if $\beta_n' > \alpha_n'$.
   This is the case if and only if 
   \begin{align*}
      \frac{ \sqrt{ a^2 + b^2 \widetilde \gamma^2 + b^2\alpha_n^2} + \sqrt{ a^2 + b^2 \widetilde \gamma^2 + b^2\beta_n^2} }{ \beta_n-\alpha_n }
      < 1.
   \end{align*}
   Therefore the first part of \ref{item:infgapsbdd:limsup} and \ref{item:infgapsbdd:liminf} follows.

   If $b=0$, then $\beta_n' < \alpha_n'$ if and only if $\beta_n-\alpha_n > 2a$.
   Therefore infinitely many spectral gaps remain if 
   $\limsup_{n\to\infty} \beta_n-\alpha_n > 2a$
   and at most finitely many gaps close if 
   $\liminf_{n\to\infty} \beta_n-\alpha_n > 2a$.

   Now let us assume that $b>0$ and set 
   $L := \limsup_{n\to\infty} \frac{\beta_n}{\alpha_n}$.
   Note that $L\ge 1$.
   Then
   \begin{multline*}
      \liminf_{n\to\infty}
      \frac{ \sqrt{ a^2 + b^2 \widetilde \gamma^2 + b^2\alpha_n^2} 
      + \sqrt{ a^2 + b^2 \widetilde \gamma^2 + b^2\beta_n^2} }{ \beta_n-\alpha_n }
      \\
      \begin{aligned}
	 &= \liminf_{n\to\infty}
	 \frac{ \sqrt{ (a^2 + b^2 \widetilde \gamma^2)/\alpha_n^2 + b^2} 
	 + \sqrt{ (a^2 + b^2 \widetilde \gamma^2)/\alpha_n^2 + b^2 \beta_n^2/\alpha_n^2} }{ \frac{\beta_n}{\alpha_n} - 1 }
	 \\
	 &= \liminf_{n\to\infty}
	 \frac{ b (\frac{\beta_n}{\alpha_n} + 1) }{ \frac{\beta_n}{\alpha_n} - 1 }
	 = b \liminf_{n\to\infty}
	 \left( 1 + \frac{ 2 }{ \frac{\beta_n}{\alpha_n} - 1 } \right)
	 = b\left( 1 + \frac{ 2 }{ L - 1 } \right)
	 = \frac{  b( L+1) }{ L - 1 }
      \end{aligned}
   \end{multline*}
   with the convention that the latter is $\infty$ if $L=1$.
   This is smaller than 1 if and only if \eqref{eq:infgapsbdd:ii} holds hence \ref{item:infgapsbdd:limsup} is proved.
   The calculation for \ref{item:infgapsbdd:liminf} is analogous.
\end{proof}
%%%}}}

If in Theorem~\ref{thm:infgapsbdd} we drop the assumption that the imaginary part of the spectrum of $T$ is bounded, we cannot expect that the resolvent set of $T+A$ contains any strip.
The case of one such spectral strip is dealt with in Proposition~\ref{prop:interiorstrip}.
There we saw that the area between two hyperbolas, which bend towards each other, is contained in the resolvent set of $T+A$.
Although they touch for $|\im z|$ large enough, a neighbourhood of a real interval is contained in the resolvent set of $T+A$.
For infinitely many such strips, we obtain the following theorem.

\begin{theorem} %%%{{{
   \label{thm:infgaps}
   Let $T$ and $A$ be as in Assumption~\ref{ass:relbounded} and assume that $b < \frac{1}{\sqrt{5}}$ where $b$ is as in \eqref{eq:definicion2}.
   In addition we assume that there are unbounded sequences $(\alpha_n)_n$ and  $(\beta_n)_n$ such that $\alpha_n < \beta_n \le \alpha_{n+1}$ and $\{z\in\C : \alpha_n < \re z < \beta_n\}\subseteq\rho(T)$ for every $n$.

   \begin{enumerate}[label={\upshape(\roman*)}]

      \item 
      \label{item:infgaps:liminf}
      The resolvent set of the operator $T+A$ contains infinitely many rectangular regions around real intervals if 
      \begin{equation}
	 \label{eq:infgaps:i}
	 \liminf_{n\to\infty}
	 \frac{ 2\sqrt{ \frac{ a^2 + b^2\beta_n^2 }{1-b^2}} }{ \beta_n-\alpha_n } 
	 < 1.
      \end{equation}
      If $b=0$, then inequality \eqref{eq:infgaps:i} holds if and only if
      $\limsup \beta_n - \alpha_n > 2a$;
      if $0<b< \frac{1}{\sqrt 5} $, then it holds if and only if 
      \begin{equation}
	 \label{eq:infgaps:ii}
	 % \limsup_{n\to\infty} \frac{\beta_n}{\alpha_n} > \frac{\sqrt{1-b^2} + b}{\sqrt{1-b^2} - b}.
	 \limsup_{n\to\infty} \frac{\beta_n}{\alpha_n} >  1 + \frac{ 2b }{\sqrt{1-b^2} - 2b}.
      \end{equation}

      \item 
      \label{item:infgaps:limsup}
      At most finitely many spectral gaps of $T$ disappear completely if
      \begin{equation}
	 \label{eq:infgaps:iii}
	 \limsup_{n\to\infty}
	 \frac{ 2\sqrt{ \frac{ a^2 + b^2\beta_n^2 }{1-b^2}} }{ \beta_n-\alpha_n } 
	 < 1.
      \end{equation}
      If $b=0$, then inequality \eqref{eq:infgaps:iii} holds if and only if
      $\liminf \beta_n - \alpha_n > 2a$;
      if $0<b< \frac{1}{\sqrt 5} $, then it holds if and only if 
      \begin{equation}
	 \label{eq:infgaps:iv}
	 \liminf_{n\to\infty} \frac{\beta_n}{\alpha_n} > 1 + \frac{ 2b }{\sqrt{1-b^2} - 2b}.
      \end{equation}

   \end{enumerate}
   See Figure~\ref{fig:infgaps}.
\end{theorem}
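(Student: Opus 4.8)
The plan is to treat this as the ``unbounded imaginary part'' companion of Theorem~\ref{thm:infgapsbdd}: where that proof fed each spectral free strip into Theorem~\ref{thm:bddstripgap}, here I would feed each strip into Proposition~\ref{prop:interiorstrip}. Since $\alpha_n < \beta_n \le \alpha_{n+1}$, both sequences are strictly increasing and unbounded, so there is $n_0$ with $0 < \alpha_n < \beta_n$ for all $n\ge n_0$; for such $n$ we have $\wgamma = \max\{|\alpha_n|,|\beta_n|\} = \beta_n$. Applying Proposition~\ref{prop:interiorstrip} with $\gamma_1 = \alpha_n$ and $\gamma_2 = \beta_n$ gives, for $n\ge n_0$,
\begin{equation*}
\bigl\{z\in\C : \alpha_n + s_n(\im z) < \re z < \beta_n - s_n(\im z)\bigr\}\subseteq\rho(T+A),
\qquad
s_n(t):=\sqrt{\tfrac{a^2+b^2\beta_n^2}{1-b^2}+\tfrac{b^2}{1-b^2}\,t^2}.
\end{equation*}
This set is open, and evaluated at $\im z=0$ it is nonempty if and only if $2\,s_n(0) < \beta_n - \alpha_n$, i.e.
\begin{equation*}
q_n:=\frac{2\sqrt{(a^2+b^2\beta_n^2)/(1-b^2)}}{\beta_n-\alpha_n}<1.
\end{equation*}
When $q_n<1$, the set contains the real interval $\bigl(\alpha_n+s_n(0),\,\beta_n-s_n(0)\bigr)$, and since $s_n(t)\to s_n(0)$ as $t\to0$ it contains, for $\delta_n$ small enough and some $\varepsilon_n>0$, an axis-parallel rectangle $[\alpha_n+s_n(0)+\varepsilon_n,\ \beta_n-s_n(0)-\varepsilon_n]\times[-\delta_n,\delta_n]$. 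So ``gap $n$ produces a rectangular region in $\rho(T+A)$'' is equivalent to $q_n<1$, and the sufficiency in \ref{item:infgaps:liminf} follows from $\liminf_n q_n<1$ (there are infinitely many $n$ with $q_n<1$), while the sufficiency in \ref{item:infgaps:limsup} follows from $\limsup_n q_n<1$ (only finitely many $n$ with $q_n\ge1$).

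The remaining work is the asymptotic bookkeeping that rephrases the condition on $q_n$ in terms of $\beta_n/\alpha_n$. If $b=0$ then $q_n=2a/(\beta_n-\alpha_n)$, and since $x\mapsto2a/x$ is decreasing we get $\liminf_n q_n = 2a/\limsup_n(\beta_n-\alpha_n)$ and $\limsup_n q_n = 2a/\liminf_n(\beta_n-\alpha_n)$, which yields the two stated equivalences at once. If $0<b<\tfrac{1}{\sqrt{5}}$, I would divide top and bottom of $q_n$ by $\beta_n$,
\begin{equation*}
q_n=\frac{2\sqrt{(a^2/\beta_n^2+b^2)/(1-b^2)}}{1-\alpha_n/\beta_n},
\end{equation*}
note that the numerator converges to $\tfrac{2b}{\sqrt{1-b^2}}$ because $\beta_n\to\infty$, and combine this with the liminf/limsup of $(1-\alpha_n/\beta_n)^{-1}$. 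Since $x\mapsto(1-x)^{-1}$ is increasing on $(0,1)$ and $\alpha_n/\beta_n=(\beta_n/\alpha_n)^{-1}$, writing $L:=\limsup_n\beta_n/\alpha_n$ and $\ell:=\liminf_n\beta_n/\alpha_n$ (both in $[1,\infty]$) one obtains
\begin{equation*}
\liminf_n q_n=\frac{2b}{\sqrt{1-b^2}}\cdot\frac{L}{L-1},\qquad
\limsup_n q_n=\frac{2b}{\sqrt{1-b^2}}\cdot\frac{\ell}{\ell-1},
\end{equation*}
with the right-hand side interpreted as $\infty$ when its denominator vanishes. Finally $\tfrac{2b}{\sqrt{1-b^2}}\cdot\tfrac{L}{L-1}<1$ is equivalent to $\sqrt{1-b^2}<L(\sqrt{1-b^2}-2b)$; the hypothesis $b<\tfrac{1}{\sqrt{5}}$ is precisely what makes $\sqrt{1-b^2}-2b>0$ (as $\sqrt{1-b^2}>2b\iff b^2<\tfrac{1}{5}$), so dividing gives $L>\tfrac{\sqrt{1-b^2}}{\sqrt{1-b^2}-2b}=1+\tfrac{2b}{\sqrt{1-b^2}-2b}$, which is \eqref{eq:infgaps:ii}; the same algebra with $\ell$ in place of $L$ gives \eqref{eq:infgaps:iv}.

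I do not expect a serious obstacle: the analytic content is already in Proposition~\ref{prop:interiorstrip}, so this is essentially a repetition of the argument for Theorem~\ref{thm:infgapsbdd} with a different input. The points needing care are (i) that $\alpha_n,\beta_n$ are eventually positive, which is why $\beta_n$ (and not $\alpha_n$) appears in $q_n$; (ii) the short continuity-in-$\im z$ remark showing that the open region of Proposition~\ref{prop:interiorstrip}, once nonempty on the real axis, genuinely contains a rectangle; and (iii) the algebra in the last step, including the observation that $b<\tfrac{1}{\sqrt{5}}$ is the exact threshold at which the inequality for $q_n$ can be solved for $\beta_n/\alpha_n$ (for $b\ge\tfrac{1}{\sqrt{5}}$ the quantity $\tfrac{2b}{\sqrt{1-b^2}}\cdot\tfrac{L}{L-1}$ stays $\ge1$ for every finite $L$). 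As after Theorem~\ref{thm:infgapsbdd}, one should also remark that the statement carries over verbatim to index sets $\Z$ or $-\N$.
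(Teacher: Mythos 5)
Your proof is correct and takes the same route as the paper: it feeds each gap into Proposition~\ref{prop:interiorstrip}, reduces non-emptiness of the resulting region to $q_n<1$ (equivalently $\alpha_n'<\beta_n'$), uses openness and continuity in $\im z$ to extract a rectangle, and then does the same $\liminf$/$\limsup$ monotonicity bookkeeping to rewrite the condition in terms of $L=\limsup\beta_n/\alpha_n$, with the observation that $b<\tfrac{1}{\sqrt{5}}$ is exactly $\sqrt{1-b^2}-2b>0$. The only cosmetic difference is that you normalise $q_n$ by $\beta_n$ where the paper normalises by $\alpha_n$, which makes the convergence of the numerator a touch more transparent but changes nothing in substance.
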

%%%}}}
The theorem can easily be modified to the situation when $\alpha_n < \beta_n \le \alpha_{n+1}$ for $n\in\Z$ or for $n\in -\N$.
\begin{proof} %%%{{{
   Let $\alpha_n' = \alpha_n + \sqrt{ \frac{ a^2 + b^2\beta_n^2 }{1-b^2}}$,
   $\beta_n' = \beta_n - \sqrt{ \frac{ a^2 + b^2\beta_n^2 }{1-b^2}}$ and
   \begin{equation*}
      I_n := 
      \left\{ x\in\R  :
      \alpha_n + \sqrt{ \frac{a^2 + b^2\beta_n^2}{1-b^2} + \frac{b^2}{1-b^2} (\im z)^2}
      < x < 
      \beta_n - \sqrt{ \frac{a^2 + b^2\beta_n^2}{1-b^2} + \frac{b^2}{1-b^2} (\im z)^2}
      \right\}.
   \end{equation*}
   Note that for $n$ large enough, $\alpha_n < \beta_n$, therefore
   Proposition~\ref{prop:interiorstrip} guarantees that a open complex neighourhood of $I_n$ is contained in $\rho(T+A)$ for every $n$.
   Clearly, $I_n$ is nonempty if and only if $\alpha_n' < \beta_n'$
   and in this case, for every $\alpha_n' < \alpha_n'' < \beta_n'' < \beta_n'$ we can find $\epsilon_n>0$ such that 
   $\{z\in\C : \alpha_n'' < \re z < \beta_n'',\ -\epsilon_n < \im z < \epsilon_n \}\subseteq\rho(T+A)$.
   It remains to check for how many $n$ the inequality $\alpha_n' < \beta_n'$ holds.
   This can be done as in Theorem~\ref{thm:infgapsbdd}.
   For the proof that 
   \eqref{eq:infgaps:i} and \eqref{eq:infgaps:ii} are equivalent,
   we set $L := \limsup_{n\to\infty} \frac{\beta_n}{\alpha_n}$. 
   Note that $L\ge 1$. 
   If $b=0$, then \eqref{eq:infgaps:i} is equivalent to $\limsup (\beta_n - \alpha_n) > 2a$.
   If $0 < b < \frac{1}{\sqrt{2}}$, then 
   \begin{align*}
      \liminf_{n\to\infty}
      \frac{ 2 \sqrt{ \frac{ a^2 + b^2\beta_n^2 }{1-b^2}} }{ \beta_n-\alpha_n } 
      &= \liminf_{n\to\infty} \frac{ 2 \sqrt{ a^2 + b^2\beta_n^2 } }{ \sqrt{1-b^2} (\beta_n-\alpha_n) } 
      = \liminf_{n\to\infty} \frac{ 2 \sqrt{ a^2/\alpha_n^2 + b^2\beta_n^2/\alpha_n^2 } }{ \sqrt{1-b^2} (\beta_n/\alpha_n - 1) } 
      \\
      &= \liminf_{n\to\infty} \frac{ b }{ \sqrt{1-b^2} } \cdot \frac{ 2\beta_n/\alpha_n }{ \beta_n/\alpha_n - 1 } 
      = \frac{ b }{ \sqrt{1-b^2} } \cdot \frac{ 2L }{ L - 1 } .
   \end{align*}
   % \begin{align*}
   %    \liminf_{n\to\infty}
   %    \frac{ \sqrt{ \frac{ a^2 + b^2\alpha_n^2 }{1-b^2}} + \sqrt{ \frac{ a^2 + b^2\beta_n^2 }{1-b^2}} }{ \beta_n-\alpha_n } 
   %    &= \liminf_{n\to\infty} \frac{ \sqrt{ a^2 + b^2\alpha_n^2 } + \sqrt{ a^2 + b^2\beta_n^2 } }{ \sqrt{1-b^2} (\beta_n-\alpha_n) } 
   %    \\
   %    &= \liminf_{n\to\infty} \frac{ \sqrt{ a^2/\alpha_n^2 + b^2 } + \sqrt{ a^2/\alpha_n^2 + b^2\beta_n^2/\alpha_n^2 } }{ \sqrt{1-b^2} (\beta_n/\alpha_n - 1) } 
   %    \\
   %    &= \liminf_{n\to\infty} \frac{ b }{ \sqrt{1-b^2} } \cdot \frac{ 1 + \beta_n/\alpha_n }{ \beta_n/\alpha_n - 1 } 
   %    = \frac{ b }{ \sqrt{1-b^2} } \cdot \frac{ L + 1 }{ L - 1 } .
   % \end{align*}
   Note that this is smaller than 1 if and only if $L( \sqrt{1-b^2} - 2b) > \sqrt{1-b^2}$.
   By assumption, $b< \frac{1}{\sqrt{5}}$, so the term in brackets on the left hand side is positive and we have that \eqref{eq:infgaps:i} holds if and only if \eqref{eq:infgaps:ii} holds.
   The equivalence of \eqref{eq:infgaps:iii} and \eqref{eq:infgaps:iv} is shown analogously.
\end{proof}
%%%}}}

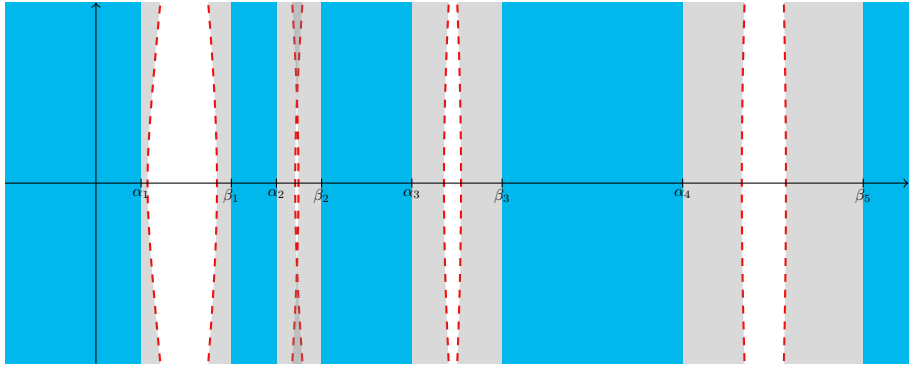
\begin{figure}[H] %%%{{{ infinitely many gaps
   \centering
   \begin{tikzpicture}[ 
      scale=0.6, transform shape, 
      declare function={ 
      %% hyperbola around Y-axis
      hyperbolax(\x)     = \gx      - sqrt( (\abound^2 + \bbound^2*(\gx)^2      + \bbound^2*\x*\x)/(1-\bbound^2) ); 
      %% hyperbola 1 left and right around X-axis
      hyperbolaleft(\x)  = \gxupper - sqrt( (\abound^2 + \bbound^2*(\gxupper)^2 + \bbound^2*\x*\x)/(1-\bbound^2) ); 
      hyperbolaright(\x) = \gxlower + sqrt( (\abound^2 + \bbound^2*(\gxlower)^2 + \bbound^2*\x*\x)/(1-\bbound^2) ); 
      rightfun(\x, \gxupper) = \gxupper - sqrt( (\abound^2 + \bbound^2*(\gxupper)^2 + \bbound^2*\x*\x)/(1-\bbound^2) ); 
      leftfun(\x, \gxlower) = \gxlower + sqrt( (\abound^2 + \bbound^2*(\gxlower)^2 + \bbound^2*\x*\x)/(1-\bbound^2) ); 
      parabola(\x,\yy) = \x + \yy;
      }
      ]

      \tikzmath{
      \abound = .1;   %% a
      \bbound = .1;  %% b
      \gxloweri = 1.0;  %% right boundary of the spectrum or T
      \gxupperi = 3.0;  %% left boundary of the spectrum or T
      \gxlowerii = 4.0;  %% right boundary of the spectrum or T
      \gxupperii = 5.0;  %% left boundary of the spectrum or T
      \gxloweriii = 7.0;  %% right boundary of the spectrum or T
      \gxupperiii = 9.0;  %% left boundary of the spectrum or T
      \gxloweriv = 13.0;  %% right boundary of the spectrum or T
      \gxupperiv = 17.0;  %% left boundary of the spectrum or T
      \gxlowerv = 28.0;  %% right boundary of the spectrum or T
      \gxupperv = 28.0;  %% left boundary of the spectrum or T
      \yy = 1;
      }

      \begin{scope} %%%{{{
	 \clip (-2,-4) rectangle (18,4);

	 %% spectrum of T
	 \path [fill=colTspec, opacity=1] (-5, -5) rectangle (\gxloweri, 8);
	 \path [fill=colTspec, opacity=1] (\gxupperi, -5) rectangle (\gxlowerii, 8);
	 \path [fill=colTspec, opacity=1] (\gxupperii, -5) rectangle (\gxloweriii, 8);
	 \path [fill=colTspec, opacity=1] (\gxupperiii, -5) rectangle (\gxloweriv, 8);
	 \path [fill=colTspec, opacity=1] (\gxupperiv, -5) rectangle (\gxlowerv, 8);
	 \path [fill=colTspec, opacity=1] (\gxupperv, -5) rectangle (18, 8);

	 %% upper and lower dashed lines of the hyperbolas
	 \tikzmath{ \gxlower = \gxloweri; \gxupper = \gxupperi;  }
	 \draw [red,dashed, thick] plot[domain=-5.0:8, smooth] ({leftfun(\x, \gxlower)},{\x});
 	 \draw [red,dashed, thick] plot[domain=-5.0:8, smooth] ({rightfun(\x, \gxupper)},{\x});

	 \tikzmath{ \gxlower = \gxlowerii; \gxupper = \gxupperii; }
	 \draw [red,dashed, thick] plot[domain=-5.0:8, smooth] ({leftfun(\x, \gxlower)},{\x});
 	 \draw [red,dashed, thick] plot[domain=-5.0:8, smooth] ({rightfun(\x, \gxupper)},{\x});

	 \tikzmath{ \gxlower = \gxloweriii; \gxupper = \gxupperiii; }
	 \draw [red,dashed, thick] plot[domain=-5.0:8, smooth] ({leftfun(\x, \gxlower)},{\x});
 	 \draw [red,dashed, thick] plot[domain=-5.0:8, smooth] ({rightfun(\x, \gxupper)},{\x});

	 \tikzmath{ \gxlower = \gxloweriv; \gxupper = \gxupperiv; }
	 \draw [red,dashed, thick] plot[domain=-5.0:8, smooth] ({leftfun(\x, \gxlower)},{\x});
 	 \draw [red,dashed, thick] plot[domain=-5.0:8, smooth] ({rightfun(\x, \gxupper)},{\x});

	 \tikzmath{ \gxlower = \gxlowerv; \gxupper = \gxupperv; }
	 \draw [red,dashed, thick] plot[domain=-5.0:8, smooth] ({leftfun(\x, \gxlower)},{\x});
 	 \draw [red,dashed, thick] plot[domain=-5.0:8, smooth] ({rightfun(\x, \gxupper)},{\x});

	 %% Shaded gray area for spectral enclosure
	 %% left part of enclosure
	 \tikzmath{ \gxlower = \gxloweri; \gxupper = \gxupperi;  }
	 \fill [gray, opacity=.3] plot[domain=-5.0:8, smooth] ({leftfun(\x, \gxlower)} ,{\x}) -- (\gxloweri,8) -- (\gxloweri, -7) -- cycle; 
	 \fill [gray, opacity=.3] plot[domain=-5.0:8, smooth] ({rightfun(\x, \gxupper)} ,{\x}) -- (\gxupperi,8) -- (\gxupperi, -7) -- cycle; 
	 \tikzmath{ \gxlower = \gxlowerii; \gxupper = \gxupperii; }
	 \fill [gray, opacity=.3] plot[domain=-5.0:8, smooth] ({leftfun(\x, \gxlower)} ,{\x}) -- (\gxlowerii,8) -- (\gxlowerii, -7) -- cycle; 
	 \fill [gray, opacity=.3] plot[domain=-5.0:8, smooth] ({rightfun(\x, \gxupper)} ,{\x}) -- (\gxupperii,8) -- (\gxupperii, -7) -- cycle; 
	 \tikzmath{ \gxlower = \gxloweriii; \gxupper = \gxupperiii; }
	 \fill [gray, opacity=.3] plot[domain=-5.0:8, smooth] ({leftfun(\x, \gxlower)} ,{\x}) -- (\gxloweriii,8) -- (\gxloweriii, -7) -- cycle; 
	 \fill [gray, opacity=.3] plot[domain=-5.0:8, smooth] ({rightfun(\x, \gxupper)} ,{\x}) -- (\gxupperiii,8) -- (\gxupperiii, -7) -- cycle; 

	 \tikzmath{ \gxlower = \gxloweriv; \gxupper = \gxupperiv; }
	 \fill [gray, opacity=.3] plot[domain=-5.0:8, smooth] ({leftfun(\x, \gxlower)} ,{\x}) -- (\gxloweriv,8) -- (\gxloweriv, -7) -- cycle; 
	 \fill [gray, opacity=.3] plot[domain=-5.0:8, smooth] ({rightfun(\x, \gxupper)} ,{\x}) -- (\gxupperiv,8) -- (\gxupperiv, -7) -- cycle; 

	 %% Axis
	 \draw[->] (-5.5,0)--(18,0);
	 \draw[->] (0,-4)--(0,4);

	 \draw (\gxloweri,-.1) -- node [below] {$\alpha_{1}$} (\gxloweri,.1);
	 \draw (\gxlowerii,-.1) -- node [below] {$\alpha_{2}$} (\gxlowerii,.1);
	 \draw (\gxloweriii,-.1) -- node [below] {$\alpha_{3}$} (\gxloweriii,.1);
	 \draw (\gxloweriv,-.1) -- node [below] {$\alpha_{4}$} (\gxloweriv,.1);
	 \draw (\gxupperi,-.1) -- node [below] {$\beta_{1}$} (\gxupperi,.1);
	 \draw (\gxupperii,-.1) -- node [below] {$\beta_{2}$} (\gxupperii,.1);
	 \draw (\gxupperiii,-.1) -- node [below] {$\beta_{3}$} (\gxupperiii,.1);
	 \draw (\gxupperiv,-.1) -- node [below] {$\beta_{5}$} (\gxupperiv,.1);
      \end{scope}
      %%%}}}

   \end{tikzpicture}

   \caption{
   Illustration of Theorem~\ref{thm:infgaps} when 
   $\sigma(T)\subseteq \bigcup_{n\in\N} \{ \beta_{n}\le \re(z) \le \alpha_{n+1} \}$.
   The blue areas contain the spectrum of $T$, the shaded areas are enclosures for the spectrum of $T+A$.
   }
   \label{fig:infgaps}
\end{figure}
%%%}}}

Observe that in Theorem~\ref{thm:infgapsbdd} we assumed that the imaginary part of 
$\sigma(T)\cap \{\re z \ge \alpha_1\}$ is bounded
whereas in Theorem~\ref{thm:infgaps} we had no such assumption.
As an intermediate situation we could assume that the ``height'' of the boxes that contain the spectrum of $T$ is increasing linearly.
For instance, if 
\begin{equation*}
   \sigma(T) \subseteq \bigcup_{n\in\Z} \{ \alpha_n < \re z < \beta_{n}\} \cap (-\sector{\theta}{0}\cup \sector{\theta}{0}),
\end{equation*}
then we can combine the results from Theorem~\ref{thm:infgapsbdd} with those from Theorem~\ref{thm:bisecgap} to obtain spectral inclusions.

%%%}}}

%%%}}}

\section{$p$-subordinate perturbations} %%%{{{
\label{sec:psub}

In this section we consider $p$-subordinate perturbations of a normal operator $T$.
In our first theorem we assume that $T$ has spectrum as in Theorem~\ref{thm:bddstripgap}, but the perturbation $A$ is now assumed to be $p$-subordinate.

\begin{theorem}
   \label{thm:psubord1}
   Let $T$ and $A$ be as in Assumption~\ref{ass:psubord}. That is, $T$ is a normal operator in a Hilbert space $H$, $A$ is $p$-subordinate to $T$ with $0\leq p\leq 1$ and $c \geq 0$ as in \eqref{eq:defpsubord}.
   We assume that there exist $\alpha_T < \beta_T$ such that $(\alpha_{T},\beta_{T})+\I\R \subseteq \rho(T)$ and that
   $\im\sigma(T)\subseteq [-\gamma_T,\gamma_T]$ for some $\gamma_T\geq 0$. 
   If
   \begin{align}
      \label{eq:crucial1subordinate}
      2c\cdot\max\left\{|\alpha_T+\I\gamma_T|^p,|\beta_T+\I\gamma_T|^p\right\}<\beta_{T}-\alpha_{T}
   \end{align}
   then the resolvent set of $T+A$ contains a strip parallel to the imaginary axis; more precisely, 
   \begin{align}
      \label{eq:tira1subordinate}
      \{z\in\C: \alpha'_{T,p} <\re z < \beta'_{T,p} \} \subseteq \rho(T+A)
   \end{align}
   with
   \begin{align}
      \label{eq:extremostira1subordinate}
      \alpha'_{T,p}:=\alpha_{T}+c\cdot\max\left\{|\alpha_T+\I\gamma_T|^p,|\beta_T+\I\gamma_T|^p\right\},\\
      \beta'_{T,p}:= \beta_T -c\cdot\max\left\{|\alpha_T+\I\gamma_T|^p,|\beta_T+\I\gamma_T|^p\right\}.
   \end{align}
\end{theorem}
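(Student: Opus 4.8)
The plan is to run the same machinery as in Section~\ref{sec:boundedimaginary}. Fix $z=\mu+\I\nu$ with $\alpha'_{T,p}<\mu<\beta'_{T,p}$. Note that $\alpha'_{T,p}<\beta'_{T,p}$ holds \emph{exactly} because of \eqref{eq:crucial1subordinate}, and that $\alpha_T<\alpha'_{T,p}<\mu<\beta'_{T,p}<\beta_T$, so $z$ lies in the vertical strip $(\alpha_T,\beta_T)+\I\R\subseteq\rho(T)$; in particular $z\in\rho(T)$. Since $T+A-z=(\id+A(T-z)^{-1})(T-z)$ with $A(T-z)^{-1}$ bounded, the Neumann-series argument behind Proposition~\ref{prop:ourfirsprop3}\ref{item:ourfirstprop3:i} shows it suffices to prove $\|A(T-z)^{-1}\|<1$ for every such $z$.

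For the resolvent estimate I would combine $p$-subordination with Young's inequality: for $x$ with $\|x\|=1$ and $v:=(T-z)^{-1}x\in\mD(T)$, using \eqref{eq:defpsubord},
\begin{equation*}
   \|A(T-z)^{-1}x\|=\|Av\|\le c\,\|v\|^{1-p}\|Tv\|^{p}\le c\big((1-p)\lambda\|v\|+p\,\lambda^{-(1-p)/p}\|Tv\|\big)
\end{equation*}
for every $\lambda>0$; bounding $\|v\|\le\dist(z,\sigma(T))^{-1}$ and $\|Tv\|\le\|T(T-z)^{-1}\|$ by the spectral theorem for normal operators and optimising over $\lambda$ gives
\begin{equation*}
   \|A(T-z)^{-1}\|\le c\,\dist(z,\sigma(T))^{-(1-p)}\,\|T(T-z)^{-1}\|^{p}=c\,\dist(z,\sigma(T))^{-(1-p)}\Big(\sup_{t\in\sigma(T)}\frac{|t|}{|t-z|}\Big)^{p}.
\end{equation*}
(This Young step is what one needs instead of Proposition~\ref{prop:ourfirsprop3}\ref{item:ourfirstprop3:ii}, since $p$-subordination controls only the product $\|v\|^{1-p}\|Tv\|^p$, not $\||T|^p v\|$.)

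The third step is to estimate the two factors geometrically. Since $\sigma(T)\subseteq\{\re t\le\alpha_T\}\cup\{\re t\ge\beta_T\}$ we have $\dist(z,\sigma(T))\ge\min\{\mu-\alpha_T,\ \beta_T-\mu\}$. For the second factor, $\log\frac{|t|^{2}}{|t-z|^{2}}$ is the real part of a function holomorphic on $\C\setminus\{0,z\}$, so $t\mapsto\frac{|t|}{|t-z|}$ has no interior local maximum there (the analogue of Lemma~\ref{lem:nolocalextrema}); hence its supremum over each of the closed half-strips $\{\re t\le\alpha_T,\ |\im t|\le\gamma_T\}$, $\{\re t\ge\beta_T,\ |\im t|\le\gamma_T\}$ is attained on the boundary or in the limit $|t|\to\infty$, where it tends to $1$. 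A one-variable computation along the vertical edge and the two horizontal rays (using $|t-z|\ge|\re t-\mu|$ and checking that the single interior critical point on a ray gives a value $<1$) shows that the supremum over the left half-strip is $\le\max\{1,\ \tfrac{|\alpha_T+\I\gamma_T|}{\mu-\alpha_T}\}$ and over the right one $\le\max\{1,\ \tfrac{|\beta_T+\I\gamma_T|}{\beta_T-\mu}\}$. Since $\min\{\mu-\alpha_T,\beta_T-\mu\}\le\tfrac12(\beta_T-\alpha_T)\le\max\{|\alpha_T+\I\gamma_T|,|\beta_T+\I\gamma_T|\}$ by the triangle inequality, both maxima, and hence $\|T(T-z)^{-1}\|$, are $\le\frac{\max\{|\alpha_T+\I\gamma_T|,|\beta_T+\I\gamma_T|\}}{\min\{\mu-\alpha_T,\ \beta_T-\mu\}}$. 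Plugging this into the estimate of the second step and simplifying the exponents,
\begin{equation*}
   \|A(T-z)^{-1}\|\le\frac{c\,\max\{|\alpha_T+\I\gamma_T|^{p},\ |\beta_T+\I\gamma_T|^{p}\}}{\min\{\mu-\alpha_T,\ \beta_T-\mu\}},
\end{equation*}
which is $<1$ precisely when $\min\{\mu-\alpha_T,\beta_T-\mu\}>c\max\{|\alpha_T+\I\gamma_T|^{p},|\beta_T+\I\gamma_T|^{p}\}$, i.e. when $\alpha'_{T,p}<\mu<\beta'_{T,p}$ with $\alpha'_{T,p},\beta'_{T,p}$ as in \eqref{eq:extremostira1subordinate}. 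This proves \eqref{eq:tira1subordinate}.

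The main obstacle is the geometric control of $\|T(T-z)^{-1}\|=\sup_{t\in\sigma(T)}\frac{|t|}{|t-z|}$: because the half-strips are unbounded one has to rule out that the behaviour at infinity (where the quotient approaches $1$) or a critical point on a bounding ray overshoots the corner value — this is exactly where the boundedness of $\im\sigma(T)$ enters, and, via the inequality $\min\{\mu-\alpha_T,\beta_T-\mu\}\le\max\{|\alpha_T+\I\gamma_T|,|\beta_T+\I\gamma_T|\}$, this is also what makes the borderline case $p=1$ work (note that \eqref{eq:crucial1subordinate} then forces $c<1$). The resolvent estimate of the second step is otherwise routine, only the use of Young's inequality needs to be spelled out.
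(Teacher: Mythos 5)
Your argument is correct and takes essentially the same route as the paper's proof: reduce to showing $\|A(T-z)^{-1}\|<1$ on the strip, factor via $p$-subordination into $c\,\|(T-z)^{-1}\|^{1-p}\|T(T-z)^{-1}\|^{p}$, and estimate the two factors geometrically using normality. One remark on your middle step: the Young-inequality detour you flag as \emph{needed} in place of Proposition~\ref{prop:ourfirsprop3}\ref{item:ourfirstprop3:ii} is in fact superfluous, because the factorised operator bound follows directly from $\|Av\|\le c\,\|v\|^{1-p}\|Tv\|^p$ (with $v=(T-z)^{-1}x$) by substituting $\|v\|\le\|(T-z)^{-1}\|\,\|x\|$ and $\|Tv\|\le\|T(T-z)^{-1}\|\,\|x\|$ and using that $s\mapsto s^{1-p}$, $s\mapsto s^{p}$ are increasing; introducing $\lambda$ and then optimising merely reproduces this, and the paper passes to the product bound directly. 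Your geometric bound $\sup_{t\in\sigma(T)}\frac{|t|}{|t-z|}\le\max\bigl\{1,\ \frac{|\alpha_T+\I\gamma_T|}{\mu-\alpha_T},\ \frac{|\beta_T+\I\gamma_T|}{\beta_T-\mu}\bigr\}$, with the extra $1$ absorbed via $\min\{\mu-\alpha_T,\beta_T-\mu\}\le\tfrac12(\beta_T-\alpha_T)\le\max\{|\alpha_T+\I\gamma_T|,|\beta_T+\I\gamma_T|\}$, is a valid alternative to the paper's one-variable analysis of $\tau\mapsto\frac{\tau^2+\gamma_T^2}{(\tau-\mu)^2}$, which shows that the $1$ is already dominated by one of the two corner values; both yield the same final conclusion.
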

\begin{proof}
   Let $z\in \C$ with $ \alpha_{T} <\re z < \beta_{T}$. 
   We set $\mu = \re z$ and $\nu=\im z$.
   Then $z\in \rho(T)$
   and
   \begin{equation*}
      \|(T-z)^{-1}\|=\frac{1}{\dist (z,\sigma (T))}\leq \max\left\{\frac{1}{\mu-\alpha_T}, \frac{1}{\beta_T-\mu}\right\}.
   \end{equation*}
   If we set $\tau =\re t$ and $\sigma = \im t$ for $t\in\sigma(T)$, then we obtain
   \begin{align*}
      \|T(T-z)^{-1}\|^2
      &= \sup_{t\in\sigma(T)} \frac{|t|^2}{|t-z|^2}
      =\sup_{\tau + \I\sigma \in\sigma(T)} \frac{\tau^2+\sigma^2}{(\tau-\mu)^2+(\sigma-\nu)^2}
      \leq \sup_{\tau + \I\sigma \in\sigma(T)} \frac{\tau^2+\gamma_T^2}{(\tau-\mu)^2}
      \\
      & \le \sup_{\tau\in\R\setminus\ (\alpha_T,\beta_T)} \frac{\tau^2+\gamma_T^2}{(\tau-\mu)^2}
      = \max \left\{\frac{\alpha_T^2+\gamma_T^2}{(\alpha_T-\mu)^2},\,  \frac{\beta_T^2+\gamma_T^2}{(\beta_T-\mu)^2} \right\}
   \end{align*}
   where the last equality is a straightforward calculation.
   Therefore, if $\alpha_{T,p}' < \re z < \beta_{T,p}'$, then 
   \begin{align*}
      \|A(T-z)^{-1}\|&\leq c\|(T-z)^{-1}\|^{1-p}\|T(T-z)^{-1}\|^{p}\\
      &\leq c\cdot 
      \max\left\{\frac{1}{(\mu-\alpha_T)^{1-p}}, \frac{1}{(\beta_T-\mu)^{1-p}}\right\}
      \max\left\{\frac{|\alpha_T+\I\gamma_T|^p}{(\mu-\alpha_T)^p},\ \frac{|\beta_T+\I\gamma_T|^p}{(\beta_T-\mu)^p} \right\}
      \\
      &\leq c\cdot 
      \max\left\{\frac{1}{\mu-\alpha_T}, \frac{1}{\beta_T-\mu}\right\}
      \max\left\{ |\alpha_T+\I\gamma_T|^p,\ |\beta_T+\I\gamma_T|^p \right\}
      \\
      & <  c\cdot 
      \max\left\{\frac{1}{\alpha_{T,p}'-\alpha_T}, \frac{1}{\beta_T-\beta_{T,p}'}\right\}
      \max\left\{ |\alpha_T+\I\gamma_T|^p,\ |\beta_T+\I\gamma_T|^p \right\}
      = 1
   \end{align*}
   and \eqref{eq:tira1subordinate} follows from Proposition~\ref{prop:ourfirsprop3}.
\end{proof}

The theorem can be generalised to the case of a (bi)sectorial normal operator which will be discussed in detail elsewhere.
\medskip

For $\lambda\in \R$ we define the set
\begin{equation*}
   \parabola(\lambda) := \left\{z\in \C: \re z \geq \lambda, \ | \im z | \leq (\re z - \lambda)^2 \right\}.
\end{equation*}
\begin{figure}[H] %%%{{{
   \centering
   \begin{tikzpicture}[scale=.8, transform shape]
      \begin{scope}
	 \clip (-1,-3) rectangle (4,3);

	 \fill [colTspec!80] plot [domain=3:1, smooth] ( {\x}, {-(\x-1)^2} ) -- plot [domain=1:3, smooth] ( {\x}, {(\x-1)^2} ) --++(2, 0) --++(0, -8) --++ (-2,0) -- cycle; 
      \end{scope}

      %% Axes
      \draw[->] (-1, 0) -- (4,0);
      \draw[->] (0,-3) -- (0,3);

      \draw (1,-.1) -- node [below] {$\lambda$} (1,.1);
      \node at (3.5, 1) {$\parabola(\lambda)$};
   \end{tikzpicture}
   \caption{The set $\parabola (\lambda) = \left\{z\in \C: \re z \geq \lambda, \ | \im z | \leq (\re z - \lambda)^2 \right\}$.}
   \label{fig:exteriorparabola}
\end{figure}
%%%}}}

\begin{comment}
\begin{definition}
   An operator $T$ is called \define{quasi-almost bisectorial} if there exists $\lambda\in\R$ such that $\lambda + \I\nu\in\rho(T)$ for all $\nu\in\R\setminus\{0\}$ and there exist constants $0 < q < 1$ and  $C\ge 0$ such that 
   \begin{equation*}
      \| (T-\lambda-\I\nu)^{-1} \| \le \frac{C}{|\nu|^q},
      \qquad \nu\in\R\setminus\{0\}.
   \end{equation*}
\end{definition}
\end{comment}

\begin{proposition}
   \label{prop:psubord}
   Let $T$ be a normal operator in a Hilbert space $H$ with
   \begin{equation*}
      \sigma(T)\subseteq \parabola(\beta_{T})\cup-\parabola(-\alpha_{T})
   \end{equation*}
   for some $\alpha_T<\beta_T$.
   Let $A$ be $p$-subordinate to $T$ for some  $0\leq p< 1/2$ and let $c>0$ such that \eqref{eq:defpsubord} holds.
   Let $\zeta_T:= \max\{|\beta_T|,|\alpha_T|\}$ and let $\nu_0\geq 1/2$ such that 
   $$\frac{c\left(\sqrt{\widehat{\nu}-\frac{1}{4}}+\sqrt{\zeta_T^2+\widehat{\nu}^2}\right)^p}{\sqrt{\widehat{\nu}-\frac{1}{4}} }<1~~~ \mbox{ for } \widehat{\nu} >\nu_0.$$
   If there exists $M>0$ such that 
   \begin{align}
      \label{c:crucial3subordinate}
      \frac{c\left(M+\sqrt{\zeta_T^2+\nu_0^2}\right)^p}{M}<1 ~\mbox{ and }~ 2M<\beta_{T}-\alpha_{T},
   \end{align}
   then the resolvent set of $T+A$ contains a strip parallel to the imaginary axis; more precisely, 
   \begin{align}
      \label{t:tira3subordinate}
      \{z\in\C: \alpha_{T+A}^{(p)} <\re z < \beta_{T+A}^{(p)}\} \subseteq \rho(T+A)
   \end{align}
   with
   \begin{align}
      \label{e:extremos de la tira3subordinate}
      \alpha_{T+A}^{(p)}:=\alpha_{T}+M,~~~~ \beta_{T+A}^{(p)}:= \beta_T-M.
   \end{align}
\end{proposition}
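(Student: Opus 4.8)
The plan is to apply Proposition~\ref{prop:ourfirsprop3}\ref{item:ourfirstprop3:i}. The claimed strip is $\{z\in\C : \alpha_T+M<\re z<\beta_T-M\}$, which is nonempty because $2M<\beta_T-\alpha_T$. Fix $z=\mu+\I\nu$ in it. Every point of $\parabola(\beta_T)\cup-\parabola(-\alpha_T)$ has real part $\ge\beta_T$ or $\le\alpha_T$, so this strip is disjoint from $\sigma(T)$ and hence $z\in\rho(T)$. It therefore suffices to prove $\|A(T-z)^{-1}\|<1$. Setting $d:=\dist(z,\sigma(T))$, normality gives $\|(T-z)^{-1}\|=d^{-1}$, and the identity $T(T-z)^{-1}=I+z(T-z)^{-1}$ gives $\|T(T-z)^{-1}\|\le 1+|z|/d=(d+|z|)/d$. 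Applying the $p$-subordination inequality \eqref{eq:defpsubord} to $(T-z)^{-1}x$ with $\|x\|=1$ and bounding by operator norms yields
\[
 \|A(T-z)^{-1}\|\ \le\ c\,\|(T-z)^{-1}\|^{1-p}\,\|T(T-z)^{-1}\|^{p}\ \le\ \frac{c\,(d+|z|)^{p}}{d}.
\]
I will use two elementary monotonicity facts, valid for $0\le p<1$ and $a\ge0$: the map $x\mapsto(x+a)^{p}/x$ is decreasing on $(0,\infty)$ (its derivative has the sign of $(p-1)x-a<0$), and $a\mapsto(x+a)^{p}/x$ is increasing. Hence any lower bound for $d$ and upper bound for $|z|$ can be inserted into the displayed estimate.

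Next I establish the geometric bounds. From $\alpha_T<\mu<\beta_T$ one gets immediately $d\ge\min\{\mu-\alpha_T,\beta_T-\mu\}>M$, so $d\ge M$ always; also $|z|=\sqrt{\mu^{2}+\nu^{2}}\le\sqrt{\zeta_T^{2}+\nu^{2}}$, since $\alpha_T<\mu<\beta_T$ forces $|\mu|<\zeta_T$. For the refined bound I may assume $\nu\ge0$, because $\parabola(\beta_T)\cup-\parabola(-\alpha_T)$ is symmetric about the real axis. Writing a point of $\parabola(\beta_T)$ as $\beta_T+s+\I\sigma$ with $s\ge0$, $|\sigma|\le s^{2}$ (and symmetrically for $-\parabola(-\alpha_T)$), its real part differs from $\mu$ by more than $M+s$, so its squared distance to $z$ exceeds $(M+s)^{2}+(\sigma-\nu)^{2}$. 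When $\sigma\ge0$ this is at least $M^{2}+\sigma+(\sigma-\nu)^{2}$ (using $s^{2}\ge\sigma$), and when $\sigma\le0\le\nu$ it is at least $M^{2}+\nu^{2}$. Minimising $\sigma+(\sigma-\nu)^{2}$ over $\sigma\ge0$ gives $\nu-\tfrac{1}{4}$ for $\nu\ge\tfrac{1}{2}$, while $\nu^{2}\ge\nu-\tfrac{1}{4}$ always; therefore $d\ge\sqrt{|\nu|-\tfrac{1}{4}}$ whenever $|\nu|\ge\tfrac{1}{2}$.

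Finally I split on the size of $|\nu|$. If $|\nu|\le\nu_{0}$, then using $d\ge M$, $|z|\le\sqrt{\zeta_T^{2}+\nu_{0}^{2}}$ and the monotonicity facts,
\[
 \|A(T-z)^{-1}\|\ \le\ \frac{c\,\bigl(M+\sqrt{\zeta_T^{2}+\nu_{0}^{2}}\bigr)^{p}}{M}\ <\ 1
\]
by the first inequality in \eqref{c:crucial3subordinate}. If $|\nu|>\nu_{0}\ (\ge\tfrac{1}{2})$, then using $d\ge\sqrt{|\nu|-\tfrac{1}{4}}>0$ and $|z|\le\sqrt{\zeta_T^{2}+|\nu|^{2}}$,
\[
 \|A(T-z)^{-1}\|\ \le\ \frac{c\,\bigl(\sqrt{|\nu|-\tfrac{1}{4}}+\sqrt{\zeta_T^{2}+|\nu|^{2}}\bigr)^{p}}{\sqrt{|\nu|-\tfrac{1}{4}}}\ <\ 1
\]
by the defining property of $\nu_{0}$ with $\widehat\nu=|\nu|$. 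In both cases $\|A(T-z)^{-1}\|<1$, so Proposition~\ref{prop:ourfirsprop3}\ref{item:ourfirstprop3:i} yields $z\in\rho(T+A)$, which is \eqref{t:tira3subordinate}. The only genuinely delicate step is the distance estimate $d\ge\sqrt{|\nu|-\tfrac{1}{4}}$ for points in the central strip; everything else is bookkeeping. Note that $p<\tfrac{1}{2}$ is not used directly in the argument — it is exactly the condition under which the ratio appearing in the definition of $\nu_{0}$ tends to $0$ as $\widehat\nu\to\infty$, so that a $\nu_{0}$ with the required property exists.
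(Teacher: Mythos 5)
Your proof is correct and follows essentially the same route as the paper: apply the Neumann-series criterion from Proposition~\ref{prop:ourfirsprop3}, bound $\|A(T-z)^{-1}\|\le c(d+|z|)^{p}/d$ via $p$-subordination and normality, and then split on $|\im z|\lessgtr\nu_0$. The one structural difference is that the paper cites Lemma~\ref{lem:app:parabola} from the appendix for the distance estimate $d\ge\sqrt{|\nu|-\tfrac14}$ when $|\nu|\ge\tfrac12$, whereas you re-derive the needed (slightly weaker but sufficient) bound inline by parametrising the parabola as $\beta_T+s+\I\sigma$ with $|\sigma|\le s^2$ and minimising $\sigma+(\sigma-\nu)^2$; you also state explicitly the monotonicity of $x\mapsto(x+a)^p/x$ that the paper uses tacitly. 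Your final remark that $p<\tfrac12$ serves only to guarantee that a suitable $\nu_0$ exists is a nice observation not made in the paper.
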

\begin{proof}
   Let $z\in \rho(T)$ with $\mu=\re z$ and $\nu=\im z$ and let $d(z) :=\dist(z,\sigma(T)$. 
   Since $T$ is normal, we have that
   \begin{equation*}
      \|(T-z)^{-1}\| = \frac{1}{d(z)},
      \qquad
      \|T(T-z)^{-1}\| = \|I+z(T-z)^{-1}\| \leq 1+\frac{|z|}{d(z)}.
   \end{equation*}
   Together with 
   \begin{equation*}
      \|Ax\|\leq c\|x\|^{1-p}\|Tx\|^{p},\qquad x\in \mD(T),
   \end{equation*}
   we obtain for $\alpha_{T+A}^{(p)} < \re z < \beta_{T+A}^{(p)}$
   \begin{multline*}
      \|A(T-z)^{-1}\| 
      \leq c\|(T-z)^{-1}\|^{1-p}\|T(T-z)^{-1}\|^{p}
      \leq c\frac{1}{d(z)^{1-p}} \left(1+\frac{|z|}{d(z)}\right)^p \\
      \leq 
      \begin{cases}
	 \frac{c\left(1+\frac{|z|}{\sqrt{\min\{\mu-\alpha_T, \beta_T-\mu\}^2+\nu^2}}\right)^p}{\left(\sqrt{\min\{\mu-\alpha_T, \beta_T-\mu\}^2+\nu^2}\right)^{1-p}},
	 &\text{if }  |\nu| \leq 1/2,
	 \\[4ex]
	 \frac{c\left(1+\frac{|z|}{\sqrt{\min\{\mu-\alpha_T, \beta_T-\mu\}^2+|\nu|-\frac{1}{4}} }\right)^p}{\left(\sqrt{\min\{\mu-\alpha_T, \beta_T-\mu\}^2+|\nu|-\frac{1}{4}}\right)^{1-p}},
	 &\text{if } |\nu| > 1/2
      \end{cases}
   \end{multline*}
   where in the last step we used Lemma~\ref{lem:app:parabola}.
   If $|\nu|\leq \nu_0$, then 
   \begin{align*}
      \|A(T-z)^{-1}\|&\leq c\left(1+\frac{|z|}{\min\{\mu-\alpha_T, \beta_T-\mu\}}\right)^p\frac{1}{\left(\min\{\mu-\alpha_T, \beta_T-\mu\}\right)^{1-p}}\\
      &\leq c\left(1+\frac{\sqrt{\zeta_T^2+\nu_0^2}}{M}\right)^p\frac{1}{M^{1-p}}=\frac{c\left(M+\sqrt{\zeta_T^2+\nu_0^2}\right)^p}{M}<1
   \end{align*}
   whereas if $|\nu|>\nu_0\geq 1/2$, then
   \begin{align*}
      \|A(T-z)^{-1}\|&\leq c\left(1+\frac{|z|}{\sqrt{|\nu|-\frac{1}{4}} }\right)^p\frac{1}{\left(\sqrt{|\nu|-\frac{1}{4}}\right)^{1-p}}= \frac{c\left(\sqrt{|\nu|-\frac{1}{4}}+|z|\right)^p}{\sqrt{|\nu|-\frac{1}{4}} }\\
      &\leq \frac{c\left(\sqrt{|\nu|-\frac{1}{4}}+\sqrt{\zeta_T^2+\nu^2}\right)^p}{\sqrt{|\nu|-\frac{1}{4}} }<1
   \end{align*}
   by \eqref{c:crucial3subordinate}.
   We conclude that, in any case, $T+A-z$ is bijective and so $z\in \rho(T+A)$.
\end{proof}
%%%}}}

\section{Applications}
\label{sec:applications}

\subsection{Schrödinger operators on a non-selfadjoint quantum star graph} %%%{{{
\label{sec:nsaquantumgraph}
In this last section we apply some of the above results to a non-selfadjoint Schrödinger operator on a quantum star graph $\Gamma$ with a non-selfadjoint Robin condition at the central vertex and Dirichlet boundary conditions on the set of outer vertices.
Set $\J=\{1,2,\ldots,n\}$.
Our Hilbert space is
$\mathcal{H}=\bigoplus_{j=1}^n L^2(0,a_j; \C)$, with $a_j>0$ for $j=1,\ldots,n$,
and for $c\in \C\cup \{\infty\}$, we define the operator $L_c$ by 
\begin{equation*}
   D(L_c):= \left\{\psi=(\psi_j)_{j=1}^n\in \bigoplus_{j=1}^n H^2(0,a_j;\C):
   \begin{array}{ll}
      \psi_j(a_j)=0,\qquad &j \in \J\\
      \psi_j(0)=\psi_i(0),\qquad &i,j\in\J\\
      \multicolumn{2}{l}{\sum_{j\in\J} \big(\psi_j(0)+ c\psi_j'(0)\big)=0}
   \end{array}
   \right\}
\end{equation*}
and 
\begin{equation}
   \label{eq:Lc}
   L_c\psi:=L_c(\psi_j)_{j=1}^n=(-\psi_j'')_{j=1}^n,\qquad\psi\in D(L_c).
\end{equation}
If $c=\infty$, the conditions at the central vertex are understood as Neumann-Kirchhoff conditions
\begin{equation*}
   \sum_{j\in\J} \psi_j'(0)=0
   \quad\text{and}\quad
   \psi_j(0)=\psi_i(0)\ \text{ for all }\ i,j\in\J.
\end{equation*}

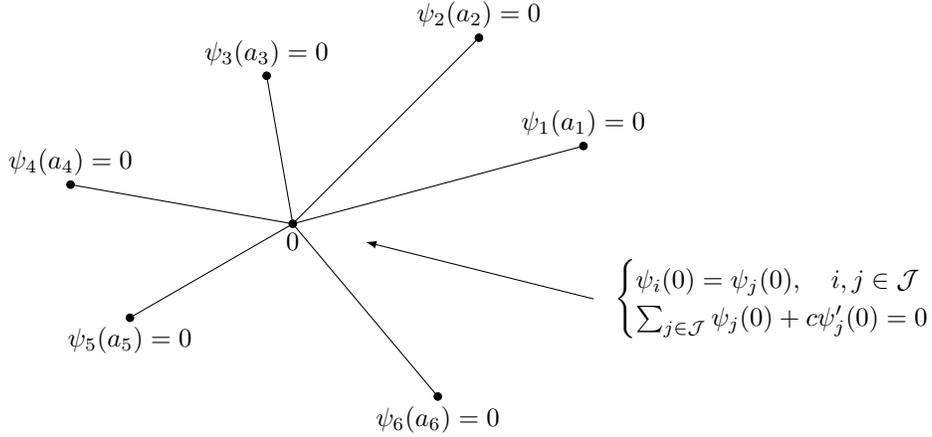
\begin{figure}[H] %%%{{{ Star graph
  \centering
   \begin{tikzpicture}
      \coordinate (O) at (0,0);
      \coordinate (A) at (15:4);
      \coordinate (B) at (45:3.5);
      \coordinate (C) at (100:2);
      \coordinate (D) at (170:3);
      \coordinate (E) at (210:2.5);
      \coordinate (F) at (310:3);
      \foreach \x in {A,B,C,D,E,F} \draw (O) -- (\x);
      \foreach \x in {A,B,C,D,E,F,O}  \draw[fill] (\x) circle (0.05cm);
      \node at (A) [above] {$\psi_1(a_1) = 0$};
      \node at (B) [above] {$\psi_2(a_2) = 0$};
      \node at (C) [above] {$\psi_3(a_3) = 0$};
      \node at (D) [above] {$\psi_4(a_4) = 0$};
      \node at (E) [below] {$\psi_5(a_5) = 0$};
      \node at (F) [below] {$\psi_6(a_6) = 0$};
      \node at (O) [below] {$0$};

     \draw[latex-, shorten <=1cm]  (0, 0) -- (4,-1) node[right=1ex]{
      $ \begin{cases}
	 \psi_i(0) = \psi_j(0),\quad i,j\in\J\\
	 \sum_{j\in\J} \psi_j(0) + c\psi'_j(0) = 0
      \end{cases}$};
   \end{tikzpicture}
  \caption{Quantum star graph with Robin condition at the central vertex.}
  \label{fig:figuragrafo}
\end{figure}
%%%}}}

In the next proposition we collect some well-known properties of $L_c$.
\begin{proposition} \label{g:generalfact}
   \begin{enumerate}[label={\upshape(\roman*)}]
      \item\label{g:generalfact1}
      $L_c$ is selfadjoint if and only if $c\in \R\cup \{\infty\}$.
      
      \item\label{g:generalfact2}
      $L_c$ is m-sectorial for all $c\in \C $, i.e. $L_c$ is closed, its spectrum is contained in a closed sector symmetric to the real axis, and its numerical range is also contained in this sector. 

      \item\label{g:generalfact3}
      $L_c$ has purely discrete spectrum for all $c\in \C \cup \{\infty\}$.

      \item\label{g:generalfact4}
      $L_c$ has a Riesz basis of generalised eigenfunctions, for all $c\in \C$. 

   \end{enumerate}   
\end{proposition}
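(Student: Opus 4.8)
The plan is to prove the four assertions largely independently, each via a classical argument.

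\textbf{Item (i).} For $\psi,\phi\in\bigoplus_{j\in\J}H^2(0,a_j;\C)$ two integrations by parts give
\begin{equation*}
   \langle L_c\psi,\phi\rangle-\langle\psi,L_c\phi\rangle
   =\sum_{j\in\J}\Bigl(\psi_j'(0)\overline{\phi_j(0)}-\psi_j(0)\overline{\phi_j'(0)}\Bigr)
   -\sum_{j\in\J}\Bigl(\psi_j'(a_j)\overline{\phi_j(a_j)}-\psi_j(a_j)\overline{\phi_j'(a_j)}\Bigr).
\end{equation*}
For $\psi\in D(L_c)$ the outer-vertex terms vanish because $\psi_j(a_j)=0$, and writing $\psi(0)$ for the common value $\psi_i(0)=\psi_j(0)$ and using $\sum_{j}\psi_j'(0)=-\tfrac{n}{c}\psi(0)$ (when $c\notin\{0,\infty\}$) the central-vertex sum collapses to one scalar pairing. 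Demanding that this vanish for all admissible $\psi$ forces $\phi_j(a_j)=0$, continuity of the $\phi_j(0)$, and $\sum_j\bigl(\phi_j(0)+\bar c\,\phi_j'(0)\bigr)=0$; hence $L_c^{*}=L_{\bar c}$ (with the obvious reading for $c\in\{0,\infty\}$), so $L_c=L_c^{*}$ exactly when $c\in\R\cup\{\infty\}$.

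\textbf{Items (ii) and (iii).} I would introduce the sesquilinear form
\begin{equation*}
   \mathfrak t_c[\psi,\phi]:=\sum_{j\in\J}\int_0^{a_j}\psi_j'\,\overline{\phi_j'}\,\rd x-\frac{n}{c}\,\psi(0)\overline{\phi(0)}
\end{equation*}
(the boundary term being dropped if $c=\infty$) on the form domain
$V=\{\psi\in\bigoplus_j H^1(0,a_j;\C):\psi_j(a_j)=0,\ \psi_i(0)=\psi_j(0)\ \text{for all }i,j\}$.
By the one-dimensional trace inequality, for every $\epsilon>0$ there is $C_\epsilon$ with $|\psi(0)|^2\le\epsilon\sum_j\|\psi_j'\|^2+C_\epsilon\|\psi\|^2$, so the boundary term is relatively form-bounded with bound $0$ with respect to $\sum_j\|\psi_j'\|^2$. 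Consequently $\mathfrak t_c$ is densely defined, closed and sectorial, and Kato's first representation theorem associates with it an m-sectorial operator which a routine integration by parts identifies with $L_c$; this proves (ii). For (iii), $V$ embeds compactly into $\mathcal H$ by the Rellich--Kondrachov theorem, so $L_c$ has compact resolvent and hence purely discrete spectrum (eigenvalues of finite algebraic multiplicity, accumulating only at infinity); the case $c=\infty$ is the classical selfadjoint Neumann--Kirchhoff Laplacian.

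\textbf{Item (iv).} This is the main obstacle. The idea is to compare $L_c$ with the selfadjoint operator $L_\infty$ (or with any $L_{c_0}$, $c_0\in\R$), which differs from $L_c$ only through the single scalar condition at the central vertex. On the common form domain $V$ we have $\mathfrak t_c=\mathfrak t_\infty+\mathfrak b_c$ with $\mathfrak b_c[\psi,\phi]=-\tfrac{n}{c}\psi(0)\overline{\phi(0)}$ of rank one; equivalently, by a Krein-type resolvent formula $(L_c-z)^{-1}-(L_\infty-z)^{-1}$ has rank one, and the trace inequality above shows the perturbation is $p$-subordinate to $L_\infty$ with $p=\tfrac12<1$. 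Since $L_\infty$ is selfadjoint with compact resolvent and its eigenvalue counting function grows like $N(r)\asymp\sqrt r$ (the edgewise eigenvalues behave like $(\pi k/a_j)^2$), the eigenvalues of $L_\infty$, suitably grouped, meet the asymptotic-separation hypotheses of the Riesz-basis perturbation theorems for $p$-subordinate perturbations of normal operators; applying such a theorem---e.g.\ Markus--Matsaev, \cite[Theorem~6.12]{Markus1988}, \cite{wyss2010} or \cite{shkalikov2016perturbations}---yields a Riesz basis of generalised eigenfunctions of $L_c$ (a genuine Riesz basis, resp.\ a Riesz basis of finite-dimensional spectral subspaces when the eigenvalues of $L_\infty$ cluster, which occurs for commensurable edge lengths). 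The delicate points I expect are establishing the precise eigenvalue asymptotics and multiplicity control for the star-graph Laplacian $L_\infty$ that the abstract theorems require, and making the passage between the two boundary conditions---the rank-one, resp.\ $p$-subordinate, reduction---fully rigorous.
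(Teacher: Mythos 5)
The paper itself offers no proof---it simply cites \cite[Theorem 1.4.4]{Berkolaiko2013} for (i), \cite[Proposition 2.1]{riviere2019} for (ii), and \cite[Theorem 5.1]{Krejcirik2015} for (iii) and (iv)---so your proposal is a genuine attempt to supply the arguments. For (i), (ii) and (iii) the arguments you give are essentially the standard ones underlying those references and are sound, modulo the small caveat that your sesquilinear form $\mathfrak t_c$ is not defined at $c=0$; you should either treat $c=0$ separately (it is the selfadjoint Dirichlet Laplacian, so (ii) and (iii) are immediate) or build $\psi(0)=0$ into the form domain for that value of $c$.

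The real issue is (iv), and I think there is a genuine gap. You propose to view $L_c$ as a $p$-subordinate perturbation of the selfadjoint operator $L_\infty$ and invoke Markus--Matsaev/Wyss/Shkalikov. But those theorems apply to additive perturbations $T+A$ with $A$ defined on $\mD(T)$ and satisfying $\|Ax\|\leq c\|x\|^{1-p}\|Tx\|^p$. Here $L_c$ and $L_\infty$ have \emph{different} operator domains (the same form domain, but different boundary conditions), so there is no operator $A$ on $\mD(L_\infty)$ with $L_c=L_\infty+A$, and the $p$-subordination framework simply does not attach. The observation that the resolvent difference is rank one (Krein formula) and that the \emph{forms} differ by a rank-one, relatively form-bounded term $\mathfrak b_c$ are both correct, but neither of these is the same thing as operator $p$-subordination, and the abstract Riesz-basis theorems you cite do not apply off the shelf to a perturbation of this resolvent/form type. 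That is precisely why \cite{Krejcirik2015} proves Riesz-basis completeness for non-selfadjoint quantum graphs by a hands-on analysis of the secular equation and the associated spectral projections rather than by appealing to $p$-subordinate perturbation theory. If you want a self-contained proof of (iv) you should either reproduce that argument or locate a Riesz-basis theorem stated for form-subordinate (or rank-one resolvent) perturbations and verify its hypotheses in detail; as written, the passage from ``the perturbation is $p$-subordinate'' to ``the Markus--Matsaev theorem applies'' is not justified.
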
 

For property \ref{g:generalfact1} we refer to \cite[Theorem 1.4.4]{Berkolaiko2013}. 
The proof of \ref{g:generalfact2} is contained in \cite[Proposition 2.1]{riviere2019}. 
The properties \ref{g:generalfact3} and \ref{g:generalfact4} are consequences of \cite[Theorem 5.1]{Krejcirik2015}.\\

Here we study perturbations of the form 
\begin{equation}
   \label{eq:Sc}
   S_{c}=L_{c}+\mathcal{M}_V
\end{equation}
where $V\in \mathcal{H}$ and $\mathcal{M}_V$ is the operator of multiplication defined by $V$. 
Our goal is to determine conditions on $V$ that ensure that the gaps in the spectrum of $L_c$ result in gaps in the spectrum of $S_c$.
For $c\in\C\cup\{\infty\}$ we denote the eigenvalues of $L_c$ by $\lambda_m(c)$, $m\in \N$, repeated according to their algebraic multiplicities and ordered such that 
\begin{equation}
   \label{eq:evaluesordered}
   \re (\lambda_m(c))\leq \re(\lambda_{m+1}(c)), \qquad m\in \N.
\end{equation}
Rivi{\`e}re and Royer proved the following spectral properties of $L_c$.

\begin{proposition}[\cite{riviere2019}, Proposition 2.5]
   \label{b:boundedimaginarypart}
   For $c\in\C$, there exists $\gamma_c\geq 0$ such that 
   \begin{equation*}
      \re(\lambda_m(c))\geq -\gamma_c
      \quad\text{ and }\quad 
      |\im (\lambda_m(c))|\leq  \gamma_c \quad\text{for all } m\in \N.
   \end{equation*}
   Moreover,
   \begin{equation*}
      \lim_{R\rightarrow\infty}\sup_{\re(\lambda_{m}(c))\geq R} |\im (\lambda_m(c))|
      \leq \frac{2n|\im (c^{-1})|}{|\Gamma|},
   \end{equation*}
   where $|\Gamma|:= \sum_{j=1}^n a_j$ is the total length of the graph.
\end{proposition}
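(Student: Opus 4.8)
The plan is to reduce the eigenvalue problem for $L_c$ to a scalar secular equation, to read off boundedness of the imaginary parts from a quadratic-form identity, and to extract the sharp asymptotic bound from an explicit edgewise estimate. Throughout take $c\in\C\setminus\{0\}$; for $c=0$ and $c=\infty$ the operator is selfadjoint (Proposition~\ref{g:generalfact}\ref{g:generalfact1}) with real spectrum, so there is nothing to prove. First I would separate variables: for $\lambda=k^2$ with $k\neq0$, the equation $-\psi_j''=\lambda\psi_j$ with $\psi_j(a_j)=0$ forces $\psi_j(x)=A_j\sin\bigl(k(a_j-x)\bigr)$, hence $\psi_j(0)=A_j\sin(ka_j)$ and $\psi_j'(0)=-A_jk\cos(ka_j)$; continuity at the central vertex makes $v:=\psi_j(0)$ independent of $j$, and when $\sin(ka_j)\neq0$ for all $j$ the Robin condition $\sum_{j\in\J}\bigl(\psi_j(0)+c\psi_j'(0)\bigr)=0$ becomes the secular equation $\sum_{j\in\J}\cot(ka_j)=\dfrac{n}{ck}$. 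The remaining eigenvalues, where $\sin(ka_j)=0$ on at least two edges, are real and nonnegative and satisfy all the claimed bounds automatically.

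The key tool for the first assertion is the form identity obtained from $\langle L_c\psi,\psi\rangle=\lambda\|\psi\|^2$ by integrating each $\langle-\psi_j'',\psi_j\rangle$ by parts and inserting the boundary conditions:
\[
 \lambda\|\psi\|^2=\sum_{j\in\J}\|\psi_j'\|^2-n\,c^{-1}|v|^2,
 \qquad\text{hence}\qquad
 \im(\lambda)\,\|\psi\|^2=-\,n\,\im(c^{-1})\,|v|^2 .
\]
Thus $|\im\lambda|=n\,|\im(c^{-1})|\,|v|^2/\|\psi\|^2$, and everything reduces to controlling the ratio $|v|^2/\|\psi\|^2$. I would first bound $|\im k|$: if $|\im k|$ is large then each $\cot(ka_j)$ is exponentially close to $-\I\,\sign(\im k)$, so $\bigl|\sum_j\cot(ka_j)\bigr|$ is near $n$, whereas $|n/(ck)|=n/(|c|\,|k|)$ is small once $|k|$ is large; this contradicts the secular equation, and since only finitely many eigenvalues have $|k|$ bounded (the spectrum is discrete, Proposition~\ref{g:generalfact}\ref{g:generalfact3}) we obtain $|\im k|\le\Delta_c$ for all of them. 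With $|\im k|$ bounded, a direct evaluation of $\|\psi_j\|^2/|v|^2$ in terms of $\cosh,\sinh,\cos,\sin$ shows it is bounded below by a positive constant for all but finitely many eigenvalues, whence $|v|^2/\|\psi\|^2$ is bounded and $|\im\lambda_m|\le\gamma_c$ for every $m$. The bound $\re\lambda_m\ge-\gamma_c$ follows at once from m-sectoriality of $L_c$ (Proposition~\ref{g:generalfact}\ref{g:generalfact2}), which confines $\sigma(L_c)$ to a half-plane $\{\re z\ge\omega_c\}$; enlarging $\gamma_c$ gives one constant serving both assertions.

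For the asymptotic estimate, note that when $\re\lambda_m\to\infty$, writing $\lambda=k^2=(\xi+\I\eta)^2$ with $\xi>0$ (replacing $k$ by $-k$ if needed), the boundedness of $|\im\lambda_m|=|2\xi\eta|$ forces $\eta\to0$ and $\xi\to\infty$. Using the identity $|\sin(ku)|^2=\sin^2(\xi u)+\sinh^2(\eta u)$ I would estimate, for $\xi$ large,
\[
 \frac{\|\psi_j\|^2}{|v|^2}=\frac{\int_0^{a_j}|\sin(ku)|^2\,\rd u}{|\sin(ka_j)|^2}
 \ge\frac{\int_0^{a_j}\sin^2(\xi u)\,\rd u}{\sin^2(\xi a_j)+\sinh^2(\eta a_j)}
 \ge\frac{\tfrac{a_j}{2}-\tfrac{1}{4\xi}}{1+\sinh^2(\eta a_j)},
\]
so summing over $j$ and letting $\xi\to\infty$ gives $\liminf\|\psi\|^2/|v|^2\ge\tfrac12\sum_{j\in\J}a_j=\tfrac12|\Gamma|$; the form identity then yields $\limsup|\im\lambda_m|\le 2n\,|\im(c^{-1})|/|\Gamma|$, which is the asserted bound.

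The main obstacle is making this last step uniform. Near an edgewise resonance $\sin(ka_j)\approx0$ the amplitude $A_j$ is large relative to $v$, and several edges may approach resonance simultaneously (for instance when the $a_j$ are commensurable), so one has to track the cancellations in $\int_0^{a_j}\sin^2(\xi u)\,\rd u$ and the interplay of $\sin^2(\xi a_j)$ with $\sinh^2(\eta a_j)$ carefully enough to see that each per-edge bound loses only $O(1/\xi)$; carrying out these estimates, together with the bookkeeping showing that the finitely many low-lying eigenvalues do not spoil the uniform constant $\gamma_c$, is where the bulk of the technical work lies.
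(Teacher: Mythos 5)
The paper does not prove this statement; it quotes it verbatim from \cite{riviere2019} (Proposition~2.5) and cites it as an external fact, so there is no in-paper argument to compare your proof against. Your reconstruction is nonetheless correct and follows what one would expect to be the same route: separating variables and imposing continuity at the hub reduces the eigenvalue problem (away from the edge-resonant eigenvalues) to the secular equation $\sum_{j}\cot(ka_j)=n/(ck)$; integration by parts with $\psi_j(a_j)=0$ and the Robin condition yields $\lambda\|\psi\|^2=\sum_j\|\psi_j'\|^2-nc^{-1}|v|^2$, hence $\im(\lambda)\,\|\psi\|^2=-n\,\im(c^{-1})\,|v|^2$; the secular equation bounds $|\im k|$ because $\cot(ka_j)\to-\I\operatorname{sgn}(\im k)$ exponentially while $n/(ck)\to 0$; this plus the form identity bounds $|\im\lambda_m|$, m-sectoriality supplies $\re\lambda_m\ge-\gamma_c$, and the edgewise estimate of $\|\psi_j\|^2/|v|^2$ produces the sharp asymptotic constant $2n|\im(c^{-1})|/|\Gamma|$ once $\xi\to\infty$ and $\eta\to0$. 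The eigenvalues with $v=0$ are real (the form identity gives $\im\lambda=0$ directly), as you correctly observed.

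The closing concern about edge resonances, however, is not an actual obstacle. Near a resonance $\sin(ka_j)\approx 0$ the ratio $\|\psi_j\|^2/|v|^2=\int_0^{a_j}|\sin(ku)|^2\,\rd u\,/\,|\sin(ka_j)|^2$ becomes \emph{large}, not small: a blown-up amplitude $A_j$ enlarges $\|\psi\|^2$ while $|v|$ is fixed, which only improves the lower bound on $\|\psi\|^2/|v|^2$. More to the point, your own chain
\begin{equation*}
 \frac{\|\psi_j\|^2}{|v|^2}\ge\frac{\int_0^{a_j}\sin^2(\xi u)\,\rd u}{\sin^2(\xi a_j)+\sinh^2(\eta a_j)}\ge\frac{a_j/2-\tfrac{1}{4\xi}}{1+\sinh^2(\eta a_j)}
\end{equation*}
already absorbs every resonance: you majorised $\sin^2(\xi a_j)$ by $1$, so the right-hand side is uniform in $\xi$ regardless of where $\xi a_j$ lies modulo $\pi$, and the factor $1+\sinh^2(\eta a_j)\to 1$ as $\eta\to 0$. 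Summing over $j$ gives $\liminf\|\psi\|^2/|v|^2\ge|\Gamma|/2$ directly, and the premise $\eta\to 0$ follows from the first part ($|\im\lambda_m|=2\xi|\eta|$ bounded) plus $\xi\to\infty$ when $\re\lambda_m=\xi^2-\eta^2\to\infty$. The remaining bookkeeping (discarding the finitely many low-lying eigenvalues by discreteness of the spectrum when fixing $\gamma_c$) is routine. The argument as you have laid it out is complete; no further tracking of cancellations is required.
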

\begin{definition}
   We say that the lengths $a_1,a_2,\ldots,a_n$ are \define{incommensurable over} $\{-1,0,1\}$ if only a trivial linear combination of $a_1,a_2,\ldots,a_n$ with these coefficients vanishes.
\end{definition}
\begin{proposition}[\cite{riviere2019}, Proposition 1.1 and Lemma 2.7]
   \label{m:multiplicities}
   For all $c\in \C$ the geometric and algebraic multiplicity of the eigenvalue $\lambda_{m}(c)$ are equal, for all $m\in \N$. Moreover, if $\{a_j\}_{j=1}^n$ are incommensurable over $\{-1,-0, 1\}$
   then the multiplicity of each eigenvalue of $L_c$ is 1.
\end{proposition}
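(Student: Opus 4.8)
The plan is to reduce both assertions to an explicit, edge-by-edge analysis of the eigenvalue problem for $L_c$, i.e. to the secular (characteristic) equation of the star graph. The structural facts I would lean on --- that $L_c$ is $m$-sectorial with purely discrete spectrum --- are already available from Proposition~\ref{g:generalfact}, so each eigenvalue is isolated, of finite algebraic multiplicity, and an isolated pole of the resolvent.

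For the equality of geometric and algebraic multiplicities I would fix an eigenvalue $\lambda=k^2$ and show $\ker(L_c-\lambda)^2=\ker(L_c-\lambda)$, the case $\lambda=0$ being handled the same way with affine functions on each edge. So suppose $\psi\in D(L_c)$ with $(L_c-\lambda)\psi=:\phi$ an eigenfunction; I must deduce $\phi=0$. On edge $j$ the eigenfunction component is $\phi_j(x)=B_j\sin(k(a_j-x))$ (this already encodes $\phi_j(a_j)=0$), while $\psi_j$ solves the resonant inhomogeneous equation $-\psi_j''-k^2\psi_j=\phi_j$, whose particular solution grows linearly, of the form $\tfrac{B_j}{2k}(a_j-x)\cos(k(a_j-x))$. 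I would then impose $\psi_j(a_j)=0$, the continuity conditions $\psi_i(0)=\psi_j(0)$ at the central vertex, and the Robin condition $\sum_{j\in\J}(\psi_j(0)+c\psi_j'(0))=0$, and use that the data $(B_j)$ already satisfy the corresponding homogeneous relations; what should remain is a single scalar identity whose coefficient is, up to harmless factors, the $k$-derivative of the secular determinant at $k$. Since $\lambda$ is an isolated zero of that determinant, this coefficient cannot vanish in the way that would be needed to sustain a Jordan chain, which forces every $B_j=0$, hence $\phi=0$. This explicit computation on the star graph is where I expect the real work to lie; it is elementary but bookkeeping-heavy.

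For simplicity of the eigenvalues under the incommensurability hypothesis I would compute $\dim\ker(L_c-\lambda)$ directly. When $\sin(ka_j)\neq0$ for every $j$, the Dirichlet and continuity conditions pin $\psi_j$ down to $\psi(0)\,\sin(k(a_j-x))/\sin(ka_j)$ up to the single scalar $\psi(0)\in\C$, so the eigenspace is at most one-dimensional. A degeneracy can thus arise only through simultaneous zeros $\sin(ka_{j_1})=\dots=\sin(ka_{j_r})=0$; these force $ka_{j_1},\dots,ka_{j_r}\in\pi\Z\setminus\{0\}$ and hence a vanishing linear combination of the corresponding $a_j$ with integer coefficients --- a commensurability relation of exactly the type the incommensurability assumption is meant to exclude. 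Tracking the one remaining Robin constraint on the surviving components shows that even in the worst admissible configuration the eigenspace is still one-dimensional, so every eigenvalue is simple; the degenerate instances $c=0$ (where the central condition collapses to a Dirichlet condition and the edges decouple) and $\lambda=0$ I would check separately, with the same outcome. The delicate point --- and the natural place to invoke \cite[Proposition~1.1 and Lemma~2.7]{riviere2019} rather than redo the argument --- is matching the relations produced by several coincident zeros of $k\mapsto\sin(ka_j)$ precisely to those ruled out by incommensurability over $\{-1,0,1\}$.
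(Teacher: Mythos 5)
Note first that the paper itself does not prove this proposition; it is cited verbatim from Rivi\`ere and Royer \cite{riviere2019} (Proposition~1.1 and Lemma~2.7), so you are reconstructing an argument the paper merely imports. Your setup --- explicit edge solutions, the resonant inhomogeneous equation, imposing the vertex conditions --- is the natural one, but two decisive steps in the sketch do not hold up. In the Jordan-chain argument you isolate a coefficient which you claim is essentially the $k$-derivative of the secular determinant, and assert it cannot vanish ``since $\lambda$ is an isolated zero of that determinant.'' This is a non sequitur: analytic functions routinely have isolated zeros with vanishing derivative ($(k-k_0)^2$ is the standard example), and isolation is automatic here from discreteness of the spectrum. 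What is actually needed --- and is the nontrivial content of the first assertion, for all $c\in\C$ including eigenvalues of multiplicity $\geq 2$ --- is that the order of the zero of the characteristic determinant equals the \emph{geometric} multiplicity. That requires an explicit identity (e.g.\ a formula for the resolvent of $L_c$ in terms of the characteristic function) rather than a topological appeal to isolation; your sketch produces a scalar relation but provides no reason the coefficient in it is nonzero.

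The second step that fails is the claimed match between simultaneous zeros of $\sin(ka_j)$ and the incommensurability hypothesis. If $\sin(ka_{j_0})=\sin(ka_{j_1})=0$ with $k\neq 0$, then $ka_{j_0}=m_0\pi$ and $ka_{j_1}=m_1\pi$ for nonzero integers $m_0,m_1$, and the resulting relation $m_1a_{j_0}-m_0a_{j_1}=0$ has arbitrary integer coefficients --- not coefficients in $\{-1,0,1\}$ --- so incommensurability over $\{-1,0,1\}$ does not exclude it. The hypothesis over $\{-1,0,1\}$ enters differently: the characteristic determinant is an exponential sum whose frequencies are $\sum_j\varepsilon_ja_j$ with $\varepsilon_j\in\{-1,1\}$, and two such frequencies coincide precisely when $\sum_j(\varepsilon_j-\varepsilon_j')a_j=0$; since $\varepsilon_j-\varepsilon_j'\in\{-2,0,2\}$, this is, after dividing by two, exactly a nontrivial $\{-1,0,1\}$-relation among the lengths. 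Incommensurability over $\{-1,0,1\}$ therefore guarantees that all frequencies in the characteristic function are distinct, which is what drives simplicity of its zeros; it says nothing directly about coincident zeros of the individual $\sin(ka_j)$. Your closing caveat shows you sensed this mismatch, but as written the reduction does not establish the claimed implication.
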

\begin{corollary}
   \label{s:similarity}
   If $c\in \C\setminus \R$, then $L_c$ is similar to a non-selfadjoint normal operator.     
\end{corollary}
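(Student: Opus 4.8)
The plan is to realise $L_c$ as a diagonal operator in a suitable (non-orthogonal) basis and then to check that the corresponding diagonal entries are not all real. By Proposition~\ref{g:generalfact}\ref{g:generalfact4}, $L_c$ has a Riesz basis of generalised eigenfunctions, and by Proposition~\ref{m:multiplicities} the geometric and algebraic multiplicities of every eigenvalue agree, so there are no nontrivial Jordan chains; hence this is in fact a Riesz basis $\{e_k\}_{k\in\N}$ of $\mathcal{H}$ consisting of honest eigenfunctions, $L_ce_k=\lambda_ke_k$. Writing $e_k=Vf_k$ with $\{f_k\}$ an orthonormal basis of $\mathcal{H}$ and $V\in B(\mathcal{H})$ boundedly invertible (the defining property of a Riesz basis), the operator $N:=V^{-1}L_cV$ with domain $V^{-1}D(L_c)$ satisfies $Nf_k=\lambda_kf_k$; since $L_c$ has compact resolvent by Proposition~\ref{g:generalfact}\ref{g:generalfact3}, so has $N$, and therefore $N$ is the maximal diagonal operator $\diag(\lambda_k)$ relative to $\{f_k\}$, which is normal with $\sigma(N)=\sigma(L_c)$. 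Thus $L_c$ is similar to the normal operator $N$, and the only thing left to prove is that $N$ is not selfadjoint, i.e.\ that $\sigma(L_c)\not\subseteq\R$.

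Suppose, to reach a contradiction, that every $\lambda_k$ is real. The key step is an integration by parts: using that each $\psi\in D(L_c)$ satisfies $\psi_j(a_j)=0$, $\psi_1(0)=\dots=\psi_n(0)=:v(\psi)$ and the Robin--Kirchhoff condition $n\,v(\psi)+c\sum_{j}\psi_j'(0)=0$, one obtains
\begin{equation*}
   \langle L_c\psi,\psi\rangle=\sum_{j=1}^{n}\int_0^{a_j}|\psi_j'|^2\,\rd x-\frac{n}{c}\,|v(\psi)|^2,
   \qquad \psi\in D(L_c),
\end{equation*}
whence $\im\langle L_c\psi,\psi\rangle=\frac{n\,\im c}{|c|^2}\,|v(\psi)|^2$. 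Evaluating at $\psi=e_k$ and using $\langle L_ce_k,e_k\rangle=\lambda_k\|e_k\|^2\in\R$ together with $\im c\neq0$ forces $v(e_k)=0$: every eigenfunction vanishes at the central vertex. On each edge $e_{k,j}$ then solves $-e_{k,j}''=\lambda_ke_{k,j}$ with $e_{k,j}(0)=e_{k,j}(a_j)=0$, which already forces $\lambda_k>0$ and shows that $e_{k,j}$ is a Dirichlet eigenmode $\sin(\sqrt{\lambda_k}\,x)$ of $-\rd^2/\rd x^2$ on $(0,a_j)$ (or vanishes identically), the Kirchhoff condition reducing to the vanishing of the sum of the edge coefficients.

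It remains to contradict completeness of $\{e_k\}$. Fix the Dirichlet level $\lambda:=(\pi/a_1)^2$ of the first edge and let $W_\lambda=\{j:\sqrt\lambda\,a_j\in\pi\N\}\ni 1$. Since distinct Dirichlet modes on a fixed edge are mutually orthogonal and modes on different edges have disjoint support, a suitable nonzero linear combination $u$ of the functions ``$\sin(\sqrt\lambda\,x)$ on edge $j$, zero elsewhere'' ($j\in W_\lambda$) --- namely the vector orthogonal, in the weighted sense dictated by the norms $\|\cdot\|_{L^2(0,a_j)}$, to the subspace of coefficient vectors summing to zero --- is orthogonal to every eigenfunction at level $\lambda$, and it is automatically orthogonal to all eigenfunctions at the other (real) Dirichlet levels. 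Hence $u\perp e_k$ for all $k$, contradicting that $\{e_k\}$ is a basis of $\mathcal{H}$; therefore $\sigma(L_c)$ contains a non-real point and $N$ is a non-selfadjoint normal operator similar to $L_c$. The main obstacle is exactly this last part --- establishing the boundary form identity and then ruling out a complete family of eigenfunctions all of which are Dirichlet-decoupled at the central vertex --- the ``similar to a normal operator'' step being routine once Propositions~\ref{g:generalfact} and \ref{m:multiplicities} are in hand.
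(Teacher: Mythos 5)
Your proof takes the same route as the paper for the core similarity statement: Proposition~\ref{g:generalfact}\ref{g:generalfact4} gives a Riesz basis of generalised eigenfunctions, and Proposition~\ref{m:multiplicities} (geometric $=$ algebraic multiplicity) rules out Jordan chains, so one actually has a Riesz basis of genuine eigenfunctions and hence similarity to a diagonal, and therefore normal, operator. The paper compresses precisely this into the citation \cite[Chapter~XV, Theorem~6.4]{DunfordSchwartzIII}; you unfold that citation explicitly, and the small polish your identification of $N=V^{-1}L_cV$ with the maximal diagonal operator really needs (compare the bounded resolvents $(N-z_0)^{-1}$ and $(\diag(\lambda_k)-z_0)^{-1}$ on the orthonormal basis $\{f_k\}$ at a common resolvent point $z_0$, rather than appealing only to compactness) is routine.

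The genuine difference is your second half. The paper's one-line proof does not justify that the resulting normal operator is actually \emph{non-selfadjoint}; since a normal operator with real spectrum is automatically selfadjoint, and since $\sigma(N)=\sigma(L_c)$, this amounts to showing $\sigma(L_c)\not\subseteq\R$ --- a fact that none of the cited propositions addresses, and one that is not forced merely by $L_c$ itself being non-selfadjoint (Proposition~\ref{g:generalfact}\ref{g:generalfact1}), because a non-selfadjoint operator with a Riesz basis of eigenfunctions can perfectly well have purely real spectrum. Your boundary-form identity $\langle L_c\psi,\psi\rangle=\sum_j\|\psi_j'\|^2-(n/c)|v(\psi)|^2$ together with $\im c\neq0$ forces every eigenfunction with real eigenvalue to vanish at the central vertex; these are then pure Dirichlet modes on the edges with the Kirchhoff constraint $\sum_j d_{k,j}=0$, and your explicit $u$ with $u_j=(2/a_j)\sin(\sqrt{\lambda}\,x)$ on the edges $j$ where $\lambda:=(\pi/a_1)^2$ is a Dirichlet level (and $u_j=0$ otherwise) is nonzero and orthogonal to every such eigenfunction, contradicting completeness of the Riesz basis. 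This is correct and fills a step that the paper's terse proof leaves implicit. One minor presentational remark: it is worth stating that $|W_\lambda|=1$ is not an exceptional case --- then the constraint $\sum d_j=0$ kills all eigenfunctions at level $\lambda$ and the argument is even easier --- so the construction works uniformly.
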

\begin{proof}
   The claim follows from Proposition \ref{g:generalfact}\ref{g:generalfact4}, Proposition \ref{m:multiplicities} and \cite[Chapter~XV, Theorem 6.4]{DunfordSchwartzIII}.    
\end{proof}   

The next lemma shows that $L^q$ integrability of $V$ ensures its $p$-subordination to $L_{c}$ with $p=\frac{1}{q}$.

\begin{lemma}
   Let $c_\Gamma := n \left( \sum_{n\in\mathcal J} \frac{1}{a_j} \right)^{-1}$
   and $c\in \C\setminus \{c_\Gamma\}$. 
   If $V=(V_j)_{j=1}^n\in L^q(\Gamma):=\bigoplus_{j=1}^n L^q(0,a_j; \C)$ for some $q\geq 2$, then there exists a constant $C>0$ which depends only on $c$, $q$ and $\{a_j\}_{j=1}^n$, such that
   \[
   \|V u\|_{L^2(\Gamma)}\leq C\|V\|_{L^q(\Gamma)}\|u\|_{L^2(\Gamma)}^{1-\frac{1}{q}}\|L_{c}u\|_{L^2(\Gamma)}^{\frac{1}{q}},
   \qquad
   u\in D(L_c).
   \]
\end{lemma}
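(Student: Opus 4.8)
The plan is to prove that $\mathcal M_V$ is $p$-subordinate to $L_c$ with $p=1/q$; the displayed inequality is then exactly this subordination, with subordination bound $C\|V\|_{L^q(\Gamma)}$. Since $c\neq c_\Gamma$, the value $0$ is not an eigenvalue of $L_c$: an element of $\ker L_c$ is affine on each edge, and the vertex conditions of $L_c$ admit such a nontrivial function only when $c=c_\Gamma$. As $L_c$ has purely discrete spectrum by Proposition~\ref{g:generalfact}\,\ref{g:generalfact3}, it follows that $0\in\rho(L_c)$, so $K:=\|L_c^{-1}\|<\infty$. The proof then combines three ingredients: an edgewise Hölder reduction, a Gagliardo--Nirenberg inequality on each interval $(0,a_j)$ together with the identity $(L_cu)_j=-u_j''$, and finally the bound $\|u\|_{L^2(\Gamma)}\le K\|L_cu\|_{L^2(\Gamma)}$ to convert a non-homogeneous estimate into the required multiplicative one.

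First I would estimate each edge contribution. Fix $u\in D(L_c)$ and $j\in\J$. If $q>2$, Hölder's inequality with exponents $q/2$ and $q/(q-2)$ gives $\|V_ju_j\|_{L^2(0,a_j)}^2\le\|V_j\|_{L^q(0,a_j)}^2\,\|u_j\|_{L^{2q/(q-2)}(0,a_j)}^2$, and interpolating the $L^{2q/(q-2)}$ norm between $L^2$ and $L^\infty$ (parameter $(q-2)/q$) yields $\|u_j\|_{L^{2q/(q-2)}}^2\le\|u_j\|_{L^2}^{2(q-2)/q}\|u_j\|_{L^\infty}^{4/q}$. For $q=2$ the same conclusion holds trivially since $\int_0^{a_j}|V_j|^2|u_j|^2\le\|V_j\|_{L^2(0,a_j)}^2\|u_j\|_{L^\infty(0,a_j)}^2$. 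In either case,
\begin{equation*}
   \|V_ju_j\|_{L^2(0,a_j)}^2\le\|V_j\|_{L^q(0,a_j)}^2\,\|u_j\|_{L^2(0,a_j)}^{2(q-2)/q}\,\|u_j\|_{L^\infty(0,a_j)}^{4/q},
\end{equation*}
where $\|u_j\|_{L^2}^{0}=1$ when $q=2$.

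Next, since $u\in D(L_c)\subseteq\bigoplus_jH^2(0,a_j)$ and $(L_cu)_j=-u_j''$, we have $\|u_j''\|_{L^2(0,a_j)}\le\|L_cu\|_{L^2(\Gamma)}$ and $\|u_j\|_{L^2(0,a_j)}\le\|u\|_{L^2(\Gamma)}$. By the Gagliardo--Nirenberg interpolation inequality on the bounded interval $(0,a_j)$, together with the equivalence of $\|\cdot\|_{H^2}$ with $\|\cdot\|_{L^2}+\|(\cdot)''\|_{L^2}$ there, there is $C_1=C_1(a_j)$ with $\|u_j\|_{L^\infty}^2\le C_1\bigl(\|u_j\|_{L^2}^{3/2}\|u_j''\|_{L^2}^{1/2}+\|u_j\|_{L^2}^2\bigr)$. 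Inserting this into the previous display, using $(x+y)^{2/q}\le x^{2/q}+y^{2/q}$ (valid since $q\ge2$) and keeping track of the exponents, then summing over the finitely many edges with $\sum_j\|V_j\|_{L^q}^2\le n^{1-2/q}\|V\|_{L^q(\Gamma)}^2$ and taking square roots, I obtain
\begin{equation*}
   \|Vu\|_{L^2(\Gamma)}\le C_2\,\|V\|_{L^q(\Gamma)}\Bigl(\|u\|_{L^2(\Gamma)}^{1-1/(2q)}\,\|L_cu\|_{L^2(\Gamma)}^{1/(2q)}+\|u\|_{L^2(\Gamma)}\Bigr)
\end{equation*}
for all $u\in D(L_c)$, with $C_2$ depending only on $q$ and $\{a_j\}_{j=1}^n$.

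Finally, using $\|u\|_{L^2(\Gamma)}\le K\|L_cu\|_{L^2(\Gamma)}$ I would write $\|u\|^{1-1/(2q)}\|L_cu\|^{1/(2q)}=\|u\|^{1-1/q}\|L_cu\|^{1/q}\cdot(\|u\|/\|L_cu\|)^{1/(2q)}\le K^{1/(2q)}\|u\|^{1-1/q}\|L_cu\|^{1/q}$ and $\|u\|=\|u\|^{1-1/q}\|u\|^{1/q}\le K^{1/q}\|u\|^{1-1/q}\|L_cu\|^{1/q}$, which absorbs the lower-order term and gives the assertion with $C=C_2\bigl(K^{1/(2q)}+K^{1/q}\bigr)$, depending only on $c$, $q$ and $\{a_j\}_{j=1}^n$. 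The main obstacle I anticipate is the middle step: choosing the right Gagliardo--Nirenberg inequality on the interval in its homogeneous-plus-lower-order form and checking that the exponents produced by the Hölder and $L^p$-interpolation steps combine to give exactly $p=1/q$. The hypothesis $c\neq c_\Gamma$ is used only in the last paragraph and should be routine once $0\in\rho(L_c)$ has been recorded.
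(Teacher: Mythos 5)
Your argument is correct and follows the same structure as the paper's: the identical computation that $c\neq c_\Gamma$ forces $0\in\rho(L_c)$, an edgewise H\"older reduction, a Gagliardo--Nirenberg interpolation tied to $(L_cu)_j=-u_j''$, and absorption of the lower-order term via $\|u\|\le\|L_c^{-1}\|\,\|L_cu\|$. The only (immaterial) deviation is that you route the interpolation through $L^\infty$ and the second derivative, squaring the H\"older step and using $\|u_j\|_{L^\infty}^2\lesssim\|u_j\|_{L^2}^{3/2}\|u_j''\|_{L^2}^{1/2}+\|u_j\|_{L^2}^2$, whereas the paper applies Gagliardo--Nirenberg directly to $\|u_j\|_{L^r}$ with $1/r+1/q=1/2$ and then bounds $\|u_j\|_{H^1}$ by $\|u_j\|_{L^2}+\|u_j''\|_{L^2}$; both produce the exponent $p=1/q$.
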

\begin{proof}
   First we show that $0\in \rho(L_c)$. 
   Assume for the sake of a contradiction that $0\in \sigma(L_c)$. 
   Then $0$ is an eigenvalue of $L_c$ and we can pick 
   $\psi := (\psi_j)_{j=1}^n\in \mD(L_c)$ with $L_c\psi =0$.
   Hence for $j=1,\,\dots,\, n$ there exist $A_j, B_j$ such that 
   \begin{equation*}
      \psi_j(x) = A_j + B_j x,
      \qquad x\in [0,a_j].
   \end{equation*}
   By the continuity of $\psi$ at the central vertex, we obtain that $A_j= A_1$ for all $j$.
   The Dirichlet conditions at the outer vertices imply that $B_j = -\frac{A_j}{a_j} = -\frac{A_1}{a_j}$.
   Therefore it follows that 
   $0 = nA_1 + c\sum_{j=1}^n B_j = nA_1 (1 - c\sum_{j=1}^n \frac{1}{a_j}) = nA_1 (1 - \frac{c}{c_\Gamma})$.
   Since $c\neq c_\Gamma$, we must have $A_1=0$ and consequently $\psi=0$.

   For every $u=(u_j)_{j=1}^n\in  D(L_c)\subseteq\bigoplus_{j=1}^n H^2(0,a_j;\C)$
   we have, by \cite[ Chapter~III, Example~2.2]{EngelNagel2000}),
   \begin{align*}
      \|u_j'\|_{L^2(0,a_j)}\leq  3\|u_j\|_{L^2(0,a_j)}+3\|u_j''\|_{L^2(0,a_j)}.  
   \end{align*}
   The Gagliardo-Nirenberg inequality, see for instance \cite{Brezis},
   shows that for each $j=1,2,\ldots, n$ there exists $d_j>0$ such that
   \[
   \|u_j\|_{L^r(0,a_j)}\leq d_j \|u_j\|_{L^2(0,a_j)}^{1-\frac{1}{q}}\|u_j\|_{H^1(0,a_j)}^{\frac{1}{q}}
   \]
   where 
   \[
   \frac{1}{r}+\frac{1}{q}=\frac{1}{2}
   \]
   and from H\"older's inequality we infer that
   \[
   \|V_ju_j\|_{L^2(0,a_j)}\leq \|V_j\|_{L^q(0,a_j)}\|u_j\|_{L^r(0,a_j)}.
   \]
   Therefore, combining these three inequalities, we can find constants $d, C>0$ such that  
   \begin{align*}
      \|Vu\|_{ L^2(\Gamma)}&= \sum_{j=1}^n  \|V_ju_j\|_{L^2(0,a_j)}  \leq\sum_{j=1}^n \|V_j\|_{L^q(0,a_j)}\|u_j\|_{L^r(0,a_j)}  \\
      &\leq \sum_{j=1}^n d_j\|V_j\|_{L^q(0,a_j)} \|u_j\|_{L^2(0,a_j)}^{1-\frac{1}{q}}\|u_j\|_{H^1(0,a_j)}^{\frac{1}{q}}\\
      &\leq \sum_{j=1}^n d_j\|V_j\|_{L^q(0,a_j)} \|u_j\|_{L^2(0,a_j)}^{1-\frac{1}{q}}\left(4\|u_j\|_{L^2(0,a_j)}+3\|u_j''\|_{L^2(0,a_j)}\right)^{\frac{1}{q}} \\
      &\leq n d \max_{1\leq j\leq  n}\|V_j\|_{L^q(0,a_j)} \left(\max_{1\leq j\leq  n}\|u_j\|_{L^2(0,a_j)}\right)^{1-\frac{1}{q}}\left(4\max_{1\leq j\leq  n}\|u_j\|_{L^2(0,a_j)}+3\max_{1\leq j\leq  n}\|u_j''\|_{L^2(0,a_j)}\right)^{\frac{1}{q}} \\
      &\leq n d \|V\|_{L^q(\Gamma)}\|u\|_{L^2(\Gamma)}^{1-\frac{1}{q}}\left(4\|u\|_{L^2(\Gamma)}+3\|L_cu\|_{L^2(\Gamma)}\right)^{\frac{1}{q}}\\
      &\leq n d \|V\|_{L^q(\Gamma)}\|u\|_{L^2(\Gamma)}^{1-\frac{1}{q}}\left(3+4\|L_c^{-1}\|\right)^{\frac{1}{q}}\|L_cu\|_{L^2(\Gamma)}^{\frac{1}{q}} \\
      &\leq C\|V\|_{L^q(\Gamma)}\|u\|_{L^2(\Gamma)}^{1-\frac{1}{q}}\|L_{c}u\|_{L^2(\Gamma)}^{\frac{1}{q}}.
      \qedhere
   \end{align*}
\end{proof}

By Corollary \ref{s:similarity}, for every $c\in\C\cup\{\infty\}$ there exists a normal operator $T_c$ on $\mathcal{H}$ and an isomorphism $J_c\in B(\mathcal{H})$ such that 
$$T_c= J_cL_cJ_c^{-1}.$$
Note that $T_c$ and $L_c$ have the same spectra.
Set $A_{V}:= J_c\M_{V}J_c^{-1}$ and $\kappa_c:= \|J_c\|\|J_c^{-1}\|$.
If $V\in L^q(\Gamma)$, then the above lemma shows that $A_V$ is $\frac{1}{q}$-subordinate to $T_c$ and that
\begin{align}
   \label{p:subordinate} 
   \|A_V w\|_{L^2(\Gamma)}
   \leq \kappa_cC\|V\|_{L^q(\Gamma)}\|w\|_{L^2(\Gamma)}^{1-\frac{1}{q}}\|T_{c}w\|_{L^2(\Gamma)}^{\frac{1}{q}}
\end{align}
for all $w\in D(T_c)$. 
The Weyl law proved by Rivière and Royer in \cite[Proposition 1.2]{riviere2019} shows that 
$$\re(\lambda_m(c))= \frac{\pi^2}{|\Gamma|^2}m^2+O(m) \text{ as } m\rightarrow \infty$$
and 
\begin{equation*}
   \re(\lambda_{m+1}(c))-\re(\lambda_m(c))=O(m+1)\mbox{ as } m\rightarrow \infty.
\end{equation*}
This implies that, for $m\in \N$ such that $\re(\lambda_{m}(c))\neq \re(\lambda_{m+1}(c))$, 
\begin{equation*}
   \frac{ \max\{|\re(\lambda_{m}(c))+\I\gamma_c|^{\frac{1}{q}},|\re(\lambda_{m+1}(c))+\I\gamma_c|^{\frac{1}{q}}\} }{ \re(\lambda_{m+1}(c))-\re(\lambda_{m}(c)) }
   \longrightarrow 0
   \quad\text{for } m\to\infty.
\end{equation*}
Therefore, hypothesis \eqref{eq:crucial1subordinate} is satisfied for $T_c$ and $A_V$ for large enough $m$ and hence 
Theorem~\ref{thm:psubord1} leads to the following theorem.
\begin{theorem}
   Let $L_c$ and $S_c$ be as in \eqref{eq:Lc} and \eqref{eq:Sc}, respectively, for some 
   $c\in \C\setminus \{c_\Gamma\}$ and $V\in L^q(\Gamma)$ with $q>2$.
   Then at most finitely many of the vertical spectral free strips of $L_c$ close under perturbation by $V$.
\end{theorem}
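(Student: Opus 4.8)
The plan is to transfer everything to the normal model operator and then invoke Theorem~\ref{thm:psubord1} one strip at a time. First I would record the translation. By Corollary~\ref{s:similarity} when $c\notin\R$, and trivially when $c\in\R\setminus\{c_\Gamma\}$ (in which case $L_c$ is selfadjoint, hence already normal), there are a normal operator $T_c$ on $\mathcal H$ and an isomorphism $J_c\in B(\mathcal H)$ with $T_c=J_cL_cJ_c^{-1}$. Writing $A_V:=J_c\M_VJ_c^{-1}$ gives $J_cS_cJ_c^{-1}=T_c+A_V$, so $\sigma(S_c)=\sigma(T_c+A_V)$ while $\sigma(L_c)=\sigma(T_c)$. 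Since $V\in L^q(\Gamma)$ with $q>2$, estimate \eqref{p:subordinate} shows that $A_V$ is $\tfrac1q$-subordinate to $T_c$ with $\tfrac1q<\tfrac12$ and subordination bound $c_1:=\kappa_cC\|V\|_{L^q(\Gamma)}$, so Assumption~\ref{ass:psubord} holds for $(T_c,A_V)$; and by Proposition~\ref{b:boundedimaginarypart}, $\im\sigma(T_c)\subseteq[-\gamma_c,\gamma_c]$. One caveat to keep in mind throughout: the constant denoted $c$ in Theorem~\ref{thm:psubord1} is the subordination bound, here $c_1$, not the graph parameter $c$.

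Next I would organise the vertical spectral free strips. Because $\sigma(L_c)$ is discrete with bounded imaginary part and with $\re\lambda_m(c)$ nondecreasing and tending to $+\infty$, the set $R:=\{\re\lambda_m(c):m\in\N\}$ has the form $r_0<r_1<r_2<\cdots$ with $r_k\to\infty$, and the vertical spectral free strips of $L_c$ (equivalently those of $T_c$, since the two operators have equal spectrum and $\im\sigma(T_c)$ is bounded) are exactly the strips $(\alpha_n,\beta_n)+\I\R$ with $(\alpha_n,\beta_n):=(r_{n-1},r_n)$, $n\ge1$. For each such $n$ pick $m(n)$ with $\re\lambda_{m(n)}(c)=\alpha_n<\beta_n=\re\lambda_{m(n)+1}(c)$; then $m(n)\to\infty$ as $n\to\infty$. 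Now apply Theorem~\ref{thm:psubord1} to $(T_c,A_V)$ with $\alpha_T=\alpha_n$, $\beta_T=\beta_n$, $\gamma_T=\gamma_c$, $p=\tfrac1q$ and subordination bound $c_1$: its hypothesis \eqref{eq:crucial1subordinate} reads
\[
   2c_1\cdot\frac{\max\{|\re\lambda_{m(n)}(c)+\I\gamma_c|^{1/q},\;|\re\lambda_{m(n)+1}(c)+\I\gamma_c|^{1/q}\}}{\re\lambda_{m(n)+1}(c)-\re\lambda_{m(n)}(c)}<1 ,
\]
and the fraction here is exactly the quotient in the limit displayed just above the theorem statement, evaluated along the unbounded sequence of indices $m=m(n)$, all of which satisfy $\re\lambda_m(c)\ne\re\lambda_{m+1}(c)$; hence it tends to $0$, so the inequality holds for all $n\ge N_0$ and some $N_0$. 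For such $n$, Theorem~\ref{thm:psubord1} gives $\{\alpha'_{n}<\re z<\beta'_{n}\}\subseteq\rho(T_c+A_V)=\rho(S_c)$ with $\alpha_n<\alpha'_{n}<\beta'_{n}<\beta_n$ (nonemptiness being equivalent to \eqref{eq:crucial1subordinate}), so the strip $(\alpha_n,\beta_n)+\I\R$ does not close: it still contains a nonempty vertical spectral free strip of $S_c$. Hence only the at most $N_0-1$ strips with $n<N_0$ can close, which is the assertion.

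As for the main obstacle: there is essentially no new analysis, since the two substantive ingredients — the subordination bound \eqref{p:subordinate} and the abstract gap-survival result Theorem~\ref{thm:psubord1} — are already available. The points that need care are organisational: checking that every vertical spectral free strip really furnishes an index $m(n)$ with $\re\lambda_{m(n)}(c)\ne\re\lambda_{m(n)+1}(c)$ and with $m(n)\to\infty$, so that the Weyl-law limit (which is stated only along such indices) actually applies; keeping the two meanings of ``$c$'' apart; and verifying that the full-vertical-strip hypothesis of Theorem~\ref{thm:psubord1} is met, which holds precisely because $\im\sigma(T_c)$ is bounded, so that $(\alpha_n,\beta_n)+\I\R\subseteq\rho(T_c)$ is the same as $(\alpha_n,\beta_n)\cap R=\emptyset$.
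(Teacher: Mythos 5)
Your proposal is correct and follows essentially the same route as the paper: transfer to the normal model $T_c = J_c L_c J_c^{-1}$, deduce $\tfrac1q$-subordination of $A_V = J_c\M_V J_c^{-1}$ from the Gagliardo–Nirenberg lemma and estimate \eqref{p:subordinate}, combine the bounded imaginary part of $\sigma(T_c)$ (Proposition~\ref{b:boundedimaginarypart}) with the Weyl law of Rivi\`ere–Royer to verify that hypothesis \eqref{eq:crucial1subordinate} of Theorem~\ref{thm:psubord1} holds for all sufficiently large gaps, and conclude strip by strip. The paper leaves this as a brief discussion above the theorem; you merely spell out the details it glosses over — treating the real‑parameter case $c\in\R\setminus\{c_\Gamma\}$ (where $L_c$ is already selfadjoint, since Corollary~\ref{s:similarity} as stated covers only $c\notin\R$), distinguishing the graph parameter $c$ from the subordination bound, and checking that the gap indices $m(n)$ do run to infinity so the displayed Weyl‑law limit actually applies — none of which changes the argument in substance.
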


\begin{remark}
   By Proposition \ref{prop:bddstrip}  and \ref{cor:ourconjecture1}, we have that, for all $\theta\in (0,\pi/2]$, there is $\beta\in \R$ such that $\sigma(L_c+\M_V)\subseteq S_{\theta}(\beta)$. 
\end{remark}
\begin{remark}
   We can also admit more general perturbations.
   Consider the operator $\mathcal{A}$ defined by
   $$D(\mathcal{A}):=D(L_c),$$
   and
   $$\mathcal{A}\psi(x):= \zeta(x)\frac{d}{dx}\psi(x)+V(x)\psi(x),~~~~\psi(x)\in D(L_c)$$
   with $\zeta, V\in L^{2}(\Gamma)$. 
   Then the following is true.
   \begin{itemize}
      \item  If $\zeta\in L^{\infty}(\Gamma)$ then $J_{c}\mathcal{A}J^{-1}_c$ is $\frac{1}{2}$-subordinate to $T_{c}$.
      \item If $\zeta\in L^{2}(\Gamma)$ then $J_{c}\mathcal{A}J^{-1}_c$is $T_{c}$-bounded.
   \end{itemize}
\end{remark}
%%%}}}

\subsection{ %%% {{{
Perturbations of normal operators generated by first order
systems}
We consider the following first order system of ODEs
\begin{equation*}
   \mathcal{L}y:= \mathcal{L}(B,Q)y:= -\I B y'+Q(x)y=\lambda y,\qquad y
   = 
   \begin{pmatrix}
      y_1\\
      y_2\\
      \vdots\\
      y_n  
   \end{pmatrix}
\end{equation*}
with 
\begin{equation*}
   B:=\diag(b_1,b_2,\ldots, b_n)\in \C^{n\times n} \quad\text{and}\quad Q:= \left(q_{jk}\right)_{j,k=1}^n\in L^1([0,1]; \C^{n\times n}).
\end{equation*}
We associate to $\mathcal{L}$ the maximal operator $L_{\max}:= L_{\max}(B,Q)$ acting in $ L^2([0,1]; \C^{n})$ on the domain 
$$D(L_{\max}):=\left\{y\in AC([0,1]; \C^{n}): \mathcal{L}y \in L^2([0,1]; \C^{n})  \right\}$$
and with action 
$$L_{\max}y:= \mathcal{L}y,\qquad y\in D(L_{\max}). $$
The minimal operator $L_{\min}:= L_{\min}(B,Q)$ is defined by 
$$D(L_{\min}):=\left\{y\in D(L_{\max}): y(0)=y(1)=0 \right\}$$
and
$$L_{\min}y:= \mathcal{L}y,~~~y\in D(L_{\min}). $$

Let us first consider the case when $Q=0$.
The next proposition describes the situation when the operators on the different vertices are decoupled and satisfy so-called \define{quasi-periodic boundary conditions} on each edge.
We omit the proof.

\begin{proposition}
   Let $D:=\diag(d_1,d_2,\ldots, d_n)\in \C^{n\times n} $ and consider the operator $L_{D}(B,0)$ defined by 
   \begin{align}
      \label{q:quasiperiodic}
      D(L_{D}(B,0)):=\left\{y\in D(L_{\max}(B,0)): Dy(1)=y(0) \right\}  
   \end{align}
   and
   $$L_{D}(B,0)y:= \mathcal{L}(B,0)y,~~~y\in D(L_{D}(B,0)). $$
   Then the following is true:
   \begin{enumerate}[label={\upshape(\roman*)}]
      \item If $|d_i|=1$ for all $i=1,\ldots,n$, then $L_D(B,0)$ is normal.
      \item If $|d_i|=1$ and $b_i\in \C\setminus \{0\}$ for all $i=1,\ldots,n$, then $L_D(B,0)$ is normal and discrete.
      \item If $|d_i|=1$ and $b_i\in \R\setminus \{0\}$ for all $i=1,\ldots,n$, then $L_D(B,0)$ is selfadjoint and discrete.
   \end{enumerate}
\end{proposition}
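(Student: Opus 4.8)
The plan is to exploit that $B$ and $D$ are both diagonal, so that $L_D(B,0)$ splits into a finite orthogonal direct sum of scalar operators, and then to normalise each scalar block by a simple gauge transformation. First I would note that under the identification $L^2([0,1];\C^n)=\bigoplus_{i=1}^n L^2([0,1];\C)$ the maximal operator $L_{\max}(B,0)$ becomes $\bigoplus_{i=1}^n \ell^{\max}_{b_i}$ with $\ell^{\max}_{b_i}y_i=-\I b_i y_i'$, and that the boundary condition $Dy(1)=y(0)$ decouples, since $D$ is diagonal, into $d_i y_i(1)=y_i(0)$. Hence $L_D(B,0)=\bigoplus_{i=1}^n \ell_i$, where $\ell_i$ acts as $y\mapsto -\I b_i y'$ on $L^2([0,1];\C)$ with domain $\{y\in H^1([0,1];\C):d_i y(1)=y(0)\}$ (for $b_i=0$ the maximal operator is not closed, so implicitly $b_i\neq0$, which is in any case assumed in (ii) and (iii)). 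A finite orthogonal direct sum is normal, respectively selfadjoint, respectively has compact resolvent, exactly when each summand does, so it suffices to analyse a single $\ell_i$.

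For the scalar block I would use the hypothesis $|d_i|=1$: write $d_i=\e^{-\I\psi_i}$ with $\psi_i\in\R$ and define the unitary multiplication operator $U_i$ on $L^2([0,1];\C)$ by $(U_i f)(x)=\e^{-\I\psi_i x}f(x)$. One checks directly that $U_i$ maps $D(\ell_i)$ onto $\{g\in H^1([0,1];\C):g(1)=g(0)\}$, i.e. onto the domain of the periodic momentum operator $P:=-\I\,\frac{d}{dx}$, and that
\begin{equation*}
   U_i\,\ell_i\,U_i^{-1}=b_i\,(\psi_i I+P).
\end{equation*}
Since $P$ is selfadjoint with compact resolvent (its eigenfunctions $\e^{2\pi\I kx}$, $k\in\Z$, form an orthonormal basis of $L^2([0,1];\C)$, with eigenvalues $2\pi k\to\infty$), the operator $\psi_i I+P$ is selfadjoint with compact resolvent and spectrum $\psi_i+2\pi\Z$.

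Finally I would read off the three statements. For any complex scalar $\alpha$ and selfadjoint $S$, the operator $\alpha S$ is normal (as $(\alpha S)(\alpha S)^*=|\alpha|^2 S^2=(\alpha S)^*(\alpha S)$ with matching domains), it is selfadjoint precisely when $\alpha\in\R$, and it has compact resolvent whenever $\alpha\neq0$ and $S$ does, in which case its spectrum $\alpha\sigma(S)$ is a discrete set of eigenvalues tending to $\infty$ so that $\rho(\alpha S)\neq\emptyset$. Applying this with $S=\psi_i I+P$ and $\alpha=b_i$ shows that $\ell_i$ is normal when $|d_i|=1$; normal and discrete when moreover $b_i\neq0$; and selfadjoint and discrete when moreover $b_i\in\R\setminus\{0\}$. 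Taking the direct sum over $i=1,\dots,n$ then gives parts (i), (ii) and (iii). The only point that needs a little care is the bookkeeping of the first step — verifying that $\{y\in AC([0,1];\C^n):\mathcal L(B,0)y\in L^2\}=H^1([0,1];\C^n)$ when all $b_i\neq0$ and that the boundary conditions genuinely decouple — together with the (standard) spectral description of $P$; beyond that there is no real obstacle.
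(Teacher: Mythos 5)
The paper explicitly omits the proof of this proposition, so there is nothing of theirs to compare your argument against; I evaluate yours on its own merits. Your argument is correct and well organised. The reduction to scalar blocks exploiting that $B$ and $D$ are both diagonal, the gauge transformation $(U_if)(x)=\e^{-\I\psi_i x}f(x)$ (with $d_i=\e^{-\I\psi_i}$) which carries the quasi-periodic condition $d_iy(1)=y(0)$ to the periodic one $g(1)=g(0)$ and conjugates $\ell_i$ to $b_i(\psi_i I+P)$ with $P=-\I\,d/dx$ periodic, and the elementary observations that for selfadjoint $S$ the scalar multiple $\alpha S$ is always normal, is selfadjoint iff $\alpha\in\R$, and has compact resolvent (with nonempty resolvent set) when $\alpha\neq0$ and $S$ does — all of this is sound, and a finite orthogonal direct sum preserves each of these properties. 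You were right to flag the edge case $b_i=0$ in part (i): there the scalar block is the zero operator on $AC([0,1];\C)$, which is not closed, so normality (which presupposes closedness) can only hold for its closure; the paper's surrounding material (e.g.\ the quoted Lemma 5.1 of Lunyov--Malamud) takes $B$ nonsingular, so this is an unstated standing hypothesis rather than a gap in your reasoning.
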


Now we consider the following more general boundary conditions 
\begin{align}
   \label{b:boundaryconditions} 
   Cy(0)+Dy(1)=0,\qquad C:= \left(c_{jk}\right),\ D:= \left(d_{jk}\right)\in  \C^{n\times n},
\end{align}
where we always assume that $\rank(C~D) = n$.
We define the operator $L_{C,D}(B,Q)$ as restriction of $L_{\max}$ to the domain
\begin{align*}
   %\label{q:quasiperiodic}
   D(L_{C,D}(B,Q)):=\left\{y\in D(L_{\max}(B,Q)): Cy(0)+Dy(1)=0 \right\} .
\end{align*}
% and
% $$L_{C,D}(B,Q)y:= \mathcal{L}(B,Q)y,~~~y\in D(L_{C,D}(B,Q)). $$
This class of operators has appeared in numerous papers (see e.g. \cite{BirkhoffLanger1923}, \cite{MO2012}, \cite{LM2015} and \cite{ALMO2019}). 
For instance, Lunyov and Malamud found in \cite[Lemma 5.1]{LM2015} all normal realizations $L_{C,D}(B,0)$ of the minimal operator $L_{\min}(B,0)$ under so-called regular boundary conditions (see \cite[pag. 89]{BirkhoffLanger1923} or \cite[Definition 5.1]{ALMO2019}). 
Later, Agibalova, Lunyov, Malamud and Oridoroga in \cite[Lemma 4.4]{ALMO2019} characterized these normal extensions for the case $n=2$.
\begin{proposition}[\cite{LM2015}, Lemma 5.1]
   Let $B$ be a nonsingular diagonal $n \times n$ matrix with complex entries and let $C, D\in  \C^{n\times n}$ such that the boundary conditions given in \eqref{b:boundaryconditions} are regular. 
   Then the operator $L_{C,D}(B,0)$ is normal if and only if 
   \begin{equation*}
      CBC^*= DBD^*.
   \end{equation*}
\end{proposition}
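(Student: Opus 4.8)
The plan is to decide normality by computing the Hilbert--space adjoint of $L_{C,D}(B,0)$ and invoking the standard criterion that a closed densely defined operator $T$ is normal if and only if $\mD(T)=\mD(T^{*})$ and $\|Tx\|=\|T^{*}x\|$ for every $x\in\mD(T)$. First I would note that $L_{C,D}(B,0)$ is closed and densely defined and, since $y(0)=y(1)=0$ forces $Cy(0)+Dy(1)=0$, satisfies $L_{\min}(B,0)\subseteq L_{C,D}(B,0)\subseteq L_{\max}(B,0)$; consequently $L_{C,D}(B,0)^{*}$ lies between $L_{\max}(B,0)^{*}=L_{\min}(B^{*},0)$ and $L_{\min}(B,0)^{*}=L_{\max}(B^{*},0)$, so it is a restriction of the formal expression $-\I B^{*}\frac{d}{dx}$. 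Because $B$ is nonsingular, the maximal domains of $-\I B\frac{d}{dx}$ and of $-\I B^{*}\frac{d}{dx}$ both equal $H^{1}([0,1];\C^{n})$, and the map $y\mapsto(y(0),y(1))$ sends $H^{1}$ onto $\C^{2n}$; this is what will let us compare the two domains through their boundary subspaces.

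Next I would carry out Green's formula. Integration by parts (the matrix $B$ is constant) gives, for all $y,z\in H^{1}([0,1];\C^{n})$,
\begin{equation*}
   \langle -\I By',z\rangle-\langle y,-\I B^{*}z'\rangle
   =-\I\bigl(z(1)^{*}By(1)-z(0)^{*}By(0)\bigr)
   =-\I\,\mathbf z^{*}\begin{pmatrix}-B&0\\0&B\end{pmatrix}\mathbf y ,
\end{equation*}
where $\mathbf y,\mathbf z\in\C^{2n}$ denote the stacked boundary vectors of $y$ and $z$. Hence $z\in\mD(L_{C,D}(B,0)^{*})$ exactly when this boundary form vanishes for every $y$ with $\mathbf y\in\ker(C\ D)$, and then $L_{C,D}(B,0)^{*}z=-\I B^{*}z'$. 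Using $\bigl(\ker(C\ D)\bigr)^{\perp}=\Ran\binom{C^{*}}{D^{*}}$ and the invertibility of $\bigl(\begin{smallmatrix}-B&0\\0&B\end{smallmatrix}\bigr)$, the admissible boundary vectors $\mathbf z$ are precisely those in the $n$-dimensional subspace
\begin{equation*}
   \mathcal N^{\#}:=\Ran\begin{pmatrix}-(B^{*})^{-1}C^{*}\\ (B^{*})^{-1}D^{*}\end{pmatrix},
\end{equation*}
so $L_{C,D}(B,0)^{*}$ is the restriction of $-\I B^{*}\frac{d}{dx}$ to $\{z\in H^{1}:\mathbf z\in\mathcal N^{\#}\}$.

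Finally I would exploit that $B=\diag(b_{1},\dots,b_{n})$ satisfies $|b_{j}|=|\overline{b_{j}}|$, whence $\|By'\|^{2}=\sum_{j}\int_{0}^{1}|b_{j}|^{2}|y_{j}'|^{2}=\|B^{*}y'\|^{2}$ for every $y\in H^{1}$; so the norm condition in the normality criterion holds automatically, and normality reduces to the single requirement $\mD(L_{C,D}(B,0))=\mD(L_{C,D}(B,0)^{*})$, i.e.\ (both maximal domains being $H^{1}$) to $\ker(C\ D)=\mathcal N^{\#}$. As both are $n$-dimensional subspaces of $\C^{2n}$, this is equivalent to the inclusion $\mathcal N^{\#}\subseteq\ker(C\ D)$, that is, to $(C\ D)\bigl(\begin{smallmatrix}-(B^{*})^{-1}C^{*}\\ (B^{*})^{-1}D^{*}\end{smallmatrix}\bigr)=0$, i.e.\ $C(B^{*})^{-1}C^{*}=D(B^{*})^{-1}D^{*}$; taking adjoints gives the equivalent identity $CB^{-1}C^{*}=DB^{-1}D^{*}$. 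The remaining step of the plan is to pass from this to the symmetric form $CBC^{*}=DBD^{*}$ stated in the proposition, which I would do using the hypothesis that the boundary conditions are regular: regularity permits $(C\ D)$ to be brought into a normalised block form adapted to the sign/phase splitting of $B$, and in that form the identities coincide. I expect the two delicate points to be getting Green's formula — hence the subspace $\mathcal N^{\#}$ — exactly right, since a sign or transpose slip there produces the wrong matrix identity, and this final reconciliation with the regular normal form; as an independent check one can argue directly instead, writing eigenfunctions as $y(x)=e^{\I\lambda xB^{-1}}c$ with $\det\bigl(C+De^{\I\lambda B^{-1}}\bigr)=0$ and imposing that eigenfunctions to distinct eigenvalues be orthogonal and every eigenvalue semisimple.
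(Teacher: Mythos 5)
Your route---compute $L_{C,D}(B,0)^{*}$ via Green's formula and invoke the criterion that a closed densely defined $T$ is normal iff $\mD(T)=\mD(T^{*})$ and $\|Tx\|=\|T^{*}x\|$ on $\mD(T)$---is the natural one, and your computation is correct all the way to the identity $CB^{-1}C^{*}=DB^{-1}D^{*}$. The Green's formula, the boundary matrix $\bigl(\begin{smallmatrix}-B&0\\0&B\end{smallmatrix}\bigr)$, the adjoint boundary subspace $\mathcal N^{\#}=\Ran\bigl(\begin{smallmatrix}-(B^{*})^{-1}C^{*}\\(B^{*})^{-1}D^{*}\end{smallmatrix}\bigr)$, the observation $\|By'\|=\|B^{*}y'\|$ which kills the norm condition, and the equal-dimension argument for $\ker(C\ D)=\mathcal N^{\#}$ are all sound.

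The gap is your last step, and it is genuine: $CB^{-1}C^{*}=DB^{-1}D^{*}$ is \emph{not} equivalent to $CBC^{*}=DBD^{*}$, and regularity of the boundary conditions does not bridge them. Both conditions are invariant under $(C,D)\mapsto(MC,MD)$ with $M\in GL(n,\C)$, so each depends only on the subspace $\ker(C\ D)\subset\C^{2n}$, and they pick out genuinely different families of subspaces. Concretely, take $n=2$, $B=\diag(1,2)$, $C=I$, $D=\bigl(\begin{smallmatrix}0&1/\sqrt2\\\sqrt2&0\end{smallmatrix}\bigr)$. Then $\det C\neq 0\neq\det D$, and $CBC^{*}=B=DBD^{*}$, but $CB^{-1}C^{*}=\diag(1,\tfrac12)$ while $DB^{-1}D^{*}=\diag(\tfrac14,2)$; computing the eigenfunctions at $\lambda=0$ and $\lambda=4\pi/3$ shows they fail to be orthogonal, so the operator is not normal even though $CBC^{*}=DBD^{*}$ holds. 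The discrepancy is a convention mismatch with the cited source: in \cite{LM2015} the differential expression is $-\I B^{-1}(y'+Qy)$, so their $B$ plays the role of $B^{-1}$ in this paper's convention $\mathcal Ly=-\I By'+Qy$, and the corresponding Green's formula produces $B^{-1}$ in the boundary term, yielding $CBC^{*}=DBD^{*}$ under \emph{their} normalisation. With the normalisation you were given, the correct condition is the one you actually derived, $CB^{-1}C^{*}=DB^{-1}D^{*}$; the attempted ``normalised block form'' reconciliation should be dropped.
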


\begin{proposition}[\cite{ALMO2019}, Lemma 4.4]
   Let $n=2$,  $C, D\in  \C^{2\times 2}$ such that $\rank(C~D) = 2$ and $b_1b_2^{-1}\notin \R$ . Then the operator $L_{C,D}(B,0)$ is normal if and only if 
   \begin{equation*}
      D=C\diag(d_1,d_2),~~|d_1|=|d_2|=1\ \text{ and }\ \det C \neq 0.
   \end{equation*}
\end{proposition}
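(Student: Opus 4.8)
The plan is to reduce normality of $L := L_{C,D}(B,0)$ to an equality of two two-dimensional subspaces of $\C^4$ (the boundary-value spaces of $L$ and of $L^*$), and then to an elementary computation with $2\times 2$ matrices. Recall that $b_1,b_2\neq 0$ because $b_1b_2^{-1}\notin\R$, and that $L$ is closed and densely defined, being a restriction of $L_{\max}$ by finitely many boundary conditions.

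First I would invoke the standard characterisation: a closed densely defined operator $L$ is normal if and only if $D(L)=D(L^*)$ and $\|Ly\|=\|L^*y\|$ for all $y\in D(L)$. An integration by parts on $[0,1]$ gives, for $y,z\in H^1([0,1];\C^2)$, the Green identity $\langle Ly,z\rangle-\langle y,-\I\bar B z'\rangle=-\I\big(z(1)^*By(1)-z(0)^*By(0)\big)$, so that $L^*z=-\I\bar B z'$ on the domain cut out by the vanishing of that boundary form. The key observation is that $D(L_{\max}(B,0))=D(L_{\max}(\bar B,0))=H^1([0,1];\C^2)$ (both $B$ and $\bar B$ are invertible) and that $\|Ly\|^2=\int_0^1\big(|b_1|^2|y_1'|^2+|b_2|^2|y_2'|^2\big)=\|L^*y\|^2$ automatically, since $|b_j|=|\bar b_j|$. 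Hence $L$ is normal precisely when the boundary spaces of $D(L)$ and $D(L^*)$ coincide.

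Next I would make both spaces explicit. Because $\rank(C\ D)=2$, the boundary values of $D(L)$ form the $2$-dimensional space $\ker(C\ D)$, while those of $D(L^*)$ form $\operatorname{Ran}\!\begin{pmatrix}-\bar B^{-1}C^*\\ \bar B^{-1}D^*\end{pmatrix}$, which is also $2$-dimensional (it has the same rank as $(C\ D)^*$). Equality of two $2$-dimensional subspaces is equivalent to one containing the other, which here reads $C\bar B^{-1}C^*=D\bar B^{-1}D^*$; this is the concrete form, in our setting, of the Lunyov--Malamud condition $CBC^*=DBD^*$, and either version leads to the same conclusion. The ``if'' direction is then immediate: if $D=CE$ with $E=\diag(d_1,d_2)$, $|d_1|=|d_2|=1$ and $\det C\neq 0$, then $E\bar B^{-1}E^*=\bar B^{-1}$, whence $D\bar B^{-1}D^*=C\bar B^{-1}C^*$ and $L$ is normal.

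For the ``only if'' direction I would argue in two steps, and this is where $b_1b_2^{-1}\notin\R$ enters. Put $G:=\bar B^{-1}=\diag(g_1,g_2)$, so $g_1/g_2=\overline{b_2/b_1}\notin\R$ and hence $v^*Gv=g_1|v_1|^2+g_2|v_2|^2\neq 0$ for every $v\neq 0$. If $C$ had rank $\le 1$, say $C=uv^*$ with $u,v\neq 0$, then $CGC^*=(v^*Gv)\,uu^*$ still has rank $1$, forcing $D$ to have rank $1$ as well; comparing the ranges of the rank-one matrices $(v^*Gv)uu^*$ and $(w^*Gw)u'u'^*$ would then force all columns of $(C\ D)$ into a single line, contradicting $\rank(C\ D)=2$. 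So $C$, and by symmetry $D$, are invertible. Setting $E:=C^{-1}D$, the identity $C\bar B^{-1}C^*=D\bar B^{-1}D^*$ becomes $EGE^*=G$; writing $E=\begin{pmatrix}a&b\\ c&d\end{pmatrix}$, the $(1,1)$-entry yields $(|a|^2-1)g_1+|b|^2g_2=0$, which by $g_1/g_2\notin\R$ forces $b=0$ and $|a|=1$, and symmetrically the $(2,2)$-entry forces $c=0$ and $|d|=1$; the off-diagonal entries of the identity then hold automatically, giving $D=C\diag(d_1,d_2)$ with $|d_1|=|d_2|=1$ and $\det C\neq 0$. I expect the only delicate points to be the sign/conjugation bookkeeping in the adjoint boundary form and the rank-one exclusion in the invertibility step; everything else is routine.
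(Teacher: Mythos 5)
The paper states this result as a citation (\cite{ALMO2019}, Lemma~4.4) without giving a proof, so there is no internal argument to compare against; you are in effect supplying the omitted proof, and your argument is correct. Two small remarks, neither a genuine gap. First, the Green identity you compute gives the adjoint boundary space as $\operatorname{Ran}\bigl(\begin{smallmatrix}-\bar B^{-1}C^*\\ \bar B^{-1}D^*\end{smallmatrix}\bigr)$, so equality of the two $2$-dimensional boundary spaces reads $C\bar B^{-1}C^*=D\bar B^{-1}D^*$ rather than the Lunyov--Malamud form $CBC^*=DBD^*$ quoted just above it in the paper; as you say, this does not matter, because the only feature of the middle matrix used in the algebra is that it is diagonal with nonreal ratio of its two diagonal entries, and both $B$ and $\bar B^{-1}$ inherit that from $b_1b_2^{-1}\notin\R$ (this is also what makes $v^*Gv\neq 0$ for all $v\neq 0$, the key to your rank argument). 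Second, in the ``only if'' direction you rule out $\rank C\le 1$ by writing $C=uv^*$ with $u,v\neq 0$, which only treats $\rank C=1$; add the one-line observation that $C=0$ forces $DGD^*=0$, hence $D=0$ by the same diagonal-entry argument, contradicting $\rank(C~D)=2$. With that, the reduction to $EGE^*=G$ for $E=C^{-1}D$ and the entrywise reading of the $(1,1)$ and $(2,2)$ equations give $E=\diag(d_1,d_2)$ with $|d_1|=|d_2|=1$ exactly as you state. The closedness of $L_{C,D}(B,0)$, the identity $\|Ly\|=\|L^*y\|$ on $H^1$ from $|b_j|=|\bar b_j|$, and the standard characterisation of normality via $D(L)=D(L^*)$ plus equal norms are all used correctly.
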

\begin{remark}
   The operator $L_{C,D}(B,0)$ is normal if and only if there exist $d_1,d_2\in \C$ with $|d_1|=|d_2|=1$ such that $L_{C,D}(B,0)=L_{D'}(B,0)$ with $D'=\diag(d_1,d_2)$.   
\end{remark}

Now we focus on the case $n=2$. 
Our goal is to obtain spectral inclusions for the operator $L_D(B,Q)$ with $B=\diag(b_1,b_2)$ and $D=\diag(d_1,d_2)$. The next lemma shows that $L^q$ integrability of $Q$ with $q\geq 2$ ensures that its corresponding operator of multiplication is $L_D(B,0)$-bounded with $L_D(B,0)$-bound equal to zero.
\begin{lemma}
   Let $Q\in L^q([0,1]; \C^{2\times 2})$ for some $q \ge 2$.
   As before, let $B=\diag(b_1,b_2)$ and $D=\diag(d_1,d_2)$ with $b_1,b_2\neq 0$, $d_1,d_2\in \C\setminus\{0,1\}$.
   Then $\mathcal{M}_{Q}$, the operator of multiplication defined by $Q$, is $q$-subordinate to $L_D(B,0)$, in particular it has $L_D(B,0)$-bound equal to zero.
\end{lemma}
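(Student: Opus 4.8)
The plan is to mirror the proof of the corresponding lemma for the quantum star graph: reduce the subordination estimate to a one–dimensional Sobolev interpolation inequality combined with Hölder's inequality, using an a~priori bound for $\|y'\|_{L^2}$ in terms of $\|L_D(B,0)y\|_{L^2}$. The first step is to show $0\in\rho(L_D(B,0))$. If $L_D(B,0)y=0$, then $-\I By'=0$ and, since $B=\diag(b_1,b_2)$ is invertible, $y'\equiv0$, so $y$ equals a constant vector $v$; the boundary condition $Dy(1)=y(0)$ becomes $(D-I)v=0$, and as $D=\diag(d_1,d_2)$ with $d_1,d_2\neq1$ the matrix $D-I$ is invertible, whence $v=0$. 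Thus $L_D(B,0)$ is injective, and for $f\in L^2([0,1];\C^2)$ the unique $y$ with $-\I By'=f$ and $Dy(1)=y(0)$ is obtained by integrating once and solving the resulting linear system $(D-I)y(0)=-\I DB^{-1}\!\int_0^1 f$; this gives a bounded inverse, so $0\in\rho(L_D(B,0))$. (Alternatively, $L_D(B,0)-\lambda$ is Fredholm of index $0$ for regular first order boundary value problems, so injectivity implies bijectivity.)

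Next I would record that $L_D(B,0)y=-\I By'$ together with the invertibility of $B$ forces $y'\in L^2$ whenever $y\in D(L_D(B,0))$, so $D(L_D(B,0))\subseteq H^1([0,1];\C^2)\subseteq D(\mathcal M_Q)$ and
\begin{equation*}
   \|y'\|_{L^2}\leq\|B^{-1}\|\,\|L_D(B,0)y\|_{L^2},\qquad y\in D(L_D(B,0)).
\end{equation*}
Combined with $\|y\|_{L^2}\leq\|L_D(B,0)^{-1}\|\,\|L_D(B,0)y\|_{L^2}$ from the previous step, this yields a constant $c_1>0$ with $\|y\|_{H^1}\leq c_1\|L_D(B,0)y\|_{L^2}$ on $D(L_D(B,0))$.

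Then I would apply, componentwise, the Gagliardo–Nirenberg interpolation inequality: there is $d>0$ such that $\|u\|_{L^r(0,1)}\leq d\,\|u\|_{L^2(0,1)}^{1-1/q}\,\|u\|_{H^1(0,1)}^{1/q}$ with $\tfrac1r=\tfrac12-\tfrac1q$ (and, when $q=2$, the endpoint embedding $H^1(0,1)\hookrightarrow L^\infty(0,1)$). Using $|Q(x)y(x)|\leq\|Q(x)\|_{\mathrm{op}}|y(x)|$ and Hölder's inequality with exponents $\tfrac q2$ and $\tfrac{q}{q-2}$ gives $\|\mathcal M_Q y\|_{L^2}\leq\|Q\|_{L^q}\,\|y\|_{L^r}$. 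Chaining this with the interpolation inequality and the bound $\|y\|_{H^1}\leq c_1\|L_D(B,0)y\|_{L^2}$ produces a constant $C>0$, depending only on $q$, $B$ and $D$, with
\begin{equation*}
   \|\mathcal M_Q y\|_{L^2}\leq C\,\|Q\|_{L^q}\,\|y\|_{L^2}^{1-\frac1q}\,\|L_D(B,0)y\|_{L^2}^{\frac1q},\qquad y\in D(L_D(B,0)),
\end{equation*}
i.e.\ $\mathcal M_Q$ is $\tfrac1q$-subordinate to $L_D(B,0)$ with $\tfrac1q$-subordination bound at most $C\|Q\|_{L^q}$. Since $q\geq2$ we have $\tfrac1q\leq\tfrac12<1$, so by Remark~\ref{rem:psub:relbounded} the $L_D(B,0)$-bound of $\mathcal M_Q$ is zero.

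I expect the main obstacle to be the clean justification that $0$ lies in the resolvent set (rather than merely not being an eigenvalue): this needs either the explicit construction of the inverse sketched above or the Fredholm-index-zero property of regular first order boundary value problems. The remaining work — assembling the Sobolev and Hölder estimates and handling the endpoint $q=2$ — is routine.
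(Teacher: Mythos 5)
Your proof is correct and follows essentially the same route as the paper: Gagliardo--Nirenberg interpolation plus H\"older, combined with a bound $\|y\|_{H^1}\lesssim\|L_D(B,0)y\|_{L^2}$ obtained from the bounded invertibility of the unperturbed operator. The only cosmetic difference is that the paper decomposes $L_D(B,0)=b_1 L_{d_1}\oplus b_2 L_{d_2}$ and argues componentwise (simply asserting that $d_j\notin\{0,1\}$ yields bounded invertibility of the scalar pieces), whereas you work directly with $\|y'\|_{L^2}\leq\|B^{-1}\|\,\|L_D(B,0)y\|_{L^2}$ and supply an explicit verification that $0\in\rho(L_D(B,0))$; both variants are sound.
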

\begin{proof}
   Let $ y=\begin{pmatrix}
      y_1 \\ y_2
   \end{pmatrix} \in D(L_{D}(B,0))$
   Then $y_1,y_2\in H^1(0,1)$ and 
   \begin{equation*}
      d_jy_j(1)= y_j(0)\quad \text{ for } j=1,2,
   \end{equation*}
   and $L_D(B,0)$ can be written as the direct sum $L_D(B,0) = b_1 L_{d_1}\oplus b_2 L_{d_2}$.
   Note that the condition $d_1, d_2 \notin \{0,1\}$ guarantees that $L_{d_1}$ and $L_{d_2}$ are boundedly invertible.
   The Gagliardo-Nirenberg inequality shows that there exists a constant $c_q>0$ such that for $j=1,2$ 
   \[
   \|y_j\|_{L^r(0,1)}\leq c_q \|y_j\|_{L^2(0,1)}^{1-\frac{1}{q}}\|y_j\|_{H^1(0,1)}^{\frac{1}{q}}
   \]
   where 
   \[
   \frac{1}{r}+\frac{1}{q}=\frac{1}{2}
   \]
   and from H\"older's inequality we infer that, for any $\tilde q\in L^q(0,1)$, 
   \[
   \|\tilde q y_j\|_{L^2(0,1)}\leq \|\tilde q\|_{L^q(0,1)}\|y_j\|_{L^r(0,1)}.
   \]
   Therefore, combining these two inequalities, we can find constants $C_j>0$ such that  
   \begin{align*}
      \|\tilde q y_j\|_{L^2(0,1)}
      & \leq \|\tilde q\|_{L^q(0,1)}\|y_j\|_{L^r(0,1)} 
      \leq c_q \|\tilde q\|_{L^q(0,1)}\|y_j\|_{L^2(0,1)}^{1-\frac{1}{q}}\|y_j\|_{H^1(0,1)}^{\frac{1}{q}}  \\
      &= c_q \|\tilde q\|_{L^q(0,1)}\|y_j\|_{L^2(0,1)}^{1-\frac{1}{q}}
      \left(\|y_j\|_{L^2(0,1)}+\|y_j'\|_{L^2(0,1)}\right)^{\frac{1}{q}} 
      \\
      % &= c_q \|\tilde q\|_{L^q(0,1)}\|y_j\|_{L^2(0,1)}^{1-\frac{1}{q}}
      % \left(\|y_j\|_{L^2(0,1)}+\|L_{d_j}(1,0)y_j\|_{L^2(0,1)}\right)^{\frac{1}{q}} 
      % \\
      &\leq c_q \|\tilde q\|_{L^q(0,1)}\|y_j\|_{L^2(0,1)}^{1-\frac{1}{q}}
      \left(1+\|(L_{d_j})^{-1}\|\right)^{\frac{1}{q}}\|L_{d_j}y_j\|_{L^2(0,1)}^{\frac{1}{q}} 
      \\
      &= C_j\|\tilde q\|_{L^q(0,1)}\|y_j\|_{L^2(0,1)}^{1-\frac{1}{q}}
      \|L_{d_j}y_j\|_{L^2(0,1)}^{\frac{1}{q}}.
   \end{align*} 
   Consequently, there is a constant $C_0>0$ such that 
   \begin{align*}
      \|Qy\|_{L^2(0,1)}^2
      &= \| q_{11} y_1 + q_{12} y_2 \|_{L^2(0,1)}^2 + \| q_{21} y_1 + q_{22} y_2 \|_{L^2(0,1)}^2
      \\
      & \le 2\left\{ \| q_{11} y_1 \|_{L^2(0,1)}^2 + \|q_{12} y_2 \|_{L^2(0,1)}^2 + \| q_{21} y_1 \|_{L^2(0,1)}^2 + \|q_{22} y_2 \|_{L^2(0,1)}^2
      \right\}
      \\
      &\le 
      C_0\, \|Q\|_{L^q(0,1)}^2
      \sum_{j=1}^2
      \|y_j\|_{L^2(0,1)}^{2(1-\frac{1}{q})} \|L_{d_j}(1,0)y_j\|_{L^2(0,1)}^{\frac{2}{q}}
      \\
      &\le 
      C_0\, \|Q\|_{L^q(0,1)}^2
      \bigg( \sum_{j=1}^2 \|y_j\|_{L^2(0,1)}^2 \bigg)^{1-\frac{1}{q}} 
      \bigg( \sum_{j=1}^2 \|L_{d_j}(1,0)y_j\|_{L^2(0,1)}^2 \bigg)^{\frac{1}{q}}
      \\
      & =
      C_0\, \left( 
      \|Q\|_{L^q(0,1)}
      \|y\|_{L^2(0,1)}^{1-\frac{1}{q}} 
      \|L_{D}(1,0)y\|_{L^2(0,1)}^{\frac{1}{q}}
      \right)^2.
      \qedhere
   \end{align*}

   \begin{comment}
      \bigskip
      {\color{magenta}
      \hrule \bigskip
      \noindent
      Creo que el pen\'ultimo paso es \'el que no pudimos hacer en el tablero cuando expusiste.
      Es una desigualdad de H\"older.
      \\
      Sean $s = (1-\frac{1}{q})^{-1}$, $t = q$ y
      $a_j = \|y_j\|_{L^2(0,1)}^{2(1-\frac{1}{q})}= \|y_j\|_{L^2(0,1)}^{\frac{2}{s}}$,
      $b_j = \|L_{d_j}^{-1}y_j\|_{L^2(0,1)}^{\frac{2}{q}} = \|L_{d_j}^{-1}y_j\|_{L^2(0,1)}^{\frac{2}{t}}$,
      Entonces $\frac{1}{s} + \frac{1}{t} = 1$ se sigue que
      \begin{align*}
	 \sum_{j=1}^2
	 \|y_j\|_{L^2(0,1)}^{2(1-\frac{1}{q})} \|L_{d_j}(1,0)y_j\|_{L^2(0,1)}^{\frac{2}{q}}
	 & =
	 \sum_{j=1}^2 a_j b_j
	 \le 
	 \Big( \sum_{j=1}^2 a_j^s \Big)^{\frac{1}{s}}
	 \Big( \sum_{j=1}^2 b_j^t \Big)^{\frac{1}{s}}
	 \\
	 & =
	 \bigg( \sum_{j=1}^2 \|y_j\|_{L^2(0,1)}^2 \bigg)^{1-\frac{1}{q}} 
	 \bigg( \sum_{j=1}^2 \|L_{d_j}(1,0)y_j\|_{L^2(0,1)}^2 \bigg)^{\frac{1}{q}}.
      \end{align*}
      }
   \end{comment}

\end{proof}
\begin{remark}
   Let $\epsilon>0$ and $y\in D( L_{D}(B,0))$, then 
   $$\|Q(x)y\|_{L^2([0,1]; \C^2)}\leq a'_{\epsilon}\|y\|_{L^2([0,1]; \C^2)}+ b'_{\epsilon}\|L_{D}(B,0)y\|_{L^2([0,1]; \C^2)},$$
   where 
   \begin{align*}
      a'_{\epsilon} := 2\left(1-\frac{1}{q}\right) \epsilon^{\frac{q}{1-q}}C_0
      \max_{i,j=1}^2 \|q_{ij}\|_{L^q(0,1)} \quad\text{ and }\quad
      b'_{\epsilon} := \frac{2}{q}\, {\epsilon^{q}}C_0
      \max_{i,j=1}^2 \|q_{ij}\|_{L^q(0,1)}.
   \end{align*}
\end{remark}
\begin{proposition}
   Let $Q\in L^q([0,1]; \C^{2\times 2})$, $b_1,b_2\neq 0$, $d_1,d_2\in \C\setminus\{1\}$ with $|d_1|=|d_2|=1$ and $q\geq 2$. 
   If $B:=\diag(b_1,b_2)$ and $D:=\diag(d_1,d_2)$ then the operator $L_{D}(B,Q)$ has compact resolvent. 
   Moreover, if $ \theta_{\mathfrak s}:= \frac{1}{2}\left|\arg\left(\frac{\mathfrak s b_2}{b_1}\right)\right| $, with $\mathfrak s = \pm 1$, then for all $\epsilon>0$, there exists $r_\epsilon >0$ such that 
   \begin{equation}
      \label{eq:inlcusionsystem}
      \sigma(\e^{\I\varphi_{\mathfrak s}} L_D(B,Q))
      \subseteq {K_{r_\epsilon}(0)}\cup S_{\theta_{\mathfrak s}+\epsilon}(0)\cup -S_{\theta_{\mathfrak s}+\epsilon}(0),
   \end{equation}
   for some $\varphi_{\mathfrak s} \in[0,2\pi)$ and $K_{r_\epsilon}(0)$ is the closed disk of radius $r_\epsilon$ centred in $0$.  
\end{proposition}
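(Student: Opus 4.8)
The plan is to rotate the picture so that the unperturbed operator becomes a normal operator whose spectrum sits on two lines through the origin, and then to apply Proposition~\ref{prop:ourfirsprop3} together with the line estimate \eqref{eq:distanceline} from Lemma~\ref{lem:app:Hest}. First I would analyse $L_D(B,0)$. Since $|d_1|=|d_2|=1$ and the boundary condition $Dy(1)=y(0)$ decouples the two components, $L_D(B,0)=b_1L_{d_1}\oplus b_2L_{d_2}$, where $L_{d_j}=-\I\tfrac{d}{dx}$ on $L^2(0,1)$ with $d_jy(1)=y(0)$ is selfadjoint with the purely discrete spectrum $\sigma(L_{d_j})=\{-\arg d_j+2\pi k:k\in\Z\}\subseteq\R$. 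Hence $b_jL_{d_j}$ is normal with discrete spectrum contained in the line $b_j\R$, and therefore $L_D(B,0)$ is normal, has compact resolvent, and satisfies $\sigma(L_D(B,0))\subseteq b_1\R\cup b_2\R$. For each $\mathfrak s\in\{+1,-1\}$ I would then choose $\varphi_{\mathfrak s}\in[0,2\pi)$ so that $e^{\I\varphi_{\mathfrak s}}(b_1\R\cup b_2\R)$ is a union $\ell_1\cup\ell_2$ of two lines through the origin symmetric about $\R$, of directions $\pm\theta_{\mathfrak s}$ (one may take $\varphi_{+1}$ equal to $-\tfrac12(\arg b_1+\arg b_2)$ modulo $2\pi$, and $\varphi_{-1}=\varphi_{+1}+\tfrac\pi2$, so that the two possible bisectors are used); then $T:=e^{\I\varphi_{\mathfrak s}}L_D(B,0)$ is normal with $\sigma(T)\subseteq\ell_1\cup\ell_2\subseteq\Omega(0,0,\theta_{\mathfrak s})$, where $\theta_{\mathfrak s}=\tfrac12\lvert\arg(\mathfrak s b_2/b_1)\rvert\in[0,\tfrac\pi2]$ is half the angle between the lines $b_1\R$ and $b_2\R$.

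Next I would handle the perturbation. By the lemma preceding the proposition, $\mathcal M_Q$ is $\tfrac1q$-subordinate to $L_D(B,0)$, so $A:=e^{\I\varphi_{\mathfrak s}}\mathcal M_Q=\mathcal M_{e^{\I\varphi_{\mathfrak s}}Q}$ is $\tfrac1q$-subordinate to $T$ with the same constant (rotation leaves $\|x\|$ and $\|Tx\|$ unchanged). Since $\tfrac1q\le\tfrac12<1$, Remark~\ref{rem:psub:relbounded} shows that $A$ is $T$-bounded with $T$-bound $0$, i.e. for every $b>0$ there is $a=a(b)\ge0$ with $\|Ax\|^2\le a^2\|x\|^2+b^2\|Tx\|^2$. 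Because $e^{\I\varphi_{\mathfrak s}}L_D(B,Q)=T+A$ and $\sigma(e^{\I\varphi_{\mathfrak s}}L_D(B,Q))=\sigma(T+A)$, it suffices to localise $\sigma(T+A)$. The compact-resolvent claim follows because a $T$-bounded perturbation with relative bound $<1$ of an operator with compact resolvent again has compact resolvent: $\rho(T+A)\neq\emptyset$ by Proposition~\ref{prop:ourfirsprop3}, and for such $z$ one writes $(T+A-z)^{-1}=(T-z)^{-1}(\id+A(T-z)^{-1})^{-1}$, a product of a compact and a bounded operator, so $L_D(B,Q)$ has compact resolvent.

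For the enclosure, fix $\epsilon>0$ with $\theta_{\mathfrak s}+\epsilon<\pi$. If $z\notin\sector{\theta_{\mathfrak s}+\epsilon}{0}\cup-\sector{\theta_{\mathfrak s}+\epsilon}{0}$, then $z\in\rho(T)$, and an elementary angular computation shows $\dist(z,\ell_j)\ge\lvert z\rvert\sin\epsilon$ for $j=1,2$. Rotating each $\ell_j$ onto the real axis and applying \eqref{eq:distanceline} from Lemma~\ref{lem:app:Hest} with $\gamma=0$, together with $\lvert\re(e^{\mp\I\theta_{\mathfrak s}}z)\rvert\le\lvert z\rvert$ and $\lvert\im(e^{\mp\I\theta_{\mathfrak s}}z)\rvert=\dist(z,\ell_j)$, yields, with $H_z$ as in \eqref{eq:Hdef},
\begin{equation*}
   \sup_{t\in\sigma(T)}H_z(t)\ \le\ b^2+\frac{b^2\lvert z\rvert^2+a^2}{\lvert z\rvert^2\sin^2\epsilon}\ =\ b^2\Bigl(1+\frac1{\sin^2\epsilon}\Bigr)+\frac{a^2}{\lvert z\rvert^2\sin^2\epsilon}.
\end{equation*}
I would then choose $b=b(\epsilon)$ so small that $b^2(1+\sin^{-2}\epsilon)=\tfrac12$, take the associated $a=a(\epsilon)$, and set $r_\epsilon:=a\sqrt2/\sin\epsilon$; for $\lvert z\rvert>r_\epsilon$ outside the fattened double sector the right-hand side is $<1$, hence $z\in\rho(T+A)$ by Proposition~\ref{prop:ourfirsprop3}\ref{item:ourfirstprop3:iii}. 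Passing to complements gives \eqref{eq:inlcusionsystem}.

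I expect the main obstacle to be the first step: confirming that $L_D(B,0)$ really is normal with spectrum exactly on the two lines $b_1\R$ and $b_2\R$, and pinning down the rotation $\varphi_{\mathfrak s}$ so that the half-angle of the resulting double sector comes out to be precisely $\theta_{\mathfrak s}$ for both signs of $\mathfrak s$. Once the spectrum is confined to two lines through the origin, the remainder is a routine application of the tools of Section~\ref{sec:perturbation}; the only point requiring a little care there is the planar bound $\dist(z,\ell_j)\ge\lvert z\rvert\sin\epsilon$, where the relevant angle may be obtuse, but this is saved by the symmetry of $\lvert\sin\rvert$ about $\tfrac\pi2$.
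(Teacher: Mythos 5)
Your proposal is correct and follows essentially the same route as the paper: rotate so that $\sigma(T)$ lies in the double sector $\Omega(0,0,\theta_{\mathfrak s})$, use the $\tfrac1q$-subordination of $\mathcal M_Q$ (hence $T$-bound zero) together with Proposition~\ref{prop:ourfirsprop3} to get the spectral enclosure, and a Neumann series to transport compact resolvent. The only, minor, difference is that you derive the final quantitative estimate directly from the line bound \eqref{eq:distanceline} of Lemma~\ref{lem:app:Hest} (exploiting that $\sigma(T)$ lies on just two lines through $0$) and thereby make explicit how $b$ and $r_\epsilon$ depend on $\epsilon$, whereas the paper appeals to Proposition~\ref{prop:bisectorstrip} and leaves that dependence implicit; your version is arguably the cleaner bookkeeping.
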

\begin{proof}
   Let $\varphi_{\mathfrak s}\in [0,2\pi) $  such that 
   \begin{equation*}
      \sigma(\e^{\I\varphi_{\mathfrak s}} L_D(B,0))
      \subseteq  S_{\theta_{\mathfrak s}}(0)\cup -S_{\theta_{\mathfrak s}}(0).
   \end{equation*}
   The spectral inclusion \eqref{eq:inlcusionsystem}
   follows from Proposition \ref{prop:bisectorstrip}. 
   Moreover, if $T:= \e^{\I\varphi_{\mathfrak s}} L_D(B,0)$ we have that 
   $\sup_{t\in\sigma(T)} H_z(t) < 1$, for $z\in {K_{r_\epsilon}(0)}\cup S_{\theta_{\mathfrak s}+\epsilon}(0)\cup -S_{\theta_{\mathfrak s}+\epsilon}(0) $, where $H_z$ is as in \eqref{eq:Hdef}. 
   So, by  Proposition~\ref{prop:ourfirsprop3}\ref{item:ourfirstprop3:iii} we have that 
   \begin{equation*}
      ( L_D(B,Q)-w)^{-1}=( L_D(B,0)-w)^{-1} \sum_{n=0}^\infty (-\mathcal{M}_{Q} (L_D(B,Q)-w)^{-1})^n,
   \end{equation*}
   for $w\notin\e^{-\I\varphi_{\mathfrak s}}\left( {K_{r_\epsilon}(0)}\cup S_{\theta_{\mathfrak s}+\epsilon}(0)\cup -S_{\theta_{\mathfrak s}+\epsilon}(0)\right)$. 
   Since $L_{D}(B,0)$ is a discrete normal operator, we conclude that $L_{D}(B,Q)$ has compact resolvent.
\end{proof}
\begin{remark}
   Resolvent estimates for $L_{D}(B,Q)$ will be given in a forthcoming work. For instance, we will prove that, even if $|d_1|,|d_2|\neq 1$, this operator is bisectorial for some angle $\phi\in [0, \pi/2)$ and radius $r\geq 0$, see \cite[Definition 2.7]{TretterWyss2014} .   
\end{remark}
%%%}}}

\appendix
\section{Estimates in the complex plane} %%%{{{
\label{app:estimates}
In this section we prove some technical estimates for functions in the complex plane that are used in Sections~\ref{sec:boundedimaginary} and \ref{sec:sectorial}.

\begin{remark}
   Let $w\in\C\setminus\R$ and $\gamma\in\R\setminus\{\im w\}$.
   Then
   \begin{align}
      \label{eq:app:distrealaxis}
      \sup_{\tau\in \R}\frac{|\tau|}{|\tau-w|} &= \frac{|w|}{|\im w|}
      \intertext{and}
      \label{eq:app:distrealaxisshifted}
      \sup_{t\in \R + \I\gamma }\frac{|t|^2}{|t-w|^2} 
      & \le 1 + \frac{(\re w)^2}{ (\gamma - \im w)^2 } + \frac{\gamma^2}{ ( \gamma-  \im  w )^2} .
   \end{align}
\end{remark}
\begin{proof}
   In fact, if $\re w \neq 0$, then the supremum in \eqref{eq:app:distrealaxis} is a maximum and it is attained at $\tau = \frac{|z|^2}{\mu}$.
   If $\re w  = 0$, then the function on the left hand side has no maximum and
   $\sup_{\tau\in \R}\frac{|\tau|}{|\tau-w|}= \im_{|\tau|\to\infty}\frac{|\tau|}{|\tau-w|} = 1 = \frac{|w|}{|\im w|}$.

   For the proof of \eqref{eq:app:distrealaxisshifted} observe that 
   \begin{align*}
      \sup_{t\in \R + \I\gamma }\frac{|t|^2}{|t-w|^2}
       & = \sup_{\tau\in \R}\frac{|\tau + \I\gamma|^2}{|\tau + \I\gamma-w|^2} 
       = \sup_{\tau\in \R} \frac{\tau^2}{|\tau + \I\gamma-w|^2} + \frac{\gamma^2}{|\tau + \I\gamma-w|^2} 
       \\
       & \le \frac{|\I\gamma - w|^2}{ (\im(\I\gamma-w))^2 } + \frac{\gamma^2}{ ( \im( \I\gamma-w ) )^2} 
   \end{align*}
   where we used \eqref{eq:app:distrealaxis} to estimate the first term.
   The second term clearly attains its maximum when $\tau = \re w$.
   The claim follows if we observe that 
   $\frac{|\I\gamma - w|^2}{ (\im(\I\gamma-w))^2 } = 1 + \frac{(\re w)^2}{ (\im(\I\gamma-w))^2 }$.
\end{proof}

\begin{lemma}
   \label{lem:nolocalextrema}
   For $a,b\ge0$ and $z\in\C$ fixed, the function
   \begin{equation*}
      H_z:\C\setminus\{z\}\to\C,
      \qquad
      H_z(t) = \frac{a^2 + b^2 |t|^2}{|t-z|^2} 
   \end{equation*}
   does not have any local maximum and
   \begin{equation*}
      \lim_{|t|\to\infty }H_z(t) = b^2.
   \end{equation*}
   Consequently, for every set $U\subseteq\C$ with unbounded boundary $\partial U$, we have that
   \begin{equation*}
      \sup_{t\in U} H_z(t) = \sup_{t\in \partial U} H_z(t).
   \end{equation*}
   If $U$ is unbounded but $\partial U$ is bounded, then
   \begin{equation*}
      \sup_{t\in U} H_z(t) = \max\{ b^2, \sup_{t\in \partial U} H_z(t) \}.
   \end{equation*}
\end{lemma}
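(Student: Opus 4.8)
The plan is to establish the two pointwise facts directly and then derive the three supremum identities from them by a maximum-principle argument. For the limit, write $|t-z|^{2}=|t|^{2}-2\re(\bar t z)+|z|^{2}$ and divide the numerator and the denominator of $H_z(t)$ by $|t|^{2}$; since $|\re(\bar t z)|\le |t|\,|z|$, every term other than $b^{2}$ in the numerator and every term other than $1$ in the denominator tends to $0$ as $|t|\to\infty$, so $H_z(t)\to b^{2}$. That settles $\lim_{|t|\to\infty}H_z(t)=b^{2}$.

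For the absence of a local maximum the key step is the algebraic identity
\[
   H_z(t)=b^{2}+2b^{2}\,\re\Bigl(\frac{z}{t-z}\Bigr)+\frac{a^{2}+b^{2}|z|^{2}}{|t-z|^{2}},
   \qquad t\in\C\setminus\{z\},
\]
obtained from $|t|^{2}=|t-z|^{2}+2\re\bigl((t-z)\bar z\bigr)+|z|^{2}$ together with $\re\bigl((t-z)\bar z\bigr)/|t-z|^{2}=\re\bigl(z/(t-z)\bigr)$. The first two summands are harmonic on $\C\setminus\{z\}$ (a constant, and the real part of a function holomorphic there), while a one-line computation gives $\Delta\bigl(|t-z|^{-2}\bigr)=4|t-z|^{-4}$, whence $\Delta H_z(t)=4(a^{2}+b^{2}|z|^{2})|t-z|^{-4}$. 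If $a^{2}+b^{2}|z|^{2}>0$ this is strictly positive, so $H_z$ is a $C^{\infty}$ function with everywhere positive Laplacian and therefore has no local maximum: at an interior local maximum the Hessian is negative semidefinite, contradicting that its trace $\Delta H_z$ is positive. The only exceptional case is $a=0$ together with $bz=0$, where $H_z\equiv b^{2}$ is constant and all remaining assertions are trivial; I would dispose of it first and assume $a^{2}+b^{2}|z|^{2}>0$ afterwards.

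For the supremum identities, fix $U\subseteq\C$; in every intended application $z$ lies at positive distance from $U$, so I would assume $z\notin\overline U$, making $H_z$ continuous on $\overline U$ with $\sup_U H_z=\sup_{\overline U}H_z$ by density. Easy direction: if $\partial U\neq\emptyset$ then $\sup_{\partial U}H_z\le\sup_{\overline U}H_z$, and if $\partial U$ is unbounded then letting points of $\partial U$ go to infinity and using the limit gives $\sup_{\partial U}H_z\ge b^{2}$. Reverse direction by contradiction: if $H_z(t_0)>\max\{b^{2},\sup_{\partial U}H_z\}$ for some $t_0\in\overline U$, then $t_0\notin\partial U$, so $t_0$ lies in the open set $\mathrm{int}\,\overline U$; let $V_0$ be the component containing $t_0$ and recall $\partial V_0\subseteq\partial U$. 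If $V_0$ is bounded, $H_z$ attains its maximum over the compact set $\overline{V_0}$ at a point of value $\ge H_z(t_0)>\sup_{\partial U}H_z\ge\sup_{\partial V_0}H_z$, which must therefore lie in $V_0$, a local maximum — contradiction. If $V_0$ is unbounded, the limit $H_z\to b^{2}<H_z(t_0)$ confines $\{t\in\overline{V_0}:H_z(t)\ge H_z(t_0)\}$ to a compact set, on which $H_z$ attains a maximum lying again in $V_0$ — contradiction. Hence $\sup_U H_z\le\max\{b^{2},\sup_{\partial U}H_z\}$, and combining with the easy direction gives $\sup_U H_z=\sup_{\partial U}H_z$ when $\partial U$ is unbounded and $\sup_U H_z=\max\{b^{2},\sup_{\partial U}H_z\}$ when $U$ is unbounded with $\partial U$ bounded (the value $b^{2}$ being reached in the limit since $U$ is unbounded and $z\notin\overline U$). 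The step demanding the most care is this last one — the topological bookkeeping with unbounded components, the inclusion $\partial V_0\subseteq\partial U$, and $U$ being possibly neither open nor closed — which is why I would replace $U$ by $\overline U$ throughout; the analytic content, namely the sign of $\Delta H_z$, is elementary.
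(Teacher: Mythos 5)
Your proof is correct, but it takes a genuinely different route from the paper's. The paper argues without any harmonic analysis: after disposing of $b=0$ and $z=0$, it restricts $H_z$ to the real ray $\{rz : r\in\R\}$ and computes explicitly that this one-variable function has a single critical point which is a global minimum, and then observes that along any circle $|t-z|=\mathrm{const}$, $H_z$ is strictly monotone at every point not on that ray; together these two one-dimensional restrictions preclude a two-dimensional local maximum. Your proof instead writes $H_z = b^2 + 2b^2\re\bigl(z/(t-z)\bigr) + (a^2+b^2|z|^2)\,|t-z|^{-2}$ and computes $\Delta H_z = 4(a^2+b^2|z|^2)\,|t-z|^{-4} > 0$ (in the nondegenerate case), so $H_z$ is strictly subharmonic and hence has no interior maximum. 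Your decomposition is tidier — one computation instead of two case distinctions over directions — and it yields the two supremum identities as a direct corollary of the maximum principle, whereas the paper's proof establishes only the absence of local maxima and the limit at infinity, leaving the ``Consequently'' part implicit. Your careful topological argument (passing to $\overline U$, working component by component, using $\partial V_0\subseteq\partial U$, and appealing to the limit $b^2$ on unbounded components) is exactly what is needed to fill that gap. You also flag, correctly, that the lemma is applied with $\mathrm{dist}(z,U)>0$, and you isolate the genuinely degenerate case $a=0$ and $bz=0$ (where $H_z\equiv b^2$) more precisely than the paper's ``$z=0$ or $b=0$ are clear.''
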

\begin{proof}
   The cases when $z=0$ or $b=0$ are clear. So let us now assume that $b>0$ and $z\neq 0$.
   Let us restrict $H_z$ to $t\in\C$ with $t = rz$ for $r\in\R\setminus\{1\}$.
   Then 
   \begin{equation*}
      f(r) := H_z(r z) = \frac{ a^2 + b^2 r^2|z|^2 }{ |z|^2(r-1)^2} 
      = b^2 + \frac{ a^2 + |z|^2b^2}{|z|^2(r-1)^2} + \frac{2b^2}{r-1} .
   \end{equation*}
   It is easy to see that $f$ has only one local extremum; it is a global minimum 
   and it is attained at $r_0 = -\frac{a^2}{b^2 |z|^2}$ and $f(r_0) = \frac{b^2 a^2}{b^2|z|^2 + a^2} < b^2$.
   Therefore $H_z$ cannot have a local maximum in any $t\in\C\setminus\{z\}$ with $\arg t = \arg(\pm z)$.
   If $\arg t\neq \arg \pm z$, then we write $t= z + R\e^{\I\phi}$ with $R=|t-z|$ and appropriate $\phi\in\R$.
   Then
   $H_z(z + R\e^{\I(\phi+h)}) = \frac{a^2 + b^2|z + R\e^{\I(\phi+h)}|^2}{R^2}$ is strictly monotonic in $h$ for $h\in\R$ in a small enough neighbourhood of zero, see Figure~\ref{fig:app:nolocalextrema}.
   Therefore $H_z$ cannot have a local extremum in $t$.
   \begin{figure}
      \begin{center}
	 \begin{tikzpicture}[scale=0.6] %%%{{{
	    %% Axis
	    \draw[->] (-3,0) -- (3,0);
	    \draw[->] (0,-3) -- (0,3);

	    % coordinates of t and z
	    \coordinate (T) at (50:2);
	    \coordinate (Z) at (3,1);

	    %--Computing the distance between (T) and (Z)
	    \tikzmath{coordinate \RRR;
	    \RRR = (Z)-(T); 
	    %Computing the length of RRR = (RRRx,RRRy) from its components Cx and Cy in pt
	    \distZT = sqrt((\RRRx)^2+(\RRRy)^2); 
	    }

	    \node[above right]  at (Z) {$z$};
	    \draw [fill] (Z) circle [radius=.05];

	    \node[above=.3]  at (T) {$t$};
	    \draw [fill] (T) circle [radius=.05];

	    %% circle with radios |t| centred in 0
	    \draw [dashed] (0,0) circle [radius=2];

	    %% circle with radios |z-t| centred in z
	    %% \draw (Z) circle (\distZT pt);

	    \draw[red]
	    % radius=\distZT pt, initial angle=-10, final angle=350
	    ($(Z) + (-10:-\distZT pt)$) arc (-10:350:-\distZT pt);

	 \end{tikzpicture}
	 %%%}}}
      \end{center}
      \caption{Illustration for the proof of Lemma~\ref{lem:nolocalextrema}. 
      If $h\in\R$, then $t' = z+ R\e^{\I(\phi+h)}$ varies on the red circle leaving $|z-t'|$ constant while $|t'|$ is strictly monotonic for $h$ in a neighbourhood of $0$.}
      \label{fig:app:nolocalextrema}
   \end{figure}
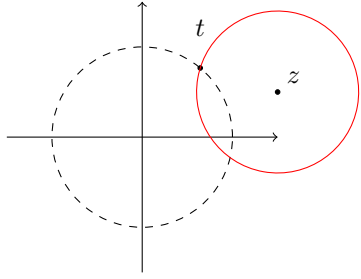
\end{proof}

Our next lemma gives bounds for the supremum of $H_z$ when restricted to a strip around the real axis or a (bi)sector centred in $0$. 
It is used in Proposition~\ref{prop:bddstrip}, Proposition~\ref{prop:bisectorstrip} and Theorem~\ref{thm:bisecgap} respectively.
Recall that for $\beta\in\R$ and $\theta\in [0, \pi/2)$ we define the closed sector 
$\sector{\theta}{\beta} = \{z\in\C : |\arg(z-\beta)| \le \theta \} \cup \{\beta\}$.

\begin{lemma}
   \label{lem:app:Hest} %%%{{{
   Let $a,b\ge 0$.
   Let $H_z(t) = \frac{a^2 + b^2 |t|^2}{ |t-z|^2 } $ be the function from Lemma~\ref{lem:nolocalextrema}.
   \begin{enumerate}[label={\upshape(\roman*)}]
      \item
      \label{item:lem:app:distanceline}
      Let $\gamma\in\R$ and let $L = \{ \tau + \I \gamma : \tau\in\R\}$.
      Then for $z \in\C\setminus L$ we have that 
      \begin{align}
	 \label{eq:distanceline}
	 \sup_{t\in L} H_z(t)
	 &\le 
	 b^2 + \frac{b^2 (\re z)^2}{(\im z - \gamma)^2} + \frac{a^2 + b^2\gamma^2}{(\im z - \gamma)^2}
	 \\
	 \nonumber
	 &= \frac{b^2|z-\I\gamma|^2 + a^2 + b^2\gamma^2}{ (\im(z - \I\gamma))^2}.
      \end{align}

      \item
      \label{item:lem:app:distancestrip}
      Let $\gamma_1 < \gamma_2$ and let 
      $\strip_{\gamma_1,\gamma_2} 
      = \{ \tau + \I \sigma : \tau\in\R,\, \gamma_1 \le \sigma\le \gamma_2\}$ 
      be the strip parallel to the real axis.
      Then for $z\in\C\setminus \strip_{\gamma_1,\gamma_2}$ we have that 
      \begin{equation}
	 \label{eq:distancestrip}
	 \sup_{t\in \strip_{\gamma_1,\gamma_2}} H_z(t)
	 \le 
	 b^2 + 
	 \max\left\{ 
	 \frac{b^2 (\re z)^2}{(\im z - \gamma_1)^2} + \frac{a^2 + b^2\gamma_1^2}{(\im z - \gamma_1)^2},\ 
	 \frac{b^2 (\re z)^2}{(\im z - \gamma_2)^2} + \frac{a^2 + b^2\gamma_2^2}{(\im z - \gamma_2)^2}
	 \right\}.
      \end{equation}

      \item
      \label{item:lem:app:Hsectorb}
      Let $\theta\in [0, \pi/2)$, $\beta\in\R$ and let $z\in \C \setminus \sector{\theta}{\beta}$.
      Then either $\arg(z-\beta)\in (\theta, \theta+\pi)$ or $\arg(z-\beta)\in (-\theta-\pi, -\theta)$ and
      \begin{multline}
	 \label{eq:Hsectorb}
	 \sup_{t\in \sector{\theta}{\beta}}  H_z(t)
	 \\
	 \le b^2 + 
	 \begin{cases}
	    b^2\, \frac{ (\re( \e^{-\I\theta}(z-\beta)) )^2 }{ ( \im (\e^{-\I\theta} (z-\beta)) )^2 }
	    + \frac{a^2 + b^2\beta^2}{ ( \im (\e^{-\I\theta} (z-\beta)) )^2 }
	    &\qquad \text{if } \arg(z-\beta)\in(\theta, \theta+\pi),
	    \\[1ex]
	    b^2\, \frac{ (\re( \e^{\I\theta}(z-\beta)) )^2 }{ ( \im (\e^{\I\theta} (z-\beta)) )^2 }
	    + \frac{a^2 + b^2\beta^2}{ ( \im (\e^{\I\theta} (z-\beta)) )^2 }
	    &\qquad \text{if } \arg(z-\beta)\in(-\theta-\pi, -\theta).
	 \end{cases}
      \end{multline}

      \item
      \label{item:lem:app:Hsectora}
      Let $\theta\in [0, \pi/2)$, $\alpha\in\R$ and let $z\in \C \setminus -\sector{\theta}{-\alpha}$.
      Then either $\arg(z-\alpha)\in (-\theta, -\theta+\pi)$ or $\arg(z-\alpha)\in (\theta-\pi, \theta)$ and
      \begin{multline}
	 \label{eq:Hsectora}
	 \sup_{t\in -\sector{\theta}{-\alpha}} H_z(t)
	 \le b^2 +
	 \\
	 \begin{cases}
	    b^2\, \frac{ (\re( \e^{\I\theta}(z-\alpha)) )^2 }{ ( \im (\e^{\I\theta} (z-\alpha)) )^2 }
	    + \frac{a^2 + b^2\alpha^2}{ ( \im (\e^{\I\theta} (z-\alpha)) )^2 }
	    &\qquad \text{if } \arg(z-\alpha)\in(-\theta, -\theta+\pi),
	    \\[2ex]
	    b^2\, \frac{ (\re( \e^{-\I\theta}(z-\alpha)) )^2 }{ ( \im (\e^{-\I\theta} (z-\alpha)) )^2 }
	    + \frac{a^2 + b^2\alpha^2}{ ( \im (\e^{-\I\theta} (z-\alpha)) )^2 }
	    &\qquad \text{if } \arg(z-\alpha)\in(\theta, \theta-\pi).
	 \end{cases}
      \end{multline}

   \end{enumerate}
\end{lemma}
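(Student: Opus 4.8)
The plan is to reduce everything to the single observation from Lemma~\ref{lem:nolocalextrema}: since $H_z$ has no local maximum on $\C\setminus\{z\}$ and tends to $b^2$ at infinity, the supremum over any closed set $U$ with $z\notin U$ equals $\max\{b^2,\ \sup_{t\in\partial U}H_z(t)\}$, and in fact $\sup_{t\in\partial U}H_z(t)$ alone if $\partial U$ is unbounded. For all four items the relevant set is closed with unbounded boundary, so it suffices to bound $H_z$ on the boundary line, and the $b^2$ term appearing on the right-hand sides is precisely the ``value at infinity'' which dominates along the boundary line as well. Concretely, for \ref{item:lem:app:distanceline} the boundary is the line $L=\R+\I\gamma$, for \ref{item:lem:app:distancestrip} it is the pair of lines $\R+\I\gamma_1$ and $\R+\I\gamma_2$, and for \ref{item:lem:app:Hsectorb}, \ref{item:lem:app:Hsectora} it is the pair of rays bounding the (rotated, shifted) sector, which after the substitution $t\mapsto \e^{\pm\I\theta}(t-\beta)$ become two half-lines parallel to the real axis.

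First I would prove \ref{item:lem:app:distanceline}. Parametrise $t=\tau+\I\gamma$, $\tau\in\R$, and write $z=\mu+\I\nu$ with $\nu\neq\gamma$. Then
\begin{equation*}
   H_z(t)=\frac{a^2+b^2(\tau^2+\gamma^2)}{(\tau-\mu)^2+(\nu-\gamma)^2}
   \le \frac{b^2\tau^2}{(\tau-\mu)^2+(\nu-\gamma)^2}+\frac{a^2+b^2\gamma^2}{(\nu-\gamma)^2}.
\end{equation*}
For the first summand I use the elementary bound $\frac{\tau^2}{(\tau-\mu)^2+(\nu-\gamma)^2}\le 1+\frac{\mu^2}{(\nu-\gamma)^2}$, which is exactly \eqref{eq:app:distrealaxisshifted} applied to the function $|t|^2/|t-w|^2$ with $w=z$ restricted to its real-part contribution; equivalently it follows by writing $\tau^2=((\tau-\mu)+\mu)^2$, expanding, and maximising $\frac{(\tau-\mu)\mu}{(\tau-\mu)^2+(\nu-\gamma)^2}$ via the AM--GM estimate $2(\tau-\mu)|\nu-\gamma|\le(\tau-\mu)^2+(\nu-\gamma)^2$. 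Collecting terms gives \eqref{eq:distanceline}; the alternative closed form is just algebra using $|z-\I\gamma|^2=\mu^2+(\nu-\gamma)^2$. Item \ref{item:lem:app:distancestrip} then follows immediately: the strip's boundary is $L_{\gamma_1}\cup L_{\gamma_2}$, so $\sup_{\strip_{\gamma_1,\gamma_2}}H_z=\max\{\sup_{L_{\gamma_1}}H_z,\sup_{L_{\gamma_2}}H_z\}$ (the $b^2$ from infinity is already subsumed in each term), and one applies \eqref{eq:distanceline} to each line.

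For \ref{item:lem:app:Hsectorb} I would first note that $z\notin\sector{\theta}{\beta}$ forces $\arg(z-\beta)\in(\theta,\theta+\pi)\cup(-\theta-\pi,-\theta)$, and that $\partial\sector{\theta}{\beta}$ consists of the two rays $\beta+\e^{\pm\I\theta}[0,\infty)$. The key reduction is the change of variable $s=\e^{-\I\theta}(t-\beta)$ (respectively $s=\e^{\I\theta}(t-\beta)$), which maps the relevant boundary ray to the nonnegative real axis and, writing $w=\e^{-\I\theta}(z-\beta)$, turns $H_z(t)$ into $\frac{a^2+b^2|s+\beta\e^{\I\theta}|^2}{|s-w|^2}$ with $s\ge 0$; since the far ray lies even deeper inside the half-plane $\{\im(\e^{-\I\theta}(\cdot-\beta))<0\}$ one checks the relevant boundary ray is the one that can come closest to $z$. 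Extending the supremum from the ray $s\ge 0$ to the whole line $s\in\R$ only increases it, and then the argument is literally the computation of \ref{item:lem:app:distanceline} with ``$\gamma$'' replaced by the imaginary part of $-\beta\e^{\I\theta}$, after expanding $|s+\beta\e^{\I\theta}|^2 = s^2 + 2\beta s\cos\theta + \beta^2$ and bounding the cross term by AM--GM against $(\im w)^2$; the $b^2\beta^2$ and $b^2(\re w)^2/(\im w)^2$ terms emerge exactly as in \eqref{eq:distanceline}. Item \ref{item:lem:app:Hsectora} is obtained from \ref{item:lem:app:Hsectorb} by the reflection $t\mapsto -t$, $z\mapsto -z$, $\beta\mapsto -\alpha$, which interchanges the roles of $\e^{\I\theta}$ and $\e^{-\I\theta}$ and leaves $H$ invariant. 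The main obstacle I anticipate is purely bookkeeping: correctly identifying, in each of the two angular sub-cases, which boundary ray realises the supremum and verifying that passing from the ray to the full line is harmless — once the geometry is pinned down, every inequality is a one-line AM--GM estimate and there is no genuine analytic difficulty.
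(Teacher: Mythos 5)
Your treatment of parts (i) and (ii) is correct and is essentially the paper's own argument: reduce to the (unbounded) boundary via Lemma~\ref{lem:nolocalextrema}, split the numerator, apply \eqref{eq:app:distrealaxis} (equivalently your perfect-square estimate) to the $b^2\tau^2$ piece, bound the remainder by the distance from $z$ to the line, and for (ii) take the maximum over the two boundary lines.

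For parts (iii) and (iv), however, there is a genuine gap, and the stated conclusion itself is the problem. After the rotation $s=\e^{-\I\theta}(t-\beta)$ the numerator is $a^2+b^2|s+\e^{-\I\theta}\beta|^2=a^2+b^2\bigl(s^2+2\beta s\cos\theta+\beta^2\bigr)$; to land on the constants $b^2\beta^2$ and $b^2(\re(\e^{-\I\theta}(z-\beta)))^2$ of \eqref{eq:Hsectorb} one would have to discard the cross term, i.e.\ use $|s+\e^{-\I\theta}\beta|^2\le s^2+\beta^2$, which requires $s\beta\cos\theta\le0$ and therefore fails on half of the boundary line as soon as $\beta\neq 0$. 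Your proposed ``AM--GM against $(\im w)^2$'' cannot absorb this term, because \eqref{eq:Hsectorb} is actually false as stated: with $\theta=0$, $\beta=1$, $a=0$, $b=1$, $z=2+\I$ one has $\sector{0}{1}=[1,\infty)$, $z\notin\sector{0}{1}$, $\arg(z-\beta)=\pi/4\in(\theta,\theta+\pi)$, the right-hand side of \eqref{eq:Hsectorb} equals $1+\frac{1+0+1}{1}=3$, yet $H_z(5/2)=\frac{(5/2)^2}{(1/2)^2+1}=5>3$. (The paper's own proof of (iii) makes the same unjustified step, passing from $a^2+b^2|r+\e^{-\I\theta}\beta|^2$ to $b^2|r|^2+a^2+b^2\beta^2$.) If you instead substitute $s'=s+\beta\cos\theta$, you get $|s+\e^{-\I\theta}\beta|^2={s'}^2+\beta^2\sin^2\theta$ exactly, with no cross term to discard; running the part~(i) computation over $s'\in\R$ against $w'=\e^{-\I\theta}(z-\beta)+\beta\cos\theta$, whose real part is $\re(\e^{-\I\theta}z)$ and whose imaginary part is $\im(\e^{-\I\theta}(z-\beta))$, produces precisely \eqref{eq:Hsectorb:alt}, with $\beta^2\sin^2\theta$ in place of $\beta^2$ and $\re(\e^{-\I\theta}z)$ in place of $\re(\e^{-\I\theta}(z-\beta))$. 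You should therefore aim for \eqref{eq:Hsectorb:alt} and \eqref{eq:Hsectora:alt} from Lemma~\ref{lem:app:Hestalt}, not for \eqref{eq:Hsectorb} and \eqref{eq:Hsectora} as written, or else add an explicit sign hypothesis on $\beta$.
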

%%%}}}

\begin{proof} %%%{{{
   We know from Lemma~\ref{lem:nolocalextrema} that the supremum of $H_z$ on the given sets is equal to its supremum on their respective boundaries.
   Therefore, in 
   \eqref{eq:distanceline} -- \eqref{eq:Hsectora}
   we can replace the supremum by the supremum over the boundary of the respective sets.
   Let $\mu = \re z$ and $\nu = \im z$.

   \ref{item:lem:app:distanceline}
   For every $t=\tau + \I\gamma\in L$ we obtain with the help of \eqref{eq:app:distrealaxis} for the first summand
   \begin{align*}
      H_z(t) 
      &= \frac{a^2 + b^2|t|^2}{|t-z|^2} 
      = \frac{b^2\tau^2}{|\tau-(z - \I\gamma)|^2} + \frac{a^2 + b^2\gamma^2}{|\tau-(z - \I\gamma)|^2}
      \\ &\le \frac{b^2 |z - \I\gamma|^2}{|\im (z - \I\gamma)|^2} + \frac{ a ^2 + b^2 \gamma^2}{|\im(z - \I\gamma)|^2}
      = b^2 + \frac{b^2 (\re z)^2}{(\im z - \gamma)^2} + \frac{a^2 + b^2\gamma^2}{(\im z - \gamma)^2}.
   \end{align*}

   \ref{item:lem:app:distancestrip}
   It follows from \eqref{eq:distanceline} that
   \begin{align*}
      \sup_{t\in \strip_{\gamma_1, \gamma_2}} H_z(t)
      = \max_{\gamma \in\{\gamma_1, \gamma_2\}}\
      \sup_{t\in \R + \I\gamma} H_z(t)
      & \le 
      b^2 + \max_{\gamma \in\{\gamma_1, \gamma_2\}}\
      \frac{b^2 (\re z)^2}{(\im z - \gamma)^2} + \frac{a^2 + b^2\gamma^2}{(\im z - \gamma)^2}.
   \end{align*}

   \ref{item:lem:app:Hsectorb}
   Suppose that $\arg(z-\beta)\in(\theta,\theta+\pi)$.
   Then $H_z$ is well defined in the half-plane $P = \beta + \e^{\I\theta} \{ \zeta \in\C : \im \zeta \le 0\}$  and $\sector{\theta}{\beta}\subseteq P$.
   See Figure~\ref{fig:rotatedhyperbolas}.
   Therefore
   \begin{align}
      \nonumber
      \sup_{t\in \sector{\theta}{\beta} } H_z(t)
      & \le \sup_{t\in P } H_z(t)
      = \sup_{t\in \partial P } H_z(t)
      = \sup_{ r\in\R } H_z(\beta + r\e^{\I\theta})
      \\
      \nonumber
      & = \sup_{ r\in\R } \frac{ a^2 + b^2|\beta +  r\e^{\I\theta}|^2 }{ | \beta + r\e^{\I\theta} - z |^2 }
      = \sup_{ r\in\R } \frac{ a^2 + b^2 | r + \e^{-\I\theta}\beta |^2 }{ | r + \e^{-\I\theta} (\beta - z) |^2 }
      \\
      \nonumber
      & \le \sup_{ r\in\R } 
      \frac{ b^2 |r|^2 }{ | r + \e^{-\I\theta} (\beta - z) |^2 }
      + \frac{ a^2 + b^2 \beta^2 }{ | r + \e^{-\I\theta} (\beta - z) |^2 }
      \\
      \label{eq:halfplaneestimate}
      & \le b^2 + b^2\, \frac{ (\re( \e^{-\I\theta}(\beta-z)) )^2 }{ ( \im (\e^{-\I\theta} (\beta - z)) )^2 }
      + \frac{a^2 + b^2\beta^2}{ ( \im (\e^{-\I\theta} (\beta - z)) )^2 }
   \end{align}
   where we used \eqref{eq:app:distrealaxis} in the last inequality.
   This shows the first line in \eqref{eq:Hsectorb}.
   If $\arg(z-\beta)\in(-\theta-\pi, -\theta)$, then $H_z$ is well defined in the half-plane 
   $Q = \beta + \e^{-\I\theta} \{ \zeta\in\C : \im \zeta \ge 0\}$ and $\sector{\theta}{\beta}\subseteq Q$
   and an analogous calculation shows the second line in \eqref{eq:Hsectorb}.

   \smallskip

   \ref{item:lem:app:Hsectora}
   Note that 
   \begin{align*}
      \sup_{t\in-\sector{\theta}{-\alpha}} \frac{|t|^2}{|t-z|^2} 
      = \sup_{t\in-\sector{\theta}{-\alpha}} \frac{|-t|^2}{|-t+z|^2} 
      = \sup_{t\in \sector{\theta}{\alpha}} \frac{|t|^2}{|t+z|^2} .
   \end{align*}
   Now the claim follows from \ref{item:lem:app:Hsectorb} applied to $\beta = -\alpha$ and $-z$ instead of $z$.
\end{proof}
%%%}}}

The next lemma provides alternative estimates for $H_z$.
Instead of the half-plane $P= \beta + \e^{\I\theta} \{ \zeta\in\C:  \im \zeta \le 0 \}$ from the proof of Lemma~\ref{lem:app:Hest}, we use the half-plane $P' = \e^{\I\theta} \{  \zeta\in\C: \im \zeta \le \beta\sin\theta \}$.

\begin{lemma}
   \label{lem:app:Hestalt} %%%{{{
   Let $a,b\ge 0$.
   Let $H_z(t) = \frac{a^2 + b^2 |t|^2}{ |t-z|^2 } $ be the function from Lemma~\ref{lem:nolocalextrema}.
   \begin{enumerate}[label={\upshape(\roman*')}]
      \setcounter{enumi}{2}

      \item
      \label{item:lem:app:distsectorb:alt}
      Let $\theta \in [0,\pi/2)$, $\beta\in\R$ and let $z\in \C \setminus \sector{\theta}{\beta}$.
      Then
      \begin{multline}
	 \label{eq:Hsectorb:alt}
	 \tag{\ref{eq:Hsectorb}'}
	 \sup_{t\in \sector{\theta}{\beta}} H_z(t)
	 \\
	 \le b^2 + 
	 \begin{cases}
	    b^2 \frac{ (\re( \e^{-\I\theta}z) )^2 }{ (\im (  \e^{-\I\theta}(z - \beta )))^2 } 
	    + \frac{ a^2 + b^2 \beta^2\sin^2\theta}{ (\im (  \e^{-\I\theta}(z - \beta )))^2 } 
	    &\qquad \text{if } \arg(z-\beta)\in(\theta, \theta+\pi).
	    \\[2ex]
	    b^2 \frac{ (\re( \e^{\I\theta}z) )^2 }{ (\im (  \e^{\I\theta}(z - \beta )))^2 } 
	    + \frac{ a^2 + b^2 \beta^2\sin^2\theta}{ (\im (  \e^{\I\theta}(z - \beta )))^2 } 
	    &\qquad \text{if } \arg(z-\beta)\in(-\pi-\theta, -\theta).
	 \end{cases}
      \end{multline}

      \item
      \label{item:lem:app:distsectora:alt}
      Let $\theta \in [0,\pi/2)$, $\alpha\in\R$ and let $z\in \C \setminus -\sector{\theta}{-\alpha}$.
      Then
      \begin{multline}
	 \label{eq:Hsectora:alt}
	 \tag{\ref{eq:Hsectora}'}
	 \sup_{t\in \sector{\theta}{\alpha}} H_z(t)
	 \\
	 \le b^2 + 
	 \begin{cases}
	    b^2 \frac{ (\re( \e^{-\I\theta}z) )^2 }{ (\im (  \e^{-\I\theta}(z - \alpha )))^2 } 
	    + \frac{ a^2 + b^2 \alpha^2\sin^2\theta}{ (\im (  \e^{-\I\theta}(z - \alpha )))^2 } 
	    &\qquad \text{if } \arg(z-\alpha)\in(-\theta, -\theta+\pi).
	    \\[2ex]
	    b^2 \frac{ (\re( \e^{\I\theta}z) )^2 }{ (\im (  \e^{\I\theta}(z - \alpha )))^2 } 
	    + \frac{ a^2 + b^2 \alpha^2\sin^2\theta}{ (\im (  \e^{\I\theta}(z - \alpha )))^2 } 
	    &\qquad \text{if } \arg(z-\alpha)\in(\theta, \theta-\pi).
	 \end{cases}
      \end{multline}

   \end{enumerate}
\end{lemma}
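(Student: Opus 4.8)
The plan is to run the proof of Lemma~\ref{lem:app:Hest}\ref{item:lem:app:Hsectorb} almost verbatim, the only change being the way the bounding line of the relevant half‑plane is parametrised: instead of the ``crude'' bound for the numerator $a^2+b^2|t|^2$ used there, we keep an exact splitting. First I would reduce the supremum over the sector to one over a line. As in Lemma~\ref{lem:app:Hest}, when $\arg(z-\beta)\in(\theta,\theta+\pi)$ the half‑plane $P'$ from the preamble contains $\sector{\theta}{\beta}$ and does not contain $z$, so $H_z$ is finite on $P'$; since $\partial P'$ is an (unbounded) line, Lemma~\ref{lem:nolocalextrema} gives $\sup_{t\in\sector{\theta}{\beta}}H_z(t)\le\sup_{t\in\partial P'}H_z(t)$. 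For $\arg(z-\beta)\in(-\pi-\theta,-\theta)$ one uses the mirror half‑plane, bounded by the line through $\beta$ in direction $\e^{-\I\theta}$.

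Next I would parametrise $\partial P'$ by $t=\e^{\I\theta}(s+\I\delta)$, $s\in\R$, where $\delta=\im(\e^{-\I\theta}\beta)=-\beta\sin\theta$ is the offset for which the line passes through $\beta$. The point of this choice is that $|t|^2=s^2+\delta^2=s^2+\beta^2\sin^2\theta$ has no cross term (this is exactly where the argument improves on Lemma~\ref{lem:app:Hest}, which parametrises the same line through the vertex $\beta$ and so picks up a term $2r\beta\cos\theta$), while $|t-z|^2=|s-w|^2$ with $w:=\e^{-\I\theta}z-\I\delta$, so that $\re w=\re(\e^{-\I\theta}z)$ and $\im w=\im(\e^{-\I\theta}z)-\delta=\im(\e^{-\I\theta}(z-\beta))\neq 0$ (the last because $z\notin P'$). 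Splitting
\begin{equation*}
   H_z(t)=\frac{b^2s^2+\big(a^2+b^2\beta^2\sin^2\theta\big)}{|s-w|^2}
   \le\frac{b^2s^2}{|s-w|^2}+\frac{a^2+b^2\beta^2\sin^2\theta}{|s-w|^2},
\end{equation*}
taking $\sup_{s\in\R}$, applying \eqref{eq:app:distrealaxis} to the first term (whose supremum is $\frac{b^2|w|^2}{(\im w)^2}=b^2+b^2\frac{(\re w)^2}{(\im w)^2}$) and bounding the second by $\frac{a^2+b^2\beta^2\sin^2\theta}{(\im w)^2}$, and finally inserting the values of $\re w$ and $\im w$, gives precisely the first line of \eqref{eq:Hsectorb:alt}; the case $\arg(z-\beta)\in(-\pi-\theta,-\theta)$ is the same with $\e^{-\I\theta}$ in place of $\e^{\I\theta}$ and yields the second line. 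Part \ref{item:lem:app:distsectora:alt} then follows from \ref{item:lem:app:distsectorb:alt} by the substitution $t\mapsto-t$, $z\mapsto-z$, using $\frac{|t|^2}{|t-z|^2}=\frac{|-t|^2}{|-t+z|^2}$ together with the fact that negation carries the sector at $-\alpha$ onto the sector at $\alpha$, exactly as \ref{item:lem:app:Hsectora} is deduced from \ref{item:lem:app:Hsectorb} in Lemma~\ref{lem:app:Hest}.

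I expect no real obstacle here; the only delicate part is the bookkeeping in the rotated coordinate $\e^{-\I\theta}t$ — in particular choosing the offset $\delta$ so that $\partial P'$ genuinely passes through $\beta$ (so that $\sector{\theta}{\beta}$ lies on the intended side), checking that $z\notin P'$ holds precisely in each of the two argument ranges, and verifying the cross‑term‑free identity $|t|^2=s^2+\beta^2\sin^2\theta$. These computations are elementary but sign‑sensitive, which is why it is convenient to isolate them in this lemma.
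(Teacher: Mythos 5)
Your proposal is correct and follows essentially the same route as the paper's own proof: you reduce to the supremum over $\partial P'$ (the same half‑plane used in the paper), parametrise the boundary line as $t=\e^{\I\theta}(s-\I\beta\sin\theta)$ so that $|t|^2=s^2+\beta^2\sin^2\theta$ is cross‑term free, and then apply \eqref{eq:app:distrealaxis} to the $b^2s^2$ contribution and the elementary bound $\inf_s|s-w|=|\im w|$ to the constant part; part (iv') follows by $t\mapsto -t$, $z\mapsto -z$, exactly as in the paper. The only cosmetic difference is that you name $w=\e^{-\I\theta}z-\I\delta$ and display the split explicitly, whereas the paper compresses this into one line.
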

%%%}}}
\begin{proof} %%%{{{
   We only  show \eqref{eq:Hsectorb:alt}.
   Its proof is similar to the one of \eqref{eq:Hsectorb}.

   Suppose that $\arg(z-\beta)\in(\theta,\theta+\pi)$.
   Then $H_z$ is well defined in the half-plane $P' = \e^{\I\theta} \{ \zeta\in\C : \im \zeta \le -\beta\sin\theta\}$  and $\sector{\theta}{\beta}\subseteq P'$.
   Therefore
   \begin{align*}
      \sup_{t\in \sector{\theta}{\beta} } H_z(t)
      & \le \sup_{t\in P' } H_z(t)
      = \sup_{t\in \partial P' } H_z(t)
      = \sup_{ \tau\in\R } H_z(\e^{\I\theta}(\tau-\I\beta\sin\theta))
      \\
      & = \sup_{ \tau\in\R} \frac{ a^2 + b^2 | \tau - \I\beta\sin\theta|^2 }{ | \tau - \I\beta\sin\theta - \e^{-\I\theta} z |^2 }
      \\
      & \le b^2 + \frac{ b^2 (\re( \e^{-\I\theta})z )^2 }{ ( \beta\sin\theta + \im( \e^{-\I\theta}z ) )^2 } 
      + \frac{ a^2 + b^2 \beta^2\sin^2\theta}{ ( \beta\sin\theta + \im( \e^{-\I\theta}z ) )^2 } 
      \\
      & = b^2 + b^2 \frac{ (\re( \e^{-\I\theta})z )^2 }{ (\im (  \e^{\-I\theta}(z - \beta )))^2 } 
      + \frac{ a^2 + b^2 \beta^2\sin^2\theta}{ (\im (  \e^{\-I\theta}(z - \beta )))^2 } 
   \end{align*}
   where we used \eqref{eq:app:distrealaxis} in the last inequality.
   See Figure~\ref{fig:rotatedhyperbolas:alt}.
   If $\arg(z-\beta)\in(-\theta-\pi, -\theta)$, then $H_z$ is well defined in the half-plane 
   $Q' = \e^{-\I\theta} \{ \zeta\in\C : \im \zeta \ge \beta\sin\theta\}$  and $\sector{\theta}{\beta}\subseteq Q'$
   and we can use an analogous calculation to estimate $H_z(t)$ for such $z$.
   \smallskip
\end{proof}
%%%}}}

The situation in the next lemma is as in Lemma~\ref{lem:app:Hest}~\ref{item:lem:app:distanceline} but for the special case when $\re(z) =0$.
\begin{lemma}
   \label{lem:app:lemma0}
   Let $a,b\ge 0$ and $\mu > 0$.
   Let $H_z(t) = \frac{a^2 + b^2 |t|^2}{ |t-z|^2 } $ be the function from Lemma~\ref{lem:nolocalextrema}.
   \begin{enumerate}[label={\upshape(\roman*)}]

      \item
      For all $x \neq \mu \in\R$,
      \begin{equation}
	 \label{eq:supline}
	 \sup_{t\in x+\I\R} H_\mu(t)
	 =\max\left\{b^2,\ \frac{ a^2+b^2 x^2 }{ (\mu- x)^2} \right\}.
      \end{equation}

      \item
      Let $\alpha, \beta \in \R$ with $\alpha<\beta$ and let $a, b\geq 0$. 
      Then, for all $\mu\in (\alpha, \beta)$,
      \begin{equation}
	 \label{eq:supstrip}
	 \sup_{t\in\C\setminus\big((\alpha,\beta)+\I \R\big)} H_\mu(t)
	 =\max\left\{b^2,\ \frac{ a^2+b^2\alpha^2 }{ (\mu-\alpha)^2},\ \frac{a^2+b^2\beta^2}{(\beta-\mu)^2}\right\}.
      \end{equation}

   \end{enumerate}
\end{lemma}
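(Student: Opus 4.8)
The plan is to reduce both statements to a single one-variable monotonicity computation and then to quote Lemma~\ref{lem:nolocalextrema}.

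First I would prove \eqref{eq:supline}. Parametrising the vertical line by $t = x + \I s$ with $s \in \R$, one has $|t|^2 = x^2 + s^2$ and $|t - \mu|^2 = (x-\mu)^2 + s^2$, so that $H_\mu(x+\I s) = \bigl( (a^2 + b^2 x^2) + b^2 s^2 \bigr) / \bigl( (x-\mu)^2 + s^2 \bigr)$. Substituting $u = s^2 \in [0,\infty)$, $A := a^2 + b^2 x^2 \ge 0$ and $D := (x-\mu)^2 > 0$, this becomes the linear fractional function $g(u) = (A + b^2 u)/(D + u)$, whose derivative $g'(u) = (b^2 D - A)/(D+u)^2$ has constant sign. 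Hence $g$ is monotone on $[0,\infty)$: if $b^2 D \ge A$ it is non-decreasing with $\sup_{u\ge 0} g(u) = \lim_{u\to\infty} g(u) = b^2$ (and then $A/D \le b^2$), while if $b^2 D < A$ it is decreasing with $\sup_{u\ge 0} g(u) = g(0) = A/D$ (and then $A/D > b^2$). In both regimes $\sup_{u\ge 0} g(u) = \max\{b^2,\, A/D\}$, which is exactly the right-hand side of \eqref{eq:supline}.

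For \eqref{eq:supstrip} I would observe that $\C \setminus \bigl( (\alpha,\beta) + \I\R \bigr) = \{\re t \le \alpha\} \cup \{\re t \ge \beta\}$ is unbounded, that its boundary is the union of the two vertical lines $\{\re t = \alpha\}$ and $\{\re t = \beta\}$, which is unbounded, and that $\mu \in (\alpha,\beta)$ lies neither in the set nor on its boundary, so that $H_\mu$ is defined throughout. By Lemma~\ref{lem:nolocalextrema} the supremum of $H_\mu$ over this set equals its supremum over the two boundary lines, and applying \eqref{eq:supline} with $x = \alpha$ and with $x = \beta$ (both distinct from $\mu$) produces the threefold maximum $\max\{b^2,\, (a^2+b^2\alpha^2)/(\mu-\alpha)^2,\, (a^2+b^2\beta^2)/(\beta-\mu)^2\}$.

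There is no serious obstacle here; both parts are elementary. The only points that need a moment's care are the sign bookkeeping in the case distinction for \eqref{eq:supline} (verifying that $A/D \le b^2$ precisely when $g$ is non-decreasing, so that the single closed form $\max\{b^2, A/D\}$ covers both cases), and checking that the hypotheses of Lemma~\ref{lem:nolocalextrema} genuinely apply in part (ii), in particular that $\mu$ is excluded from the relevant set so that the function $H_\mu$ is defined on it.
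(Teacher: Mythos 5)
Your proof is correct and follows essentially the same route as the paper: both parts reduce to observing that on a vertical line the function $H_\mu$ is monotone in $\sigma^2$ so its supremum is attained either at $\sigma=0$ or as $|\sigma|\to\infty$, and part (ii) is obtained from part (i) by invoking Lemma~\ref{lem:nolocalextrema} to pass to the boundary lines. Your derivative computation for the linear fractional $g(u)$ is just a rephrasing of the paper's decomposition $H_\mu(x+\I\sigma)=b^2+\frac{a^2+b^2[x^2-(x-\mu)^2]}{(x-\mu)^2+\sigma^2}$.
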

\begin{proof} %%%{{{
   For $t=x + \I\sigma$ we obtain
   \begin{equation*}
      H_\mu(t)
      = H_\mu(x + \I\sigma)
      = \frac{ a^2 + b^2 x^2 + b^2 \sigma^2 }{ (x-\mu)^2 + \sigma^2} 
      = b^2 + \frac{ a^2 + b^2 [ x^2 - (x-\mu)^2] }{ (x-\mu)^2 + \sigma^2}.
   \end{equation*}
   If $a^2 + b^2 [ x^2 - (x-\mu)^2] \ge 0$, then the function has a global maximum when $\sigma=0$, that is in $t = x$;
   otherwise it has no global maximum and its supremum is $b^2$ which proves \eqref{eq:supline}.
   Note that $H_\mu$ has no local maxima by Lemma~\ref{lem:nolocalextrema}.
   \begin{equation*}
      \sup_{t\in\C\setminus\big((\alpha,\beta)+\I \R\big)}\frac{ a^2+b^2|t|^2 }{ |t-\mu|^2 }
      = \sup_{t\in\{\alpha, \beta\} +\I \R} H_\mu(t)
      = \max_{x \in \{\alpha, \beta\} } \sup_{\sigma\in\R} H_\mu( x + \I\sigma).
   \end{equation*}
   Now \eqref{eq:supstrip} follows from \eqref{eq:supline}.
\end{proof}
%%%}}}

The next lemma is used in Theorem~\ref{thm:bisecgap}.
\begin{lemma}
   Let $\alpha < \beta$ and $\theta\in \left[0, \frac{\pi}{2}\right)$.
   Then, for all $z\in \C$ we obtain
   \begin{alignat}{4}
      \label{eq:Hsector2b}
      \sup_{t\in \sector{\theta}{\beta}} H_z(t)
      \le 
      \max\left\{b^2,\ \frac{{a^2+b^2\beta^2}}{(\beta- \re z)^2} \right\} + \tan^2\theta.
      \qquad
      &&\text{ if }  \re z < \beta,
      \\
      \label{eq:Hsector2a}
      \sup_{t\in -\sector{\theta}{\alpha}} H_z(t)
      \le 
      \max\left\{b^2,\ \frac{{a^2+b^2\alpha^2}}{( \re z-\alpha)^2} \right\} + \tan^2\theta.
      \qquad
      &&\text{ if }  \re z > \alpha.
   \end{alignat}
\end{lemma}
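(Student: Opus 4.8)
The plan is to prove \eqref{eq:Hsector2b} by a single direct pointwise estimate of $H_z$ on the sector $\sector{\theta}{\beta}$, and then to deduce \eqref{eq:Hsector2a} from it by the reflection $t\mapsto -t$. Write $z=\mu+\I\nu$ with $\mu=\re z<\beta$, and let $t=\xi+\I\eta\in\sector{\theta}{\beta}$. The defining condition $|\arg(t-\beta)|\le\theta$ (with $\theta<\pi/2$) translates, after writing $t-\beta=\rho\e^{\I\phi}$ with $|\phi|\le\theta$, into the two facts $\xi=\re t\ge\beta$ and $|\eta|=|\im(t-\beta)|=\re(t-\beta)\,|\tan\phi|\le(\xi-\beta)\tan\theta$. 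The first thing I would do is discard the imaginary part in the denominator, $|t-z|^2=(\xi-\mu)^2+(\eta-\nu)^2\ge(\xi-\mu)^2>0$ (positive since $\xi\ge\beta>\mu$), and bound the numerator by $a^2+b^2|t|^2=a^2+b^2\xi^2+b^2\eta^2\le a^2+b^2\xi^2+b^2(\xi-\beta)^2\tan^2\theta$.

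Combining these two estimates gives
\[
   H_z(t)\le \frac{a^2+b^2\xi^2}{(\xi-\mu)^2}+b^2\tan^2\theta\cdot\frac{(\xi-\beta)^2}{(\xi-\mu)^2}.
\]
Since $\mu<\beta\le\xi$ we have $0\le\xi-\beta\le\xi-\mu$, so the last fraction is $\le 1$; as $0\le b\le 1$, the second term is at most $\tan^2\theta$ (in fact one gets the slightly sharper $b^2\tan^2\theta$, which is the form that enters \eqref{eq:thm:bisecgap:1}). For the first term I would use the one-variable bound $\sup_{\tau\ge\beta}\frac{a^2+b^2\tau^2}{(\tau-\mu)^2}\le\max\{b^2,\frac{a^2+b^2\beta^2}{(\beta-\mu)^2}\}$, which follows at once from Lemma~\ref{lem:nolocalextrema} (passing from the half-plane $\{\re t\ge\beta\}$ to its boundary line $\beta+\I\R$) together with \eqref{eq:supline} in Lemma~\ref{lem:app:lemma0}, or, if one prefers, from an elementary sign analysis of the rational function. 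Since $\xi\ge\beta$, taking the supremum over $t\in\sector{\theta}{\beta}$ then yields \eqref{eq:Hsector2b}.

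Finally, \eqref{eq:Hsector2a} requires no extra work: using $H_z(t)=H_{-z}(-t)$ and the fact that $t\mapsto-t$ maps the sector in \eqref{eq:Hsector2a} onto $\sector{\theta}{-\alpha}$ while turning the hypothesis $\re z>\alpha$ into $\re(-z)<-\alpha$, I would apply \eqref{eq:Hsector2b} with $-z$ and $-\alpha$ in place of $z$ and $\beta$, which reproduces exactly the asserted inequality. I do not expect any genuine obstacle here; the only points needing a little care are the translation of the sector condition into $|\eta|\le(\xi-\beta)\tan\theta$ and the observation that dropping $(\eta-\nu)^2$ from the denominator still leaves a useful estimate, both of which are routine.
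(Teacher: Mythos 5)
Your proof is correct, and it takes a genuinely more direct route than the paper's. The paper's argument begins by separating the translation by $\I\nu$ from $H_z$, writing $H_z(t) = H_\mu(t-\I\nu) + b^2\frac{\sigma^2-(\sigma-\nu)^2}{|t-z|^2}$; it then invokes Lemma~\ref{lem:nolocalextrema} to replace the supremum over $\sector{\theta}{\beta}$ by the supremum over $\partial\sector{\theta}{\beta}$, parametrizes the boundary explicitly as $t = (\beta + |\sigma|\cot\theta) + \I\sigma$, and bounds the two pieces separately. You instead encode the sector geometry pointwise through $|\im(t-\beta)| \le \re(t-\beta)\tan\theta$, discard $(\eta-\nu)^2$ in the denominator, and reduce everything to a supremum over the single real variable $\xi=\re t$, noting that $(\xi-\beta)^2/(\xi-\mu)^2\le 1$ handles the $\tan^2\theta$ contribution and that the one-variable bound $\sup_{\xi\ge\beta}(a^2+b^2\xi^2)/(\xi-\mu)^2\le\max\{b^2,(a^2+b^2\beta^2)/(\beta-\mu)^2\}$ (from Lemma~\ref{lem:nolocalextrema} together with \eqref{eq:supline}, or an elementary sign analysis of the derivative) handles the rest. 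This avoids the algebraic decomposition of $H_z$, the passage to the boundary of the sector, and the boundary parametrization, while delivering the same sharper estimate with $b^2\tan^2\theta$ in place of $\tan^2\theta$ that the paper also obtains and subsequently uses in \eqref{eq:thm:bisecgap:1}. One remark on the statement rather than your proof: the set in \eqref{eq:Hsector2a} must be read as $-\sector{\theta}{-\alpha}$ (the leftward-opening sector with vertex $\alpha$), not $-\sector{\theta}{\alpha}$, for the condition $\re z>\alpha$ and the bound $\frac{a^2+b^2\alpha^2}{(\re z-\alpha)^2}$ to be consistent; your reflection argument correctly treats it this way, matching how the lemma is applied in Theorem~\ref{thm:bisecgap} and Corollary~\ref{cor:resolventestimate2}.
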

\begin{proof}
   We only prove \eqref{eq:Hsector2b}.
   Let us write $z = \mu + \I \nu$ and 
   $t = \tau + \I \sigma$.
   Observe that 
   \begin{align*}
      H_z(t) 
      &= \frac{a^2+b^2|t|^2}{|t-z|^2}
      = \frac{a^2+b^2|t-\I\nu|^2}{|t-z|^2}
      + \frac{ b^2((\sigma^2 - (\sigma-\nu)^2}{|t-z|^2 }
      \\
      &= H_{\mu}(\tau + \I(\sigma-\nu)) + b^2\frac{\sigma^2-(\sigma-\nu)^2}{|t-z|^2}
      \\
      &= H_{\mu}(t - \I\nu) + b^2\frac{\sigma^2-(\sigma-\nu)^2}{|t-z|^2}.
   \end{align*}
   Therefore Lemma~\ref{lem:nolocalextrema} implies that 
   \begin{align*}
      \sup_{t\in \sector{\theta}{\beta}} H_z(t)
      = \sup_{t\in \partial \sector{\theta}{\beta}} H_z(t)
      \le \sup_{t\in \partial \sector{\theta}{\beta}} H_{\mu}(t - \I\nu)
      + \sup_{t\in \partial \sector{\theta}{\beta}} b^2\frac{\sigma^2-(\sigma-\nu)^2}{|t-z|^2}.
   \end{align*}
   For the first term we obtain 
   \begin{align*}
      \sup_{t\in \partial \sector{\theta}{\beta}} H_{\mu}(t - \I\nu)
      & \le \sup_{\re t \ge \beta} H_{\mu}(t - \I\nu)
      = \sup_{\re t = \beta} H_{\mu}(t)
       \le \max\left\{b^2,\ \frac{{a^2+b^2\beta^2}}{(\beta-\mu)^2} \right\}
   \end{align*}
   by \eqref{eq:supline}.
   For the second term, we use that 
   $t = \tau + \I\sigma = (\beta + |\sigma|\cot\theta) + \I\sigma$, $\sigma\in \R$, on $\partial \sector{\theta}{\beta}$.
   Therefore we obtain
   \begin{align*}
      \sup_{t\in \partial \sector{\theta}{\beta}} b^2\frac{\sigma^2-(\sigma-\nu)^2}{|t-z|^2}
      & = b^2 \sup_{\sigma\in \R} \frac{\sigma^2-(\sigma-\nu)^2}{(\beta-\mu + |\sigma|\cot\theta)^2 + (\nu-\sigma)^2}
      \le b^2 \sup_{\sigma\in \R} \frac{\sigma^2}{(\beta-\mu + |\sigma|\cot\theta)^2}
      \\
      & \le b^2 \tan^2\theta.
   \end{align*}
   In the last step we used that $\beta-\mu = \beta-\re z > 0$.
\end{proof}

The next lemma is used in Proposition~\ref{prop:psubord}.
Recall that $\parabola(\beta) = \{ z\in\C : \re z > \beta,\, |\im (z)| \le (\re z - \beta)^2 \}$.
\begin{lemma}
   \label{lem:app:parabola}
   Let $\alpha, \beta\in\R$ and $z\in\C$.
   Then
   \begin{align}
      \label{eq:lem:parabola:b}
      \dist(z,\parabola(\beta)) &\ge 
      \begin{cases}
	 \sqrt{(\beta- \re z)^2 + (\im z)^2},\quad & |\im z|\le \frac{1}{2}, \\
	 \sqrt{(\beta- \re z)^2 + |\im z| - \frac{1}{4}},\quad & |\im z| > \frac{1}{2},
      \end{cases}
      \qquad\text{if } \re z < \beta,
      \\[2ex]
      \label{eq:lem:parabola:a}
      \dist(z,-\parabola(-\alpha)) &\ge 
      \begin{cases}
	 \sqrt{(\alpha- \re z)^2 + (\im z)^2},\quad & |\im z|\le \frac{1}{2}, \\
	 \sqrt{(\alpha- \re z)^2 + |\im z| - \frac{1}{4}},\quad & |\im z| > \frac{1}{2}.
      \end{cases}
      \qquad\text{if } \re z > \alpha.
   \end{align}
\end{lemma}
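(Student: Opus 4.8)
The plan is to reduce the whole statement to a single one-variable quadratic minimization. First I would exploit the obvious symmetries: $\parabola(\beta)$ is invariant under the reflection $z\mapsto\bar z$, and $\dist(\cdot,\parabola(\beta))$ is covariant under real translations, so it suffices to prove \eqref{eq:lem:parabola:b} in the normalised case $\beta=0$, $\im z\ge 0$. Moreover $\dist(z,-\parabola(-\alpha))=\dist(-z,\parabola(-\alpha))$, and if $\re z>\alpha$ then $\re(-z)<-\alpha$ while $|\im(-z)|=|\im z|$; hence \eqref{eq:lem:parabola:a} follows from \eqref{eq:lem:parabola:b} applied to the point $-z$ with the vertex $\beta=-\alpha$. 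So everything comes down to bounding $\dist(\mu+\I\nu,\parabola(0))$ from below for $\mu<0$ and $\nu\ge 0$.

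Next, writing a generic point of $\parabola(0)$ as $s+\I t$ with $s\ge 0$, $|t|\le s^2$, we have
$\dist(\mu+\I\nu,\parabola(0))^2=\inf\{(\mu-s)^2+(\nu-t)^2: s\ge 0,\ |t|\le s^2\}$.
Since $\mu<0\le s$, one has $(\mu-s)^2=\mu^2+2|\mu|s+s^2\ge\mu^2+s^2$, which peels off the real contribution:
$\dist(\mu+\I\nu,\parabola(0))^2\ge\mu^2+\inf\{s^2+(\nu-t)^2:s\ge0,\ |t|\le s^2\}$.
For fixed $s$ the optimal $t$ is the projection of $\nu\ge0$ onto $[-s^2,s^2]$, namely $t=\min\{\nu,s^2\}$, so with $u:=s^2\ge0$ the inner infimum equals $\min\{\nu,\ \inf_{0\le u\le\nu}h(u)\}$, where $h(u):=u+(\nu-u)^2=u^2+(1-2\nu)u+\nu^2$.

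Finally I would minimise $h$: its vertex is at $u^\ast=\nu-\tfrac12$. If $\nu\le\tfrac12$ then $u^\ast\le0$, so $h$ is increasing on $[0,\nu]$ and its minimum there is $h(0)=\nu^2$; since $\nu^2\le\nu$, the inner infimum is $\nu^2$, giving $\dist^2\ge\mu^2+\nu^2$. If $\nu>\tfrac12$ then $u^\ast\in(0,\nu)$ and a direct substitution gives $h(u^\ast)=\nu-\tfrac14<\nu$, so the inner infimum is $\nu-\tfrac14$ and $\dist^2\ge\mu^2+\nu-\tfrac14$. Undoing the normalisations yields \eqref{eq:lem:parabola:b} and \eqref{eq:lem:parabola:a}. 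There is no serious obstacle here; the only points needing a little care are keeping the hypotheses $\re z<\beta$ (resp.\ $\re z>\alpha$) intact through the translation/reflection reductions, and justifying the reduction to the inner infimum (either via the elementary estimate $(\mu-s)^2\ge\mu^2+s^2$ as above, or, equivalently, by noting that the nearest point of $\parabola(0)$ to a point with $\im z\ge0$ lies on the arc $t=s^2$).
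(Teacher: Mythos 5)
Your proof is correct and follows essentially the same route as the paper: after reducing to the normalised case (translation to $\beta=0$, reflection to $\nu\ge 0$, and the identity $\dist(z,-\parabola(-\alpha))=\dist(-z,\parabola(-\alpha))$), the crux is isolating $(\beta-\re z)^2$ and minimising the quadratic $h(u)=u+(\nu-u)^2$ on $[0,\nu]$, which is exactly what the paper obtains in the variable $u=(\tau-\beta)^2$ after its completing-the-square manipulation with $\Delta(\tau)$. Your packaging via $(\mu-s)^2\ge\mu^2+s^2$ and the projection of $\nu$ onto $[-s^2,s^2]$ is a slightly cleaner way to reach the same one-variable problem, with the minor benefit that it bypasses the paper's preliminary reduction to the boundary arc.
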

\begin{proof}
   It suffices to show \eqref{eq:lem:parabola:b} because
   $\dist(z,-\parabola(-\alpha)) = \dist(-z,\parabola(\alpha))$.
   Let $z = \mu + \I \nu\in\C$ and assume $\re z < \beta$.
   For $t= \tau + \I\sigma \in \parabola(\beta)$ we define
   \begin{equation*}
      \delta_z(t):=|z-t|^2.
   \end{equation*}
   Clearly, $\delta$ has no critical points in the interior of $\parabola(\beta)$, hence
   \begin{equation*}
      \inf_{t\in \parabola(\beta) }\delta_z(t) = \inf_{t\in \partial(\parabola(\beta)) }\delta_z(t).
   \end{equation*}
   Note that the infimum is in fact a minimum.
   Let $t = \tau + \I \sigma \in\parabola(\beta)$.
   Hence $\sigma = \pm (\tau - \beta)^2$.
   Let us first assume that $\sigma = (\tau-\beta)^2$.
   Then
   \begin{align*}
      \delta_z(t)= (\tau-\mu)^2+(\nu-(\tau-\beta)^2)^2=  (\tau-\mu)^2-2\nu(\tau-\beta)^2+(\tau-\beta)^4+\nu^2.
   \end{align*}
   If $\nu \leq 0$, then
   \begin{equation*}
      \delta_z(t)\geq (\tau-\mu)^2+\nu^2\geq  (\beta-\mu)^2+\nu^2.
   \end{equation*}
   If $\nu \geq 0$, let $\Delta(\tau):=(\tau-\mu)^2-(\tau-\beta)^2$. Then
   \begin{align*}
      \delta_z(t)
      &= \left[(\tau-\beta)^2+\frac{1-2\nu}{2}\right]^2+\nu^2-\left(\frac{1-2\nu}{2}\right)^2+\Delta(\tau).
   \end{align*}
   Suppose that $\nu\leq 1/2$, so that $1-2\nu \geq 0$. Then 
   $$\delta_z(t)\geq \left(\frac{1-2\nu}{2}\right)^2+\nu^2-\left(\frac{1-2\nu}{2}\right)^2+\Delta(\tau)= \nu^2+\Delta(\tau). $$
   If $\nu > 1/2$, we have that 
   $$\delta_z(t)\geq \nu^2-\left(\frac{1-2\nu}{2}\right)^2+\Delta(\tau)= \nu-\frac{1}{4}+\Delta(\tau).$$
   Note that 
   \begin{align*}
      \Delta(\tau)
      =(2\tau-(\beta+\mu))(\beta-\mu)
      \geq (2\beta-(\beta+\mu))(\beta-\mu)\geq (\beta-\mu)^2.
   \end{align*}
   Therefore
   \begin{align}
      \label{e:eq1subordinate}
      \delta_z(t)\geq 
      \begin{cases}
	 (\beta-\mu)^2+\nu^2,&\text{if } \nu \leq 1/2, \\
	 (\beta-\mu)^2+ \nu-\frac{1}{4},&\text{if } \nu > 1/2.
      \end{cases}
      \qquad\text{if } \im t = (\re t -\beta)^2.
   \end{align}
   Since $\delta_z(t) = \delta_{\overline z}(\overline t)$, we obtain 
   \begin{align}
      \label{e:eq2subordinate}
      \delta_z(t)\geq 
      \begin{cases}
	 (\beta-\mu)^2 + \nu^2,&\text{if } \nu \geq -1/2, \\
	 (\beta-\mu)^2 - \nu-\frac{1}{4},&\text{if } \nu < 1/2.
      \end{cases}
      \qquad\text{if } \im t = -(\re t -\beta)^2.
   \end{align}
   Since the right hand sides of \eqref{e:eq1subordinate} and \eqref{e:eq2subordinate} are independent of $t$, the claim follows. 
\end{proof}

%%%}}}%%%%%%%%%%%%%%%%%%%%%%%%%%%%%%%%5

\section{Alternative bounds for the resolvent set of $T+A$ for sectorial operators $T$} %%%{{{
\label{app:alternativebounds}

\cite{werw}
% \cite{werw}

In the proofs of Theorem~\ref{thm:sectorial},
Theorem~\ref{thm:bisecgap}, Corollary~\ref{cor:resolventestimate2}, Proposition~\ref{prop:bisectorstrip} we used the estimates 
\eqref{eq:Hsectorb} and \eqref{eq:Hsectora} from Lemma~\ref{lem:app:Hest} which resulted in sets of the form $\e^{\I\theta}(\beta+\hyperbola_\beta)$.
If we use the estimates 
\eqref{eq:Hsectorb:alt} and \eqref{eq:Hsectora:alt} from Lemma~\ref{lem:app:Hestalt} then 
\eqref{eq:ourfirsttheo2:b2} is replaced by 
\begin{align}
   \tag{\ref{eq:ourfirsttheo2:b2}'}
   \label{eq:ourfirsttheo2:b2:alt}
   \frac{ a^2  + b^2 \beta_T^2\sin^2\theta + b^2 (\re ( \e^{ -\I\theta}z ) )^2 }{1-b^2}
   < (\im (\e^{ -\I\theta} (z - \beta_T)) )^2.
\end{align}
and consequently $\sup_{t\in \sector{\theta}{\beta_T}} \frac{a^2+b^2|t|^2}{|t-z|^2} < 1$ if
\begin{equation}
   \label{eq:summaryb:alt}
   \re z < \beta_{T+A}
   \quad\text{or}\quad
   z\in \e^{\I\theta}(\beta_T\sin\theta + \hyperbola_{\beta_T\sin\theta}) \cup \e^{-\I\theta}(-\beta_T\sin\theta - \hyperbola_{\beta_T\sin\theta}).
\end{equation}
Therefore, under the conditions of Theorem~\ref{thm:bisecgap}
\begin{multline*}
   \Big(
   \e^{\I\theta}(\beta_T\sin\theta +\hyperbola_{\beta_T\sin\theta}) 
   \cup \e^{-\I\theta}(-\beta_T\sin\theta -\hyperbola_{\beta_T\sin\theta}\sin\theta)
   \cup \{\re z < \beta_{T+A} \}
   \Big)
   \\
   \cap
   \Big(
   \e^{-\I\theta}(\alpha_T\sin\theta +\hyperbola_{\alpha_T\sin\theta}) 
   \cup \e^{-\I\theta}(-\alpha_T\sin\theta -\hyperbola_{\alpha_T\sin\theta})
   \cup \{\re z > \alpha_{T+A} \}
   \Big)
   \\
   \subseteq \rho(T+A).
\end{multline*}
Similarly, the estimates for the spectral inclusions in the proofs of
Theorem~\ref{thm:sectorial},
Corollary~\ref{cor:resolventestimate2}, 
Proposition~\ref{prop:bisectorstrip}  
can be modified.
The hyperbolas that provide the spectral inclusions have the same asymptotic slopes with either of the estimates from Lemma~\ref{lem:app:Hest} or Lemma~\ref{lem:app:Hestalt} but they are shifted differently, thus leading to different spectral inclusions.
See Figure~\ref{fig:TEST}.

We find that the estimates from Lemma~\ref{lem:app:Hest} yield better results 
than those from Lemma~\ref{lem:app:Hestalt} 
if $\alpha<0$ and worse results if $\alpha>0$.
Analogously, they are better if $\beta > 0$ and worse if $\beta < 0$.
To see this, let $\psi = \theta + \arctan \frac{b}{\sqrt{1+b^2}}$ and 
note that for instance for $\re z \to\infty$ the asymptotes are given by the following functions:
\begin{align*}
   \text{Asymptote of } \e^{\I\theta}(\beta+\hyperbola_\beta):\qquad
   & y  = (x-\beta)\tan(\psi),
   \\
   \text{Asymptote of } \e^{\I\theta} (\beta\sin\theta +\hyperbola_{\beta\sin\theta}): \qquad
   & \widetilde y  = x  \tan(\psi) - \beta\sin\theta(\cos\theta + \sin\theta \tan\psi).
\end{align*}

Clearly $y$ gives a better estimate than $\widetilde y$ if and only if $y(x) < \widetilde y(x)$.
Note that 
\begin{align*}
   y(x) < \widetilde y(x)
   &\; \iff \;
   -\beta\tan(\psi) < -\beta\sin\theta(\cos\theta + \sin\theta \tan\psi)
   \\
   &\; \iff \;
   0 < \beta\tan(\psi) - \beta\sin\theta(\cos\theta + \sin\theta \tan\psi).
\end{align*}
Now
\begin{align*}
   \beta\tan(\psi) - \beta\sin\theta(\cos\theta + \sin\theta \tan\psi)
   &=
   \beta ( \tan \psi - \sin\theta\cos\theta - \sin^2\theta \tan\psi )
   \\
   &=
   \beta ( \cos^2\theta \tan \psi - \sin\theta\cos\theta)
   \\
   &=
   \beta \cos^2\theta (\tan\psi - \tan\theta ).
\end{align*}
Since $0 \le \theta \le \psi < \pi/2$, the expression in parenthesis is $\ge 0$.

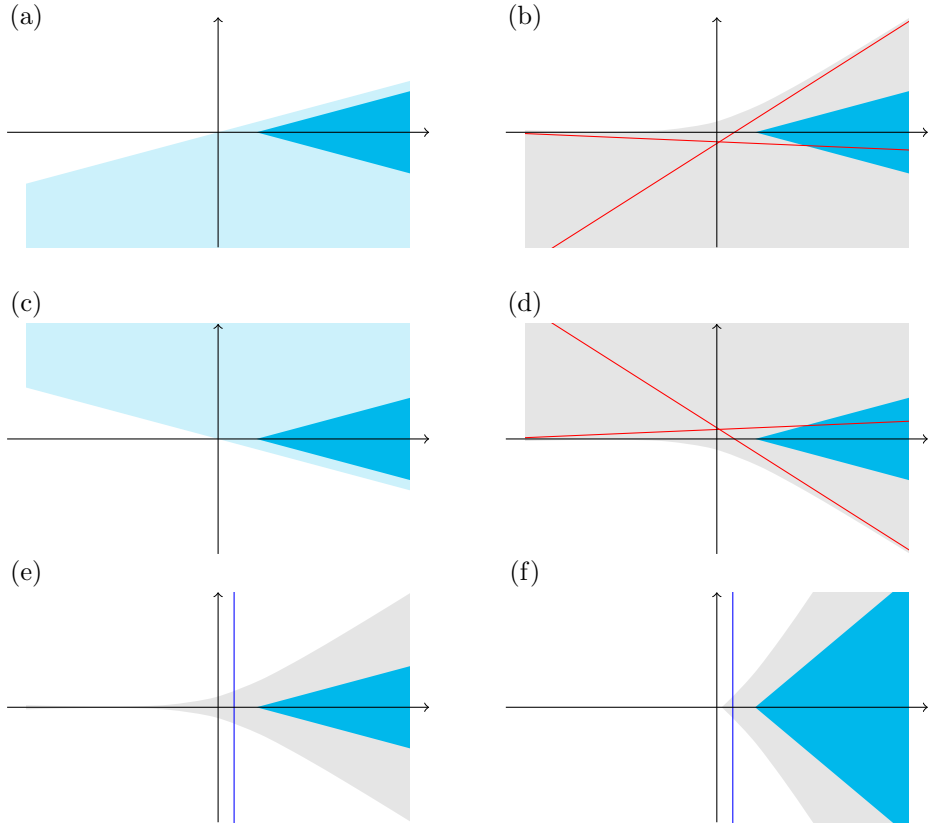
\begin{figure}[H] %%%{{{ Hyperbolas H_{s,\rho}
   \begin{tikzpicture}[ 
      declare function={ 
      hyperbolarotb(\x) = sqrt( (\abound^2 + \bbound^2*(\betaT)^2  + \bbound^2*(\x)^2)/(1-\bbound^2) ); 
      altbhyperbola(\x) =  \betaT*(sin(\thetaT)) - sqrt( (\abound^2 + \bbound^2*(\betaT)^2*(sin(\thetaT))^2 + \bbound^2*(\x)^2)/(1-\bbound^2) ); 
      } ,
      scale=.85
      ]

      \tikzmath{
      \gT = 0;  %% \gamma_T
      \abound = .5;   %% a
      \bbound = .3;  %% b
      \alphaT = -2;   %% \alpha_T
      \betaT = 1;      %% \beta_T
      \thetaT = 15;      %% \beta_T
      \alphaTA = \alphaT + sqrt( \abound^2 + \bbound^2*\gT^2 + \bbound^2*(\alphaT)^2);   %% \alpha_{T+A}
      \betaTA = \betaT - sqrt( \abound^2 + \bbound^2*\gT^2 + \bbound^2*(\betaT)^2);      %% \beta_{T+A}
      \thetaB = atan(\bbound/sqrt(1-\bbound^2));
      }

      \begin{scope}[scale=.6, xshift=-13cm] %%%{{{ 

	 \begin{scope}
	    \clip (-5,-3) rectangle (5, 3);

	    %% plan P' containing the right sector
	    %% first shift, then rotate
	    \path [fill=colPlane, rotate=\thetaT, shift={(\betaT,0)}] (-8,0) -- (8,0) --  (8, -8) -- (-8,-8) -- cycle;

	    %% spectrum of T
	    %% \path [fill=colTspec, shift={(\alphaT,0)}] (\thetaT:-6) -- (0,0) --  (-\thetaT:-6) -- cycle;
	    \path [fill=colTspec, shift={(\betaT,0)}] (\thetaT:6) -- (0,0) --  (-\thetaT:6) -- cycle;

	 \end{scope}

	 %% Axis
	 \draw[->] (-5.5,0)--(5.5,0);
	 \draw[->] (0,-3)--(0,3);

	 \node at (-5,3) {(a)};
      \end{scope}

      %%%}}}

      \begin{scope}[scale=.6, xshift=0cm] %%%{{{ 

	 \begin{scope}
	    \clip (-5,-3) rectangle (5, 3);

	    %% plan P' containing the right sector
	    % \path [fill=gray, opacity=.3, rotate=\thetaT, shift={(\betaT,0)}] (-8,0) -- (8,0) --  (8, -8) -- (-8,-8) -- cycle;

	    %% area below the rotated hyperbolas ALTERNATIVE FORMULA
	    %% \draw [red] plot[domain=-15:15, smooth, rotate=1*\thetaT] ({\x},{-altbhyperbola(\x)});
	    \fill [colTAspec, opacity=.2 ] plot[domain=-15:15, smooth, rotate=1*\thetaT] ({\x},{-altbhyperbola(\x)}) 
	    -- (15, -15) -- (-15, -15) -- cycle;

	    %% spectrum of T
	    %% \path [fill=colTspec, shift={(\alphaT,0)}] (\thetaT:-6) -- (0,0) --  (-\thetaT:-6) -- cycle;
	    \path [fill=colTspec, shift={(\betaT,0)}] (\thetaT:6) -- (0,0) --  (-\thetaT:6) -- cycle;

	    %% tangents
  	    \draw [red, rotate=\thetaT] plot [domain=-7.0:7.0, smooth] ( {\x},{(\x)*tan(\thetaB)-\betaT*(sin(\thetaT))} );
  	    \draw [red, rotate=\thetaT] plot [domain=-7.0:7.0, smooth] ( {\x},{-(\x)*tan(\thetaB)-\betaT*(sin(\thetaT))} );
	 \end{scope}

	 %% Axis
	 \draw[->] (-5.5,0)--(5.5,0);
	 \draw[->] (0,-3)--(0,3);

	 \node at (-5,3) {(b)};
      \end{scope}

      %%%}}}

      \begin{scope}[scale=.6, xshift=-13cm, yshift=-8cm] %%%{{{ 

	 \begin{scope}
	    \clip (-5,-3) rectangle (5, 3);

	    %% plan Q' containing the right sector}
	    %% first shift, then rotate
	    \path [fill=colPlane, rotate={180-\thetaT}, shift={(\betaT,0)}] (-8,0) -- (8,0) --  (8, -8) -- (-8,-8) -- cycle;

	    %% spectrum of T
	    %% \path [fill=colTspec, shift={(\alphaT,0)}] (\thetaT:-6) -- (0,0) --  (-\thetaT:-6) -- cycle;
	    \path [fill=colTspec, shift={(\betaT,0)}] (\thetaT:6) -- (0,0) --  (-\thetaT:6) -- cycle;

	 \end{scope}

	 %% Axis
	 \draw[->] (-5.5,0)--(5.5,0);
	 \draw[->] (0,-3)--(0,3);

	 \node at (-5,3.5) {(c)};
      \end{scope}

      %%%}}}

      \begin{scope}[scale=.6, xshift=0cm, yshift=-8cm] %%%{{{ 

	 \begin{scope}
	    \clip (-5,-3) rectangle (5, 3);

	    %% plan Q containing the right sector
	    %% \path [fill=gray, opacity=.3, rotate=180-\thetaT, shift={(\betaT,0)}] (-8,0) -- (8,0) --  (8, -8) -- (-8,-8) -- cycle;

	    %% area above the rotated hyperbolas
	    \fill [colTAspec, opacity=.2 ] 
	    plot[domain=-15:15, smooth, rotate=-\thetaT] ({\x},{altbhyperbola(\x)}) -- (15, 15) -- (-15, 15) -- cycle;
	    %% spectrum of T
	    %% \path [fill=colTspec, shift={(\alphaT,0)}] (\thetaT:-6) -- (0,0) --  (-\thetaT:-6) -- cycle;
	    \path [fill=colTspec, shift={(\betaT,0)}] (\thetaT:6) -- (0,0) --  (-\thetaT:6) -- cycle;

	    %% tangents
  	    \draw [red, rotate=-\thetaT] plot [domain=-7.0:7.0, smooth] ( {\x},{-(\x)*tan(\thetaB)+\betaT*(sin(\thetaT))} );
  	    \draw [red, rotate=-\thetaT] plot [domain=-7.0:7.0, smooth] ( {\x},{(\x)*tan(\thetaB)+\betaT*(sin(\thetaT))} );
	 \end{scope}

	 %% Axis
	 \draw[->] (-5.5,0)--(5.5,0);
	 \draw[->] (0,-3)--(0,3);

	 \node at (-5,3.5) {(d)};
      \end{scope}

      %%%}}}

      \begin{scope}[scale=0.6, xshift=-13cm, yshift=-15cm] %%%{{{ 

	 \begin{scope}
	    \clip (-5,-3) rectangle (5, 3);

	    %% area above the rotated hyperbolas
	    \begin{scope}
	       \clip plot[domain=-15:15, smooth, rotate=1*\thetaT] ({\x},{-altbhyperbola(\x)}) -- (15, -15) -- (-15, -15) -- cycle;
	       \fill [colTAspec, opacity=.2 ]
	       plot[domain=-15:15, smooth, rotate=-\thetaT] ({\x},{altbhyperbola(\x)}) -- (15, 15) -- (-15, 15) -- cycle;
	    \end{scope}

	    %% spectrum of T
	    %% \path [fill=colTspec, shift={(\alphaT,0)}] (\thetaT:-6) -- (0,0) --  (-\thetaT:-6) -- cycle;
	    \path [fill=colTspec, shift={(\betaT,0)}] (\thetaT:6) -- (0,0) --  (-\thetaT:6) -- cycle;

	    %% line at \betaTA, lower bound for the spectrum
	    \draw [blue] (\betaTA, -5) -- (\betaTA, 5);

	 \end{scope}

	 %% Axis
	 \draw[->] (-5.5,0)--(5.5,0);
	 \draw[->] (0,-3)--(0,3);

	 \node at (-5,3.5) {(e)};

      \end{scope}

      %%%}}}

      \begin{scope}[scale=0.6, xshift=0cm, yshift=-15cm] %%%{{{ 

	 \begin{scope}

	    \tikzmath{
	    \thetaT = 40;      %% \beta_T
	    \betaTA = \betaT - sqrt( \abound^2 + \bbound^2*\gT^2 + \bbound^2*(\betaT)^2);      %% \beta_{T+A}
	    }

	    \clip (-5,-3) rectangle (5, 3);

	    %% area above the rotated hyperbolas
	    \begin{scope}
	       \clip plot[domain=-15:15, smooth, rotate=1*\thetaT] ({\x},{-altbhyperbola(\x)}) -- (15, -15) -- (-15, -15) -- cycle;
	       \fill [colTAspec, opacity=.2 ]
	       plot[domain=-15:15, smooth, rotate=-\thetaT] ({\x},{altbhyperbola(\x)}) -- (15, 15) -- (-15, 15) -- cycle;
	    \end{scope}

	    %% spectrum of T
	    %% \path [fill=colTspec, shift={(\alphaT,0)}] (\thetaT:-6) -- (0,0) --  (-\thetaT:-6) -- cycle;
	    \path [fill=colTspec, shift={(\betaT,0)}] (\thetaT:6) -- (0,0) --  (-\thetaT:6) -- cycle;

	    %% line at \betaTA, lower bound for the spectrum
	    \draw [blue] (\betaTA, -5) -- (\betaTA, 5);

	 \end{scope}

	 %% Axis
	 \draw[->] (-5.5,0)--(5.5,0);
	 \draw[->] (0,-3)--(0,3);

	 \node at (-5,3.5) {(f)};

      \end{scope}

      %%%}}}

   \end{tikzpicture}
   
   \caption[Illustration for the spectral inclusions with alternative bounds.]
   {Illustration for the proof of Lemma~\ref{lem:app:Hestalt} and the boundaries for the spectrum of $T+A$ that we would yield in Theorem~\ref{thm:bisecgap}.
   In (a), the lightblue half-plane is the plane $P'$ above which the estimate \eqref{eq:Hsectorb:alt} for $\arg(z-\beta)\in(\theta, \theta+\pi)$ holds.
   The white regions in (b) and (d) correspond to the sets
   $\e^{\I\theta}(\beta_T\sin\theta + \hyperbola_{\beta_T\sin\theta})$ and
   $\e^{-\I\theta}(-\beta_T\sin\theta - \hyperbola_{\beta_T\sin\theta})$ respectively.

   There, $\sup_{t\in\partial(\sector{\theta}{\beta_T})} H_z(t) < 1$ holds.
   If we combine these results, we find that the spectrum of $T+A$ must be contained in the grey area in (e).
   The graphic in (f) shows the spectral inclusion for a larger angle $\theta$.

   The vertical blue line in (e) and (f) is the lower bound $\beta_{T+A}$ for the real part of the spectrum of $T+A$ in the case $b<\cos\theta$, so $\sigma(T+A)$ is contained in the grey area to its right.
   }
  \label{fig:rotatedhyperbolas:alt}
\end{figure}
%%%}}}

\begin{figure}[H] %%%{{{ Spectral inclusion: COMPARISON DIFFERENT FORMULAS FOR HYPERBOLAS
   \centering

   \begin{tikzpicture}[ 
      declare function={ 
      hyperbolarota(\x) = sqrt( (\abound^2 + \bbound^2*(\alphaT)^2 + \bbound^2*(\x)^2)/(1-\bbound^2) ); 
      altapphyperbola(\x) = \alphaT*(sin(\thetaT)) + sqrt( (\abound^2 + \bbound^2*(\alphaT)^2*(sin(\thetaT))^2 + \bbound^2*(\x)^2)/(1-\bbound^2) ); 
      altapmhyperbola(\x) = \alphaT*(sin(\thetaT)) - sqrt( (\abound^2 + \bbound^2*(\alphaT)^2*(sin(\thetaT))^2 + \bbound^2*(\x)^2)/(1-\bbound^2) ); 
      hyperbolarotb(\x) = sqrt( (\abound^2 + \bbound^2*(\betaT)^2 + \bbound^2*(\x)^2)/(1-\bbound^2) ); 
      altbhyperbola(\x) = \betaT*(sin(\thetaT)) - sqrt( (\abound^2 + \bbound^2*(\betaT)^2*(sin(\thetaT))^2 + \bbound^2*(\x)^2)/(1-\bbound^2) ); 
      altbpphyperbola(\x) = \betaT*(sin(\thetaT)) + sqrt( (\abound^2 + \bbound^2*(\betaT)^2*(sin(\thetaT))^2 + \bbound^2*(\x)^2)/(1-\bbound^2) ); 
      altbpmhyperbola(\x) = \betaT*(sin(\thetaT)) - sqrt( (\abound^2 + \bbound^2*(\betaT)^2*(sin(\thetaT))^2 + \bbound^2*(\x)^2)/(1-\bbound^2) ); 
      } ,
      scale=.85]

      \tikzmath{
      \gT = 1.5;  %% \gamma_T
      \abound = 1.0;   %% a
      \bbound = .3;  %% b
      \alphaT = -1;   %% \alpha_T
      \betaT = 2;      %% \beta_T
      \thetaT = 38;      %% \beta_T
      \alphaTA = \alphaT + sqrt( \abound^2 + \bbound^2*\gT^2 + \bbound^2*(\alphaT)^2);   %% \alpha_{T+A}
      \betaTA = \betaT - sqrt( \abound^2 + \bbound^2*\gT^2 + \bbound^2*(\betaT)^2);      %% \beta_{T+A}
      }

      \begin{scope}[scale=0.4, transform shape] %%%{{{ 
	 \tikzmath{
	 \betaT = 5;   
	 \betaTA = \betaT - sqrt( \abound^2 + \bbound^2*\gT^2 + \bbound^2*(\betaT)^2);   %% \beta_{T+A}
	 }

	 \begin{scope}
	    \clip (-5,-5) rectangle (10, 5);

	    %% area between the rotated hyperbolas for beta
	    %% \draw plot[domain=-15:15, smooth, shift={(\betaT,0)},  rotate=\thetaT] ({\x},{hyperbolarotb(\x)});
	    %% \draw plot[domain=-15:15, smooth, shift={(\betaT,0)},  rotate={180-\thetaT}] ({\x},{hyperbolarotb(\x)});
	    \begin{scope}
	       \clip plot[domain=-15:15, smooth, shift={(\betaT,0)},  rotate=\thetaT] ({\x},{hyperbolarotb(\x)}) -- (15,-15) -- (-15,-15)-- cycle;
	       \fill [green, opacity=.2 ]
	       plot[domain=-15:15, smooth, shift={(\betaT,0)},  rotate={180-\thetaT}] ({\x},{hyperbolarotb(\x)}) -- (-15,15) -- (15,15) -- cycle;
	       \draw [teal, densely dotted] plot[domain=-15:15, smooth, shift={(\betaT,0)},  rotate={180-\thetaT}] ({\x},{hyperbolarotb(\x)});
	    \end{scope}
	    \begin{scope}
	       \clip plot[domain=-15:15, smooth, shift={(\betaT,0)},  rotate={180-\thetaT}] ({\x},{hyperbolarotb(\x)}) -- (-15,15) -- (15,15) -- cycle;
	       \draw [teal, densely dotted] plot[domain=-15:15, smooth, shift={(\betaT,0)},  rotate=\thetaT] ({\x},{hyperbolarotb(\x)});
	    \end{scope}

	    %% area between the rotated hyperbolas for beta, ALTERNATIVE FORMULA
	    %% \draw [red,] plot[domain=-15:15, smooth, rotate=\thetaT] ({\x},{-altbhyperbola(\x)});
	    %% \draw [red] plot[domain=-15:15, smooth, rotate=-\thetaT] ({\x},{altbhyperbola(\x)});

	    \begin{scope}
	       \clip plot[domain=-15:15, smooth, rotate=-\thetaT] ({\x},{altbhyperbola(\x)}) -- (15, 15) -- (-15, 15) -- cycle;
	       \fill [red, opacity=.2 ]
	       plot[domain=-15:15, smooth, rotate=1*\thetaT] ({\x},{-altbhyperbola(\x)}) -- (15, -15) -- (-15, -15) -- cycle;
	       \draw [red, densely dotted] plot[domain=-15:15, smooth, rotate=1*\thetaT] ({\x},{-altbhyperbola(\x)});
	    \end{scope}
	    \begin{scope}
	       \clip plot[domain=-15:15, smooth, rotate=1*\thetaT] ({\x},{-altbhyperbola(\x)}) -- (15, -15) -- (-15, -15) -- cycle;
	       \draw [red, densely dotted] plot[domain=-15:15, smooth, rotate=-\thetaT] ({\x},{altbhyperbola(\x)});
	    \end{scope}

	    %% cutoff in \beta_{T+A}
	    \draw[blue] (\betaTA,-5.5) -- (\betaTA,5.5);

	    %% spectrum of T
	    \path [fill=colTspec, shift={(\betaT,0)}] (\thetaT:10) -- (0,0) --  (-\thetaT:10) -- cycle;

	 \end{scope}

	 %% cutoff in \alpha_{T+A}
	 \draw[blue] (\betaTA,-5.5) -- (\betaTA,5.5);

	 %% Axis
	 \draw[->] (-5.5,0)--(10.5,0);
	 \draw[->] (0,-5.5)--(0,5.5);
      \end{scope}

      %%%}}}

      \begin{scope}[xshift=10cm, scale=0.4, transform shape] %%%{{{ 

	 \tikzmath{
	 \thetaT = 10;      %% \beta_T
	 \betaT = 5;   
	 \betaTA = \betaT - sqrt( \abound^2 + \bbound^2*\gT^2 + \bbound^2*(\betaT)^2);   %% \beta_{T+A}
	 }

	 \begin{scope}
	    \clip (-5,-5) rectangle (10, 5);

	    %% area between the rotated hyperbolas for beta
	    %% \draw plot[domain=-15:15, smooth, shift={(\betaT,0)},  rotate=\thetaT] ({\x},{hyperbolarotb(\x)});
	    %% \draw plot[domain=-15:15, smooth, shift={(\betaT,0)},  rotate={180-\thetaT}] ({\x},{hyperbolarotb(\x)});
	    \begin{scope}
	       \clip plot[domain=-15:15, smooth, shift={(\betaT,0)},  rotate=\thetaT] ({\x},{hyperbolarotb(\x)}) -- (15,-15) -- (-15,-15)-- cycle;
	       \fill [green, opacity=.2 ]
	       plot[domain=-15:15, smooth, shift={(\betaT,0)},  rotate={180-\thetaT}] ({\x},{hyperbolarotb(\x)}) -- (-15,15) -- (15,15) -- cycle;
	    \end{scope}
	    \draw [teal, densely dotted] plot[domain=-15:15, smooth, shift={(\betaT,0)},  rotate={180-\thetaT}] ({\x},{hyperbolarotb(\x)});
	    \draw [teal, densely dotted] plot[domain=-15:15, smooth, shift={(\betaT,0)},  rotate=\thetaT] ({\x},{hyperbolarotb(\x)});

	    %% area between the rotated hyperbolas for beta, ALTERNATIVE FORMULA
	    %% \draw [red] plot[domain=-15:15, smooth, rotate=\thetaT] ({\x},{-altbhyperbola(\x)});
	    %% \draw [red] plot[domain=-15:15, smooth, rotate=-\thetaT] ({\x},{altbhyperbola(\x)});

	    \begin{scope}
	       \clip plot[domain=-15:15, smooth, rotate=-\thetaT] ({\x},{altbhyperbola(\x)}) -- (15, 15) -- (-15, 15) -- cycle;
	       \fill [red, opacity=.2 ]
	       plot[domain=-15:15, smooth, rotate=1*\thetaT] ({\x},{-altbhyperbola(\x)}) -- (15, -15) -- (-15, -15) -- cycle;
	    \end{scope}
	    \draw [red, densely dotted] plot[domain=-15:15, smooth, rotate=1*\thetaT] ({\x},{-altbhyperbola(\x)});
	    \draw [red, densely dotted] plot[domain=-15:15, smooth, rotate=-\thetaT] ({\x},{altbhyperbola(\x)});

	    %% \draw [teal] plot[domain=-15.0:15.0, smooth, rotate=-1*\thetaT] ({\x},{altbpphyperbola(\x)});
	    %% \draw [teal] plot[domain=-15.0:15.0, smooth, rotate=-1*\thetaT] ({\x},{altbhyperbola(\x)});

	    %% spectrum of T
	    \path [fill=colTspec, shift={(\betaT,0)}] (\thetaT:10) -- (0,0) --  (-\thetaT:10) -- cycle;

	 \end{scope}

	 %% cutoff in \alpha_{T+A}
	 \draw[blue] (\betaTA,-5.5) -- (\betaTA,5.5);

	 %% Axis
	 \draw[->] (-5.5,0)--(10.5,0);
	 \draw[->] (0,-5.5)--(0,5.5);
      \end{scope}

      %%%}}}

      \begin{scope}[xshift=0cm, yshift=-5cm, scale=0.4, transform shape] %%%{{{ 
	 \tikzmath{
	 \betaT = 2;   
	 \betaTA = \betaT - sqrt( \abound^2 + \bbound^2*\gT^2 + \bbound^2*(\betaT)^2);   %% \beta_{T+A}
	 }

	 \begin{scope}
	    \clip (-5,-5) rectangle (10, 5);

	    %% area between the rotated hyperbolas for beta
	    %% \draw plot[domain=-15:15, smooth, shift={(\betaT,0)},  rotate=\thetaT] ({\x},{hyperbolarotb(\x)});
	    %% \draw plot[domain=-15:15, smooth, shift={(\betaT,0)},  rotate={180-\thetaT}] ({\x},{hyperbolarotb(\x)});
	    \begin{scope}
	       \clip plot[domain=-15:15, smooth, shift={(\betaT,0)},  rotate=\thetaT] ({\x},{hyperbolarotb(\x)}) -- (15,-15) -- (-15,-15)-- cycle;
	       \fill [green, opacity=.2 ]
	       plot[domain=-15:15, smooth, shift={(\betaT,0)},  rotate={180-\thetaT}] ({\x},{hyperbolarotb(\x)}) -- (-15,15) -- (15,15) -- cycle;
	       \draw [ teal, densely dotted ] plot[domain=-15:15, smooth, shift={(\betaT,0)},  rotate={180-\thetaT}] ({\x},{hyperbolarotb(\x)});
	    \end{scope}
	    \begin{scope}
	       \clip plot[domain=-15:15, smooth, shift={(\betaT,0)},  rotate={180-\thetaT}] ({\x},{hyperbolarotb(\x)}) -- (-15,15) -- (15,15) -- cycle;
	       \draw [ teal, densely dotted ] plot[domain=-15:15, smooth, shift={(\betaT,0)},  rotate={\thetaT}] ({\x},{hyperbolarotb(\x)});
	    \end{scope}

	    %% \fill [green, opacity=.2, shift={(\betaT,0)} ]
	    %% plot[domain=-15.0:15.0, smooth, rotate=-\thetaT] ({\x},{hyperbolarotb(\x)}) -- plot[domain=15.0:-15.0, smooth, rotate=-\thetaT] ({\x},{-hyperbolarotb(\x)}) -- cycle;

	    %% area between the rotated hyperbolas for beta, ALTERNATIVE FORMULA
	    %% \draw [red] plot[domain=-15:15, smooth, rotate=\thetaT] ({\x},{-altbhyperbola(\x)});
	    %% \draw [red] plot[domain=-15:15, smooth, rotate=-\thetaT] ({\x},{altbhyperbola(\x)});

	    \begin{scope}
	       \clip plot[domain=-15:15, smooth, rotate=-\thetaT] ({\x},{altbhyperbola(\x)}) -- (15, 15) -- (-15, 15) -- cycle;
	       \fill [red, opacity=.2 ]
	       plot[domain=-15:15, smooth, rotate=1*\thetaT] ({\x},{-altbhyperbola(\x)}) -- (15, -15) -- (-15, -15) -- cycle;
	       \draw [red, densely dotted ] plot[domain=-15:15, smooth, rotate=1*\thetaT] ({\x},{-altbhyperbola(\x)});
	    \end{scope}
	    \begin{scope}
	       \clip plot[domain=-15:15, smooth, rotate=1*\thetaT] ({\x},{-altbhyperbola(\x)}) -- (15, -15) -- (-15, -15) -- cycle;
	       \draw [red, densely dotted ] plot[domain=-15:15, smooth, rotate=-\thetaT] ({\x},{altbhyperbola(\x)});
	    \end{scope}
	    %% \draw [teal] plot[domain=-15.0:15.0, smooth, rotate=-1*\thetaT] ({\x},{altbpphyperbola(\x)});
	    %% \draw [teal] plot[domain=-15.0:15.0, smooth, rotate=-1*\thetaT] ({\x},{altbpmhyperbola(\x)});

	    %% spectrum of T
	    \path [fill=colTspec, shift={(\betaT,0)}] (\thetaT:10) -- (0,0) --  (-\thetaT:10) -- cycle;

	 \end{scope}

	 %% cutoff in \beta_{T+A}
	 \draw[blue] (\betaTA,-5.5) -- (\betaTA,5.5);

	 %% Axis
	 \draw[->] (-5.5,0)--(10.5,0);
	 \draw[->] (0,-5.5)--(0,5.5);
      \end{scope}

      %%%}}}

      \begin{scope}[xshift=10cm, yshift=-5cm, scale=0.4, transform shape] %%%{{{ 
	 \tikzmath{
	 \thetaT = 10;      %% \beta_T
	 \betaT = 2;   
	 \betaTA = \betaT - sqrt( \abound^2 + \bbound^2*\gT^2 + \bbound^2*(\betaT)^2);   %% \beta_{T+A}
	 }

	 \begin{scope}
	    \clip (-5,-5) rectangle (10, 5);

	    %% area between the rotated hyperbolas for beta
	    %% \draw plot[domain=-15:15, smooth, shift={(\betaT,0)},  rotate=\thetaT] ({\x},{hyperbolarotb(\x)});
	    %% \draw plot[domain=-15:15, smooth, shift={(\betaT,0)},  rotate={180-\thetaT}] ({\x},{hyperbolarotb(\x)});
	    \begin{scope}
	       \clip plot[domain=-15:15, smooth, shift={(\betaT,0)},  rotate=\thetaT] ({\x},{hyperbolarotb(\x)}) -- (15,-15) -- (-15,-15)-- cycle;
	       \fill [green, opacity=.2 ]
	       plot[domain=-15:15, smooth, shift={(\betaT,0)},  rotate={180-\thetaT}] ({\x},{hyperbolarotb(\x)}) -- (-15,15) -- (15,15) -- cycle;
	       \draw [teal, densely dotted ] plot[domain=-15:15, smooth, shift={(\betaT,0)},  rotate={180-\thetaT}] ({\x},{hyperbolarotb(\x)});
	    \end{scope}
	    \begin{scope}
	       \clip plot[domain=-15:15, smooth, shift={(\betaT,0)},  rotate={180-\thetaT}] ({\x},{hyperbolarotb(\x)}) -- (-15,15) -- (15,15) -- cycle;
	       \draw [teal, densely dotted ] plot[domain=-15:15, smooth, shift={(\betaT,0)},  rotate=\thetaT] ({\x},{hyperbolarotb(\x)});
	    \end{scope}

	    %% \fill [green, opacity=.2, shift={(\betaT,0)} ]
	    %% plot[domain=-15.0:15.0, smooth, rotate=-\thetaT] ({\x},{hyperbolarotb(\x)}) -- plot[domain=15.0:-15.0, smooth, rotate=-\thetaT] ({\x},{-hyperbolarotb(\x)}) -- cycle;

	    %% area between the rotated hyperbolas for beta, ALTERNATIVE FORMULA
	    %% \draw [red] plot[domain=-15:15, smooth, rotate=\thetaT] ({\x},{-altbhyperbola(\x)});
	    %% \draw [red] plot[domain=-15:15, smooth, rotate=-\thetaT] ({\x},{altbhyperbola(\x)});

	    \begin{scope}
	       \clip plot[domain=-15:15, smooth, rotate=-\thetaT] ({\x},{altbhyperbola(\x)}) -- (15, 15) -- (-15, 15) -- cycle;
	       \fill [red, opacity=.2 ]
	       plot[domain=-15:15, smooth, rotate=1*\thetaT] ({\x},{-altbhyperbola(\x)}) -- (15, -15) -- (-15, -15) -- cycle;
	       \draw [red, densely dotted] plot[domain=-15:15, smooth, rotate=1*\thetaT] ({\x},{-altbhyperbola(\x)});
	       \draw [red, densely dotted] plot[domain=-15:15, smooth, rotate=-\thetaT] ({\x},{altbhyperbola(\x)});
	    \end{scope}

	    %% \draw [teal] plot[domain=-15.0:15.0, smooth, rotate=-1*\thetaT] ({\x},{altbpphyperbola(\x)});
	    %% \draw [teal] plot[domain=-15.0:15.0, smooth, rotate=-1*\thetaT] ({\x},{altbpmhyperbola(\x)});

	    %% spectrum of T
	    \path [fill=colTspec, shift={(\betaT,0)}] (\thetaT:10) -- (0,0) --  (-\thetaT:10) -- cycle;

	 \end{scope}

	 %% cutoff in \alpha_{T+A}
	 \draw[blue] (\betaTA,-5.5) -- (\betaTA,5.5);

	 %% Axis
	 \draw[->] (-5.5,0)--(10.5,0);
	 \draw[->] (0,-5.5)--(0,5.5);
      \end{scope}

      %%%}}}

      \begin{scope}[xshift=0cm, yshift=-10cm, scale=0.4, transform shape] %%%{{{ 
	 \tikzmath{
	 \betaT = -2;   
	 \betaTA = \betaT - sqrt( \abound^2 + \bbound^2*\gT^2 + \bbound^2*(\betaT)^2);   %% \beta_{T+A}
	 }

	 \begin{scope}
	    \clip (-5,-5) rectangle (10, 5);

	    %% area between the rotated hyperbolas for beta
	    %% \draw plot[domain=-15:15, smooth, shift={(\betaT,0)},  rotate=\thetaT] ({\x},{hyperbolarotb(\x)});
	    %% \draw plot[domain=-15:15, smooth, shift={(\betaT,0)},  rotate={180-\thetaT}] ({\x},{hyperbolarotb(\x)});
	    \begin{scope}
	       \clip plot[domain=-15:15, smooth, shift={(\betaT,0)},  rotate=\thetaT] ({\x},{hyperbolarotb(\x)}) -- (15,-15) -- (-15,-15)-- cycle;
	       \fill [green, opacity=.2 ]
	       plot[domain=-15:15, smooth, shift={(\betaT,0)},  rotate={180-\thetaT}] ({\x},{hyperbolarotb(\x)}) -- (-15,15) -- (15,15) -- cycle;
	       \draw [teal, densely dotted] plot[domain=-15:15, smooth, shift={(\betaT,0)},  rotate={180-\thetaT}] ({\x},{hyperbolarotb(\x)});
	    \end{scope}
	    \begin{scope}
	       \clip plot[domain=-15:15, smooth, shift={(\betaT,0)},  rotate={180-\thetaT}] ({\x},{hyperbolarotb(\x)}) -- (-15,15) -- (15,15) -- cycle;
	       \draw [teal, densely dotted] plot[domain=-15:15, smooth, shift={(\betaT,0)},  rotate=\thetaT] ({\x},{hyperbolarotb(\x)});
	    \end{scope}

	    %% \fill [green, opacity=.2, shift={(\betaT,0)} ]
	    %% plot[domain=-15.0:15.0, smooth, rotate=-\thetaT] ({\x},{hyperbolarotb(\x)}) -- plot[domain=15.0:-15.0, smooth, rotate=-\thetaT] ({\x},{-hyperbolarotb(\x)}) -- cycle;

	    %% area between the rotated hyperbolas for beta, ALTERNATIVE FORMULA
	    %% \draw [red] plot[domain=-15:15, smooth, rotate=\thetaT] ({\x},{-altbhyperbola(\x)});
	    %% \draw [red] plot[domain=-15:15, smooth, rotate=-\thetaT] ({\x},{altbhyperbola(\x)});

	    \begin{scope}
	       \clip plot[domain=-15:15, smooth, rotate=-\thetaT] ({\x},{altbhyperbola(\x)}) -- (15, 15) -- (-15, 15) -- cycle;
	       \fill [red, opacity=.2 ]
	       plot[domain=-15:15, smooth, rotate=1*\thetaT] ({\x},{-altbhyperbola(\x)}) -- (15, -15) -- (-15, -15) -- cycle;
	       \draw [red, densely dotted] plot[domain=-15:15, smooth, rotate=1*\thetaT] ({\x},{-altbhyperbola(\x)});
	       \draw [red, densely dotted] plot[domain=-15:15, smooth, rotate=-\thetaT] ({\x},{altbhyperbola(\x)}); 
	    \end{scope}

	    %% \draw [teal] plot[domain=-15.0:15.0, smooth, rotate=-1*\thetaT] ({\x},{altbpphyperbola(\x)});
	    %% \draw [teal] plot[domain=-15.0:15.0, smooth, rotate=-1*\thetaT] ({\x},{altbpmhyperbola(\x)});

	    %% spectrum of T
	    \path [fill=colTspec, shift={(\betaT,0)}] (\thetaT:15) -- (0,0) --  (-\thetaT:15) -- cycle;

	 \end{scope}

	 %% cutoff in \beta_{T+A}
	 \draw[blue] (\betaTA,-5.5) -- (\betaTA,5.5);

	 %% Axis
	 \draw[->] (-5.5,0)--(10.5,0);
	 \draw[->] (0,-5.5)--(0,5.5);
      \end{scope}

      %%%}}}

      \begin{scope}[xshift=10cm, yshift=-10cm, scale=0.4, transform shape] %%%{{{ 
	 \tikzmath{
	 \thetaT = 10;      %% \beta_T
	 \betaT = -2;   
	 \betaTA = \betaT - sqrt( \abound^2 + \bbound^2*\gT^2 + \bbound^2*(\betaT)^2);   %% \beta_{T+A}
	 }

	 \begin{scope}
	    \clip (-5,-5) rectangle (10, 5);

	    %% area between the rotated hyperbolas for beta
	    %% \draw plot[domain=-15:15, smooth, shift={(\betaT,0)},  rotate=\thetaT] ({\x},{hyperbolarotb(\x)});
	    %% \draw plot[domain=-15:15, smooth, shift={(\betaT,0)},  rotate={180-\thetaT}] ({\x},{hyperbolarotb(\x)});
	    \begin{scope}
	       \clip plot[domain=-15:15, smooth, shift={(\betaT,0)},  rotate=\thetaT] ({\x},{hyperbolarotb(\x)}) -- (15,-15) -- (-15,-15)-- cycle;
	       \fill [green, opacity=.2 ]
	       plot[domain=-15:15, smooth, shift={(\betaT,0)},  rotate={180-\thetaT}] ({\x},{hyperbolarotb(\x)}) -- (-15,15) -- (15,15) -- cycle;
	       \draw [teal, densely dotted] plot[domain=-15:15, smooth, shift={(\betaT,0)},  rotate={180-\thetaT}] ({\x},{hyperbolarotb(\x)});
	       \draw [teal, densely dotted] plot[domain=-15:15, smooth, shift={(\betaT,0)},  rotate=\thetaT] ({\x},{hyperbolarotb(\x)});
	    \end{scope}
	    %% \fill [green, opacity=.2, shift={(\betaT,0)} ]
	    %% plot[domain=-15.0:15.0, smooth, rotate=-\thetaT] ({\x},{hyperbolarotb(\x)}) -- plot[domain=15.0:-15.0, smooth, rotate=-\thetaT] ({\x},{-hyperbolarotb(\x)}) -- cycle;

	    %% area between the rotated hyperbolas for beta, ALTERNATIVE FORMULA
	    %% \draw [red] plot[domain=-15:15, smooth, rotate=\thetaT] ({\x},{-altbhyperbola(\x)});
	    %% \draw [red] plot[domain=-15:15, smooth, rotate=-\thetaT] ({\x},{altbhyperbola(\x)});

	    \begin{scope}
	       \clip plot[domain=-15:15, smooth, rotate=-\thetaT] ({\x},{altbhyperbola(\x)}) -- (15, 15) -- (-15, 15) -- cycle;
	       \fill [red, opacity=.2 ]
	       plot[domain=-15:15, smooth, rotate=1*\thetaT] ({\x},{-altbhyperbola(\x)}) -- (15, -15) -- (-15, -15) -- cycle;
	       \draw [red, densely dotted] plot[domain=-15:15, smooth, rotate=1*\thetaT] ({\x},{-altbhyperbola(\x)});
	       \draw [red, densely dotted] plot[domain=-15:15, smooth, rotate=-\thetaT] ({\x},{altbhyperbola(\x)});
	    \end{scope}
	    %% \draw [teal] plot[domain=-15.0:15.0, smooth, rotate=-1*\thetaT] ({\x},{altbpphyperbola(\x)});
	    %% \draw [teal] plot[domain=-15.0:15.0, smooth, rotate=-1*\thetaT] ({\x},{altbpmhyperbola(\x)});

	    %% spectrum of T
	    \path [fill=colTspec, shift={(\betaT,0)}] (\thetaT:15) -- (0,0) --  (-\thetaT:15) -- cycle;

	 \end{scope}

	 %% cutoff in \alpha_{T+A}
	 \draw[blue] (\betaTA,-5.5) -- (\betaTA,5.5);

	 %% Axis
	 \draw[->] (-5.5,0)--(10.5,0);
	 \draw[->] (0,-5.5)--(0,5.5);
      \end{scope}

      %%%}}}

   \end{tikzpicture}
   
   \caption[Comparison between the estimates for the spectral enclosure obtained with the estimates from Lemma~\ref{lem:app:Hest} and Lemma\ref{lem:app:Hestalt}.]
   {Comparison between the estimates for the spectral enclosure we obtain with the estimates from Lemma~\ref{lem:app:Hest} and Lemma~\ref{lem:app:Hestalt}.
   The blue area is $\sector{\theta}{\beta_T}$ for the angles $\theta=30^\circ$ and $\theta=10^\circ$ and 
   $\beta_T=5$, $\beta_T=2$ and $\beta_T=-2$.

   The vertical blue line is the lower bound $\beta_{T+A}$ for the real part of the spectrum of $T+A$, so only the region to its right is of interest.

   The green area shows the enclosure for $\sigma(T+A)$ which corresponds to the estimates from Lemma~\ref{lem:app:Hest} that we used throughout this work.
   They are given by 
   $\frac{ a^2 +  b^2 \beta_T^2 + b^2 (\re (\e^{ \mp\I\theta}(z{\color{red}-\beta_T})) )^2 }{1-b^2}
   < (\im (\e^{ \mp\I\theta} (z - \beta_T)) )^2$.
   \\
   The red area shows the enclosure for $\sigma(T+A)$ which corresponds to the estimates from Lemma~\ref{lem:app:Hestalt}.
   They are given by 
   $\frac{ a^2  + b^2 \beta_T^2{\color{red}\sin^2\theta} + b^2 (\re ( \e^{ \mp\I\theta}z ) )^2 }{1-b^2}
   < (\im (\e^{ \mp\I\theta} (z - \beta_T)) )^2$.
   \smallskip
   
   As expected, for large $|z|$ the green hyperbolas provide better estimates if $\beta_T>0$ than those given by the red ones.
   If $\beta_T<0$ the converse is true.
   }
  \label{fig:TEST}

\end{figure}
%%%}}}

%%%}}}

\addcontentsline{toc}{section}{Bibliography}
% \bibliography{biblio}
% \bibliographystyle{alpha}

\end{document}